\newcommand\@dotsep{4.5}
\def\@tocline#1#2#3#4#5#6#7{\relax
	\ifnum #1>\c@tocdepth 
	\else
	\par \addpenalty\@secpenalty\addvspace{#2}%
	\begingroup \hyphenpenalty\@M
	\@ifempty{#4}{%
		\@tempdima\csname r@tocindent\number#1\endcsname\relax
	}{%
		\@tempdima#4\relax
	}%
	\parindent\z@ \leftskip#3\relax \advance\leftskip\@tempdima\relax
	\rightskip\@pnumwidth plus1em \parfillskip-\@pnumwidth
	#5\leavevmode\hskip-\@tempdima #6\relax
	\leaders\hbox{$\m@th
		\mkern \@dotsep mu\hbox{.}\mkern \@dotsep mu$}\hfill
	\hbox to\@pnumwidth{\@tocpagenum{#7}}\par
	\nobreak
	\endgroup
	\fi}
	\def\thefigure{\thesection.\@arabic\c@figure}
	\def\fps@figure{h,t}
	\def\thetable{\thesection.\@arabic\c@table}
	\def\fps@table{h, t}
\DeclareMathOperator{\pr}{pr}
\begin{document}

\newcommand*{\dt}[1]{%
	\accentset{\mbox{\large\bfseries .}}{#1}}
\newcommand*{\ddt}[1]{%
	\accentset{\mbox{\large\bfseries .\hspace{-0.25ex}.}}{#1}}
\renewcommand{\dot}{\dt}

\newcommand{\Z}{\mathbb Z}
\newcommand{\C}{\mathbb C}
\newcommand{\N}{\mathbb N}
\newcommand{\R}{\mathbb R}
\newcommand{\I}{\mathbbm 1}
\newcommand{\0}{\mathbbm O}
\newcommand{\CP}{\mathbb{CP}}
\newcommand{\eps}{\varepsilon}
\newcommand{\A}{\mathcal{A}}
\newcommand{\B}{\mathcal{B}}
\newcommand{\Cc}{{\mathcal C}}
\newcommand{\K}{\mathcal{K}}
\newcommand{\J}{\mathcal{J}}
\newcommand{\T}{\mathcal{T}}
\renewcommand{\P}{\mathcal{P}}
\newcommand{\G}{\mathcal{G}}
\newcommand{\AG}{\mathcal{AG}}
\newcommand{\AU}{\mathcal{AU}}
\newcommand{\U}{\mathcal{U}}
\newcommand{\V}{\mathcal{V}}
\newcommand{\Xc}{{\mathcal X}}
\newcommand{\bg}{{\mathfrak{b}}}
\renewcommand{\gg}{{\mathfrak{g}}}
\newcommand{\rg}{{\mathfrak{r}}}
\newcommand{\ug}{{\mathfrak{u}}}

\renewcommand{\H}{\mathcal{H}}
\renewcommand{\ll}{l}
\newcommand{\rr}{r}
\newcommand{\Ll}{\mathcal{L}}
\newcommand{\Ss}{\textbf{s}}
\newcommand{\Tt}{\textbf{t}}
\newcommand{\anch}{\textbf{a}}
\newcommand{\tto}{\rightrightarrows}
\newcommand{\M}{\mathfrak{M}}
\newcommand{\X}{\mathfrak{X}}
\newcommand{\Y}{\mathcal{Y}}
\renewcommand{\to}{\rightarrow}
\newcommand{\prf}[1]{\begin{proof}#1\end{proof}}

\renewcommand{\leq}{\leqslant}
\renewcommand{\geq}{\geqslant}

\newcommand{\dq}{\partial_q}
\newcommand{\To}{\longrightarrow}
\newcommand{\ol}{\overline}
\newcommand{\bra}[1]{\left\langle #1 \right|}
\newcommand{\ket}[1]{\left| #1 \right\rangle}
\newcommand{\oper}[2]{\left|#1\right\rangle\left\langle #2 \right|}

\renewcommand{\1}{{\bf 1}}
\newcommand{\Ad}{{\rm Ad}}
\newcommand{\ad}{{\rm ad}}
\newcommand{\Aut}{{\rm Aut}}
\newcommand{\Banach}{}
\newcommand{\bepsilon}{{\boldsymbol\epsilon}}
\newcommand{\bl}{{\boldsymbol\ell}}
\newcommand{\br}{{\boldsymbol r}}
\newcommand{\bi}{{\boldsymbol\iota}}
\newcommand{\bmu}{{\boldsymbol m}}
\newcommand{\bomega}{{\boldsymbol\omega}}
\newcommand{\bs}{{\boldsymbol s}}
\newcommand{\bt}{{\boldsymbol t}}
\newcommand{\Cas}{{\rm Cas}}
\newcommand{\Ci}{{\mathcal C}^\infty}
\newcommand{\Diff}{{\rm Diff}}
\newcommand{\de}{{\rm d}}
\newcommand{\g}{{g}}
\renewcommand{\Im}{{\rm Im}}
\newcommand{\id}{{\rm id}}
\newcommand{\Inn}{{\rm Inn}}
\newcommand{\Iso}{{\rm Iso}}
\newcommand{\inn}{{\rm inn}}
\newcommand{\Ker}{{\rm Ker}\,}
\newcommand{\Lie}{\text{\bf L}}
\newcommand{\LP}{{\text{L-P}}}
\newcommand{\lin}{ \mathfrak{l}}
\newcommand{\Oc}{{\mathcal O}}
\newcommand{\Poiss}{{\rm Poiss}}
\newcommand{\pre}{{\rm pre}}
\newcommand{\Tr}{{\rm Tr}\,}

\newcommand{\bfi}{\bfseries\itshape}

\vfuzz2pt 
\hfuzz2pt 
\newtheorem{thm}{Theorem}[section]
\newtheorem{cor}[thm]{Corollary}
\newtheorem{lem}[thm]{Lemma}
\newtheorem{stat}[thm]{Statement}
\newtheorem{prop}[thm]{Proposition}
\newtheorem{axiom}{Axiom}
\theoremstyle{definition}
\newtheorem{defn}[thm]{Definition}
\newtheorem{example}[thm]{Example}
\newtheorem{rem}[thm]{Remark}
\newtheorem{notation}[thm]{Notation}

\numberwithin{equation}
{section}




\newcommand{\ex}[1]{{\it #1}}

\title[Poisson geometry and modular theory]{Poisson Geometrical Aspects \\ of the Tomita-Takesaki Modular Theory}

\author{Daniel Belti\c t\u a}
\address{Institute of Mathematics ``Simion Stoilow'' of the Romanian Academy, 
	P.O. Box 1-764, Bucharest, Romania}
\email{Daniel.Beltita@imar.ro, beltita@gmail.com}

\author{Anatol Odzijewicz$\dagger$}
\address{Institute of Mathematics, University of Bia\l ystok, 
		Cio\l kowskiego 1M, 15-245 Bia\l ystok, Poland}
\email{aodzijew@uwb.edu.pl}
\thanks{$\dagger$Anatol Odzijewicz passed away on 18.04.2022.}






\begin{abstract}
We investigate some genuine Poisson geometric objects in the modular theory of an arbitrary von Neumann algebra $\mathfrak{M}$.  
Specifically, for any standard form realization $(\mathfrak{M},\mathcal{H},J,\mathcal{P})$, 
we find a canonical foliation of the Hilbert space $\mathcal{H}$, 
whose leaves are Banach manifolds that are weakly immersed into~$\mathcal{H}$, 
thereby endowing $\mathcal{H}$ with a richer Banach manifold structure to be denoted by~$\widetilde{\mathcal{H}}$. 
We also find that $\widetilde{\mathcal{H}}$ has the structure of a Banach-Lie groupoid $\widetilde{\mathcal{H}}\rightrightarrows\mathfrak{M}_*^+$ 
which is isomorphic to the action groupoid $\mathcal{U}(\mathfrak{M})\ast\mathfrak{M}_*^+\rightrightarrows\mathfrak{M}_*^+$ defined by the natural action of 
the Banach-Lie groupoid of partial isometries $\mathcal{U}(\mathfrak{M})\rightrightarrows\mathcal{L}(\mathfrak{M})$ on the positive cone in the predual $\mathfrak{M}_*^+$, 
where $\mathcal{L}(\mathfrak{M})$ is the projection lattice of $\mathfrak{M}$. 
There is also a presymplectic form $\widetilde{\boldsymbol\omega}\in\Omega^2(\widetilde{\mathcal{H}})$ 
that comes from the scalar product of $\mathcal{H}$ and is multiplicative in the usual sense of finite-dimensional Lie groupoid theory. 
We further explore some aspects of reduction theory for the groupoid endowed with the multiplicative presymplectic form 
 $(\widetilde{\mathcal{H}},\widetilde{\boldsymbol\omega})\rightrightarrows \mathfrak{M}_*^+$, 
including the Poisson manifold structures of its orbits and 
the foliation defined by the degeneracy kernel of the presymplectic form~$\widetilde{\boldsymbol\omega}$. 
\\
\emph{2010 MSC:} Primary 53D17; Secondary 22A22, 22E65, 46L10, 46L60, 58B25
\\
\emph{Keywods:} Banach-Lie groupoid, multiplicative presymplectic form, Poisson bracket, reduction theory,  von Neumann algebra
\end{abstract}

\maketitle

\tableofcontents

 \section{Introduction}

There is a significant surge in the applications of modular theory of von Neumann algebras to some areas of differential geometry and representation theory of Lie groups, with a view to facilitating the construction of models of field theories that satisfy the requirements of local quantum physics in the sense of \cite{H96}. 
Samples of such areas are the theory of causal symmetric spaces or holomorphic continuation in the framework of Lie group representations. 
See e.g., \cite{Ne18},  \cite{NOOl20}, \cite{NOl19}, \cite{NOl21}, \cite{NOl23}, \cite{MNOl24}, \cite{FNO25}, and the references therein.

The above research direction was very little extended to infinite-dimensional Lie theory so far. 
One of the reasons may be that, despite the substantial development of the theory of infinite-dimensional Lie groups (see e.g., \cite{GN26} and \cite{KM97}), one still lacks a good understanding of the interaction between representation theory and symplectic geometry that governs the coadjoint actions of such groups. 
In the present paper, we take a step towards filling that gap for some infinite-dimensional linear Lie groups, specifically the unitary groups of von Neumann algebras. 
For reasons related to the specifics of Poisson geometry, we extend the scope of our discussion to certain groupoids associated to von Neumann algebras.
In particular, as a by-product, our study  contributes to the incipient theory of infinite-dimensional Lie groupoids, cf. \cite{SW15}, \cite{SW16}, \cite{S23}, \cite{BGJP19}. 

We use a considerable amount of genuine infinite-dimensional Poisson geometry in the sense of \cite{Neeb}, \cite{CP12}, and \cite{OR03}
that is inherent to the modular theory of every von Neumann algebra. 
This is independent of any semiclassical limit process that might lead from  operator algebras to Poisson manifolds. 
Instead, it turns out that for any von Neumann algebra~$\M$ realized in its standard form $(\M,\H,J,\P)$, 
{\it the original manifold structure of its corresponding Hilbert space~$\H$ has a Banach foliation~$\widetilde{\H}$ 
(i.e., the original manifold structure of $\H$ can be enriched to a Banach manifold structure 
for which the identity map $\H\to\widetilde{\H}$ is a weak immersion)  
which further leads to a Banach-Lie groupoid structure $\widetilde{\H}\tto\M_*^+$ 
on the convex cone of positive normal forms of $\M$}  
(Theorem~\ref{grpd_act}).  
Here $\M_*^+$ is the positive cone in the predual $\M_*$ of~$\M$. 
The groupoid $\widetilde{\H}\tto\M_*^+$  carries a presymplectic form $\widetilde{\bomega}\in\Omega^2(\widetilde{\H})$ 
that comes from the scalar product of $\H$, 
is multiplicative 
(Proposition~\ref{graph}) 
in the usual sense 
of finite-dimensional Lie groupoid theory, 
and \emph{gives by reduction the symplectic structure of the groupoid orbits in  $\M_*^+$} (Proposition~\ref{SQ6}).  
(See \cite{Ka86}, \cite{We87}, \cite{BuCWZ04}, \cite{We17}, and the references therein.)
Our construction thus 
leads
to nontrivial examples of multiplicative presymplectic forms on infinite-dimensional 
Lie groupoids.

In this paper we actually work with several groupoid isomorphisms defined 
by suitable polar decompositions of vectors and functionals, which can be summarized in the commutative diagram 
\begin{equation}\label{24August2019}
\xymatrix{
\H \ar@<-.5ex>[d] \ar@<.5ex>[d] &	\U(\M)\ast \M_*^+ \ar@<-.5ex>[d] \ar@<.5ex>[d] \ar[l]_{\Phi\quad}  \ar[r]^{\quad \Xi} 
	& \M_* \ar@<-.5ex>[d] \ar@<.5ex>[d]\\
\M_*^+ &	\M_*^+ \ar[l]_{\id} \ar[r]^{\id} & \M_*^+}
\end{equation}
(cf. Propositions \ref{23August2019} and 
\ref{isomorphisms}). 
Here
the coadjoint action groupoid $\U(\M)\ast \M_*^+ \tto 	\M_*^+$ is a Banach-Lie groupoid in a natural way, 
the total space $\M_*$ of the predual groupoid 
carries a natural Lie-Poisson bracket, while the total space of the standard groupoid $\H\tto \M_*^+$ 
is the Hilbert space $\H$ of the essentially unique standard representation and has a natural symplectic structure. 
(See Section~\ref{Sect6}.)

The point we want to make in this paper is that the geometrical aspects of the above three groupoids actually complement each other
and it is the interaction of these aspects which provides  Poisson geometrical information 
on the von Neumann algebra $\M$. 
These three groupoids taken together actually share suitable versions 
of the key features of any symplectic groupoid $G\tto P$ from the finite-dimensional Poisson geometry \cite{We87}: 
the base $P$ has a Poisson structure for which the source/target maps are Poisson/anti-Poisson maps, 
the graph of the groupoid multiplication is a Lagrangian submanifold of $G\times G\times\overline{G}$, 
the base $P$ is a Lagrangian submanifold of the symplectic manifold $G$, and so on. 
It looks like a worthwhile 
endeavor 
in the future to find the explanation of the occurrence of these deep genuine Poisson geometric aspects 
in the theory of von Neumann algebras. 
As already mentioned above, we find versions of these features that are adapted to multiplicative presymplectic forms on infinite-dimensional Lie groupoids.

It is noteworthy that infinite-dimensional geometric structures associated to operator algebras 
have been studied extensively, 
from a variety of perspectives including but not limited to Toeplitz operators \cite{ACL18}, 
Riemannian and Finsler geometry \cite{ALR10}, symmetric spaces \cite{Up85,Ne02}, 
representations of Banach-Lie groups \cite{NO98}, Banach-Poisson manifolds 
and particularly Banach Lie-Poisson spaces \cite{OR03,OJS2,OS}, Banach-Lie algebroids \cite{OJS1}, 
geometry of polar decomposition and generalized inverses \cite{ACM05, BGJP19, CM25} etc. 
From the point of view of these earlier studies, 
what we are doing here is to explore some of the Poisson geometric structures 
that are encoded in the standard form of von Neumann algebras. 

Before presenting the contents of the present paper, we recall that 
several striking similarities between finite-dimensional Poisson geometry and the theory of von Neumann algebras 
were pointed out by A.~Weinstein in his seminal paper~\cite{We97} and also for instance in the book \cite{CW}, 
motivated by an attempt to understand the classical limit of 
quantum theory, viewing Poisson manifolds 
as semiclassical limits of operator algebras in some sense. 
Such a collection of similarities serves at any rate as 
a very useful guide for study of each one of these two theories. 
As emphasized in the aforementioned paper, 
the investigation of the modular vector fields of Poisson manifolds may help one 
gain a geometric perspective on the modular operators of von Neumann algebras.

{\bf Outline of this paper}.  
In Section~\ref{Sect2} we have recorded notions of Poisson geometry on infinite-dimensional manifolds 
that are needed in the later sections. 
Thus, in Subsection~\ref{Subsect2.1} we introduce a suitable notion of Poisson structure that 
is obtained by adapting the notions of Poisson structure from \cite{Neeb} and sub-Poisson structures from \cite{CP12}.  
In Subsection~\ref{Subsect2.2} we show how Poisson structures in the above sense 
can be constructed from weakly symplectic structures on manifolds modeled on Banach spaces, 
thus avoiding a well known problem from the theory of infinite-dimensional Hamiltonian systems 
that was already noted in \cite{ChM}. 
Subsection~\ref{Subsect2.3} records some basic facts on the most important class of 
infinite-dimensional Poisson manifolds that was studied so far, namely the infinite-dimensional Lie-Poisson spaces, 
that is, Banach spaces $\mathfrak{b}$ for which 
the topological dual space $\mathfrak{b}^*$ has the structure of 
Banach-Lie algebra whose Lie bracket $[\cdot,\cdot]\colon \mathfrak{b}^*\times \mathfrak{b}^*\to \mathfrak{b}^*$ 
is separately continuous with respect to the weak dual topology of $\mathfrak{b}^*$. 
Basic examples of Lie-Poisson spaces are the preduals of von Neumann algebras. 
In Subsections~\ref{Subsect2.4} and \ref{Subsect2.5} 
we recall the Banach-Lie groupoid structure of the sets of closed-range elements of a von Neumann algebra, 
with its groupoids consisting of the partial isometries and the partially invertible elements. 

In Section~\ref{Sect3}, the main result is that the middle  groupoid in the diagram \eqref{24August2019}, 
that is, the coadjoint action groupoid $\U(\M)\ast \M_*^+ \tto 	\M_*^+$,  
has the natural structure of a Banach-Lie groupoid (Theorem~\ref{coadj_grpd}). 
To this end, we study the algebraic structure of that groupoid in Subsection~\ref{Subsect3.1}, while 
 Subsection~\ref{Subsect3.2} is devoted to the differential geometric structures of the coadjoint action groupoid 
and of its orbits (Theorem~\ref{thm:32}). 
 Our method of investigation of this groupoid is based on the study of its transitive subgroupoids, 
which are naturally isomorphic as Banach-Lie groupoids 
to the gauge groupoids associated to suitable principal bundles (Proposition~\ref{gauge_isom}). 

In Section~\ref{Sect4} we show that the pair consisting of a von Neumann algebra and its commutant leads to some examples of genuine dual pairs 
in the sense of infinite-dimensional Poisson geometry. 
We also investigate here some basic properties of the expectation maps that later play a key role in the construction of the standard groupoid associated to a von Neumann algebra. 

In Section~\ref{Sect5} we study the natural action of the groupoid of partial isometries $\U(\M)\tto\Ll(\M)$ on 
the positive cone $\M_*^+$ in the predual, the corresponding momentum map being given by the support projection of normal functionals. 
We call this action the coadjoint action of $\U(\M)\tto\Ll(\M)$ since its restriction to the vertex subgroupoid 
coresponds to the family of coadjoint actions of the Banach-Lie unitary groups $U(p\M p)$ for arbitrary projections $p\in\Ll(\M)$. 
Specifically, we show that every groupoid orbit has a weakly symplectic structure obtained from symplectic reduction from the Hilbert space~$\H$, 
which agrees with the Kirillov-Kostant-Souriau construction on unitary \emph{group} orbits, 
and this leads to Poisson brackets on the partial-isometry \emph{groupoid} orbits (Theorem~\ref{thm:orbit} and Corollary~\ref{cor:orbit}). 
Then, in Subsection~\ref{Subsect5.2} we briefly discuss the special properties of $\M$ encoded in these groupoid orbits, with emphasis on type~III$_1$ 
(Proposition~\ref{III1}). 

In Section~\ref{Sect6} we define the standard groupoid $\H\tto\M_*^+$ associated to any standard form $(\M,\H,J,\P)$ 
of a von Neumann algebra~$\M$ and we describe a few other realizations of that groupoid (Theorem~\ref{thm:53} and Proposition~\ref{531}). 
We further construct the natural foliation $\widetilde{\H}$, of the Hilbert space~$\H$, 
whose leaves are Banach manifolds which are weakly immersed into~$\H$ and whose underlying sets are 
the total spaces of the transitive subgroupoids of $\H\tto\M_*^+$ (Theorem~\ref{grpd_act}). 
This construction turns the standard groupoid $\H\tto\M_*^+$ into the Banach-Lie groupoid $\widetilde{\H}\tto\M_*^+$. 
Furthermore we endow that Banach-Lie grouoid with 
a presymplectic 2-form $\widetilde{\bomega}\in\Omega^2(\widetilde{\H})$ coming from the symplectic structure of the Hilbert space~$\H$, 
and one of the key technical results that we obtain is that $\widetilde{\bomega}$ is multiplicative, 
in the usual sense from finite-dimensional Lie groupoid theory (Proposition~\ref{graph}). 
This property allows us to recover several features of multiplicative presymplectic forms   
in our present infinite-dimensional setting.  
(See for instance Propositions \ref{SQ6} and~\ref{Prop}.)  

In Section~\ref{Sect7} we complete the picture obtained so far, showing that the modular flows corresponding to normal semifinite faithful weights on the von Neumann algebra~$\M$ 
lead to 1-parameter symmetry groups of our 
groupoid with multiplicative presymplectic structure $(\widetilde{\H},\widetilde{\bomega})\tto\M_*^+$, 
that is, diffeomorphisms that preserve the groupoid structural maps, the presymplectic structure, 
as well as the groupoid orbits along with their weakly symplectic structures 
(Propositions~\ref{flow} and \ref{prel_P1}). 
In Subsection~\ref{Subsect7.2} we record a few remarks and open questions on the infinitesimal aspects of these 1-parameter symmetry groups, 
as these aspects are related to the Hamiltonian structures in the infinite-dimensional Poisson geometry that we investigate in this paper. 

In Section~\ref{Sect8}, for the sake of clarity, we illustrate our general constructions by the special case when $\M$ is a type~I factor.  
We find for instance that one of the transitive subgroupoids of 
the 
Banach-Lie groupoid with multiplicative presymplectic structure  $(\widetilde{\H},\widetilde{\bomega})\tto\M_*^+$ is isomorphic to the gauge groupoid associated to a Hopf fibration, while the symplectic structure on its base is explicitly expressed in terms of a positive scalar multiple of the Fubini-Study form on an infinite-dimensional projective space.

 \section{Preliminaries}
\label{Sect2}

In this paper, smooth real Banach manifolds are called simply  \emph{manifolds} as for instance in \cite{Bou}, and in particular we use the names of Lie groups/groupoids and vector bundles for the objects that may be called elsewhere in the literature as Banach-Lie groups/groupoids and Banach vector bundles, respectively, as for instance in the general theory of Banach-Lie groupoids developed in~\cite{BGJP19}. 
This convention applies of course only to the manifolds themselves and not to their infinitesimal aspects, hence we still use expressions as Banach space or Banach-Lie algebra. 
It is often the case in this paper that, as in \cite{Bou} and \cite{Lang}, the model Banach spaces of various connected components of manifolds may be non-isomorphic for distinct connected components. 

Given any manifold $M$, it is convenient to use the name of \emph{local chart} for any diffeomorphism $\chi\colon U\to V$ whose domain $U$ is an open subset of $M$ and whose range $V$ is an open subset of another manifold $N$ that may not be a Banach space. 
For any vector bundle $\pi\colon E\to M$ we denote its space of smooth global sections by $\Gamma^\infty E$. 
We denote by $\bigwedge^k E^*$ the total space of the vector bundle over $M$ whose fiber at any point $m\in M$ 
is the Banach space consisting of 
all bounded skew-symmetric $k$-linear functions $\underbrace{E_m\times \cdots\times E_m}_{k\ \rm times}\to\R$. 
If $M$ and $N$ are manifolds, then a smooth mapping $\varphi\colon M\to N$ is called 
a \emph{weak immersion} as for instance in \cite{OR03} if for every $m\in M$ 
the tangent mapping $T_m\varphi\colon T_mM\to T_{\varphi(m)}N$ is injective, without any condition on its range. 

Much of the material presented below is based on the papers \cite{BGJP19,BR05,OJS1,OJS2,OR03,OS}.
Throughout this section, unless otherwise mentioned, $\M$ stands for an arbitrary $W^*$-algebra, $\H$ denotes an arbitrary complex Hilbert space, 
$L^\infty(\H)$ is the von Neumann algebra of all bounded linear operators on $\H$, and $L^1(\H)$ is its canonical predual consisting of all trace-class operators on $\H$. 
We also denote by $\Tr\colon L^1(\H)\to\C$ the canonical operator trace. 

 \subsection{Poisson manifolds}
\label{Subsect2.1}

If one wants to consider a Poisson structure on a \Banach smooth manifold $P$ (modeled on a  class $\mathcal{B}$ of   Banach spaces that are non-reflexive in general, see \cite{Bou,Lang}) then some problems appear which do not occur in finite dimensions. 
This is the reason why  various definitions of Poisson structures were proposed \cite{CP12, Neeb, OR03, CGM17, BGT18} for  
infinite-dimensional manifolds. 
Taking this into account we make the following definition of a Poisson structure being a slight modification of the one presented in~\cite{Neeb}.

\begin{defn}\label{def21} 
	A \emph{Poisson structure} on $P$  is a Lie algebra $(\mathcal{P}^\infty(P,\mathbb{R}),\{\cdot,\cdot\})$, where $\mathcal{P}^\infty(P,\mathbb{R})$ is an associative subalgebra of the algebra $C^\infty(P,\mathbb{R})$ of real smooth functions  with fixed $\mathbb{R}$-linear map $\#:\mathcal{P}^\infty(P,\mathbb{R})\to \Gamma^\infty TP$ such that:
\begin{enumerate}[{\rm(i)}]
\item\label{def21_item1} for any $f,g\in \mathcal{P}^\infty(P,\mathbb{R})$ one has
\begin{equation}\label{cond1} 
(\#f)(g)=\{f,g\}
\end{equation}
\item\label{def21_item2} the algebra $\mathcal{P}^\infty(P,\mathbb{R})$ separates vector fields $\xi\in \Gamma^\infty TP$ on $P$, i.e., if $\xi(f)=0$ for any $f\in \mathcal{P}^\infty(P,\mathbb{R})$ then $\xi\equiv 0$.
\end{enumerate}
\end{defn}

Let us mention that Definition \ref{def21} restricted to the category of finite dimensional manifolds leads to standard Poisson structures if one takes $\mathcal{P}^\infty(P,\mathbb{R})=C^\infty(P,\mathbb{R})$. 

From  
Definition~\ref{def21}(\eqref{def21_item1}--\eqref{def21_item2}) and the Jacobi identity for the Lie bracket $\{\cdot,\cdot\}$ one directly obtains the following facts. 

\begin{prop}\label{prop:1}
	\begin{enumerate}[{\rm(i)}]
\item\label{prop:1_item1} The bracket $\{\cdot,\cdot\}$ satisfies the Leibniz rule:
\begin{equation}\label{L} \{f,gh\}=g\{f,h\}+h\{f,g\}
\end{equation}
for $f,g,h\in \mathcal{P}^\infty(P,\mathbb{R})$
\item\label{prop:1_item2} The map $\#:(\mathcal{P}^\infty(P,\mathbb{R}),\{\cdot,\cdot\})\to (\Gamma^\infty TP,[\cdot,\cdot])$ is a morphism of Lie algebras,
 i.e., 
\begin{equation*} 
\xi_{ \{f,g\}}=[\xi_f,\xi_g]
\end{equation*}
and
\begin{equation*}
\#(f\cdot g)=f\cdot \#g+g\cdot \#f,
\end{equation*}
where $\xi_f:=\# f$ and $[\cdot,\cdot]$ is the Lie bracket of vector fields.
\item\label{prop:1_item3}  The bracket $\{f,g\}$ depends on the differentials  $\de f$ and $\de g$ only.
\end{enumerate}
\end{prop}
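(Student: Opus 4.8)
The plan is to derive all three assertions of Proposition~\ref{prop:1} directly from Definition~\ref{def21}, using only the Lie algebra axioms for $\{\cdot,\cdot\}$ (in particular the Jacobi identity), the compatibility condition \eqref{cond1}, and the separation property (ii). The underlying principle is that condition \eqref{cond1} lets us translate every algebraic identity among brackets into an identity among the vector fields $\#f$, and separation (ii) lets us conclude that two vector fields agree once they agree as derivations on all of $\mathcal{P}^\infty(P,\mathbb{R})$.

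\medskip

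\textbf{Item \eqref{prop:1_item1} (Leibniz rule).} First I would prove the Leibniz rule in the last slot. Fix $f,g,h\in\mathcal{P}^\infty(P,\mathbb{R})$ and consider the function $\{f,gh\}-g\{f,h\}-h\{f,g\}\in C^\infty(P,\mathbb{R})$. By \eqref{cond1} we may rewrite every bracket $\{f,\cdot\}$ as $(\#f)(\cdot)$, so this function equals $(\#f)(gh)-g\,(\#f)(h)-h\,(\#f)(g)$, which vanishes identically because $\#f$ is a vector field (a derivation of $C^\infty(P,\mathbb{R})$). This already gives \eqref{L}. The only mild subtlety is that $gh\in\mathcal{P}^\infty(P,\mathbb{R})$ so that $\{f,gh\}$ is defined at all; this is guaranteed because $\mathcal{P}^\infty(P,\mathbb{R})$ is assumed to be an associative subalgebra.

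\medskip

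\textbf{Item \eqref{prop:1_item2} ($\#$ is a Lie algebra morphism).} Next I would establish $\xi_{\{f,g\}}=[\xi_f,\xi_g]$. Both sides are vector fields, so by the separation property (ii) it suffices to check that they give the same value when applied to an arbitrary $h\in\mathcal{P}^\infty(P,\mathbb{R})$. On one hand, $\xi_{\{f,g\}}(h)=\{\{f,g\},h\}$ by \eqref{cond1}. On the other hand, $[\xi_f,\xi_g](h)=\xi_f(\xi_g(h))-\xi_g(\xi_f(h))=\{f,\{g,h\}\}-\{g,\{f,h\}\}$, again by \eqref{cond1} applied twice. The equality of these two expressions is precisely the Jacobi identity for $\{\cdot,\cdot\}$ (rearranged using antisymmetry). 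Hence the two vector fields agree on all of $\mathcal{P}^\infty(P,\mathbb{R})$, and separation forces $\xi_{\{f,g\}}=[\xi_f,\xi_g]$. The $\mathbb{R}$-linearity of $\#$ is part of Definition~\ref{def21}, so $\#$ is a morphism of Lie algebras. The second displayed formula $\#(f\cdot g)=f\cdot\#g+g\cdot\#f$ is just the Leibniz rule \eqref{L} read through \eqref{cond1}: for any $h$, $\#(fg)(h)=\{fg,h\}=-\{h,fg\}=-f\{h,g\}-g\{h,f\}=f\{g,h\}+g\{f,h\}=(f\cdot\#g+g\cdot\#f)(h)$, and again separation yields equality of the vector fields.

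\medskip

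\textbf{Item \eqref{prop:1_item3} (dependence only on differentials).} Finally, for the statement that $\{f,g\}$ depends only on $\de f$ and $\de g$: by antisymmetry it is enough to show that if $\de f$ vanishes at a point $p\in P$ then $\{f,g\}(p)=0$ for every $g$, and symmetrically in $g$. From \eqref{cond1} we have $\{f,g\}(p)=(\#g)(f)(p)=\langle\de f_p,(\#g)(p)\rangle$ (pairing the differential with the tangent vector $(\#g)(p)\in T_pP$), which is $0$ whenever $\de f_p=0$. The symmetric statement follows from $\{f,g\}=-\{g,f\}$. Consequently $\{f,g\}(p)$ depends on $f$ only through $\de f_p$ and on $g$ only through $\de g_p$.

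\medskip

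I do not anticipate a genuine obstacle here: each part is a short manipulation, and the only place where some care is needed is making sure that the functions to which one applies $\#$ or $\{\cdot,\cdot\}$ actually lie in the subalgebra $\mathcal{P}^\infty(P,\mathbb{R})$ (rather than merely in $C^\infty(P,\mathbb{R})$), which is exactly why Definition~\ref{def21} requires $\mathcal{P}^\infty(P,\mathbb{R})$ to be an associative subalgebra. The conceptual heart of all three items is the two-step pattern ``rewrite brackets as vector-field actions via \eqref{cond1}, then invoke separation (ii) or the derivation property'', and this is what I would make explicit at the start of the proof.
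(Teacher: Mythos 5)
Your argument is correct and is exactly the direct verification the paper has in mind (the paper offers no separate proof, stating that the assertions follow directly from Definition~\ref{def21} and the Jacobi identity): rewrite brackets via \eqref{cond1}, use that $\#f$ is a derivation for the Leibniz rule, and use the separation property to upgrade the Jacobi identity and the Leibniz rule to the vector-field identities in (ii). Only note the harmless sign slip in (iii): with the convention \eqref{cond1} one has $\{f,g\}=-(\#g)(f)=-\langle \de f_p,(\#g)(p)\rangle$ rather than $+(\#g)(f)$, which does not affect the vanishing argument or the conclusion.
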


We recall commonly used terminology:
\begin{enumerate}[{\rm(i)}]
\item the Lie algebra $\mathcal{P}^\infty(P,\mathbb{R})$ for which the Leibniz rule is fulfilled is called a \emph{Poisson algebra};
\item the vector field $\xi_f=\#f$ is called \emph{Hamiltonian vector field};
\item the elements of kernel $\Ker \#:=\{f\in \mathcal{P}^\infty(P,\mathbb{R}): \#f=0\}$ of the $\mathbb{R}$-linear morphism  
$\#:\mathcal{P}^\infty(P,\mathbb{R})\to \Gamma^\infty TP$ are called \emph{Casimirs} and one has by the condition \eqref{cond1}
$$\{ \Ker \#, \mathcal{P}^\infty(P,\mathbb{R})\}=0,$$
 i.e. the set of Casimirs is the center of the Poisson algebra  $(\mathcal{P}^\infty(P,\mathbb{R}),\{\cdot,\cdot\})$;
\item the vector sub-distribution  $S\subset TP$ of the tangent bundle $TP$ spanned by the vector fields $\xi_f\in \#(\mathcal{P}^\infty(P,\mathbb{R}))$, i.e. $S_p:=\{(\#f)(p): f\in \mathcal{P}^\infty(P,\mathbb{R})\}\subseteq T_pP$, where $p\in P$, is called the \emph{characteristic distribution} of the Poisson structure.
\end{enumerate}

One presented in \cite{BGT18} an example of a Lie algebra $( \mathcal{P}^\infty(\ell^p,\mathbb{R}),\{\cdot,\cdot\})$ on the Banach space  of $p$-summable sequences with Lie bracket satisfying the Leibniz rule for which  the derivations $\{f,\cdot\}$ are not given by vector fields if $\ 1\leq p\leq 2$. 
This shows that the condition \eqref{cond1}  in Definition \ref{def21} is stronger than the Leibniz property \eqref{L} of~$\{\cdot,\cdot\}$.

We note here that, by Proposition \ref{prop:1}\eqref{prop:1_item2}, the characteristic distribution of the Poisson structure is involutive, i.e.,  closed with respect to the Lie bracket 
$[\cdot,\cdot]$ of vector fields. 
This is a part of the sufficient condition for the integrability of the singular vector distributions in the  
Stefan-Sussmann theorem \cite{SS} on finite-dimensional manifolds. 
See also \cite{Pe12} for a version of that integrability theorem for singular vector distributions on infinite-dimensional manifolds.

Let us  define  the sub-bundle $S^*:=\{(\de f)(p): f\in \mathcal{P}^\infty(P,\mathbb{R})\}\subseteq T^*P$ of the cotangent bundle $T^*P$ called  the \emph{co-characteristic distribution} in the following.

Since $\langle (\#f)(p),\de g(p)\rangle=-\langle (\#g)(p),\de f(p)\rangle$ the tangent vector $(\#f)(p)$ at $p\in P$ depends on the differential $\de f(p)$ only, so, the Lie algebra morphism $\#$ defines an identity covering epimorphism $\widehat{\#}:S^*\to S$ of vector bundles. 
Note here that in general the vector subspaces $S^*_p\subseteq T^*_pP$ and $S_p\subseteq T_pP$ 
may not be closed subspaces of the tangent and cotangent spaces at $p\in P$. 
Even if $S_p$ are closed subspaces, 
they may not be isomorphic as Banach spaces for different $p\in P$.
The same concerns $S^*$, too.
Using $\widehat{\#}:S^*\to S$ one can define so called Poisson tensor $\Pi\in \Gamma^\infty(\bigwedge^2 S^*)$ by
\begin{equation*} 
\Pi_p(\varphi_p,\psi_p):=\langle \widehat{\#}(p)(\varphi_p),\psi_p\rangle
=-\langle \widehat{\#}(p)(\psi_p),\varphi_p\rangle.
\end{equation*}
Unlike the finite-dimensional case, 
this geometrical object is, in our opinion, 
less useful for infinite-dimensional manifolds.  
See however the notion of Schouten bracket  for sections of the suitably defined bundle $\oplus_k\bigwedge ^k E\to P$ that  was proposed for  arbitrary vector bundles $E\to P$ in \cite{CP12}. 

Now let us introduce the notion of a morphism $\Phi:P_1\to P_2$ between two \Banach Poisson manifolds
$(P_1, \mathcal{P}^\infty(P_1,\mathbb{R}),\{\cdot,\cdot\}_1,\#_1)$   and 
$(P_2, \mathcal{P}^\infty(P_2,\mathbb{R}),\{\cdot,\cdot\}_2,\#_2)$. 
We use the two definitions of a \emph{Poisson map}, 
namely 
$\Phi^*(\mathcal{P}^\infty(P_2,\mathbb{R}))\subseteq \mathcal{P}^\infty(P_1,\mathbb{R})$ 
and one of the conditions 
\begin{equation}\label{eq2.3}
\{f\circ \Phi, g\circ\Phi\}_1
=\{f, g\}_2\circ\Phi
\text{ for all }f,g\in \mathcal{P}^\infty(P_2,\mathbb{R})
\end{equation}
and 
\begin{equation}\label{eq2.4}
(T_{p_1}\Phi) ((\#_1(f\circ \Phi))(p_1))
=(\#_2f)(\Phi(p_1))
\text{ for all }p_1\in P_1\text{ and }f\in \mathcal{P}^\infty(P_2,\mathbb{R}).
\end{equation}
For arbitrary $g\in \mathcal{P}^\infty(P_2,\mathbb{R})$, 
applying $\de g(\Phi(p_1))\in T_{\Phi(p_1)}^*P_2$ to both sides of \eqref{eq2.4}, 
we find that \eqref{eq2.4} is equivalent to 
\begin{equation}\label{eq2.5}
(\#_1(f\circ\Phi))(g\circ\Phi)=(\#_2f)(g)\circ\Phi
\end{equation}
where $f\in \mathcal{P}^\infty(P_2,\mathbb{R})$. 
Hence, assuming that $g\in \mathcal{P}^\infty(P_2,\mathbb{R})\subseteq C^\infty(P_2,\R)$, we obtain~\eqref{eq2.3}.  
Consequently, the condition~\eqref{eq2.3} is less restrictive than~\eqref{eq2.4}, and in the case when $\mathcal{P}^\infty(P_k,\mathbb{R})=C^\infty(P_k,\mathbb{R})$ for $k=1,2$ the conditions \eqref{eq2.3} and \eqref{eq2.4} are equivalent. 
In the following we will use the less restrictive condition \eqref{eq2.3} as the definition of a Poisson map. 

Let us note that the mapping 
$$\mathcal{P}^\infty(P,\mathbb{R})\times C^\infty(P,\mathbb{R})\to C^\infty(P,\mathbb{R}),\quad 
(f,g)\mapsto (\# f)(g)$$
is an action of the Lie algebra $(\mathcal{P}^\infty(P,\mathbb{R}),  \{\cdot,\cdot\})$ by derivations of the associative algebra $C^\infty(P,\mathbb{R})$. 
The definition of a Poisson map $\Phi\colon P_1\to P_2$ in the sense of~\eqref{eq2.4} ensures that the pull-back mapping 
$\Phi^*\colon C^\infty(P_2,\mathbb{R})\to C^\infty(P_1,\mathbb{R})$, 
$g\mapsto g\circ\Phi$, gives a morphism between the corresponding actions of 
$\mathcal{P}^\infty(P_2,\mathbb{R})$ and 
$\mathcal{P}^\infty(P_1,\mathbb{R})$ 
on $C^\infty(P_2,\mathbb{R})$ and $C^\infty(P_1,\mathbb{R})$, respectively.  

Two important subclasses of the category of \Banach Poisson manifolds in the above sense will be discussed in the next two subsections.

 \subsection{Weakly symplectic \Banach manifolds}
\label{Subsect2.2}

Let $P$  be a \Banach manifold in the sense of the definition presented in \cite{Bou, Lang}  with a fixed  weakly symplectic form $\omega\in \Gamma^\infty\bigwedge ^2 T^*P$ on it. 
By definition, see e.g.,\cite{ChM, OR03}, $\omega$ is a weakly symplectic form if it is closed and non-singular, i.e., $\de\omega=0$ and the identity-covering bundle map $\hat{\flat}:TP\to T^*P$ defined by 
\begin{equation*} 
T_pP\ni \xi_p\to \hat{\flat}_p(\xi_p):=\omega_p(\xi_p,\cdot)\in T^*_pP
\end{equation*}
for $p\in P$, satisfies $\Ker\hat{\flat}:=\{\xi_p\in T_pP:\ \hat{\flat}(\xi_p)=0\}=\{0\}$. 
Note here that one does not assume that $\hat{\flat}_p(T_pP)$ is a closed subspace of $T_pP$.

\begin{defn}\label{def:2.3}
\begin{enumerate}[{\rm(i)}]
\item A \Banach manifold $(P,\omega)$ endowed with a weakly symplectic form $\omega$ is called a \emph{weakly symplectic manifold}.
\item If $\hat{\flat}_p(T_pP)=T^*_pP$ for each $p\in P$ then $(P,\omega)$ is called \emph{strongly symplectic manifold}.
\end{enumerate}
\end{defn}

We now discuss when a weakly symplectic manifold $(P,\omega)$ defines a Poisson structure on $P$ in the sense of Definition \ref{def21}. 

In this case the role of $\mathcal{P}^\infty(P,\mathbb{R})$ is played by  $\mathcal{P}^\infty_\omega(P,\mathbb{R}):=\{f\in C^\infty(P,\mathbb{R}):\ \de f\in \Gamma^\infty \hat{\flat}(TP)\}$. 
Since  
$\hat{\flat}:TP\to T^*P$ is an identity-covering injective bundle morphism,  one defines the map $\#:\mathcal{P}^\infty(P,\mathbb{R})\to TP$ by the equality 
\begin{equation}
\label{omega1} \omega (\#f,\cdot)=\de f
\end{equation}
and the Poisson bracket $\{f,g\}_\omega$ of $f,g\in \mathcal{P}^\infty(P,\mathbb{R})$ by 
\begin{equation*} 
\{f,g\}_\omega:=\omega(\#f,\#g)=-(\#f)(g)=(\#g)(f).
\end{equation*}
The Jacobi identity for $\{\cdot,\cdot\}_\omega$ follows from 
\begin{equation*} 
0= \de\omega(\#f, \#g,\#h)=3(\{f,\{g,h\}\}+\{h,\{f,g\}\}+\{g,\{h,f\}\}).
\end{equation*}
One obtains by \eqref{omega1} that $\mathcal{P}^\infty(P,\mathbb{R})$ separates the vectors of the tangent bundle $TP$ if and only if  $S_\omega^\perp=\{0\}$, where $S_\omega^\perp$ is defined by 
$$S_\omega^\perp:=\{\xi_p\in T_pP: \omega_p(\#f,\xi_p)=0 
\text{ for all }f\in \mathcal{P}^\infty(P,\mathbb{R})\},$$ 
i.e., this is the symplectic orthogonal of the characteristic distribution $S_\omega$.

One can summarize the above discussion as follows: 

\begin{prop}\label{wP} 
If $(P,\omega)$ is a weakly symplectic manifold with $S_\omega^\perp\equiv 0$, 
then the triple 
$$(\mathcal{P}^\infty(P,\mathbb{R}), \{\cdot,\cdot\}_\omega,\#_\omega)$$ 
defines a Poisson structure on $P$ in the sense of Definition~\ref{def21}.
\end{prop}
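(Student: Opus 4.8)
The plan is to verify the two defining conditions of Definition~\ref{def21} for the triple $(\mathcal{P}^\infty_\omega(P,\mathbb{R}),\{\cdot,\cdot\}_\omega,\#_\omega)$, after first checking that this data is well defined. The preliminary point is that $\#_\omega f$ is a genuine smooth vector field: since $\hat\flat\colon TP\to T^*P$ is an injective bundle morphism and, by the very definition of $\mathcal{P}^\infty_\omega(P,\mathbb{R})$, one has $\de f\in\Gamma^\infty\hat\flat(TP)$, there is a unique section $\#_\omega f$ of $TP$ with $\hat\flat(\#_\omega f)=\de f$; its smoothness follows because locally $\hat\flat$ is a smooth injective linear map with closed range onto which $\de f$ lands, so $\#_\omega f=\hat\flat^{-1}\circ\de f$ is smooth on that range. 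The $\mathbb{R}$-linearity of $\#_\omega$ is then immediate from that of $\de(\cdot)$ and the linearity of $\hat\flat^{-1}$ on its image.

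Next I would check that $\mathcal{P}^\infty_\omega(P,\mathbb{R})$ is an associative subalgebra of $C^\infty(P,\mathbb{R})$ closed under $\{\cdot,\cdot\}_\omega$. For the product: if $\de f=\hat\flat(\#_\omega f)$ and $\de g=\hat\flat(\#_\omega g)$, then $\de(fg)=f\,\de g+g\,\de f=\hat\flat(f\cdot\#_\omega g+g\cdot\#_\omega f)$, so $fg\in\mathcal{P}^\infty_\omega(P,\mathbb{R})$ and in fact $\#_\omega(fg)=f\cdot\#_\omega g+g\cdot\#_\omega f$. For the bracket: $\{f,g\}_\omega=\omega(\#_\omega f,\#_\omega g)=(\#_\omega g)(f)$, and one must show its differential lies in $\hat\flat(TP)$; this is where the identity $\mathcal{L}_{\#_\omega f}\omega=\de\iota_{\#_\omega f}\omega+\iota_{\#_\omega f}\de\omega=\de\de f+0=0$ (Cartan's formula, using $\de\omega=0$) is used, giving $\de\{f,g\}_\omega=\de(\iota_{\#_\omega g}\iota_{\#_\omega f}\omega)=\hat\flat([\#_\omega f,\#_\omega g])$, so $\{f,g\}_\omega\in\mathcal{P}^\infty_\omega(P,\mathbb{R})$ with $\#_\omega\{f,g\}_\omega=[\#_\omega f,\#_\omega g]$. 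The Jacobi identity for $\{\cdot,\cdot\}_\omega$ follows from the displayed identity $0=\de\omega(\#_\omega f,\#_\omega g,\#_\omega h)=3\big(\{f,\{g,h\}_\omega\}_\omega+\{h,\{f,g\}_\omega\}_\omega+\{g,\{h,f\}_\omega\}_\omega\big)$, so $(\mathcal{P}^\infty_\omega(P,\mathbb{R}),\{\cdot,\cdot\}_\omega)$ is a Lie algebra.

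It remains to verify Definition~\ref{def21}(\ref{def21_item1}) and (\ref{def21_item2}). Condition (\ref{def21_item1}) is a matter of sign bookkeeping: $(\#_\omega f)(g)=\de g(\#_\omega f)=\omega(\#_\omega g,\#_\omega f)=-\omega(\#_\omega f,\#_\omega g)$; to match $\{f,g\}_\omega=\omega(\#_\omega f,\#_\omega g)$ one uses the convention $\{f,g\}_\omega:=\omega(\#_\omega f,\#_\omega g)=-(\#_\omega f)(g)=(\#_\omega g)(f)$ exactly as recorded in the paragraph preceding the statement, so \eqref{cond1} holds up to the chosen sign normalization (equivalently one replaces $\#$ by $-\#$; I would just adopt the paper's convention and note the relation $(\#_\omega f)(g)=\{g,f\}_\omega=-\{f,g\}_\omega$). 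Condition (\ref{def21_item2})—that $\mathcal{P}^\infty_\omega(P,\mathbb{R})$ separates vector fields—is where the hypothesis $S_\omega^\perp\equiv 0$ enters, and this is the main obstacle to get right: I must show that if $\xi\in\Gamma^\infty TP$ satisfies $\xi(f)=0$ for all $f\in\mathcal{P}^\infty_\omega(P,\mathbb{R})$, then $\xi\equiv 0$. Pointwise, $\xi(f)(p)=\de f(p)(\xi_p)=\omega_p(\#_\omega f(p),\xi_p)$, so the hypothesis says $\xi_p\in S_{\omega,p}^\perp$ for every $p$; by assumption $S_\omega^\perp=\{0\}$, hence $\xi_p=0$ for all $p$, i.e.\ $\xi\equiv 0$. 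The one subtlety worth addressing carefully is that the separation in Definition~\ref{def21}(\ref{def21_item2}) is phrased for vector fields, not for individual tangent vectors, so I should confirm that $S_\omega^\perp\equiv 0$ as a condition on the sub-distribution indeed delivers the pointwise vanishing used here—which it does, since $S_\omega^\perp$ is defined fiberwise as $S_{\omega,p}^\perp=\{\xi_p\in T_pP:\omega_p(\#_\omega f(p),\xi_p)=0\ \text{for all } f\}$ and the hypothesis $S_\omega^\perp\equiv 0$ means each such fiber is trivial. Assembling the preliminary well-definedness, the Lie algebra structure, \eqref{cond1}, and the separation property completes the verification that the triple is a Poisson structure in the sense of Definition~\ref{def21}.
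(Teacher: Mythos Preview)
Your approach is essentially the one the paper sketches: the proposition is stated there as a summary of the preceding discussion, with the Jacobi identity coming from $\de\omega=0$ and the separation property coming from $S_\omega^\perp=\{0\}$, and you simply fill in the details. Your handling of the sign convention is also appropriate: the paper itself writes $\{f,g\}_\omega=-(\#f)(g)$, so condition~\eqref{cond1} is satisfied only up to the sign normalization you note.

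There is one genuine error in your preliminaries. You argue that $\#_\omega f$ is smooth ``because locally $\hat\flat$ is a smooth injective linear map with closed range,'' but the paper explicitly warns that in the weakly symplectic setting ``one does not assume that $\hat\flat_p(T_pP)$ is a closed subspace.'' So you cannot invoke the open mapping theorem to get a bounded inverse, and your smoothness argument fails as written. The correct reading of the paper's definition $\mathcal{P}^\infty_\omega(P,\mathbb{R}):=\{f\in C^\infty(P,\mathbb{R}):\de f\in\Gamma^\infty\hat\flat(TP)\}$ is that $\de f$ lies in the image $\hat\flat(\Gamma^\infty TP)$, i.e.\ there \emph{exists} a smooth vector field $\xi$ with $\hat\flat(\xi)=\de f$; uniqueness then follows from injectivity of $\hat\flat$, and smoothness of $\#_\omega f:=\xi$ is part of the definition rather than something to be proved. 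Once you replace your closed-range argument by this observation, the rest of your verification (closure under products and brackets via Cartan's formula, Jacobi, and the pointwise separation argument from $S_\omega^\perp\equiv0$) goes through.
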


\begin{rem} 
\begin{enumerate}[{\rm(i)}] 
\item It follows from the non-singularity of $\omega$ that if the fibres $S_p\subseteq T_pP$ of the characteristic distribution are dense in $T_pP$, for $p\in P$, then $S_\omega^\perp=\{0\}$ .
\item If $(P,\omega)$ is a strongly symplectic manifold then $(C^\infty(P,\mathbb{R}), \{\cdot,\cdot\}_\omega,\#_\omega)$ defines a Poisson structure in sense of Definition \ref{def21}.
\end{enumerate}
\end{rem}

\begin{example} 
	Let $P:=\mathfrak{b}\times \mathfrak{b}^*$, where $\mathfrak{b}$ is a Banach space that need not be reflexive. 
	Since in this case the bundles $T(\mathfrak{b}\times \mathfrak{b}^*)$ and  $T^*(\mathfrak{b}\times \mathfrak{b}^*)$ are trivial one can define the differential $2$-form $\omega_0$ on $\mathfrak{b}\times \mathfrak{b}^*$ in the following way
\begin{equation*} 
\omega_0(b,\varphi)((b,\varphi, \dot{b}_1,\dot{\varphi}_1),(b,\varphi, \dot{b}_2,\dot{\varphi}_2)):=\langle \dot{\varphi}_1,\dot{b}_2\rangle-\langle \dot{\varphi}_2,\dot{b}_1\rangle
\end{equation*}
where $(b,\varphi, \dot{b}_1,\dot{\varphi}_1),(b,\varphi, \dot{b}_2,\dot{\varphi}_2)\in T_{(b,\varphi)}(\mathfrak{b}\times \mathfrak{b}^*)\cong \{(b,\varphi)\}\times\mathfrak{b}\times \mathfrak{b}^*$ are the tangent vectors to $\mathfrak{b}\times \mathfrak{b}^*$ at $(b,\varphi)\in \mathfrak{b}\times \mathfrak{b}^*$. Since $\omega_0$ is constant on $\mathfrak{b}\times \mathfrak{b}^*$ and the Banach spaces $\mathfrak{b}$ and $\mathfrak{b}^*$  separate elements of each other, i.e. $\langle\varphi, \cdot\rangle=0$ if and only if $\varphi=0$ and $\langle\cdot, b\rangle=0$  if and only if  $b=0$, so, the differential $2$-form $\omega_0$ is closed $d\omega_0=0$ and non-singular. 
Thus $(\mathfrak{b}\times \mathfrak{b}^*, \omega_0)$ is a weak symplectic manifold.

Let $\xi(b,\varphi)=(b,\varphi, \xi^{\mathfrak{b}}(b,\varphi),  \xi^{\mathfrak{b}^*}(b,\varphi)))\in \{(b,\varphi)\}\times\mathfrak{b}\times \mathfrak{b}^*$ be a vector tangent to $\mathfrak{b}\times \mathfrak{b}^*$ at $(b,\varphi)\in \mathfrak{b}\times \mathfrak{b}^*$. 
The equality \eqref{omega1} taken for $\omega_0$ gives
\begin{equation*} \langle \xi^{\mathfrak{b}^*}(b,\varphi), \dot{b}_2\rangle-\langle\dot{\varphi}_2,\xi^{\mathfrak{b}}(b,\varphi)
\rangle=\langle\frac{\partial f}{\partial b}(b,\varphi), \dot{b}_2\rangle+\langle\frac{\partial f}{\partial \varphi}(b,\varphi), \dot{\varphi}_2\rangle
\end{equation*}
for any $(b,\varphi, \dot{b}_2,\dot{\varphi}_2)\in \{(b,\varphi)\}\times\mathfrak{b}\times \mathfrak{b}^*$. 
Thus one obtains for $f\in \mathcal{P}^\infty_{\omega_0}(\mathfrak{b}\times \mathfrak{b}^*, \mathbb{R})$
\begin{equation*} -\frac{\partial f}{\partial \varphi}(b,\varphi)=\xi_f^{\mathfrak{b}}(b,\varphi)\in \mathfrak{b}\subset \mathfrak{b}^{**} 
\quad \text{and} \quad 
\frac{\partial f}{\partial b}(b,\varphi)=\xi_f^{\mathfrak{b}^*}(b,\varphi)\in \mathfrak{b}^*.
\end{equation*}
Therefore, the linear morphism $\#_{\omega_0}:\mathcal{P}^\infty_{\omega_0}(\mathfrak{b}\times \mathfrak{b}^*, \mathbb{R})\to \Gamma^\infty T(\mathfrak{b}\times \mathfrak{b}^*)$ can be written as follows:
\begin{equation*} \#_{\omega_0}f=\langle\frac{\partial}{ \partial b}\ \cdot\  ,\frac{\partial f}{\partial \varphi}\rangle-\langle\frac{\partial f}{ \partial b},\frac{\partial }{\partial \varphi}\ \cdot\ \rangle.
\end{equation*}
Noting that the functions $f_{(b_0,\varphi_0)}(b,\varphi)$ where $(b_0,\varphi_0)\in \mathfrak{b}\times \mathfrak{b}^*$, defined by 
$$f_{(b_0,\varphi_0)}(b,\varphi):=\langle b,\varphi_0\rangle+\langle b_0,\varphi\rangle,$$
 belong to $\mathcal{P}^\infty_{\omega_0}(\mathfrak{b}\times \mathfrak{b}^*, \mathbb{R})$ we find that  $S_{\omega_0}=T(\mathfrak{b}\times \mathfrak{b}^*)$. So, $\mathcal{P}^\infty_{\omega_0}(\mathfrak{b}\times \mathfrak{b}^*, \mathbb{R})$
separates tangent vectors of the tangent bundle $T(\mathfrak{b}\times \mathfrak{b}^*)$. 
Thus, summarizing, we see that one has the Poisson structure on $(\mathfrak{b}\times \mathfrak{b}^*,\omega_0)$ in the sense of  Definition \ref{def21}. 
Let us also mention that the sub-bundle $T^*\mathfrak{b}\times T_*(\mathfrak{b}^*)\subseteq T^*(\mathfrak{b})\times T^*\mathfrak{b}^*\cong T^*(\mathfrak{b}\times \mathfrak{b}^*)$ is the co-characteristic distribution $S^*_{\omega_0}$ for the above Poisson structure, 
where we have used the sub-bundle $T_*(\mathfrak{b}^*)=\mathfrak{b}^*\times\mathfrak{b}\to \mathfrak{b}^*$ 
of $T^*(\mathfrak{b}^*)$.
\end{example}

 \subsection{Lie-Poisson spaces}
\label{Subsect2.3}

In this subsection we will discuss real  Banach spaces $\mathfrak{b}$ whose duals $\mathfrak{b}^*$ are Banach-Lie algebras $(\mathfrak{b}^*,[\cdot,\cdot])$  
satisfying the condition 
\begin{equation}
\label{ad} 
\ad_x^*\mathfrak{b}\subset\mathfrak{b}
\end{equation} 
for arbitrary $x\in \mathfrak{b}^*$. 
Here we regard $\mathfrak{b}$ as a 
closed linear subspace of $\mathfrak{b}^{**}$ via the canonical linear isometric embedding $\mathfrak{b}\hookrightarrow \mathfrak{b}^{**}$. 
The above properties of $\mathfrak{b}$ allow one to define the map $\#:C^\infty(\mathfrak{b},\mathbb{R})\to \Gamma^\infty T\mathfrak{b}$ by 
\begin{equation}
\label{ad2} 
(\#f)(b):=-\ad^*_{\de f(p)}b,
\end{equation}
where $b\in \mathfrak{b}$, and the Poisson bracket 
$\{f,g\}$ of $f,g\in C^\infty(\mathfrak{b}, \mathbb{R})$ by 
\begin{equation}\label{ad3} 
\{f,g\}(b)=\langle b,[\de f(b),\de g(b)]\rangle=-((\#f)(g))(b)=((\#g)(f))(b).
\end{equation}
We note here that $\de f(b), \de g(b)\in \mathfrak{b}^*$.

The condition $\langle \xi_b,\de g(b)\rangle=0$ on a tangent vector $\xi_b\in T_b\mathfrak{b}$ fulfiled for any $g\in C^\infty(\mathfrak{b}, \mathbb{R})$ implies $\xi_b=0$. In order to see this it is enough to check it for the linear functions $g_x(b)=\langle b,x\rangle$, where $x\in \mathfrak{b}^*$, and notice that $\mathfrak{b}^*$ separates elements of $\mathfrak{b}$.

Hence we see that  $(\mathfrak{b}, C^\infty(\mathfrak{b}, \mathbb{R}), \{\cdot,\cdot\},\#)$, where $\#$ and $\{\cdot,\cdot\}$ are defined in 
 \eqref{ad2} and \eqref{ad3}, respectively, satisfies the condition of Definition \ref{def21}. 
 Therefore one has the structure of a \Banach Poisson manifold on $\mathfrak{b}$.

The Banach spaces endowed with a structure of this type are called 
\emph{\Banach Lie-Poisson spaces}. 
We refer to \cite{OR03} for the general theory of \Banach Lie-Poisson spaces.
The category of \Banach Lie-Poisson spaces, morphisms of which are continuous linear Poisson maps, is a subcategory of the category of \Banach Poisson manifolds in sense of Definition \ref{def21}.

One can reformulate the definition of a \Banach Lie-Poisson space  starting  from a Banach-Lie algebra $(\mathfrak{g},[\cdot,\cdot])$ which has a predual Banach space $\mathfrak{g}_*\subset \mathfrak{g}^{*}$ such that $\ad^*_x\mathfrak{g}_*\subseteq \mathfrak{g}_*$ for all $x\in \mathfrak{g}$. 
That is, one puts $\mathfrak{b}=\mathfrak{g}_*$ and $\mathfrak{b}^*=\mathfrak{g}$, and one defines $\#:C^\infty(\mathfrak{g}_*, \mathbb{R})\to \Gamma^\infty T\mathfrak{g}_*$ and the Poisson bracket on $C^\infty(\mathfrak{g}_*, \mathbb{R})$ by \eqref{ad2} and \eqref{ad3}, respectively.

A Banach-Lie algebra $\mathfrak{g}$ may have several non-isomorphic predual Banach spaces $\mathfrak{g}_{*1}\not\cong\mathfrak{g}_{*2}$ preserved by coadjoint action, see e.g. \cite[Cor. 2.7.8]{BL04}. 
So, the category of \Banach Lie-Poisson spaces is not a subcategory of Banach-Lie algebras possessing preduals, see \cite[Th. 4.6]{OR03}.  Here we present  the following statement from  \cite{OR03} which will be useful in the following. 

\begin{prop}\label{prop:2.6}
Let $(\mathfrak{b}_1,\{\cdot,\cdot\})$ be a \Banach Lie-Poisson space and let $\pi:\mathfrak{b}_1\to\mathfrak{b}_2$ be a continuous linear surjective map onto the Banach space $\mathfrak{b}_2$.  
Then $\mathfrak{b}_2$ carries the
\Banach Lie-Poisson structure coinduced by $\pi$ if and only if 
$\pi^*(\mathfrak{b}_2^*)\subseteq \mathfrak{b}_1^*$ is closed under
the Lie bracket $[\cdot,\cdot]_1$ of $\mathfrak{b}_1^*$.  The map $\pi^*:\mathfrak{b}_2^*\to \mathfrak{b}_1^*$ is a Banach-Lie algebra morphism
whose dual $\pi^{**}:\mathfrak{b}_1^{**}\to \mathfrak{b}_2^{**}$ maps $\mathfrak{b}_1$ into $\mathfrak{b}_2$.
\end{prop}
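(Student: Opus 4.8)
The plan is to reduce everything to linear functionals: for a \Banach Lie-Poisson space the Poisson algebra is all of $C^\infty$, and by Proposition~\ref{prop:1}\eqref{prop:1_item3} the bracket is determined by the values $\{\ell_x,\ell_y\}(b)=\langle b,[x,y]\rangle$ on the linear functions $\ell_x(b):=\langle b,x\rangle$; here ``coinduced structure'' means the (necessarily unique, since $\pi$ is onto) \Banach Lie-Poisson structure on $\mathfrak{b}_2$ making $\pi$ a Poisson morphism. First I would set up the functional analysis of $\pi^*$: since $\pi$ is a continuous linear surjection of Banach spaces, the open mapping theorem provides $c>0$ with $\pi(B_{\mathfrak{b}_1})\supseteq c\,B_{\mathfrak{b}_2}$, whence $\|\pi^*\varphi\|\geq c\,\|\varphi\|$ for all $\varphi\in\mathfrak{b}_2^*$; thus $\pi^*$ is injective and is a topological isomorphism onto its range $\pi^*(\mathfrak{b}_2^*)=(\Ker\pi)^{\perp}$, which is a closed (hence Banach) subspace of $\mathfrak{b}_1^*$. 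I would also record the standard identity $\pi^{**}|_{\mathfrak{b}_1}=\pi$ under the canonical embeddings $\mathfrak{b}_1\hookrightarrow\mathfrak{b}_1^{**}$, $\mathfrak{b}_2\hookrightarrow\mathfrak{b}_2^{**}$, which gives $\pi^{**}(\mathfrak{b}_1)=\pi(\mathfrak{b}_1)=\mathfrak{b}_2\subseteq\mathfrak{b}_2$ at once, i.e.\ the last assertion of the proposition.

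For the implication ``$\Rightarrow$'', suppose $\mathfrak{b}_2$ carries a coinduced \Banach Lie-Poisson structure with bracket $[\cdot,\cdot]_2$ on $\mathfrak{b}_2^*$. Using $\ell_x\circ\pi=\ell_{\pi^*x}$ and applying the Poisson-morphism identity $\{\ell_x\circ\pi,\ell_y\circ\pi\}_1=\{\ell_x,\ell_y\}_2\circ\pi$ to $x,y\in\mathfrak{b}_2^*$ yields $\langle b,[\pi^*x,\pi^*y]_1\rangle=\langle b,\pi^*[x,y]_2\rangle$ for all $b\in\mathfrak{b}_1$, hence $[\pi^*x,\pi^*y]_1=\pi^*[x,y]_2\in\pi^*(\mathfrak{b}_2^*)$. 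So $\pi^*(\mathfrak{b}_2^*)$ is closed under $[\cdot,\cdot]_1$ and $\pi^*$ is a Banach-Lie algebra morphism.

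For ``$\Leftarrow$'', assume $\pi^*(\mathfrak{b}_2^*)$ is a Lie subalgebra of $(\mathfrak{b}_1^*,[\cdot,\cdot]_1)$ and transport the bracket, $[x,y]_2:=(\pi^*)^{-1}\bigl([\pi^*x,\pi^*y]_1\bigr)$: this is well defined since the argument lies in $\pi^*(\mathfrak{b}_2^*)$ and $\pi^*$ is injective; it is bilinear, skew-symmetric and satisfies the Jacobi identity (transported from $\mathfrak{b}_1^*$); and it is continuous because $\pi^*$ is a topological isomorphism onto a Banach subspace. Thus $(\mathfrak{b}_2^*,[\cdot,\cdot]_2)$ is a Banach-Lie algebra with $\pi^*$ a morphism. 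It remains to check $\ad^*_x\mathfrak{b}_2\subseteq\mathfrak{b}_2$ for $x\in\mathfrak{b}_2^*$: for $b\in\mathfrak{b}_1$ and $y\in\mathfrak{b}_2^*$, the relation $\pi^*[x,y]_2=[\pi^*x,\pi^*y]_1$ gives $\langle\ad^*_x(\pi b),y\rangle=\langle b,[\pi^*x,\pi^*y]_1\rangle=\langle\ad^*_{\pi^*x}b,\pi^*y\rangle=\langle\pi(\ad^*_{\pi^*x}b),y\rangle$, and since $\mathfrak{b}_1$ is \Banach Lie-Poisson we have $\ad^*_{\pi^*x}b\in\mathfrak{b}_1$, so $\ad^*_x(\pi b)=\pi(\ad^*_{\pi^*x}b)\in\mathfrak{b}_2$; as $\pi$ is onto, this proves the invariance. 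Then $\mathfrak{b}_2$ is \Banach Lie-Poisson, and the chain-rule identity $\de(f\circ\pi)=\pi^*\circ(\de f\circ\pi)$ together with Proposition~\ref{prop:1}\eqref{prop:1_item3} shows $\pi$ is a Poisson morphism for this structure, which is the coinduced one by the uniqueness noted above.

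I expect the only real obstacle to be the topological bookkeeping around $\pi^*$: one must use surjectivity of $\pi$ to know $\pi^*$ is bounded below with norm-closed range, so that the transported bracket on $\mathfrak{b}_2^*$ is genuinely continuous and $(\mathfrak{b}_2^*,[\cdot,\cdot]_2)$ is a bona fide Banach-Lie algebra, not merely an abstract one; once that is in place, every remaining step is a routine computation with the duality pairing.
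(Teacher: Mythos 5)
Your proof is correct. Note that the paper does not prove this proposition at all---it is quoted from \cite{OR03}---and your argument (reducing to the linear functionals $\ell_x$, using the open mapping theorem to get $\pi^*$ bounded below with closed range so the transported bracket $[x,y]_2=(\pi^*)^{-1}[\pi^*x,\pi^*y]_1$ is a genuine Banach--Lie bracket, and verifying $\ad^*_x(\pi b)=\pi(\ad^*_{\pi^*x}b)$ to get $\ad^*_x\mathfrak{b}_2\subseteq\mathfrak{b}_2$) is essentially the standard proof given in that reference.
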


Let us also mention that symplectic leaves of \Banach Lie-Poisson spaces are weakly symplectic manifolds 
in sense of Subsection~\ref{Subsect2.2}, see  \cite[Thms.  7.3, 7.4, and 7.5]{OR03} and \cite[Cor. 2.10]{BR05}. 
We now explain for later use that these symplectic manifolds have Poisson structures in the sense of Definition~\ref{def21}. 

\begin{rem}[Kirillov-Kostant-Souriau construction]\label{Kir}
\normalfont
Let $\bg$ be a \Banach Lie-Poisson space 
and assume that the \Banach Lie algebra $\gg:=\bg^*$ is integrable, that is, 
there exists a connected \Banach Lie group $G$ whose Lie algebra is~$\gg$. 
Since $G$ is connected and $\bg$ is a \Banach Lie-Poisson space, 
it is straightforward to prove, using \eqref{ad}, that 
$\Ad^*(G)\bg\subseteq\bg$. 
We also assume that $\rho_0\in\bg$ has the property that its coadjoint isotropy group 
$G_{\rho_0}:=\{g\in G:\Ad_G^*(g)\rho_0=\rho_0\}$ is a \Banach Lie subgroup of $G$, 
that is, $G_{\rho_0}$ is a \Banach Lie group with respect to its topology induced from $G$, 
and moreover its Lie algebra $\gg_{\rho_0}:=\{X\in\gg: \ad_{\gg}^*(X)\rho_0=0\}$ is a closed subspace of $\gg$ 
for which there exists a closed linear subspace $\Y\subseteq\g$ satisfying the direct sum decomposition 
$\gg_{\rho_0}\dotplus\Y=\gg$. 
Then the coadjoint orbit 
$$\Oc_{\rho_0}:=\Ad_G^*(G)\rho_0\subseteq\bg$$ 
has the structure of a \Banach manifold endowed with a $G$-invariant weakly symplectic structure  
$(\Oc_{\rho_0},\omega)$ for which the mapping 
$G/G_{\rho_0}\to\Oc_{\rho_0}$, $gG_{\rho_0}\mapsto\Ad_G^*(g)\rho_0$ is a diffeomorphism. 
(See for instance \cite[Th. 7.3]{OR03} and also \cite[Ex. 4.31]{Be06}.) 

Now, for arbitrary $X\in\gg$, define $f^X\colon\Oc_\rho\to\R$, $f^X(\rho):=\langle\rho,X\rangle$, 
where we recall that $\Oc_{\rho_0}\subseteq\bg\subseteq\gg^*$. 
Then it is easily seen that 
$$\#(f^X):=\frac{\de}{\de t}\big\vert_{t=0}\Ad_G^*(\exp_G(tX))\vert_{\Oc_{\rho_0}}$$ 
is a vector field on $\Oc_{\rho_0}$ satisfying \eqref{omega1} for $f=f^X$. 
This shows that $$\{(\#f)_\rho:f\in\P_\omega^\infty(\Oc_{\rho_0},\R)\}=T_\rho(\Oc_{\rho_0})$$ 
for every $\rho\in\Oc_{\rho_0}$. 
Then clearly $S_\omega^\perp=\{0\}$, hence Proposition~\ref{wP} applies and gives us a Poisson structure on~$\Oc_{\rho_0}$ (in sense of Definition \ref{def21}) for which the inclusion map $\Oc_{\rho_0}\hookrightarrow\bg$ is a Poisson map.  
\end{rem}

Ending this subsection, we describe a subcategory of the category  of \Banach Lie-Poisson  spaces 
that is naturally related to the category of $W^*$-algebras (von Neumann algebras). 
A $W^*$-algebra $\M$ is by definition a $C^*$-algebra that has a predual Banach space $\M_*$, i.e. $\M=(\M_*)^*$. 
The predual $\M_*\subset\M^*$ is uniquely determined by the algebraic structure  of $\M$ 
as the space of normal functionals on $\M$, see for instance \cite{LP2}. 
The $W^*$-algebra  $\M$ is an associative Banach algebra, hence it is a complex Banach-Lie algebra $(\M,[\cdot,\cdot]), $ where $[x,y]:=xy-yx$ for $x.y\in \M$, and it also satisfies $\ad^*_x\M_*\subseteq \M_*$. So, $(\M_*,C^\infty(\M_*, \mathbb{C}), \{\cdot,\cdot\}_{\LP},\#)$ with $\#$ and $\{\cdot,\cdot\}_{\LP}$ defined in \eqref{ad2} and \eqref{ad3} 
 defines a complex \Banach Lie-Poisson structure on $\M_*$.

Let us recall here that the adjoint $\M_*\ni\varphi\mapsto \varphi^*\in \M_*$ of $\varphi$ is defined by the equality 
\begin{equation*}
\langle \varphi^*,x\rangle:=\ol{\langle \varphi,x^*\rangle}
\end{equation*}
satisfied for arbitrary $x\in \M$. 
The hermitian  part $\M_{*}^{\rm h}$ consists of the self-adjoint elements of $\M_*$, 
that is, $\varphi\in\M_{*}$ satisfying $\varphi=\varphi^*$.
 Let us also  note that the anti-hermitian part $\M^a:=\{x\in \M:  x+x^*=0\}$ is a real Banach-Lie algebra with respect to the commutator  $[\cdot,\cdot]$ defined above. The real Banach space $\M_{*}^{\rm h}$ is the predual of the real Banach-Lie algebra $\M^a$ and $i\de f(\varphi),i\de g(\varphi)\in \M^a$ for all $f,g\in C^\infty(\M_{*}^{\rm h}, \mathbb{R})$.

In the following we will be interested in the real \Banach Lie-Poisson structure 
on the Hermitian part $\M_{*}^{\rm h}$ of $\M_*$ which is defined by the Lie-Poisson bracket
\begin{equation} \label{bracketi}
\{f,g\}_{\LP}(\varphi):=i\langle\varphi,[i\de f(\varphi),i\de g(\varphi)]\rangle
=-i\langle\varphi,[\de f(\varphi),\de g(\varphi)]\rangle
\end{equation}
 of $f,g\in C^\infty(\M_{*}^{\rm h}, \mathbb{R})$, where $\varphi\in \M_{*}^{\rm h}$, see the definition \eqref{ad3}.

In particular case when $\M=L^\infty(\H)$ the  predual of $\M$ is $\M_*=L^1(\H)$ and the Lie-Poisson bracket \eqref{ad3} of $f,g\in C^\infty(L^1(\H),\mathbb{C})$ assumes the following form 
\begin{equation}
\label{LPbracket} 
\{f,g\}(\rho)=\Tr(\rho[\de f(\rho),\de g(\rho)])
\end{equation}
where $\rho\in L^1(\H)$. For the hermitian case the Lie-Poisson bracket \eqref{bracketi} 
	 of $f,g\in C^\infty(L^1(\H)_h,\mathbb{R})$ is defined as follows
		\begin{equation}
		\label{LPbracketi} 
		\{f,g\}_{\LP}(\rho):=i \Tr \rho[\de f(\rho),\de g(\rho)],
		\end{equation}
		where $\rho\in L^1(\H)_h$ and $df(\rho),dg(\rho)\in L^1(\H)^*_h\cong L^\infty(\H)_h$, 
		see \cite{OR03}. 
		By $ L^1(\H)_h$ and $\ L^\infty(\H)_h$ we denote the hermitian part of  $L^1(\H)$ and $\ L^\infty(\H)$, respectively.

 \subsection{The complex Lie groupoid $\G(\M)\tto \Ll(\M)$ of partially invertible elements}
\label{Subsect2.4}

 The element $x\in \M$ is called \emph{partially invertible} if  $|x|\in \M^+$ defined by the polar decomposition 
\begin{equation}
\label{polar0} x=u|x|
\end{equation}
of $x$ satisfies $|x|\in G(p\M p)$, where $p=u^*u$ is the support $\bs(|x|)=p$ of  $|x|$ and $u\in \U(\M)$. 
By $G(p\M p)$ we have denoted the group of invertible elements of $W^*$-subalgebra $p\M p\subset \M$ and by $\U(\M)$ the set of all partial isometries of $\M$.

On the set  $\G(\M)$ of the partially invertible elements one can define the groupoid structure in a natural way. 
Namely, the source and target maps for this structure are 
the right and left  support maps  $\br:\G(\M)\to \Ll(\M)$ and   $\bl:\G(\M)\to \Ll(\M)$  
defined by $\br(x):=u^*u$ and $\bl(x):=uu^*$, 
where $u\in \U(\M)$ is defined by \eqref{polar0} and $\Ll(\M)$ is the lattice of the orthogonal projections in $\M$.

The groupoid product  of $x,y\in \G(\M)$ defined on  $\G(\M)* \G(\M):=\{(x,y)\in \G(\M)\times \G(\M):\ \br(x)=\bl(y)\}$ is given as the algebraic product $xy$ in $\M$.

The inverse map $\bi:\G(\M)\to \G(\M)$ is defined as follows
\begin{equation*}
\bi(x):=|x|^{-1}u^*
\end{equation*}
and object inclusion map  $\bepsilon:\Ll(\M)\to \G(\M)$  by definition is the sets inclusion map $\Ll(\M)\hookrightarrow \G(\M)$. 
We will henceforth use the notation $x^{-1}:=\bi(x)$ for all $x\in\G(\M)$. 

We refer to \cite{OS} for checking that the maps defined above satisfy the conditions to be structural maps of the groupoid $\G(\M)\tto \Ll(\M)$ of partially  invertible elements.
We now recall from \cite{OS} the structure of complex \Banach Lie groupoid on $\G(\M)\tto \Ll(\M)$. 
Namely, for $p\in \Ll(\M)$ the subset $\Pi_p\subset\Ll(\M)$ by definition  consists of all $q\in\Ll(\M)$ for which one has the direct sum decomposition 
 \begin{equation*}
 \M=q\M\oplus (1-p)\M,
 \end{equation*}
 where we note that both summands in the right-hand side are right $W^*$-ideals of~$\M$. 
Using this splitting, one decomposes the projection $p\in \Ll(\M)$ as 
 \begin{equation}\label{rozklad} 
 p=x_p-y_p
 \end{equation}
 and in this way one obtains a bijection
\begin{equation}\label{varphi}
\Pi_p\ \ni\ q\mapsto \varphi_p(q):=(pq)^{-1}-p=y_p\ \in\ (1-p)\M p,
\end{equation}
and a local section
\begin{equation}\label{sigma}
\Pi_p\ \ni\ q\mapsto \sigma_p(q):= (pq)^{-1}=x_p\in\  \bl^{-1}(\Pi_p)
\end{equation}
of the target map $\bl:\G(\M)\to \Ll(\M)$. 
Let us mention here that\footnote{
	Proof of ``$\supseteq$'': 
	If $pq\in \G(\M)^p_q$ then there exists $x\in\M$ with $pqx=p$ and $xpq=q$. 
	Then $\M=q\M+(\1-p)\M$ because of $\1=xp+(\1-xp)$ with 
	$xp=x(pqx)=(xpq)x=qx\in q\M$ and $\1-xp\in(\1-p)\M$. 
	(This last property follows from $p(xp)=p(qx)=p$, which implies 
	$(\1-p)(\1-xp)=\1-xp$.) 
	Moreover $q\M\cap(\1-p)\M=\{0\}$ since, if $a,b\in \M$ and $qa=(\1-p)b$, 
then $qa=(xpq)a=xp(qa)=xp((\1-p)b)=0$. 

Proof of ``$\subseteq$'': 
In fact we show that $x_p=(pq)^{-1}$, that is, 
$(pq)x_p=p$ and $x_p(pq)=q$. 
One has  $pq=(x_pq)-(y_pq)$ by \eqref{rozklad}, while $pq=q+(\1-p)q$, hence $x_pq=q$. 
Also $x_p-y_p=p=p^2=(x_pp)-(y_pp)$ by \eqref{rozklad}, hence $x_p=x_pp$. 
Then $x_ppq=x_pq=q$. 
Furthermore, since $x_p\in q\M$ and $y_p\in(\1-p)\M$, one has $pqx_p=px_p=p(p+y_p)=p^2+py_p=p$.} 
$$\Pi_p=\{q\in\Ll(\M): pq\in \G(\M)^p_q\}$$
where 
$
\G(\M)^p_q:=\bl^{-1}(p)\cap \br^{-1}(q)$.

The charts $(\Pi_p, \varphi_p)$, where $p\in  \Ll(\M)$,
define a complex \Banach manifold structure on the lattice $\Ll(\M)$, as shown in \cite{OS}. 
The transition map $\varphi_{p'}\circ \varphi_p^{-1}:\varphi_{p}(\Pi_{p'}\cap\Pi_p)\to \varphi_{p'}(\Pi_{p'}\cap\Pi_p)$ between $(\Pi_p, \ \varphi_p)$ and $(\Pi_{p'}, \ \varphi_{p'})$ is the following:
\begin{equation}\label{trans} y_{p'}=(\varphi_{p'}\circ\varphi^{-1}_{p})(y)=(b+dy_p)(a+cy_p)^{-1},
\end{equation}
where $a=p'p$, $\ b=(1-p')p$, $\ c=p'(1-p)$ and $\ d=(1-p')(1-p)$.

The complex \Banach manifold structure on $\G(\M)$ is given by the  charts:
\begin{align} 
&\Omega_{p\tilde{p}}:=\bl^{-1}(\Pi_{p})\cap  \br^{-1}(\Pi_{\tilde{p}})\not=\emptyset, \nonumber \\
& \label{psippmap}\psi_{p\tilde{p}}:\Omega_{p\tilde{p}}\to (1-p)\M p\oplus p\M \tilde{p}\oplus(1-\tilde{p})\M\tilde{p},
 \end{align} 
where $( p,\tilde p)\in \Ll(\M)\times \Ll(\M)$ and \eqref{psippmap} is  defined for  $x\in \Omega_{p\tilde{p}}$ by
 \begin{equation}
 \label{psipp}\psi_{p\tilde{p}}(x)=(y_p,z_{ p \tilde p},\tilde y_{\tilde{p}}):=\left(\varphi_{p}(\bl(x)),(\sigma_{p}(\bl(x)))^{-1}x \sigma_{\tilde{p}}(\br(x)),
\varphi_{\tilde{p}}(\br(x)) \right).
\end{equation} 
Let us  note that $z_{p\tilde{p}}\in \G(\M)^p_{\tilde{p}}$ and $\G(\M)^p_{\tilde{p}}$ is an open subset of $p\M\tilde{p}$.
We also note that the chart map \eqref{psippmap} is a bijection from its domain $\Omega_{p\tilde{p}}$  onto the open subset $(1-p)\M p\oplus \G(\M)^p _{\tilde p}\oplus(1-\tilde{p})\M\tilde{p}$ of the Banach space $(1-p)\M p\oplus p\M \tilde{p}\oplus(1-\tilde{p})\M\tilde{p}$.

The transition  map 
 \begin{equation*}
(y_{p'},z_{p'\tilde p'},\tilde{y}_{\tilde p'})=(\psi_{p'\tilde{p'}}\circ\psi_{p\tilde{p}}^{-1})(y_p,z_{p\tilde{p}},\tilde{y}_{\tilde p})
\end{equation*} 
between two charts is given by the formulas 
\begin{align}\label{atlas2}
y_{p'}& =(b+dy_p)(a+cy_p)^{-1}, \nonumber\\
z_{p'\tilde p'}& =(a+cy_p)z_{p\tilde p}(\tilde{a}+\tilde{c}\tilde{y}_{\tilde p})^{-1}\\
\tilde{y}_{\tilde p'} &=(\tilde{b}+\tilde {d}\tilde{y}_{\tilde p})(\tilde{a}+\tilde{c}\tilde{y}_{\tilde p})^{-1}, \nonumber
 \end{align}
where
$\tilde a=\tilde p'\tilde p$, $\ \tilde b=(1-\tilde p')\tilde p$, $\ \tilde c=\tilde p'(1-\tilde p)$ and $\ \tilde d=(1-\tilde p')(1-\tilde p)$. 

The structural maps  of $\G(\M)\tto \Ll(\M)$ satisfy the \Banach Lie  groupoid axioms, as shown in \cite{OS}. 
The underlying topology of the complex \Banach Lie groupoid $\G(\M)\tto \Ll(\M)$ is a Hausdorff topology, \cite{OJS2}.

Using the object inclusion map $\bepsilon:\Ll(\M)\hookrightarrow \G(\M)$ as a momentum map, one defines the inner action
\begin{equation}\label{action} 
\G(\M)*\Ll(\M)\ni (x,p)\mapsto xpx^{-1}\in \Ll(\M)
\end{equation}
of $\G(\M)\tto \Ll(\M)$ on $\Ll(\M)$. 
Note here that 
$$\G(\M)* \Ll(\M):=\{(x,p)\in \G(\M)\times \Ll(\M):\ \br(x)=p\}.$$ 
The orbit $\Ll_{p_0}(\M)$ of $p_0\in \Ll(\M)$ with respect to the inner action \eqref{action} is exactly the set of all projections $p\in \Ll(\M)$  equivalent $p\sim p_0$ to $p_0$
in sense of Murray-von Neumann~\cite[Def. 2.7.15]{LP2}. 
The orbits $\Ll_{p_0}(\M)$ and 
$$\G_{p_0}(\M):=\bl^{-1}(\Ll_{p_0}(\M))\cap \br^{-1}(\Ll_{p_0}(\M))$$ 
are \Banach submanifolds of $\Ll(\M)$ and $ \G(\M)$, respectively. 

The groupoid  $\G_{p_0}(\M)\tto \Ll_{p_0}(\M)$  is therefore a transitive \Banach Lie subgroupoid of $\G(\M)\tto \Ll(\M)$. 
Note here that $\Pi_p\subseteq \Ll_{p_0}(\M)$ if and only if $p\sim p_0$.

Moreover, the \Banach Lie groupoid of partially invertible elements is a disjoint union of its transitive \Banach Lie subgroupoids $\G_{p_0}(\M)\tto \Ll_{p_0}(\M)$, $p_0\in \Ll(\M)$, which are closed-open subgroupoids with respect to the topology defined by the \Banach manifold structure of  $\G(\M)\tto \Ll(\M)$, see \cite{OS}.

 \subsection{The \Banach Lie groupoid  $\U(\M)\tto \Ll(\M)$ of partial isometries}
\label{Subsect2.5}

One has the natural involution $\mathcal{J}:\G(\M)\to \G(\M)$  
defined by 
\begin{equation}
\label{J} \mathcal{J}(x):=\bi(x)^*=\bi(x^*),
\end{equation}
 i.e. $\mathcal{J}^2=\id$.  
The fixed-point set of this involution  
is the groupoid  $\U(\M)\tto \Ll(\M)$ of partial isometries  of $\M$. Note here that $\mathcal{J}$ is an automorphism of  $\G(\M)\tto \Ll(\M)$ when this is regarded as a real \Banach Lie groupoid. 
One easily sees that  $\U(\M)\tto \Ll(\M)$ is a wide subgroupoid  of $\G(\M)\tto \Ll(\M)$. 
 It is also a real \Banach Lie subgroupoid of $\G(\M)\tto \Ll(\M)$ if one consider the last one as a real \Banach Lie groupoid, see \cite{OS}. In order to see the above property we take instead of the chart 
defined in \eqref{psipp}, the following one 
\begin{equation}\label{atlas3}
\Theta_{p\tilde{p}}(x)=(y_p,u_{ p \tilde p},\tilde y_{\tilde{p}}):=\left(\varphi_{p}(\bl(x)),(u_{p}(\bl(x)))^{-1}x u_{\tilde{p}}(\br(x)),
\varphi_{\tilde{p}}(\br(x)) \right),
\end{equation}
where 
\begin{align}
\label{u_p}
u_{p}(\bl(x)):=
& \sigma_p(\bl(x))|\sigma_p(\bl(x))|^{-1}\in \U(\M)_p^{\bl(x)}, \\
\label{u_p_tilde} 
u_{\tilde{p}}(\br(x)):=
&\sigma_{\tilde{p}}(\br(x))|\sigma_{\tilde{p}}(\br(x))|^{-1} \in \U(\M)_{\tilde{p}}^{\br(x)}.
\end{align}
In the coordinates $(y_p,u_{ p \tilde p},\tilde y_{\tilde{p}})$ the involution \eqref{J} is given by 
\begin{equation}\label{Theta} 
(\Theta_{p\tilde{p}}\circ \mathcal{J}\circ (\Theta_{p\tilde{p}})^{-1})(y_p,u_{ p \tilde p},\tilde y_{\tilde{p}})=(y_p,\mathcal{J}(u_{ p \tilde p}),\tilde y_{\tilde{p}}).
\end{equation}
Hence, $x\in \U(\M)\cap\Omega_{ p \tilde p}$  if and only if  $u_{ p \tilde p}\in \U(\M)^p_{\tilde{p}}
\subset \U(\M)$, i.e. $u_{ p \tilde p}^*u_{ p \tilde p}=\tilde p$ and $u_{ p \tilde p}u_{ p \tilde p}^*= p$.

The subset $\G(\M)^p_{\tilde p}\subset p\M \tilde p$ is invariant with respect to the involution, i.e.,  $\mathcal{J}:\G(\M)^p_{\tilde p}\to \G(\M)^p_{\tilde p}$, and $\U(\M)^p_{\tilde p}\subset \G(\M)^p_{\tilde p}$ is its fixed-point set. 
The tangent map $T\mathcal{J}(u_{p\tilde p}):T_{u_{p\tilde p}}\U(\M)^p_{\tilde p}\to T_{u_{p\tilde p}}\U(\M)^p_{\tilde p}$ is thus a linear involution  of the tangent space $T_{u_{p\tilde p}}\G(\M)^p_{\tilde p}$ at $u_{p\tilde p}\in \U(\M)^p_{\tilde p}$. 
One easily obtains, see \cite{OS}, that 
\begin{equation*} 
T\mathcal{J}(u_{p\tilde p})\Delta z_{p\tilde p}=-u_{p\tilde p}(\Delta z_{p\tilde p})^*u_{p\tilde p}
\end{equation*}
for $\Delta z_{p\tilde p}\in p\M\tilde p\cong T_{u_{p\tilde p}}\G(\M)^p_{\tilde p}$. 
Since $T\mathcal{J}(u_{p\tilde p})\circ T\mathcal{J}(u_{p\tilde p})=id$, then the tangent space  $T_{u_{p\tilde p}}\G(\M)^p_{\tilde p}=T_{u_{p\tilde p}} p\M\tilde p\cong \{u_{p\tilde p}\}\times p\M\tilde p$ splits
\begin{equation}\label{split2} 
T_{u_{p\tilde p}}\G(\M)^p_{\tilde p}=T^+_{u_{p\tilde p}}\G(\M)^p_{\tilde p}\oplus T^-_{u_{p\tilde p}}\G(\M)^p_{\tilde p}
\end{equation}
on the closed subspaces which are the eigenspaces of $T\mathcal{J}(u_{p\tilde p})$ corresponding to the eigenvalues $+1$ and $-1$, respectively. 
Note here that 
\begin{equation*} 
T^\pm_{u_{p\tilde p}}\G(\M)^p_{\tilde p}\cong \{\Delta z_{p\tilde p}\in p\M\tilde p:\quad \Delta z_{p\tilde p}u_{p\tilde p}^*\pm(\Delta z_{p\tilde p}u_{p\tilde p}^*)^*=0\}
\end{equation*}
where $\Delta z_{p\tilde p}u_{p\tilde p}^*\in p\M p$, so, one has the isomorphism of Banach spaces
\begin{equation*} T^+_{u_{p\tilde p}}\G(\M)^p_{\tilde p}\cong p\M^a p\quad \text{and} 
\quad  T^-_{u_{p\tilde p}}\G(\M)^p_{\tilde p}\cong p\M^h p,
\end{equation*}
where 
$$\M^a:=\{x\in\M: x^*=-x\}$$ 
and 
$$\M^h:=\{x\in\M: x^*=x\}.$$ 
Fixing a partial isometry $u^0_{p\tilde p}\in \U(\M)^p_{\tilde p}$ one obtains the bijections $\G(\M)^p_{\tilde p}\cong G(\tilde p\M\tilde p)$ and $\U(\M)^p_{\tilde p}\cong U(\tilde p\M\tilde p)$ defined by
\begin{equation*} 
z_{p\tilde p}=u^0_{p\tilde p}\tilde g
\end{equation*}
where $\tilde g\in G(\tilde p\M\tilde p)$ for $\G(\M)^p_{\tilde p}$ and $\tilde g\in U(\tilde p\M\tilde p)$ for $\U(\M)^p_{\tilde p}$. 
Summing up,  
$\G(\M)^p_{\tilde p}$ is a complex \Banach manifold which is holomorphically diffeomorphic to  
the complex \Banach Lie group $G(\tilde p\M\tilde p)$ and $\U(\M)^p_{\tilde p}$  is a real \Banach manifold which is diffeomorphic to  the unitary group $ U(\tilde p\M\tilde p)$. 
It follows from  the splitting \eqref{split2} that $\U(\M)^p_{\tilde p}\hookrightarrow \G(\M)^p_{\tilde p}$  is a real \Banach submanifold of $\G(\M)^p_{\tilde p}$ 
with respect to its underlying real \Banach manifold structure. Hence, taking into account  the atlases defined by charts \eqref{psipp} and \eqref{atlas3} one can state the following proposition. 
(See \cite{OS} and \cite{OJS1} for additional details.)

\begin{prop}
\begin{enumerate}[{\rm(i)}]
\item The groupoid of partial isometries $\U(\M)\tto\Ll(\M)$   
is a 
\Banach Lie groupoid.
\item $\U(\M)\tto\Ll(\M)$ is a 
\Banach Lie subgroupoid of $\G(\M)\tto\Ll(\M)$ 
regarded as a  
real 
\Banach Lie groupoid.
\end{enumerate}
\end{prop}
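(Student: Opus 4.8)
The plan is to deduce the statement from the structures already available on $\G(\M)\tto\Ll(\M)$ by realising $\U(\M)$ as a split real \Banach submanifold of $\G(\M)$; this yields at once the manifold structure needed for~(i) and the submanifold assertion of~(ii), after which the groupoid axioms transfer by restriction. Concretely, I would work with the atlas $\{(\Omega_{p\tilde p},\Theta_{p\tilde p})\}$ of \eqref{atlas3} together with formula~\eqref{Theta}, which expresses the involution $\J$ in the coordinates $(y_p,u_{p\tilde p},\tilde y_{\tilde p})$ as $(y_p,u_{p\tilde p},\tilde y_{\tilde p})\mapsto(y_p,\J(u_{p\tilde p}),\tilde y_{\tilde p})$, i.e., leaving the two outer factors pointwise fixed and acting on the middle one by the involution $\J$ of $\G(\M)^p_{\tilde p}$. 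Consequently
\[
\Theta_{p\tilde p}(\U(\M)\cap\Omega_{p\tilde p})=(1-p)\M p\ \oplus\ \U(\M)^p_{\tilde p}\ \oplus\ (1-\tilde p)\M\tilde p .
\]

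Now $\U(\M)^p_{\tilde p}$ is a split real \Banach submanifold of $\G(\M)^p_{\tilde p}$ --- this is exactly the content of \eqref{split2}, whose $(+1)$-eigenspace $p\M^a p$ is topologically complemented by the $(-1)$-eigenspace $p\M^h p$, in view of $p\M p=p\M^a p\oplus p\M^h p$ --- so $\U(\M)\cap\Omega_{p\tilde p}$ is a split real \Banach submanifold of $\Omega_{p\tilde p}$. Letting $p,\tilde p$ range over $\Ll(\M)$ and observing that the $\Theta$-chart transition maps (the analogues of \eqref{atlas2}) restrict accordingly, I conclude that $\U(\M)$ is a split real \Banach submanifold of $\G(\M)$ regarded as a real manifold, carrying the induced, Hausdorff topology. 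Composing the restricted charts with a fixed reference partial isometry $u^0_{p\tilde p}$ and an exponential chart of $U(\tilde p\M\tilde p)\cong\U(\M)^p_{\tilde p}$ then gives honest charts modelled on the Banach spaces $(1-p)\M p\oplus\tilde p\M^a\tilde p\oplus(1-\tilde p)\M\tilde p$.

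Next I would check the \Banach Lie groupoid axioms. It is already known that $\U(\M)\tto\Ll(\M)$ is a wide subgroupoid of $\G(\M)\tto\Ll(\M)$, so only the smoothness of the structural maps and the submersion property of the source/target remain. Because $\U(\M)$ is a submanifold, the maps $\bl,\br\colon\U(\M)\to\Ll(\M)$, the inversion $\bi$, and the object inclusion $\bepsilon$ are the restrictions to $\U(\M)$ of the corresponding smooth maps of $\G(\M)\tto\Ll(\M)$, hence are smooth; the multiplication is smooth as well, being the restriction of the smooth multiplication of $\G(\M)$ with codomain corestricted to the submanifold $\U(\M)$. Finally $\bl$ is a submersion: in the restricted $\Theta$-chart it equals, up to the diffeomorphism $\varphi_p$ of \eqref{varphi}, the projection onto the first (complemented) factor, and it admits the smooth local section $u_p$ of \eqref{u_p}, which takes values in $\U(\M)$. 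Thus the set of composable pairs is a submanifold and all the axioms of \cite{OS} are met, proving~(i); rerunning the same restriction argument inside $\G(\M)$ regarded as a real \Banach Lie groupoid gives~(ii).

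The one genuinely infinite-dimensional point --- and the place I expect any difficulty to hide --- is the complementedness underlying the submanifold step: one must know that the $(\pm1)$-eigenspaces of $T\J$ on $\G(\M)^p_{\tilde p}$ are closed and topologically complementary, so that $\U(\M)^p_{\tilde p}$ is a \emph{split} submanifold and not merely weakly immersed; otherwise $\U(\M)$ would fail to inherit a usable manifold structure and the corestriction argument for the multiplication would break down. This is supplied by the identifications $T^+_{u_{p\tilde p}}\G(\M)^p_{\tilde p}\cong p\M^a p$ and $T^-_{u_{p\tilde p}}\G(\M)^p_{\tilde p}\cong p\M^h p$, valid for an arbitrary $W^*$-algebra $\M$; everything else is a routine transfer of the structures built for $\G(\M)\tto\Ll(\M)$ in \cite{OS,OJS1}.
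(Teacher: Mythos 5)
Your proposal is correct and follows essentially the same route as the paper: it exhibits $\U(\M)$ as the fixed-point set of the involution $\J$, uses the adapted charts $\Theta_{p\tilde p}$ of \eqref{atlas3} together with \eqref{Theta} and the eigenspace splitting \eqref{split2} (equivalently the identification $\U(\M)^p_{\tilde p}\cong U(\tilde p\M\tilde p)$ via a reference partial isometry) to get a split real \Banach submanifold of $\G(\M)$, and then transfers the groupoid structure by restriction. The paper itself only sketches these steps and defers to \cite{OS,OJS1}; your added checks (submersion of $\bl$ via the section $u_p$, smooth corestriction of the multiplication) fill in exactly the details those references supply.
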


Just as the groupoid of partially invertible elements of $\M$, 
the groupoid of partial isometries  $\U(\M)\tto\Ll(\M)$ splits into its  transitive open closed subgroupoids $\U_{p_0}(\M)\tto\Ll_{p_0}(\M)$. 

Following \cite{OJS2}, we show  that $\U_{p_0}(\M)\tto\Ll_{p_0}(\M)$ is canonically isomorphic to the gauge groupoid 
$$\frac{P_0\times P_0}{U_0}\tto P_0/U_0$$ 
of the $U_0$-principal  bundle $\bl_0\colon P_0\to \Ll_{p_0}(\M)\cong P_0/U_0$, where 
\begin{equation} \label{P0}
P_0:=\{u\in \U_{p_0}(\M): \br(u)=u^*u=p_0\}=\br^{-1}(p_0) 
\end{equation}
and 
\begin{equation*}  
U_0:=U(p_0\M p_0)
\end{equation*}
and $\bl_0:P_0\to \Ll_{p_0}(\M)$ is the restriction of the left support map~$\bl$ to~$P_0$.
For any $p\in \Ll_{p_0}(\M)$ 
its corresponding local chart on $P_0$ defined by the local chart 
$(\Theta_{pp_0},\U(\M)\cap\Omega_{pp_0})$ of $\U(\M)$ 
has the following form\footnote{
Proof: By \eqref{u_p}, 
	$u_{pp_0}=u_p(\bl(u))^{-1}u
	=\vert\sigma_p(\bl(u))\vert\sigma_p(\bl(u))^{-1}u
	=\vert(p\bl(u))^{-1}\vert p\bl(u)u
	$.
But $\vert a^{-1}\vert=\vert a^*\vert^{-1}$ for all $a\in\G(\M)$ 
hence we further obtain 
$u_{pp_0}=\vert\bl(u)p\vert pu=((pu)(pu)^*)^{1/2}pu=(puu^*p)^{1/2}pu=(puu^*p)^{1/2}u$.}
\begin{align}
\bl^{-1}(\Pi_p)\cap P_0\ni u\mapsto \theta_p(u)
= & (y_p,u_{pp_0})\nonumber  \\
\label{P_0_chart}
= & (u(pu)^{-1}-p,
(puu^* p)^{1/2}u
) \\ 
& \in (1-p)\M p \times \U(\M)^p_{p_0}. \nonumber 
\end{align}
The inverse   $\theta_p^{-1}$ of $\theta_p$  is given by
\begin{equation}
\label{inv_err}
u=\theta_p^{-1}(y_p,u_{pp_0})=u_p(p+y_p)\cdot u_{pp_0}
\end{equation}
since $p+y_p=x_p=\sigma_p(q)$ with $q=\bl(u)$, hence $p+y_p=\sigma(\bl(u))$, 
and one has the factorization 
\begin{equation}
\label{5July2019}
u=u_p(\bl(u))u_{pp_0}\text{ for all }u\in\Omega_{pp_0}.
\end{equation}
The free actions of $U_0$ an $P_0$ and on $P_0\times P_0$ are by definition 
\begin{equation}\label{actionU1} 
P_0\times U_0\ni (u,g)\mapsto ug\in P_0
\end{equation}
and 
\begin{equation}\label{actionU2} 
P_0\times P_0\times U_0\ni (u,v,g)\mapsto (ug,vg)\in P_0\times P_0.
\end{equation}
The maps 
\begin{equation*} 
P_0\ni u\mapsto 
\bl_0(u):=uu^*\in \Ll_{p_0}(\M)
\end{equation*}
and
\begin{equation*} 
P_0\times P_0\ni(u,v)\mapsto \Phi(u,v):=uv^*\in \U_{p_0}(\M)
\end{equation*}
are constant on the orbits of the actions \eqref{actionU1} and \eqref{actionU2}. 
Thus  one obtains the isomorphism of the real \Banach Lie groupoids 
 \begin{equation}\label{gaugethm}
    \xymatrix{
    	\frac{P_0\times P_0}{U_0} \ar@<-.5ex>[d] \ar@<.5ex>[d] \ar[r]^{[\Phi]} 
    	& \U_{p_0}(\M) \ar@<-.5ex>[d] \ar@<.5ex>[d]\\
    	P_0/U_0 \ar[r]^{[\bl_0]} & \Ll_{p_0}(\M)}
\end{equation}
where $[\Phi]$ and $[\bl_0] $ are defined by quotienting  $\Phi$ and $\bl_0$. 
For details see \cite{OJS1}.

Finally, we point out that one has a naturally defined $p_0\M^a p_0$-valued connection 1-form~$\alpha$ on $U_0$-principal bundle $\bl_0:P_0\to \Ll_{p_0}(\M)$:
\begin{equation}\label{alpha} 
\alpha(u):=u^*\de u
\end{equation}
where $u\in P_0$. For $x\in p_0\M^a p_0$ the fundamental vector field $\xi_x\in \Gamma^\infty TP_0$ is given by 
\begin{equation}
\xi_x(u)=ux.
\end{equation}
Hence one has 
\begin{equation}\label{230} 
\xi_x(u)\llcorner \alpha(u)=u^*ux=p_0x=x.
\end{equation}
From \eqref{actionU1} and \eqref{alpha} one also has 
\begin{equation}\label{alphaequiv} 
\alpha(ug)=g^*\alpha(u)g
\end{equation}
for $g\in U_0$.
So, the form $\alpha$ is indeed a connection 1-form on $P_0$. 
The curvature form $\Omega$ of $\alpha$ is 
\begin{equation}\label{Omega} 
\Omega:=\de\alpha+\frac{1}{2}[\alpha,\alpha]=\de u^*\wedge \de u+\frac{1}{2}[u^*\de u,u^*\de u].
\end{equation}
Note here that the curvature form $\Omega$ also satisfies 
\begin{equation}\label{Omegaequiv} 
\Omega(ug)=g^*\Omega(u)g
\end{equation}
for $g\in U_0$.

In the coordinates $(y_p,u_{pp_0})$ the connection form $\alpha$ 
is expressed as follows
\begin{align*} 
\alpha& =(p+y_p)dy_p^*+(p+y_p)u_{pp_0}du_{pp_0}^*(p+y_p)^* 
\end{align*}
The above formulas will be needed in 
the following sections of this paper.

 \section{Coadjoint action 
	groupoid $\U(\M)*\M_*^+\tto \M_*^+$ as a \Banach Lie groupoid}
\label{Sect3}

In this section we investigate the coadjoint action 
groupoid $\U(\M)*\M_*^+\tto \M_*^+$, which plays a central role in the present paper. 
In Subsection~\ref{Subsect3.1} we discuss the algebraic structure of this groupoid, and in this context it proves useful to introduce the predual groupoid $\M_*\tto \M_*^+$ that is isomorphic to $\U(\M)*\M_*^+\tto \M_*^+$ 
(Proposition~\ref{23August2019}) and 
whose structure is defined in a canonical way by the polar decompositions of elements of $\M_*$. 
Then, in Subsection~\ref{Subsect3.2}, we turn to the study of differential geometric structures related to these groupoids and their orbits, 
one of the main results being that the coadjoint action 
groupoid carries the natural structure of a \Banach Lie groupoid. 
(See Theorem~\ref{coadj_grpd}).

 \subsection{Algebraic structure of the predual/coadjoint action 
	groupoids}
\label{Subsect3.1}

We begin by the definition of the groupoid $\M_*\tto \M_*^+$ 
which
has $\M_*$ as  the space of arrows and the cone $\M_*^+$ of positive elements of $\M_*$ as its base, i.e., the set of its objects.

The left $\sigma_l(\varphi)\in \Ll(\M)$ and right $\sigma_r(\varphi)\in \Ll(\M)$ supports of $\varphi\in \M_*$ are defined as follows. 
Let $[\M\varphi]$ and $[\varphi \M]$ denote the left and right invariant subspaces of $\M_*$ generated from $\varphi\in \M_*$. 
Their annihilators $[\M\varphi]^0\subset \M$ and $[\varphi\M]^0\subset \M$ are right and left $W^*$-ideals in $\M$, 
respectively. 
Thus there exist $e,f\in \Ll(\M)$ such that  $[\M\varphi]^0=e\M$  and   $[\varphi\M]^0=\M f$. 
One then defines  
\begin{equation}
\sigma_l(\varphi):=\1-f\quad  \text{and} \quad\sigma_r(\varphi):=\1-e.
\end{equation}
If $\varphi=\varphi^*$ then we use the notation
\begin{equation}
\sigma_l(\varphi)=\sigma_r(\varphi)=:\sigma_*(\varphi).
\end{equation}
Now let us recall that $y\varphi, \varphi y\in \M_*$, where $y\in \M$ and $\varphi\in \M_*$, are defined by 
\begin{align*} 
\langle y\varphi,x\rangle:= & \langle \varphi,xy\rangle,\\
\langle \varphi y,x\rangle:=& \langle \varphi,yx\rangle, 
\end{align*}
where $x\in \M$.

Any $\varphi\in \M_*$ has the polar decomposition
\begin{equation}\label{polar} 
\varphi=u\vert\varphi\vert,
\end{equation}
where $u\in \M$ and $\vert\varphi\vert\in \M^*_+$ are uniquely defined by the condition 
\begin{equation*}
u^*u=\sigma_*(\vert\varphi\vert).
\end{equation*}
Moreover, $\sigma_*(\vert\varphi\vert)=\sigma_r(\varphi)$ and $uu^*=\sigma_l(\varphi)$. 
See for instance \cite[Ch. III, Th. 4.2]{Ta02}.

We now define the following maps:
\begin{enumerate}[{\rm(a)}]
	\item\label{a} the \textsl{source} $\bs_{*}:\M_*\to \M_*^+$ and the \textsl{target} $\bt_{*}:\M_*\to \M_*^+$ maps by
	\begin{equation}\label{st} \bs_{*}(\varphi):=\vert\varphi\vert,\qquad  
	\bt_{*}(\varphi):=u\vert\varphi\vert u^*;
	\end{equation}
	
	\item
	the\textsl{ product} of $(\varphi_1, \varphi_2)\in \M_* *\M_*:=\{(\varphi_1, \varphi_2):\ \bs_{*}(\varphi_1)=\bt_{*}( \varphi_2)\}$ by
	\begin{equation}
	\label{product} \varphi_1\bullet \varphi_2:=u_1u_2\vert\varphi_2\vert 
	\end{equation}
	where $\varphi_1=u_1\vert\varphi_1\vert$ and $\varphi_2=u_2\vert\varphi_2\vert $ are the respective polar decompositions of $\varphi_1$ and $\varphi_2$;
	
	\item\label{c} 
	the groupoid \textsl{inverse map} by
	\begin{equation*}
	\bi_{*}(\varphi)=\varphi^*;
	\end{equation*}
	
	\item\label{d} the \textsl{object inclusion} map $\bepsilon_{*}:\M_*^+\to \M_*$ is defined as the set inclusion map $\M_*^+\hookrightarrow \M_*$. 
\end{enumerate}

\begin{prop}
	\label{grpd_pred}
		The maps defined in \eqref{a}--\eqref{d} above 
		are the structural maps for the groupoid $\M_*\tto \M_*^+$ called here the \emph{predual groupoid} of $\M$.
\end{prop}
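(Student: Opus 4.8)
The plan is to verify directly that the quadruple $(\bs_*,\bt_*,\bullet,\bi_*,\bepsilon_*)$ satisfies the standard groupoid axioms, using the uniqueness of the polar decomposition \eqref{polar} throughout as the bookkeeping device. First I would record the elementary facts about supports: for $\varphi = u|\varphi|$ one has $u^*u = \sigma_r(\varphi) = \sigma_*(|\varphi|)$ and $uu^* = \sigma_l(\varphi)$, so that $u \in \U(\M)$ is a partial isometry with $\br(u) = \sigma_*(|\varphi|)$ and $\bl(u) = \sigma_*(\bt_*(\varphi))$; here $\bt_*(\varphi) = u|\varphi|u^*$ is indeed positive and normal, and its support is $uu^*$ since $u$ restricts to a unitary from $\sigma_*(|\varphi|)\H$-type data. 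The key compatibility to isolate is that the source and target of $\varphi$ have the \emph{same} polar-part partial isometry acting, i.e. $\bt_*(\varphi) = \Ad^*_u(\bs_*(\varphi))$ where $u$ is exactly the unitary appearing in the polar decomposition; this is what makes the predual groupoid the same object as the coadjoint action groupoid $\U(\M)*\M_*^+$.

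Next I would check that the product \eqref{product} is well-defined on composable pairs and lands in $\M_*$. If $\bs_*(\varphi_1) = \bt_*(\varphi_2)$, that is $|\varphi_1| = u_2|\varphi_2|u_2^*$, I need to see that $u_1 u_2$ is the polar partial isometry of $\varphi_1 \bullet \varphi_2 = u_1 u_2 |\varphi_2|$, meaning $(u_1 u_2)^* (u_1 u_2) = \sigma_*(|\varphi_2|)$. Here $u_2^* u_1^* u_1 u_2 = u_2^* \sigma_*(|\varphi_1|) u_2 = u_2^* \sigma_*(u_2|\varphi_2|u_2^*) u_2 = u_2^*(u_2 u_2^*) u_2 = u_2^* u_2 = \sigma_*(|\varphi_2|)$, using $\sigma_*(u_2|\varphi_2|u_2^*) = u_2 u_2^* = \bl(u_2)$ and $u_2^* u_2 u_2^* = u_2^*$. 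Uniqueness of polar decomposition then forces $\varphi_1\bullet\varphi_2$ to have polar decomposition $(u_1 u_2)|\varphi_2|$, so $\bs_*(\varphi_1\bullet\varphi_2) = |\varphi_2| = \bs_*(\varphi_2)$ and $\bt_*(\varphi_1\bullet\varphi_2) = u_1 u_2 |\varphi_2| u_2^* u_1^* = u_1 \bt_*(\varphi_2) u_1^* = u_1 |\varphi_1| u_1^* = \bt_*(\varphi_1)$, which are the groupoid source/target matching conditions.

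Then I would verify associativity, the unit laws, and the inverse laws. Associativity of $\bullet$: given composable $\varphi_1,\varphi_2,\varphi_3$, both $(\varphi_1\bullet\varphi_2)\bullet\varphi_3$ and $\varphi_1\bullet(\varphi_2\bullet\varphi_3)$ equal $u_1 u_2 u_3 |\varphi_3|$ once one checks via the support computation above that the polar parts compose as claimed, so this reduces to associativity of multiplication of partial isometries in $\M$. For the units: if $\rho \in \M_*^+$ then $\bepsilon_*(\rho) = \rho$ has polar decomposition $\rho = \sigma_*(\rho)\cdot\rho$ with polar partial isometry $\sigma_*(\rho) \in \Ll(\M)$; for $\varphi = u|\varphi|$ one has $\bepsilon_*(\bt_*(\varphi))\bullet\varphi = (uu^*)u|\varphi| = u|\varphi| = \varphi$ and $\varphi\bullet\bepsilon_*(\bs_*(\varphi)) = u(u^*u)|\varphi| = u|\varphi| = \varphi$, using $u u^* u = u$. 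For the inverse: $\bi_*(\varphi) = \varphi^* $; I need its polar decomposition to be $u^*\cdot(u|\varphi|u^*)$, i.e. $\varphi^* = u^*(u|\varphi|u^*)$ with $(u^*)^*(u^*) = uu^* = \sigma_*(u|\varphi|u^*)$ — this holds since $\varphi^* = |\varphi|u^* = u^* u |\varphi| u^* = u^*(u|\varphi|u^*)$. Hence $\bs_*(\varphi^*) = u|\varphi|u^* = \bt_*(\varphi)$, $\bt_*(\varphi^*) = u^* (u|\varphi|u^*) u = |\varphi| = \bs_*(\varphi)$, and $\varphi\bullet\varphi^* = u u^* (u|\varphi|u^*) = u|\varphi|u^* = \bt_*(\varphi) = \bepsilon_*(\bt_*(\varphi))$, with the symmetric identity $\varphi^*\bullet\varphi = \bepsilon_*(\bs_*(\varphi))$ similar.

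The main obstacle I anticipate is purely in the careful handling of partial-isometry support algebra: one must repeatedly use identities such as $\sigma_*(\Ad^*_u\rho) = u u^*$, $u^* u u^* = u^*$, and uniqueness of the polar decomposition under the normalization $u^*u = \sigma_*(|\varphi|)$, and one must confirm that $\Ad^*_u$ genuinely preserves positivity and normality and carries $\sigma_*(\rho)$-supported functionals to $uu^*$-supported ones. None of these steps is deep, but the computation of $\sigma_*(u|\varphi|u^*)$ requires invoking that $u$ is a partial isometry with initial projection dominating $\sigma_*(|\varphi|)$, which follows from the normalization; once that is in place every axiom reduces to an identity among the partial isometries $u_i \in \U(\M)$ and the positive cone, and the verification is mechanical.
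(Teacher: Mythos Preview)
Your proposal is correct and follows essentially the same approach as the paper: a direct verification of the groupoid axioms using the polar decomposition $\varphi = u|\varphi|$. You are in fact slightly more careful than the paper in explicitly checking that $u_1u_2|\varphi_2|$ is the polar decomposition of the product (i.e., that $(u_1u_2)^*(u_1u_2) = \sigma_*(|\varphi_2|)$) and that $\varphi^* = u^*(u|\varphi|u^*)$ is the polar decomposition of the inverse, facts the paper uses implicitly when writing $|\varphi^*| = u|\varphi|u^*$.
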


\begin{proof} 
	Compatibility of the product $\varphi_1\bullet\varphi_2$ with $\bs_{*}$ and $\bt_{*}$ follows from
	\begin{equation*}
	\bs_{*}(\varphi_1\bullet\varphi_2)=|\varphi_2|=\bs_{*}(\varphi_2),
	\end{equation*}
	and 
	\begin{equation*}
	\bt_{*}(\varphi_1\bullet\varphi_2)=u_1u_2|\varphi_2|(u_1u_2)^*=u_1|\varphi_1|u_1^*=\bt_{*}(\varphi_1).
	\end{equation*}
	Associativity of the product \eqref{product} follows from
	\begin{align*}
	(\varphi_1\bullet\varphi_2)\bullet \varphi_3
	& =(u_1u_2|\varphi_2|)\bullet (u_3|\varphi_3|) 
	 =(u_1u_2) u_3|\varphi_3| 
	 =u_1(u_2 u_3)|\varphi_3|
	\\
	&=(u_1|\varphi_1|)\bullet(u_2 u_3|\varphi_3|) 
	 =\varphi_1\bullet(\varphi_2\bullet\varphi_3)
	\end{align*}
	For $\varphi=|\varphi|\in \M_*^+$ one has
	\begin{equation*}
	\bs_{*}(\varphi)=\bt_{*}(\varphi)=\varphi.
	\end{equation*}
	Using \eqref{st} and \eqref{product} one obtains 
	\begin{equation*}
	\varphi\bullet\bepsilon_{*}(\bs_{*}(\varphi))
	=\varphi\bullet \bs_{*}(\varphi)=u|\varphi|=\varphi
	\end{equation*}
	and
	\begin{equation*}\bepsilon_{*}(\bt_{*}(\varphi))\bullet\varphi
	=\bt_{*}(\varphi)\bullet\varphi=(u|\varphi|u^*)\bullet u|\varphi|=u|\varphi|=\varphi.
	\end{equation*}
	In order to prove the consistency properties between $\bi_{*}$, $\bs_{*}$, and $\bt_{*}$, we observe that 
	\begin{align*}
	(s_*\circ\iota)(\varphi)
	&=s_*(\varphi^*)=|\varphi^*|=u|\varphi|u^*=t_*(\varphi),\\
	(t_*\circ\iota)(\varphi)
	&=t_*(\varphi^*)=u^*|\varphi^*|u=u^*u|\varphi|u^*u=|\varphi|=s_*(\varphi),\\
	\iota(\varphi)\bullet\varphi
	&=\varphi^*\bullet \varphi=u^*u|\varphi|=|\varphi|=s_*(\varphi)=\epsilon(s_*(\varphi)),
	\\
	\varphi\bullet\iota(\varphi)
	&=\varphi\bullet \varphi^*=uu^*|\varphi|=|\varphi^*|=\epsilon(t_*(\varphi)).
	\end{align*}
This completes the proof. 
\end{proof}

Furthermore, one has the coadjoint action  
\begin{equation}\label{coaction} 
\U(\M)*\M^+_*\ni(u,\rho)\mapsto u\rho u^*\in \M^+_*,
\end{equation}
of the groupoid $\U(\M)\tto \Ll(\M)$ on $\M^+_*$ for which $\sigma_*:\M^+_*\to \Ll(\M)$ is the momentum map, where 
\begin{equation*} 
\U(\M)*\M^+_*:=\{(u,\rho)\in \U(\M)\times\M^+_*:\ u^*u=\sigma_*(\rho)\}.
\end{equation*}
Any action of a groupoid on a set defines a so-called action groupoid. 
(See for instance \cite[Def. 1.6.10]{Ma05}.)
In particular the coadjoint action \eqref{coaction} of the groupoid $\U(\M)\tto \Ll(\M)$ on $\M^+_*$ defines the coadjoint action groupoid $\U(\M)*\M^+_*\tto \M^+_*$ whose structural maps are as follows. 

\begin{enumerate}[(a)]
\item  The source and target map of $\U(\M)*\M_*^+\tto \M_*^+$ are defined by 
\begin{equation}\label{s} \bs_{*}(u,\rho):=\rho\text{ and } \bt_{*}(u,\rho):=u\rho u^*,
\end{equation}
where $(u,\rho)\in\U(\M)*\M_*^+$. 

\item The product of elements $(u, \rho),(w,\delta) \in  \U(\M)*\M_*^+$,  which are composable in the groupoid sense, i.e. such that $\rho=\bs_{*}(u, \rho)=\bt_{*}(w,\delta)=
w\delta w^*$, is defined as follows
\begin{equation}
\label{9} (u, \rho)\cdot(w,\delta):=(uw,\delta)
\end{equation}
Note here that from $\bs_{*}(u, \rho)=\bt_{*}(w,\delta)$ it follows that  $uw\in \U(\M)$.

\item The groupoid inversion map 
$$\bi_{*}:\U(\M)*\M_*^+\to \U(\M)*\M_*^+$$ 
is  defined by 
\begin{equation}
\label{iota} \bi_{*}(u,\rho):=(u^*,u\rho u^*).
\end{equation}
\item The object inclusion map $\bepsilon_{*}:\M_*^+\to \U(\M)*\M_*^+$ by
\begin{equation}\label{e} 
\bepsilon_{*}(\rho):=(\sigma_*(\rho), \rho).
\end{equation}
We recall that $\sigma_*(\rho)\in \Ll(\M)\subseteq \U(\M)$.
\end{enumerate}
The usual compatibility conditions for the above groupoid structural maps follow by straightforward verification.

\begin{prop}\label{23August2019}
One has the natural isomorphism of groupoids 
\begin{equation}
\label{23August2019_eq1}
\xymatrix{
	\U(\M)\ast \M_*^+ \ar@<-.5ex>[d] \ar@<.5ex>[d] \ar[r]^{\ \Xi} 
	& \M_* \ar@<-.5ex>[d] \ar@<.5ex>[d]\\
	\M_*^+ \ar[r]^{\id} & \M_*^+}
\end{equation}
where $\Xi(u,\rho):=u\rho$ for every $(u,\rho)\in \U(\M)\ast \M_*^+$. 
\end{prop}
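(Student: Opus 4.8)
The plan is to verify that the map $\Xi(u,\rho):=u\rho$ is a well-defined bijection $\U(\M)\ast\M_*^+\to\M_*$ that intertwines all the structural maps of the two groupoids, so that it is an isomorphism of groupoids covering $\id_{\M_*^+}$. The crucial algebraic fact underlying everything is the uniqueness of the polar decomposition \eqref{polar}: given $\varphi\in\M_*$ with $\varphi=u|\varphi|$ and $u^*u=\sigma_*(|\varphi|)$, the pair $(u,|\varphi|)$ is uniquely determined by $\varphi$. Thus the inverse map should be $\Xi^{-1}(\varphi):=(u,|\varphi|)$, and one must check that this $(u,|\varphi|)$ indeed lies in $\U(\M)\ast\M_*^+$, i.e. that $|\varphi|\in\M_*^+$ (clear) and that $u^*u=\sigma_*(|\varphi|)$, which is precisely the normalization clause in the polar decomposition; so $u\in\U(\M)$ and $u^*u=\sigma_*(|\varphi|)=\sigma_*(\bs_*(u,|\varphi|))$, matching the definition of $\U(\M)\ast\M_*^+$.

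First I would establish that $\Xi$ is well defined: for $(u,\rho)\in\U(\M)\ast\M_*^+$ we have $u^*u=\sigma_*(\rho)$, and then $\varphi:=u\rho$ has $|\varphi|=\rho$ and $u$ as its polar part, so $\varphi\in\M_*$ and the assignment makes sense; this is essentially the content of the cited \cite[Ch.~III, Th.~4.2]{Ta02}. Bijectivity then follows immediately from the uniqueness of the polar decomposition as sketched above: $\Xi\circ\Xi^{-1}=\id_{\M_*}$ because $u|\varphi|=\varphi$, and $\Xi^{-1}\circ\Xi=\id$ because the polar decomposition of $u\rho$ is exactly $(u,\rho)$ when $u^*u=\sigma_*(\rho)$.

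Next I would check compatibility with the structural maps, going through the list \eqref{s}--\eqref{e} and matching it against \eqref{st}, \eqref{product}, the inverse map in item~\eqref{c}, and the object inclusion in item~\eqref{d} of the predual groupoid. For the source: $\bs_*(\Xi(u,\rho))=\bs_*(u\rho)=|u\rho|=\rho=\bs_*(u,\rho)$. For the target: $\bt_*(\Xi(u,\rho))=\bt_*(u\rho)=u\rho u^*=\bt_*(u,\rho)$. For the product: if $(u,\rho)$ and $(w,\delta)$ are composable, so $\rho=w\delta w^*$, then $\Xi((u,\rho)\cdot(w,\delta))=\Xi(uw,\delta)=uw\delta$, while $\Xi(u,\rho)\bullet\Xi(w,\delta)=(u\rho)\bullet(w\delta)=u_1u_2|\varphi_2|$ where $\varphi_1=u\rho$ has polar part $u_1=u$ and $\varphi_2=w\delta$ has polar part $u_2=w$ and $|\varphi_2|=\delta$, giving $uw\delta$ as well; one only needs to note that $(\varphi_1,\varphi_2)\in\M_*\ast\M_*$, i.e. $\bs_*(\varphi_1)=\bt_*(\varphi_2)$, which reads $\rho=w\delta w^*$, exactly the composability condition in $\U(\M)\ast\M_*^+$. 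For the inverse: $\Xi(\bi_*(u,\rho))=\Xi(u^*,u\rho u^*)=u^*u\rho u^*$; since $u^*u=\sigma_*(\rho)$ acts as the identity on $\rho$ (as $\sigma_*(\rho)$ is the support of $\rho$), this equals $\rho u^*=(u\rho)^*=\bi_*(\Xi(u,\rho))$. For the object inclusion: $\Xi(\bepsilon_*(\rho))=\Xi(\sigma_*(\rho),\rho)=\sigma_*(\rho)\rho=\rho=\bepsilon_*(\rho)$ since $\rho\in\M_*^+$ is fixed by its support projection.

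I expect the main obstacle to be bookkeeping rather than depth: the only genuinely nontrivial inputs are the existence and uniqueness of the polar decomposition in $\M_*$ with the correct normalization (invoked from Takesaki), and the basic identities $\sigma_*(\rho)\rho=\rho=\rho\,\sigma_*(\rho)$ and $u^*u\,\rho=\rho$ when $u^*u=\sigma_*(\rho)$, all of which are standard modular-theory facts. Everything else is a direct comparison of formulas. One small point requiring care is the identity $(u\rho)^*=\rho u^*$ for $\rho=\rho^*\in\M_*^+$, which follows from the definitions of left/right multiplication by elements of $\M$ on $\M_*$ recalled just before \eqref{coaction} together with the definition of conjugation on $\M_*$; this is what makes the inverse maps match.
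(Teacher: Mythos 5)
Your proposal is correct and follows exactly the route the paper takes: the paper's proof is the single line "This follows from the polar decomposition \eqref{polar}," and your argument is just the careful spelling-out of that — uniqueness of the polar decomposition of elements of $\M_*$ gives bijectivity of $\Xi$, and the structural maps are then matched by direct computation using $\sigma_*(\rho)\rho=\rho$ and $(u\rho)^*=\rho u^*$.
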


\begin{proof}
This follows from the polar decomposition \eqref{polar}.
\end{proof}

We point out that both groupoids considered above contain by definition the trivial groupoid $\{0\}\tto \{0\}$ as a subgroupoid.

In the sequel we will investigate  these isomorphic groupoids from the perspective of Poisson geometry.

 \subsection[Differential geometric structure]{Differential geometric structure of the coadjoint action groupoid}
\label{Subsect3.2}

We will show in Theorem~\ref{coadj_grpd} below that the coadjoint  action  groupoid  $\U(\M)*\M_*^+\tto \M_*^+$  is a \Banach Lie groupoid.
This fundamental fact provides the framework of the investigation of weakly symplectic structures in Section~\ref{Sect5}. 
 
To start with, we consider the orbit 
\begin{equation}\label{orbit} 
\Oc_{\rho_0}:=\{ u\rho_0 u^*: u^*u=p_0:=\sigma_*(\rho_0)\}
\end{equation}
of the coadjoint action through $\rho_0\in  \M_*^+$.
In order to describe 
the manifold structure of $\Oc_{\rho_0}$  
we make the following lemma on the stabilizer 
\begin{equation} 
		U_{\rho_0}:=\{u\in \U(\M): u\rho_0 u^*=\rho_0\}.
\end{equation}
See also \cite{BR05}.
		
\begin{lem}\label{stab}
			\begin{enumerate}[{\rm(i)}]
				\item\label{stab_item1}
				For every $\rho_0\in \M^+_*$ its stabilizer $U_{\rho_0}$ is a \Banach Lie subgroup of the Lie group $U_0=U(p_0\M p_0)$, where $p_0=\sigma_*(\rho_0)$. 
				
				\item\label{stab_item2}
				The quotient  $U_0/U_{\rho_0}$ with respect to the right action  of $U_{\rho_0}$ on $U_0$ is a smooth \Banach manifold and the canonical projection $U_0\to U_0/U_{\rho_0}$ is a submersion.
			\end{enumerate}
		\end{lem}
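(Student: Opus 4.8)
The plan is to identify $U_{\rho_0}$ with the unitary group of the centralizer of $\rho_0$, and then to invoke the standard criteria for \Banach Lie subgroups and for quotients by them. First I would reduce to a faithful functional. If $u\in\U(\M)$ stabilizes $\rho_0$, then comparing support projections gives $uu^*=\sigma_*(u\rho_0 u^*)=\sigma_*(\rho_0)=p_0$, and likewise $u^*u=p_0$, so $U_{\rho_0}\subseteq U_0=U(\M_0)$, where $\M_0:=p_0\M p_0$; moreover, since $\rho_0(x)=\rho_0(p_0 x p_0)$ for all $x\in\M$, the stabilizer condition $u\rho_0 u^*=\rho_0$ only involves $\rho_0\vert_{\M_0}$, which is a \emph{faithful} normal positive functional on $\M_0$. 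So from now on I work inside $\M_0$ with this faithful functional, still denoted $\rho_0$, so that $U_{\rho_0}=\{u\in U(\M_0):\rho_0(u^*xu)=\rho_0(x)\ \text{for all}\ x\in\M_0\}$.

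The next step is the algebraic identity $U_{\rho_0}=U(\M_0^{\rho_0})$, where $\M_0^{\rho_0}:=\{a\in\M_0:\rho_0(ax)=\rho_0(xa)\ \text{for all}\ x\in\M_0\}$ is the centralizer of $\rho_0$ (equivalently, the fixed-point algebra of the modular automorphism group of $(\M_0,\rho_0)$). Indeed, substituting $x\mapsto ux$ into $\rho_0(u^*xu)=\rho_0(x)$ yields $\rho_0(xu)=\rho_0(ux)$ for all $x$, i.e. $u\in\M_0^{\rho_0}$, and the reverse inclusion is immediate. From this description I also read off the candidate for the Lie algebra, namely $\mathfrak{u}_{\rho_0}:=(\M_0^{\rho_0})^a=\{X\in\M_0^a:X\rho_0=\rho_0 X\ \text{in}\ \M_*\}$, which is a norm-closed real Lie subalgebra of $\mathfrak{u}_0:=\M_0^a$, the Lie algebra of $U_0$; and $U(\M_0^{\rho_0})$ is norm closed in $U_0$ since $\M_0^{\rho_0}$ is a von Neumann subalgebra.

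The one step that is not purely formal is to show that $\mathfrak{u}_{\rho_0}$ is a \emph{complemented} subspace of $\mathfrak{u}_0$, and this is where the modular theory of $\rho_0$ enters (it is also the point for which \cite{BR05} is relevant). Since the centralizer $\M_0^{\rho_0}$ is pointwise fixed by the modular automorphism group of $\rho_0$, hence globally invariant under it, Takesaki's theorem provides a normal $\rho_0$-preserving conditional expectation $E\colon\M_0\to\M_0^{\rho_0}$; being a $*$-preserving norm-one idempotent, $E$ restricts to a bounded real-linear projection of $\mathfrak{u}_0=\M_0^a$ onto $\mathfrak{u}_{\rho_0}=(\M_0^{\rho_0})^a$. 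Combining this with the fact that the exponential map carries a sufficiently small ball of $\mathfrak{u}_{\rho_0}$ homeomorphically onto a neighbourhood of $\1$ in $U(\M_0^{\rho_0})$, with inverse the principal branch of the logarithm, which stays inside $\M_0^{\rho_0}$ by holomorphic functional calculus, the standard criterion for \Banach Lie subgroups (see e.g. \cite{Bou}, \cite{BL04}, \cite{Be06}) shows that $U_{\rho_0}$ is a \Banach Lie subgroup of $U_0$ with Lie algebra $\mathfrak{u}_{\rho_0}$. This proves part~(i).

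Part~(ii) then follows from the general theorem that, whenever $H$ is a \Banach Lie subgroup of a \Banach Lie group $G$ whose Lie algebra is complemented in the Lie algebra of $G$, the quotient $G/H$ carries a unique \Banach manifold structure for which the canonical projection $G\to G/H$ is a submersion, in fact a locally trivial principal $H$-bundle; see e.g. \cite{Bou} or \cite{Be06}. Since complementedness of $\mathfrak{u}_{\rho_0}$ in $\mathfrak{u}_0$ has just been established, this applies to $U_0/U_{\rho_0}$ and finishes the proof. Throughout, the only genuine obstacle is the complementedness in the third paragraph: everything else is soft, but that step really does use structure theory (Takesaki's conditional expectation theorem) tied to the modular flow of~$\rho_0$.
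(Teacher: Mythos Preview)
Your proof is correct and follows essentially the same route as the paper's: reduce to the faithful functional $\rho_0\vert_{p_0\M p_0}$, identify the Lie algebra of the stabilizer with the anti-hermitian part of the centralizer $\M_{\rho_0}$, and then invoke Takesaki's conditional expectation $\mathcal{E}_{\rho_0}\colon p_0\M p_0\to\M_{\rho_0}$ (which, being positive hence $*$-preserving, restricts to a bounded projection of $\mathfrak{u}_0$ onto $\mathfrak{u}_{\rho_0}$) to obtain the required complemented splitting; part~(ii) is then the standard quotient theorem \cite[(5.12.4)]{Bou}. Your explicit identification $U_{\rho_0}=U(\M_0^{\rho_0})$ and the exponential/logarithm remark make the Lie-subgroup verification slightly more detailed than the paper's write-up, but the substance is the same.
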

	
\begin{proof} 
	We first note that $U_{\rho_0}\subseteq U_0$  
	by the general observation that 
	$$\sigma_*(u\rho_0 u^*)=uu^*$$ 
	for all 
	$u\in\U(\M)$  with $u^*u=\sigma(\rho_0)$. 
	The Assertion~\eqref{stab_item2} follows from the Assertion~\eqref{stab_item1}, see \cite[(5.12.4)]{Bou}.
	It therefore suffices to prove~\eqref{stab_item1}.
		
		We first  note that $\rho_0$ is a faithful normal state on the $W^*$-algebra $p_0\M p_0$. 
		Therefore, according to  
		\cite[Ch. VIII, Th. 2.6]{Ta03a} the Banach-Lie algebra  $$\M_{\rho_0}:=\{x\in p_0\M p_0:\ x\rho_0-\rho_0 x=0\} $$ 
		of 
		$$G_{\rho_0}:=\{g\in G(p_0\M p_0):\ g\rho_0\ g^{-1}=\rho_0\}\subset \M_{\rho_0}$$ 
			is the centralizer of  $\rho_0$ in sense of    
		\cite[Ch. VIII, Def. 2.1]{Ta03a}, that is, 
		\begin{equation} (p_0\M p_0)_{\rho_0}=\{x\in p_0\M p_0:  (\forall t\in\R)\ \sigma_t^{\rho_0}x=x\}
		\end{equation}
		where $\sigma_t^{\rho_0}$ is the modular flow corresponding to $\rho_0$. 
		Thus there exists a conditional expectation $\mathcal{E}_{\rho_0}\colon p_0\M p_0\to (p_0\M p_0)_{\rho_0}=\M_{\rho_0} $ uniquely determined by 
		the condition $\rho_0\circ \mathcal{E}_{\rho_0}=\rho_0\vert_{p_0\M p_0}$ 
		by the Takesaki theorem 
		(see  
		\cite[Ch. IX, Th.~4.2]{Ta03a}). 
		One then obtains the direct sum decomposition 
		\begin{equation}\label{split_E} 
		p_0\M p_0=\M_{\rho_0}\oplus \Ker \mathcal{E}_{\rho_0}
		\end{equation}
		Since the conditional expectation $\mathcal{E}_{\rho_0}$ is a positive map, it preserves the anti-hermitian part $p_0\M^a p_0$ of $p_0\M p_0$. Hence, restricting \eqref{split_E} to $p_0\M^a p_0$ we obtain the splitting
		\begin{equation} 
		\mathfrak{u}_{0}=\mathfrak{u}_{\rho_0}\oplus (\Ker\mathcal{E}_{\rho_0}\cap p_0\M^a p_0)
	     \end{equation}
		of $\mathfrak{u}_0$, where $\mathfrak{u}_{0}$ and $\mathfrak{u}_{\rho_0}$ are Lie algebras of $U_{0}$ and $U_{\rho_0}$, respectively. 
		It thus follows that $U_{\rho_0}$ is a \Banach Lie subgroup of $U_{0}$. 
		\end{proof}
		
		The next statement, based on Lemma~\ref{stab}, describes the geometry of the coadjoint orbit~$\mathcal{O}_{\rho_0}$.
		
		\begin{thm}\label{thm:32} 
			If $\rho_0\in \M_*^+$ and $\sigma_*(\rho_0)=p_0$, then the following assertions hold:
	\begin{enumerate}[{\rm(i)}]
	\item\label{thm:32_item1}
	The coadjoint orbit $\mathcal{O}_{\rho_0}$ is the base of a $U_{\rho_0}$-principal bundle as well as the total space of  a bundle as presented in the diagram
 \begin{equation}\label{P03}
    \xymatrix{ P_0 \ar[r]^{\pi_0} \ar[dr]_{\bl_0} & \Oc_{\rho_0} \ar[d]^{\sigma_*} \\
     &\Ll_{p_0}(\M)}
    \end{equation}
		where $\pi_0(u)=u\rho_0 u^*$ for all $u\in P_0$ and  $\bl_0:=\bl\vert_{P_0}$.	
	\item\label{thm:32_item2}
		One has the  identity-covering  bundle isomorphism 
	\begin{equation}\label{P04}
    \xymatrix{P_0\times _{U_0}(U_0/U_{\rho_0}) \ar[r]^{\quad \sim} \ar[d]_{\hat{\bl}_0} & \Oc_{\rho_0} 
    	\cong P_0/U_{\rho_0} 
    	\ar[d]^{\sigma_*}\\
    \Ll_{p_0}(\M)\ar[r]^{\quad \id} & \Ll_{p_0}(\M)	
    }
    \end{equation}
		where $U_0$ acts by the left multiplication on the right quotient $U_0/U_{\rho_0}$.
	\item\label{thm:32_item3} 
		The inclusion map $\iota\colon\mathcal{O}_{\rho_0}\hookrightarrow \M_*$ is  smooth and its tangent map $T_\rho\iota\colon T_\rho\Oc_{\rho_0}\to T_{\iota(\rho)}\M_*$ is injective for arbitrary $\rho\in\Oc_{\rho_0}$.
	\end{enumerate}	
	\end{thm}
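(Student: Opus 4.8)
The three assertions are tightly linked through the principal bundle $P_0 = \br^{-1}(p_0)$ already constructed in Subsection~\ref{Subsect2.5}, so the plan is to route everything through it. For item~\eqref{thm:32_item1}, observe that the map $\pi_0\colon P_0\to\Oc_{\rho_0}$, $u\mapsto u\rho_0 u^*$, is surjective by the very definition \eqref{orbit} of $\Oc_{\rho_0}$, and its fibers are exactly the orbits of the right $U_{\rho_0}$-action $P_0\times U_{\rho_0}\to P_0$ inherited from \eqref{actionU1}: indeed $u\rho_0 u^* = v\rho_0 v^*$ with $u,v\in P_0$ forces $v^*u\in U_0$ to stabilize $\rho_0$, hence $v^*u\in U_{\rho_0}$. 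By Lemma~\ref{stab}\eqref{stab_item1}, $U_{\rho_0}$ is a Banach-Lie subgroup of $U_0$, and by Lemma~\ref{stab}\eqref{stab_item2} the quotient $U_0/U_{\rho_0}$ is a smooth Banach manifold with $U_0\to U_0/U_{\rho_0}$ a submersion admitting local sections. Pulling back such a local section along the local charts \eqref{P_0_chart} of $P_0$ gives local trivializations of $P_0\to P_0/U_{\rho_0}$, so $P_0$ is a $U_{\rho_0}$-principal bundle; its base $P_0/U_{\rho_0}$ is then declared to be the manifold $\Oc_{\rho_0}$, and the commutativity of \eqref{P03} with $\bl_0 = \sigma_*\circ\pi_0$ is the identity $\sigma_*(u\rho_0 u^*) = uu^*$ already noted in the proof of Lemma~\ref{stab}.

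For item~\eqref{thm:32_item2}, this is the standard associated-bundle construction: $P_0\to\Ll_{p_0}(\M)$ is a $U_0$-principal bundle (Subsection~\ref{Subsect2.5}), $U_0$ acts on the left on $U_0/U_{\rho_0}$, and the associated fiber bundle $P_0\times_{U_0}(U_0/U_{\rho_0})$ is canonically diffeomorphic to $P_0/U_{\rho_0}$ via $[u,gU_{\rho_0}]\mapsto ugU_{\rho_0}$, with inverse induced by $u\mapsto[u,eU_{\rho_0}]$ over the trivializing opens. One checks this map covers the identity on $\Ll_{p_0}(\M)$ because $\hat\bl_0[u,gU_{\rho_0}] = \bl_0(u) = uu^* = \sigma_*(ug\rho_0(ug)^*) = \sigma_*(\pi_0(ug))$, using $g\in U_0$ and $\rho_0 = p_0\rho_0 p_0$. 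The only care needed is that the construction of associated bundles to principal bundles goes through verbatim in the Banach setting, which it does since all the relevant quotients and local trivializations are available.

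For item~\eqref{thm:32_item3}, the smoothness of $\iota\colon\Oc_{\rho_0}\hookrightarrow\M_*$ follows because, in the chart \eqref{P_0_chart} composed with $\pi_0$, the map $u\mapsto u\rho_0 u^*\in\M_*$ is a composition of the smooth multiplication maps of the $W^*$-algebra acting on its predual, hence real-analytic and in particular smooth; alternatively one notes $\iota\circ\pi_0 = (\,\cdot\,\rho_0\,\cdot\,^*)$ is smooth on $P_0$ and factors through the submersion $\pi_0$. For the injectivity of $T_\rho\iota$ it suffices, by $U_{\rho_0}$-equivariance, to treat $\rho=\rho_0$. The tangent space $T_{\rho_0}\Oc_{\rho_0}$ is the image of $T_{p_0}P_0 \cong p_0\M^a p_0 = \ug_0$ under $T\pi_0$, which sends $x\in\ug_0$ to $\frac{\de}{\de t}\big\vert_{t=0}e^{tx}\rho_0 e^{-tx} = x\rho_0 - \rho_0 x\in\M_*$; the kernel of this map is exactly $\ug_{\rho_0}$, so $T_{\rho_0}\Oc_{\rho_0}\cong\ug_0/\ug_{\rho_0}$ and $T_{\rho_0}\iota$ is precisely the induced injection $\ug_0/\ug_{\rho_0}\hookrightarrow\M_*$, $x + \ug_{\rho_0}\mapsto x\rho_0-\rho_0 x$.

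\textbf{Main obstacle.} The genuinely non-formal input is Lemma~\ref{stab}, i.e.\ that $U_{\rho_0}$ is a \emph{Banach-Lie subgroup} with a \emph{complemented} Lie algebra $\ug_{\rho_0}$ in $\ug_0$ — this is what makes all the quotients above into honest Banach manifolds and is exactly where the Takesaki conditional expectation $\mathcal E_{\rho_0}$ and the splitting \eqref{split_E} of $p_0\M p_0$ do the work. Since that lemma is already established, the remaining steps in items~\eqref{thm:32_item1}--\eqref{thm:32_item3} are the standard principal/associated bundle bookkeeping plus the identification of the tangent map with $x\mapsto x\rho_0-\rho_0 x$; the one point requiring a line of justification is that this latter map really has kernel $\ug_{\rho_0}$ rather than something larger, which follows since $x\rho_0 = \rho_0 x$ in $\M_*$ is equivalent to $x$ lying in the centralizer $(p_0\M p_0)_{\rho_0} = \M_{\rho_0}$, whose anti-hermitian part is $\ug_{\rho_0}$.
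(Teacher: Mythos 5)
Your treatment of items (i) and (ii) follows essentially the paper's route: the manifold structure on $\Oc_{\rho_0}\cong P_0/U_{\rho_0}$ is obtained by combining the charts \eqref{P_0_chart} of $P_0$ with Lemma~\ref{stab} (the paper writes out the resulting charts $\chi_p^w$ and, for (ii), explicit charts $\mu_p^w$ on the associated bundle, where you instead appeal to the general associated-bundle construction in the Banach setting; that is an acceptable, if less explicit, shortcut). The problem is in item (iii), where your argument contains a genuine error. You identify $T_{p_0}P_0$ with $\ug_0=p_0\M^a p_0$, but by \eqref{TuP0} one has $T_{p_0}P_0=(1-p_0)\M p_0\oplus p_0\M^a p_0$: the summand $(1-p_0)\M p_0$ consists precisely of the directions that move the left support, i.e.\ the directions transverse to the unitary suborbit $U_0.\rho_0$ inside the groupoid orbit (they are the ones mapped by $T\pi_0$ onto directions transverse to the fibres of $\sigma_*\colon\Oc_{\rho_0}\to\Ll_{p_0}(\M)$). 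Consequently your kernel computation, which only treats $x\in\ug_0$ and the map $x\mapsto x\rho_0-\rho_0 x$, verifies injectivity of $T_{\rho_0}\iota$ only on the tangent space of the unitary orbit; the case of tangent vectors $y\rho_0+\rho_0 y^*$ with $y\in(1-p_0)\M p_0$, and of mixed vectors, is untouched, and your conclusion $T_{\rho_0}\Oc_{\rho_0}\cong\ug_0/\ug_{\rho_0}$ is false whenever $(1-p_0)\M p_0\ne\{0\}$. The missing step is exactly the nontrivial part: if $x\in T_uP_0$ satisfies $x\rho_0 u^*+u\rho_0 x^*=0$, then multiplying on the left by $1-uu^*$ and on the right by $u$ gives $(1-uu^*)x\rho_0=0$, and since $(1-uu^*)x\in\M p_0$ and $\rho_0$ is faithful on $p_0\M p_0$ (as $\sigma_*(\rho_0)=p_0$) one gets $(1-uu^*)x=0$; only then does the remaining relation $\rho_0 u^*x=u^*x\rho_0$ place $x$ in $u\,\ug_{\rho_0}=\Ker T_u\widetilde{\pi}_0$. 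This is what the paper's proof of \eqref{thm:32_item3} does (at a general $u\in P_0$).

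A second, related defect is the reduction of the general point $\rho\in\Oc_{\rho_0}$ to $\rho_0$ ``by $U_{\rho_0}$-equivariance''. The group $U_{\rho_0}$ is the stabilizer of $\rho_0$, so it relates no two distinct points of the orbit, and the natural candidates for transporting the computation, namely the ambient linear maps $\varphi\mapsto u\varphi u^*$ with $u\in P_0$, do not preserve $\Oc_{\rho_0}$: they send $v\rho_0 v^*$ into the orbit only when $vv^*\le u^*u=p_0$, i.e.\ only on the single fibre $\sigma_*^{-1}(p_0)\cap\Oc_{\rho_0}$, which is not a neighbourhood of $\rho_0$ in the orbit. So there is no honest homogeneity argument available in this form; the simplest repair is to carry out the kernel identification $\Ker T_u\kappa=\Ker T_u\widetilde{\pi}_0$ directly at an arbitrary $u\in P_0$ (as above), which is precisely how the paper argues.
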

		
\begin{proof}
\eqref{thm:32_item1}
One has the bijective map
$P_0/ U_{\rho_0}\to\Oc_{\rho_0}$,
$u U_{\rho_0}\mapsto u\rho_0 u^*=\pi_0(u)$. 
It therefore suffices for Assertion~\eqref{thm:32_item1} to study the commutative diagram 
\begin{equation}\label{P03bis}
\xymatrix{ P_0 \ar[r]^{\widetilde{\pi}_0} \ar[dr]_{\bl_0} & P_0/ U_{\rho_0} \ar[d]^{\widetilde{\sigma}_*} \\
	& \Ll_{p_0}(\M) 
}
\end{equation}
where $\widetilde{\pi}_0(u):=uU_{\rho_0}$ and $\widetilde{\sigma}_*(uU_{\rho_0}):=uu^*=\bl(u)$, 
which is well defined since $U_{\rho_0}\subseteq U_0$.

Specifically, we endow $P_0/ U_{\rho_0}$ with a manifold structure for which the mapppings $\widetilde{\pi}_0$ and $\widetilde{\sigma}_*$ are projections on the base as indicated in the statement. 
To this end, for arbitrary $p\in\Ll_{p_0}(\M)$ we recall the local chart $$\theta_p\colon \bl^{-1}(\Pi_p)\cap P_0\to (\1-p)\M p\times \U(\M)_{p_0}^p,\quad  \theta_p(u)=(y_p,u_{pp_0})$$
where $y_p=\varphi_p(\bl(u))$, cf. \eqref{P_0_chart} and 
\eqref{rozklad}--\eqref{varphi}.  
Since 
$$\bl(ug)=ug(ug)^*=uu^*=\bl(u)$$ 
for all $u\in P_0$ and $g\in U_0$, it follows that the set $\bl^{-1}(\Pi_p)\cap P_0$ is invariant under the free action from the right of $U_0$ on $P_0$ given by $P_0\times U_0\to P_0$, $(u,g)\mapsto ug$ as in \eqref{actionU1}. 
The expression of this action in the above local chart $\theta_p$ is 
$$((y_p,u_{pp_0}),g)\mapsto (y_p,u_{pp_0}g).$$ 
For any $w\in \U(\M)_{p_0}^p$ we then define a new local chart of $P_0$ by 
$$\theta_p^w\colon \bl^{-1}(\Pi_p)\cap P_0\to (\1-p)\M p\times 
U_0, 
\quad  \theta_p^w(u)=(y_p,w^{-1}u_{pp_0})$$
where $w^{-1}u_{pp_0}\in \U(\M)_{p_0}^{p_0}=U_0$. 

We now define 
\begin{equation}
\label{chi}
\chi_{p}^w\colon \widetilde{\sigma}_*^{-1}(\Pi_p)\to (\1-p)\M p\times (U_0/U_{\rho_0}),
\quad 
\chi_{p}^w(u U_{\rho_0})=(y_p,w^{-1}u_{pp_0} U_{\rho_0})
\end{equation}
where the quotient $U_0/U_{\rho_0}$ is a smooth manifold by Lemma~\ref{stab}. 
It is straightforward to check that all the mappings $\chi_{p}^w$ are bijective for  $p\in \Ll_{p_0}(\M)$ and $w\in \U(\M)_{p_0}^p$, 
and moreover they define a smooth atlas of $P_0/U_{\rho_0}$. 
Moreover, 
$$(\chi_{p}^w\circ \widetilde{\pi}_0\circ (\Theta_{pp_0}^w)^{-1})(y,g)=(y,gU_{\rho_0}) \text{ for all }(y,g)\in (\1-p)\M p\times 
U_0.$$
This shows via Lemma~\ref{stab} that $\widetilde{\pi}_0$ is a $U_{\rho_0}$-principal bundle. 

To show that $\widetilde{\sigma}_*$ is a locally trivial bundle with its typical fiber $U_0/U_{\rho_0}$ we just need to express it in local charts like this: 
\begin{equation}
\label{sigma_star_loc}
(\varphi_p\circ \widetilde{\sigma}_*\circ (\chi_{p}^w)^{-1})(y,gU_{\rho_0})=y
\text{ for all }(y,gU_{\rho_0})\in (\1-p)\M p\times 
(U_0/U_{\rho_0})
\end{equation}
where $\varphi_p\colon\Pi_p\to(\1-p)\M p$ is the local chart of $\Ll_{p_0}(\M)$ given by~\eqref{varphi}.

\eqref{thm:32_item2}
The homogeneous space $U_0/U_{\rho_0}$ is a manifold by Lemma~\ref{stab}. 
It is straightforward to check that the mapping 
$$\tau\colon P_0\times_{U_0} (U_0/U_{\rho_0})\to P_0/U_{\rho_0},\quad 
[(u,gU_{\rho_0})]\mapsto ug U_{\rho_0}$$
is bijective and the mapping  
$$P_0\times_{U_0} (U_0/U_{\rho_0})\to P_0/U_0,\quad 
[(u,gU_{\rho_0})]\mapsto ug U_0$$
is well defined, hence one obtains the commutative diagram 
\begin{equation}\label{P04bis}
\xymatrix{ P_0\times_{U_0} (U_0/U_{\rho_0}) \ar[r]^{\ \quad \tau} \ar[d]_{\hat{\bl}_0} & P_0/ U_{\rho_0} \ar[d]^{\widetilde{\sigma}_*} \\
\Ll_{p_0}(\M) \ar[r]^{\quad\id}&  \Ll_{p_0}(\M)}
\end{equation}
which implies~\eqref{P04}. 
Here $\hat{\bl}_0([(u,gU_{\rho_0})]):=\bl(ug)=\bl(u)$ for all $u\in P_0$ and $g\in U_0$. 

It remains to endow $P_0\times_{U_0} (U_0/U_{\rho_0})$ with a manifold structure for which the mappings in the diagram~\eqref{P04bis} have  appropriate properties. 
This is achieved by the family of local charts 
\begin{align*}
\mu_p^w\colon \hat{\bl}_0^{-1}(\Pi_p)\to 
\Pi_p\times (U_0/U_{\rho_0}),
\quad 
\mu_p^w([(u,gU_{\rho_0})]):=
&(
\bl(u),w^{-1}u_{pp_0}g U_{\rho_0}) \\
=
&(\bl(u),w^{-1}u_p(\bl(u))^{-1}ug U_{\rho_0})
\end{align*}
parameterized by $p\in\Ll_{p_0}(\M)$ and $w\in\U(\M)_{p_0}^p$.  
(Here we used the factorization \eqref{5July2019} which implies $u_{pp_0}=u_p(\bl(u))^{-1}u$.)
It is straightforward to check that  
the mapping~$\mu_p^w$ is bijective, and its inverse can be 
given in terms of the  mapping $u_p\colon\Pi_p\to \U(\M)$ defined by \eqref{u_p} which is a cross-section of $\bl$. 
Specifically,   
the equation $k=w^{-1}u_p(\bl(u))^{-1}ug$ is equivalent to $ug= u_p(\bl(u))wk$ and 
using this for $g=u_p(q)$ we obtain  
$$(\mu_p^w)^{-1}(q,kU_{\rho_0})=[(u_p(q)w,kU_{\rho_0})] 
\text{ for }
q\in\Pi_p\text{ and }k\in U_0.$$ 
These local charts $\mu_p^w$ parameterized by $p\in \Ll_{p_0}(\M)$ and $w\in \U(\M)_{p_0}^p$ define a smooth atlas on $P_0\times_{U_0} (U_0/U_{\rho_0})$. 
Moreover, one has 
$$(\hat{\bl}_0\circ (\mu_p^w)^{-1})(q,kU_{\rho_0})
=\hat{\bl}_0([(u_p(q)w,kU_{\rho_0})])=\bl(u_p(q)w)=q$$
and 
$$(\chi_p^w\circ\tau\circ (\mu_p^w)^{-1})(q,kU_{\rho_0})
=\chi_p^w(u_p(q)wkU_{\rho_0})
=(y_p,kU_{\rho_0}) $$
where $y_p=\varphi_p(q)$ is given by~\eqref{varphi}. 
The desired properties of $\hat{\bl}_0$ and $\tau$ then follow directly from their local expressions given by the above formulas. 

\eqref{thm:32_item3}
It remains to show that the inclusion map $\iota\colon\Oc_{\rho_0}\hookrightarrow\M_*$ is smooth and its tangent map at every point of $\Oc_\rho$ is injective.  
To this end we use the commutative diagram 
$$\xymatrix{
	P_0 \ar[d]_{\widetilde{\pi}_0} \ar[r]^{\kappa} & \M_* \\
	P_0/U_{\rho_0} \ar[r]^{\sim} & \Oc_{\rho_0} \ar@{^{(}->}[u]_{\iota}
}$$
where $\kappa\colon P_0\to\M_*$, $\kappa(u):=u\rho_0 u^*$. 
Here $\widetilde{\pi}_0$ is a submersion and the bottom arrow is a diffeomorphism hence, in order to complete the proof, it suffices to note that $\kappa$ is smooth and for every $u\in P_0$ one has 
\begin{equation*}
{\rm Ker}\, (T_u\kappa)= {\rm Ker}\, (T_u (\widetilde{\pi}_0))
\end{equation*}
where the inclusion $\supseteq$ is clear while the converse inclusion can be established as follows. 
One has 
$$T_u\kappa\colon T_u P_0\to\M_*,\quad (T_u\kappa)(x)=x\rho_0 u^*+u\rho_0 x^*$$
and 
$T_uP_0=\{x\in\M p_0: u^*x\in ip_0\M^h p_0\}$
by \eqref{TuP0}. 
Therefore, for arbitrary $x\in \Ker(T_u\kappa)$ one has 
$u\rho_0 x^*=-x\rho_0 u^*$. 
Multiplying here both sides by $u^*$ from the left and by $u$ from the right, we obtain $\rho_0 x^*v=-u^*x\rho_0$, hence $\rho_0 u^*x=u^*x\rho_0$ 
(by the above description of $T_uP_0$), 
that is, $x\in {\rm Ker}\, (T_u (\widetilde{\pi}_0))$. 
This completes the proof. 
		\end{proof}

	We note now that the transitive subgroupoid $\bt_*^{-1}(\Oc_{\rho_0})
	\tto \Oc_{\rho_0}$ of $\U(\M)*\M_*^+\tto \M_*^+$ is  just the coadjoint action groupoid 
\begin{equation}\label{sub}
		\U_{p_0}(\M)\ast\Oc_{\rho_0}\tto \Oc_{\rho_0}
\end{equation}
of the groupoid $\U_{p_0}(\M)\tto \Ll_{p_0}(\M)$ that was defined in Subsection~\ref{Subsect2.5}. 
	We also note here that these groupoids 
	would not change upon replacement of $\rho_0$ 
	by any other element of $\Oc_{\rho_0}$.

The following proposition completes the picture from \eqref{gaugethm}. 

\begin{prop}\label{gauge_isom}
	Both the gauge groupoid $\frac{P_0\times P_0}{U_{\rho_0}}\tto P_0/{U_{\rho_0}}$ and the action group\-oid~\eqref{sub} are \Banach Lie groupoids.
Moreover, one has the isomorphism 
\begin{equation}
\label{isom}
\xymatrix{
    	\frac{P_0\times P_0}{U_{\rho_0}} \ar@<-.5ex>[d] \ar@<.5ex>[d] \ar[r]^{[\Psi]\quad} 
    	& \U_{p_0}(\M)\ast \Oc_{\rho_0} \ar@<-.5ex>[d] \ar@<.5ex>[d]\\
    	P_0/U_{\rho_0} \ar[r]^{[\pi_0]\quad} & \Oc_{\rho_0}
         }
\end{equation}
between these \Banach Lie groupoids, where 
\begin{equation}\label{Psi} 
P_0\times P_0\ni (u,v)\mapsto \Psi(u,v):=(uv^*, v\rho_0 v^*)\in \U_{p_0}(\M)\ast\mathcal{O}_{\rho_0}
\end{equation}
		and 
\begin{equation}\label{psi} 
P_0\ni v\mapsto \pi_0(u)= u\rho_0 u^*\in \Oc_{\rho_0},
\end{equation}
respectively.
\end{prop}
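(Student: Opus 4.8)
The plan is to deduce the statement from the material already developed, essentially by transporting the gauge groupoid structure \eqref{gaugethm} and the orbit structure from Theorem~\ref{thm:32} along the quotient map $P_0\times P_0\to \frac{P_0\times P_0}{U_{\rho_0}}$. First I would observe that $U_{\rho_0}\subseteq U_0$ acts freely on $P_0$ from the right (freeness is inherited from the free $U_0$-action \eqref{actionU1}), hence freely and diagonally on $P_0\times P_0$; by Lemma~\ref{stab} and the already-established principal bundle $\widetilde\pi_0\colon P_0\to P_0/U_{\rho_0}$ from Theorem~\ref{thm:32}\eqref{thm:32_item1}, the diagonal quotient $\frac{P_0\times P_0}{U_{\rho_0}}$ is a smooth manifold and the two projections $[(u,v)]\mapsto uU_{\rho_0}$ and $[(u,v)]\mapsto vU_{\rho_0}$ onto $P_0/U_{\rho_0}$ are surjective submersions. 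This is exactly the setup of a gauge groupoid of a principal bundle, so $\frac{P_0\times P_0}{U_{\rho_0}}\tto P_0/U_{\rho_0}$ is a \Banach Lie groupoid by the standard construction (the same construction already invoked for \eqref{gaugethm}); via the identification $P_0/U_{\rho_0}\cong \Oc_{\rho_0}$ from Theorem~\ref{thm:32} we may equally regard its base as $\Oc_{\rho_0}$.

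Next I would treat the action groupoid \eqref{sub}. By Theorem~\ref{thm:32}\eqref{thm:32_item1}--\eqref{thm:32_item2}, $\Oc_{\rho_0}$ is a smooth \Banach manifold (indeed the total space of the locally trivial bundle $\sigma_*\colon\Oc_{\rho_0}\to\Ll_{p_0}(\M)$), the map $\sigma_*\colon\Oc_{\rho_0}\to\Ll_{p_0}(\M)$ is smooth, and the action \eqref{coaction} restricted to $\U_{p_0}(\M)\tto\Ll_{p_0}(\M)$ is smooth since it is given by $(u,\rho)\mapsto u\rho u^*$, which is real-analytic in the Banach manifold charts of Subsection~\ref{Subsect2.5} and of Theorem~\ref{thm:32}. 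Because $\U_{p_0}(\M)\tto\Ll_{p_0}(\M)$ is a \Banach Lie groupoid (the transitive component of the partial-isometry groupoid, cf.\ Subsection~\ref{Subsect2.5}) acting smoothly on $\Oc_{\rho_0}$ along the submersion $\sigma_*$, the associated action groupoid $\U_{p_0}(\M)\ast\Oc_{\rho_0}\tto\Oc_{\rho_0}$ is a \Banach Lie groupoid: its manifold structure is that of the fibered product $\U_{p_0}(\M)\times_{\Ll_{p_0}(\M),\sigma_*}\Oc_{\rho_0}$, which is a \Banach submanifold of $\U_{p_0}(\M)\times\Oc_{\rho_0}$ because $\br\colon\U_{p_0}(\M)\to\Ll_{p_0}(\M)$ is a submersion, and the structural maps \eqref{s}--\eqref{e} are visibly smooth in these coordinates.

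It then remains to verify that the maps $[\Psi]$ and $[\pi_0]$ in \eqref{isom} are well defined and give an isomorphism of \Banach Lie groupoids. Well-definedness of $[\Psi]$ is the computation $\Psi(ug,vg)=(ug g^*v^*, v g\rho_0 g^*v^*)=(uv^*, v\rho_0 v^*)=\Psi(u,v)$ for $g\in U_{\rho_0}$, using $g\rho_0 g^*=\rho_0$ and $g^*g=gg^*=p_0$; well-definedness of $[\pi_0]$ is the identity $\pi_0(ug)=ug\rho_0 g^*u^*=u\rho_0 u^*=\pi_0(u)$, which is already recorded in Theorem~\ref{thm:32}. That $([\Psi],[\pi_0])$ is a groupoid morphism is a direct check against the multiplications \eqref{9} and the gauge-groupoid multiplication $[(u,v)]\cdot[(v,w)]=[(u,w)]$: one has $[\Psi]([(u,v)])\cdot[\Psi]([(v,w)])=(uv^*,v\rho_0 v^*)\cdot(vw^*,w\rho_0 w^*)=(uv^*vw^*, w\rho_0 w^*)=(uw^*,w\rho_0 w^*)=[\Psi]([(u,w)])$, where $v^*v=p_0$ kills the middle factor; compatibility with source/target/inverse/unit is equally immediate. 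Bijectivity follows from the fact that $\Phi(u,v)=uv^*$ in \eqref{gaugethm} is already known to induce an isomorphism $\frac{P_0\times P_0}{U_0}\xrightarrow{\sim}\U_{p_0}(\M)$, together with the principal-bundle structure $\widetilde\pi_0\colon P_0\to\Oc_{\rho_0}$: an inverse to $[\Psi]$ sends $(w, v\rho_0 v^*)\in\U_{p_0}(\M)\ast\Oc_{\rho_0}$ to $[(wv, v)]$, and one checks this is independent of the choice of $v\in\pi_0^{-1}(v\rho_0 v^*)$ and is the set-theoretic inverse of $[\Psi]$. Finally, smoothness of $[\Psi]$, $[\pi_0]$ and of the inverse of $[\Psi]$ is checked locally using the charts $\chi_p^w$ from \eqref{chi} and $\mu_p^w$ from the proof of Theorem~\ref{thm:32}, together with the chart $\theta_p$ from \eqref{P_0_chart} and the local cross-section $u_p$ of $\bl$; in these coordinates all three maps become compositions of the analytic chart transition formulas and the smooth quotient map $U_0\to U_0/U_{\rho_0}$, so the isomorphism is smooth in both directions.

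The main obstacle I anticipate is not any single computation but the bookkeeping needed to show that the locally defined inverse $[(u,v)]\leftrightarrow(uv^*, v\rho_0 v^*)$ is genuinely well defined and smooth: one must carefully use that two preimages $v, v'\in P_0$ of the same point of $\Oc_{\rho_0}$ differ by an element of $U_{\rho_0}$ (this is precisely $\widetilde\pi_0$ being a $U_{\rho_0}$-principal bundle, Theorem~\ref{thm:32}\eqref{thm:32_item1}) and then track this ambiguity through the identification $\frac{P_0\times P_0}{U_{\rho_0}}$, where the diagonal action absorbs exactly such a factor. Everything else --- the two groupoids being \Banach Lie groupoids, and $[\Psi],[\pi_0]$ being morphisms --- reduces to assembling results already proved (Lemma~\ref{stab}, Theorem~\ref{thm:32}, the gauge isomorphism \eqref{gaugethm}, and the general theory of action groupoids of \Banach Lie groupoids from \cite{Ma05}).
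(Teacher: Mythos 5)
Your proposal is correct, and on the algebraic side (well-definedness of $[\Psi]$ and $[\pi_0]$, the morphism identities, bijectivity) it coincides with what the paper does; your explicit inverse $(w,v\rho_0v^*)\mapsto[(wv,v)]$ is a nice compact way to get bijectivity, whereas the paper just inverts $[\widetilde{\Psi}]$ in charts. Where you genuinely diverge is in how the two smooth structures are obtained: you invoke general constructions --- the gauge groupoid of the Banach principal bundle $\widetilde{\pi}_0\colon P_0\to P_0/U_{\rho_0}$ from Theorem~\ref{thm:32}\eqref{thm:32_item1} (an associated-bundle argument), and the fibered product $\U_{p_0}(\M)\times_{\Ll_{p_0}(\M)}\Oc_{\rho_0}$ along the submersions $\br$ and $\sigma_*$ for the action groupoid --- while the paper instead builds explicit atlases: the charts $\widetilde{\kappa}_p$ for $\frac{P_0\times P_0}{U_{\rho_0}}$ out of local sections $\beta_p$ of $\pi_0$ (Step~1), the charts $\widetilde{\nu}_{p\tilde{p}}^w$ for $\U_{p_0}(\M)\ast(P_0/U_{\rho_0})$ out of $\Theta_{p\tilde p}$ and $\chi_p^w$ (Step~2), and then computes $[\widetilde{\Psi}]$ in these coordinates (Step~3). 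The paper itself notes, in the remark immediately after the proof, that the quotient manifold structures ``could have been less explicitly described using the general results on quotients of Banach manifolds,'' so your route is the acknowledged alternative; it buys brevity and conceptual clarity, at the price of having to certify in the Banach setting that those general results apply (no properness/slice subtleties arise here precisely because of the local triviality already proved in Theorem~\ref{thm:32}). One point you should not leave at the level of assertion: smoothness of the action map $(u,\rho)\mapsto u\rho u^*$ \emph{into} $\Oc_{\rho_0}$ with its refined manifold structure does not follow from smoothness into $\M_*$ (the inclusion $\Oc_{\rho_0}\hookrightarrow\M_*$ is only a weak immersion), so the local verification through $P_0/U_{\rho_0}$ and the charts $\chi_p^w$, $\theta_p$, $u_p$ that you sketch at the end is genuinely needed --- it is exactly the content of the paper's Steps~2--3, and the explicit formulas obtained there are what make the paper's version self-contained. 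With that verification written out, your plan closes completely.
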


\begin{proof}
 One has the actions 
\begin{equation}
\label{aPP} P_0\times P_0\ni (u,v)\mapsto (vg,ug)\in P_0\times P_0
\end{equation}
and 
\begin{equation}
\label{aP} P_0\ni u\mapsto vg\in P_0
\end{equation}
 of the group $ U_{\rho_0}\subset U_{0}$, where $g\in U_{\rho_0}$. 
 By $[u,v]$ and $[u]$ we will denote the orbits of $U_{\rho_0}$ through the elements $(u,v)$ and $u$, respectively. 
 We recall the structural maps for the gauge groupoid $\frac{P_0\times P_0}{U_{\rho_0}}\tto P_0/{U_{\rho_0}}$:
\begin{enumerate}[{\rm(a)}]
\item
the target and source maps are
$\bt([u,v]):=[u]$ and $\bs([u,v]):=[v]$
\item
the groupoid multiplication is 
\begin{equation*}
[u,v]\cdot[v',w']:=[ug,w'],
\end{equation*}
where $g\in U_{\rho_0}$ is defined by $vg=v'$,
\item 
the inverse map is 
$[u,v]^{-1}:=[v,u]$ 
\item
the object inclusion map is  
$\bepsilon([u]):=[u,u]$.
\end{enumerate}
Let us note that that maps $\Psi$ and $\pi_0$ defined in \eqref{Psi}  and \eqref{psi} are invariant with respect to the actions \eqref{aPP} and \eqref{aP}, hence one has the well-defined 
bijections $[\Psi]:\frac{P_0\times P_0}{U_{\rho_0}}\to \U_{p_0}(\M)\ast\Oc_{\rho_0}$ and 
$[\pi_0]: P_0/{U_{\rho_0}}\to \Oc_{\rho_0}$. 
We easily see that 
\begin{enumerate}[{\rm(a)}]
	\item
	one has 
 \begin{align*}
 \bt_*([\Psi])([u,v])
 & =\bt_*(uv^*,v\rho_0 v^*) 
  =uv^*v\rho_0v^*vu^* 
  =up_0\rho_0 u^* 
 =u\rho_0 u^* 
 =\pi_0(u) \\
 & =[\pi_0]([u]) 
 =[\Psi](\bt([u,v]))
 \end{align*}
and 
 $$\bs_*([\Psi])([u,v])=\bs_*(uv^*,v\rho_0 v^*)=v\rho_0v^*=\pi_0(v)=[\pi_0]([v])=[\Psi](\bs([u,v])),$$ 
\item
moreover 
\begin{align*}
 [\Psi]([u,v]\cdot[v',w'])
 &=[\Psi]([ug,w']) 
 =(ugw^{'*},w'\rho_0w^{'*}) 
 =(uv^*vgw^{'*}, w'\rho_0w^{'*}) \\
 &=(uv^*u'w^{'*}, w'\rho_0w^{'*}) 
=(uv^*,v\rho_0 v^*)\cdot (v'w^{'*},w'\rho_0w^{'*}) \\
&=[\Psi]([u,v])\cdot [\Psi]([v',w']),
\end{align*}
\item
also 
 \begin{align*}
 \bi_*([\Psi]([u,v]))
 &=\bi((uv^*,v\rho_0v^*)) 
  =(vu^*,uv^*v\rho_0v^*(uv^*)^*) 
 =(vu^*,u\rho_0u^*)\\
 &=[\Psi]([v,u]) 
 =[\Psi]([u,v]^{-1}),
 \end{align*}
\item
and finally 
$$\bepsilon_*([\pi_0]([u]))=\bepsilon_*(u\rho_0u^*)=(u^*u,u\rho_0u^*)=[\Psi]([u,u])=[\Psi](\bepsilon ([u])).$$ 
\end{enumerate}
The above equalities show  that the bijections $[\Psi]$ and $[\pi_0]$ define the groupoid isomorphism \eqref{isom}.

We now turn to establishing the smoothness properties of the spaces and mappings in the diagram~\eqref{isom}. 
We do this in three steps: 

{\it Step 1:} Smooth structure of the gauge groupoid $\frac{P_0\times P_0}{U_{\rho_0}}\tto P_0/U_{\rho_0}$. 

Recall from Theorem~\ref{thm:32}\eqref{thm:32_item1} that $[\pi_0]$ is a diffeomorphism by the definition of the smooth manifold structure of $\Oc_{\rho_0}$ and moreover $\pi_0\colon P_0\to \Oc_{\rho_0}$ is a $U_{\rho_0}$-principal bundle, hence there exist an open covering 
$\Oc_{\rho_0}=\bigcup\limits_{p\in \Ll_{p_0}(\M)}D_p$ and a family of smooth cross-sections $\beta_p\colon D_p\to P_0$ satisfying $\pi_0\circ\beta_p=\id_{D_p}$ for all $p\in \Ll_{p_0}(\M)$. 
(Specifically, with the notation from the proof of Theorem~\ref{thm:32}\eqref{thm:32_item1}, one may take  $D_p:=[\pi_0](\widetilde{\sigma}_*(\Pi_p))\subseteq\Oc_{\rho_0}$ for every $p\in\Ll_{p_0}(\M)$, 
where $\widetilde{\sigma}_*(\Pi_p)$ is the domain of the local chart $\chi_p^w$ of $P_0/U_{\rho_0}$.) 
Then for every $p\in \Ll_{p_0}(\M)$ we define a local chart of $\frac{P_0\times P_0}{U_{\rho_0}}$ by 
\begin{equation}\label{kappa_tilde}
\widetilde{\kappa}_p\colon \bt^{-1}([\pi_0]^{-1}(D_p))\to D_p\times P_0,\quad 
\widetilde{\kappa}_p([(u,v)]):=(\pi_0(u),vu^*\beta_p(\pi_0(u))).
\end{equation}
Here we note that, since $U_{\rho_0}$ acts freely transitively on the fibers of $\pi_0$ and $\pi_0(u)=\pi_0(\beta_p(\pi_0(u)))$, 
one has $\beta_p(\pi_0(u))=ug$ 
where $g=
u^*\beta_p(\pi_0(u))\in U_{\rho_0}$ hence $$(u,v)\sim
(uu^*\beta_p(\pi_0(u)),vu^*\beta_p(\pi_0(u)))
=(\beta_p(\pi_0(u)),vu^*\beta_p(\pi_0(u)))$$
with respect to the equivalence relation $\sim$ defined by the diagonal action from the right of $U_{\rho_0}$ on $P_0\times P_0$. 
It is straightforward to check that for every $p\in \Ll_{p_0}(\M)$ 
the mapping $\widetilde{\kappa}_p$ given by \eqref{kappa_tilde} is bijective, having its inverse 
\begin{equation*}
\widetilde{\kappa}_p^{-1}\colon D_p\times P_0\to \bt^{-1}([\pi_0]^{-1}(D_p)),\quad 
\widetilde{\kappa}_p^{-1}(\rho,w):=[(\beta_p(\rho),w\beta_p(\rho))].
\end{equation*}
Moreover the mappings $\{\widetilde{\kappa}_p : p\in \Ll_{p_0}(\M)\}$  define a smooth atlas that makes the gauge groupoid $\frac{P_0\times P_0}{U_{\rho_0}}$ into a Lie groupoid. 

{\it Step 2:} Smooth structure of the action groupoid 
$\U_{p_0}(\M)\ast \Oc_{\rho_0}\tto\Oc_{\rho_0}$. 

One has 
$$\U_{p_0}(\M)\ast \Oc_{\rho_0}=\{(u,\rho)\in \U(\M)\times \Oc_{\rho_0}: \br(u)=\sigma(\rho)\}$$ 
where both maps $\br\colon \U(\M)\to\Ll(\M)$ and $\sigma_*\colon \Oc_{\rho_0}\to \Ll_{p_0}(\M)\subseteq\Ll(\M)$ are submersions. 
To construct an explicit atlas of $\U_{p_0}(\M)\ast \Oc_{\rho_0}$ 
it suffices, via the bijection $[\pi_0]\colon P_0/U_{\rho_0}\to \Oc_{\rho_0}$, $vU_{\rho_0}\mapsto\pi_0(v)$,  to construct an atlas for 
$$\U_{p_0}(\M)\ast(P_0/U_{\rho_0})=\{(u,vU_{\rho_0})\in \U_{p_0}(\M)\times (P_0/U_{\rho_0}): \br(u)=\sigma(\pi_0(v))=\bl(v)\}.$$ 
To this end we note that for every $p,\tilde{p}\in\Ll_{p_0}(\M)$, 
one has the local chart of $\Ll_{p_0}(\M)$ on the open neighbourhood 
$\Pi_{\tilde{p}}$ of $\tilde{p}\in\Ll_{p_0}(\M)$, 
$$\varphi_{\tilde{p}}\colon \Pi_{\tilde{p}}\to (\1-\tilde{p})\M\tilde{p},\quad 
$$
(see \eqref{varphi}), 
the local chart of $\U(\M)$  
\begin{align*}
\Theta_{p\tilde{p}}\colon\bl^{-1}(\Pi_p)\cap\br^{-1}(\Pi_{\tilde{p}})
 & \to(\1-p)\M p\times\U(\M)_{\tilde{p}}^p\times (\1-\tilde{p})\M\tilde{p}, \\
u & \mapsto (\varphi_p(\bl(u)),u_{p\tilde{p}},\varphi_{\tilde{p}}(\br(u)))
\end{align*}
(see \eqref{atlas3}) and the local chart of $P_0/U_{\rho_0}$ for any $w\in\U(\M)_{p_0}^p$, 
\begin{align*}
\widetilde{\chi}_{\tilde{p}}^w\colon 
\widetilde{\sigma}_*^{-1}(\Pi_{\tilde{p}})
& \to (\1-\tilde{p})\M\tilde{p}\times(U_0/U_{\rho_0}),\\ 
vU_{\rho_0}
& \mapsto (\varphi_{\tilde{p}}(\bl(u)),w^{-1}v_{\tilde{p}p_0} U_{\rho_0})
\end{align*}
(see \eqref{chi}). 
Moreover, one has 
\begin{align*}
\varphi_{\tilde{p}}\circ \br\circ \Theta_{p\tilde{p}}^{-1}
\colon 
(\1-p)\M p\times\U(\M)_{\tilde{p}}^p\times (\1-\tilde{p})\M\tilde{p}
 & \to 
(\1-\tilde{p})\M\tilde{p},\\
(y_p,u_{p\tilde{p}},y_{\tilde{p}}) 
& \mapsto y_{\tilde{p}}
\end{align*}
and, as noted in \eqref{sigma_star_loc},
\begin{align*}
	\varphi_{\tilde{p}}\circ \sigma_*\circ (\widetilde{\chi}_{\tilde{p}}^w)^{-1}
\colon 
(\1-\tilde{p})\M\tilde{p}\times(U_0/U_{\rho_0})
& \to 
(\1-\tilde{p})\M\tilde{p},\\ 
(y_{\tilde{p}},gU_{\rho})
& \mapsto y_{\tilde{p}}.
\end{align*}
It follows that, denoting 
\begin{align*}
	(\U(\M)\ast(P_0/U_{\rho_0}))_{p\tilde{p}}
	:=\{(u,vU_{\rho_0}) & \in \U(\M)\times (P_0/U_{\rho_0}): \\ 
	& \br(u)=\sigma_*(\pi_0(v))=\bl(v)\in\Pi_{\tilde{p}},\ \bl(u)\in\Pi_p\},
\end{align*}
the mapping 
\begin{align*}
\widetilde{\nu}_{p\tilde{p}}^w\colon (\U(\M)\ast(P_0/U_{\rho_0}))_{p\tilde{p}}
& \to 
(\1-p)\M p\times\U(\M)_{\tilde{p}}^p\times (\1-\tilde{p})\M\tilde{p}\times(U/U_{\rho_0}),\\
(u,vU_{\rho_0})
&\mapsto 
(\varphi_p(\bl(u)),u_{p\tilde{p}},\varphi_{\tilde{p}}(\br(u)),w^{-1}v_{p\tilde{p}}U_{\rho_0})
\end{align*}
is bijective. 
Moreover, the family of mappings 
$$\{\widetilde{\nu}_{p\tilde{p}}^w : p,\tilde{p}\in\Ll_{p_0}(\M),w\in\U(\M)_{p_0}^p\}$$ 
is a smooth atlas on 
$\U_{p_0}(\M)\ast(P_0/U_{\rho_0})$ for which the action groupoid 
$\U_{p_0}(\M)\ast(P_0/U_{\rho_0})\tto P_0/U_{\rho_0}$ is a Lie groupoid. 

{\it Step 3:} The mapping $[\Psi]\colon \frac{P_0\times P_0}{U_{\rho_0}} \to
\U(\M)\ast \Oc_{\rho_0}$ is a diffeomorphism. 

As above, it suffices to show that the mapping 
$$[\widetilde{\Psi}]\colon \frac{P_0\times P_0}{U_{\rho_0}} \to
\U_{p_0}(\M)\ast (P_0/U_{\rho_0})$$
is a diffeomorphism, where 
\begin{equation}\label{Psi_tilde} 
P_0\times P_0\ni (u,v)\mapsto \widetilde{\Psi}(u,v):=(uv^*, vU_{\rho_0})\in \U(\M)\ast (P_0/U_{\rho_0}).
\end{equation}
Using the above local charts, one obtains 
\begin{align*}
(\widetilde{\nu}_{p\tilde{p}}^w\circ [\widetilde{\Psi}]\circ \widetilde{\kappa}_p^{-1})(\rho,v)
& =(\widetilde{\nu}_{p\tilde{p}}^w\circ [\widetilde{\Psi}])(\beta_p(\rho),v\beta_p(\rho)) \\
&=\widetilde{\nu}_{p\tilde{p}}^w(v^*,v\beta_p(\rho)U_{\rho_0}) \\
&=(\varphi_p(\br(v)),(v^*)_{p\tilde{p}},\varphi_{\tilde{p}}(\bl(v)),
w^{-1}(v\beta_p(\rho))_{p\tilde{p}}U_{\rho_0})
\end{align*}
which shows that $[\widetilde{\Psi}]$ is smooth. 
One can similarly show that $[\widetilde{\Psi}]^{-1}$ is smooth, and this completes the proof. 
\end{proof}

We mention that the \Banach manifold structure of the quotient sets that occur in Theorem~\ref{thm:32} and Proposition~\ref{gauge_isom} could have been less explicitly described using the general results on quotients of \Banach manifolds. 
Applications of this alternative method in other instances can be found e.g., in \cite{ACM05}, \cite{AV05}, \cite{BGJP19}, and the references therein. 

Now we establish the main result of this section.

\begin{thm}\label{coadj_grpd} 
	The coadjoint action groupoid $\U(\M)*\M_*^+\tto \M_*^+$ is a \Banach Lie groupoid. 
\end{thm}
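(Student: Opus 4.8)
The plan is to build the Banach manifold structure on $\U(\M)\ast\M_*^+$ component by component, using the decomposition of the base $\M_*^+$ into coadjoint orbits and Theorem~\ref{thm:32}, then to verify the Lie groupoid axioms locally. First I would observe that, since $\U(\M)*\M_*^+\tto\M_*^+$ is a disjoint union of the transitive subgroupoids $\bt_*^{-1}(\Oc_{\rho_0})\tto\Oc_{\rho_0}$, which by \eqref{sub} are precisely the action groupoids $\U_{p_0}(\M)\ast\Oc_{\rho_0}\tto\Oc_{\rho_0}$, it suffices to show that each of these transitive pieces is a Banach Lie groupoid and then to check that they are open-closed in a suitable topology on the whole space. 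The orbit decomposition of $\M_*^+$ itself is the content of modular/Murray--von Neumann equivalence theory already invoked above; the key point is that $\Oc_{\rho_0}$ and $\Oc_{\rho_1}$ coincide or are disjoint, and each carries the Banach manifold structure produced in Theorem~\ref{thm:32}.

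For each transitive piece, I would simply invoke Proposition~\ref{gauge_isom}: it states that the gauge groupoid $\frac{P_0\times P_0}{U_{\rho_0}}\tto P_0/U_{\rho_0}$ is a Banach Lie groupoid and that $[\Psi]$ is a groupoid isomorphism onto $\U_{p_0}(\M)\ast\Oc_{\rho_0}\tto\Oc_{\rho_0}$. Hence each $\U_{p_0}(\M)\ast\Oc_{\rho_0}\tto\Oc_{\rho_0}$ is a transitive Banach Lie groupoid, the manifold structure on $\U_{p_0}(\M)\ast\Oc_{\rho_0}$ being transported from the gauge groupoid (equivalently, described directly by the atlas $\{\widetilde{\nu}_{p\tilde p}^w\}$ constructed in Step~2 of that proof). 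It then remains to assemble these over all orbits: declare $\U(\M)*\M_*^+=\bigsqcup_{\rho_0}\bigl(\U_{p_0}(\M)\ast\Oc_{\rho_0}\bigr)$ as a manifold, with $\M_*^+=\bigsqcup_{\rho_0}\Oc_{\rho_0}$ as its base, and check that source, target, multiplication, inversion and object inclusion are smooth. Compatibility with the disjoint-union structure is automatic because all five structural maps preserve the orbit components: $\bs_*,\bt_*$ land in the same $\Oc_{\rho_0}$, the product of composable arrows stays in one transitive piece, $\bi_*$ preserves it, and $\bepsilon_*(\rho_0)\in\U_{p_0}(\M)\ast\Oc_{\rho_0}$. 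So smoothness of the global structural maps reduces to smoothness on each transitive piece, which is exactly the Banach Lie groupoid property already established for $\frac{P_0\times P_0}{U_{\rho_0}}\tto P_0/U_{\rho_0}$.

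The one genuine subtlety — and what I expect to be the main obstacle — is pinning down the topology and the model spaces on the disjoint union: the manifolds $\Oc_{\rho_0}$ for distinct orbits are modeled on generally non-isomorphic Banach spaces, and one must be comfortable (as the paper explicitly is, following \cite{Bou,Lang}) with a manifold whose connected components have distinct models. Concretely, I would take the disjoint-union topology, note that each $\U_{p_0}(\M)\ast\Oc_{\rho_0}$ is then open and closed, and observe that the source/target fibers are contained in single transitive pieces so that local triviality of source and target (needed for the Lie groupoid axioms) follows from their local triviality inside each piece — again from Proposition~\ref{gauge_isom}. Having verified the structural maps are smooth, that $\bs_*$ (equivalently $\bt_*$) is a submersion, and that the object inclusion is a closed embedding (all checkable orbitwise), the conclusion follows: $\U(\M)*\M_*^+\tto\M_*^+$ is a Banach Lie groupoid.
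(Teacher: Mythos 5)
Your proposal is correct and follows essentially the same route as the paper: the paper's proof of Theorem~\ref{coadj_grpd} likewise decomposes the coadjoint action groupoid into its transitive subgroupoids $\U_{p_0}(\M)\ast\Oc_{\rho_0}\tto\Oc_{\rho_0}$ over the orbits and invokes Proposition~\ref{gauge_isom} to conclude each piece is a Lie groupoid, the disjoint union then inheriting the structure. Your additional remarks about the disjoint-union topology, orbit-preservation of the structural maps, and non-isomorphic model spaces merely make explicit what the paper leaves implicit.
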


\begin{proof} 
The coadjoint action groupoid $\U(\M)*\M_*^+\tto \M_*^+$ is the disjoint union of its transitive subgroupoids 
$\U_{p_0}(\M)\ast\Oc_{\rho_0}\tto\Oc_{\rho_0}$ parameterized by $\rho_0\in\M_*^+$.  
As established in Proposition~\ref{gauge_isom} each one of these transitive groupoids is a Lie groupoid, hence their disjoint union 
is 
a Lie groupoid.
\end{proof}

We point out once again that the \Banach manifold structures of total space and the unit space of the coadjoint action groupoid $\U(\M)*\M_*^+\tto \M_*^+$ from Theorem~\ref{coadj_grpd} provide nontrivial examples of manifolds in the sense of \cite{Bou}, 
whose various connected components are modeled on different Banach spaces. 
Such manifolds are not uncommon both in finite dimensions and in infinite dimensions, for instance the Grassmann manifolds consisting of the orthogonal projections in various $C^*$-algebras. 
However, what is special in the case of the groupoid $\U(\M)*\M_*^+\tto \M_*^+$ is that both its total space and its base have original topologies of connected spaces that fail to be manifolds, and therefore these topologies need to be enriched in order to obtain spaces that  carry \Banach manifold structures.

 \section{Symplectic dual pair related to a von Neumann algebra}\label{Sect4}

In the preceding sections, $\M$ was regarded as an abstract $W^*$-algebra. 
In this section we will consider it realized as a von Neumann algebra on a complex Hilbert space~$\H$. 
The commutant $\M'$ thus appears on the stage, 
and the main point of this section is the interaction between the Poisson structures on the preduals of $\M$ and $\M'$, as well as their relation to the canonical symplectic structure of the Hilbert space $\H$. 
See for instance Propositions \ref{diagdual_prop1}, \ref{prop:4.4}, and \ref{HP}. 

More specifically, let $\iota(\M)=\iota(\M)''\subset L^\infty (\H)$, 
where  $\iota:\M\hookrightarrow L^\infty (\H)$ is  an inclusion map of $\M$ into  the $W^*$-algebra $L^\infty (\H)$
of bounded operators on the  
complex Hilbert space~$\H$.  
Further by $\iota':\M'\hookrightarrow  L^\infty (\H)$ we will denote the inclusion map for the commutant $\M'$ of $\M$ in $L^\infty (\H)$, i.e. $\iota '(\M ')=\iota (\M)'$. 
The corresponding predual maps $\iota_*:L^1(\H)\to \M_*$ and $\iota'_*:L^1(\H)\to \M '_*$ are defined by 
\begin{equation} \label{iota*}\langle \iota_*(\rho),x\rangle=\Tr(\rho\iota(x))
\end{equation}
for any $\rho\in L^1(\H)$ and $x\in \M$. 
For the definition of $\iota'_*$ one replaces  $x\in \M$ by $x'\in \M'$ and $\iota $ by $\iota '$ in \eqref{iota*}. 
We will use the following notation:
\begin{enumerate}[(i)]
\item $(\iota_*)^*(C^\infty(\M_*,\mathbb{C})):=\{F\circ \iota_*: F\in C^\infty(\M_*,\mathbb{C})\}$;
\item $(\iota_*)^*(C^\infty(\M_*,\mathbb{C}))'$ stands for the commutant of $(\iota_*)^*(C^\infty(\M_*,\mathbb{C}))$ in the function algebra $C^\infty(L^1(\H),\mathbb{C})$ with respect to the Lie-Poisson bracket $\{\cdot,\cdot\}_{L^1}$ defined in \eqref{bracketi};
\item analogous notation will be used for $\M '_*$;
\item although one has the equality  $(\iota_*)^*(\M)=\iota(\M)$ we will consider $(\iota_*)^*(\M)'$ as the commutant of $(\iota_*)^*(\M)$ in $C^\infty(L^1(\H),\mathbb{C})$ while $\iota(\M)'$ as the commutant of $\iota(\M)$ in $L^\infty(\H)$ (note that $\M\subset C^\infty(\M_*,\mathbb{C}))$.
\end{enumerate}

\begin{prop}\label{diagdual_prop1}
\begin{enumerate}[{\rm(i)}]
\item\label{diagdual_prop1_item1}
One has the following surjective Poisson morphisms 
 \begin{equation}\label{diagdual}
     \xymatrix
     { 	& L^1(\H) 	\ar[dl]_{\iota_* ' } \ar[dr]^{\iota_*} &     \\
     	\M'_*  &	 &  \M_* 
     }
     \end{equation}	
			of complex Lie-Poisson spaces.
\item\label{diagdual_prop1_item2} 
In the Poisson algebra $C^\infty(L^1(\H),\C)$ one has the Poisson commutation relations 
\begin{equation}\label{43a} ((\iota_*)^*(\M))'=(\iota_*)^*(C^\infty(\M_*,\mathbb{C}))'
\end{equation}
and
\begin{equation} \label{44a}(\iota_*)^*(C^\infty(\M_*,\mathbb{C}))\subset  (\iota_*)^*(C^\infty(\M_*,\mathbb{C}))''\subset (\iota '_*)^*(C^\infty(\M_*',\mathbb{C}))'
\end{equation}
These relations are satisfied for $\M'$, too.

\item\label{diagdual_prop1_item3} 
Assertions similar to  \eqref{diagdual_prop1_item1}--\eqref{diagdual_prop1_item2}
hold true for the hermitian part  
			 \begin{equation}\label{diagdual2}
     \xymatrix
     { 	& L^1_h(\H) 	\ar[dl]_{\iota_* ' } \ar[dr]^{\iota_*} &     \\
     	\M'_{h*}  &	 &  \M_{h*} 
     }
     \end{equation}	
of the diagram \eqref{diagdual} when one considers the corresponding real Poisson algebras, see \eqref{bracketi} and \eqref{LPbracketi}.
\end{enumerate}
\end{prop}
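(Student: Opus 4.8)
The plan is to establish the three assertions in order, building on the Lie-Poisson machinery from Subsections \ref{Subsect2.3}. For \eqref{diagdual_prop1_item1}, the key observation is that $\iota_*\colon L^1(\H)\to\M_*$ is the predual of the (isometric, unital) $W^*$-inclusion $\iota\colon\M\hookrightarrow L^\infty(\H)$, and similarly for $\iota'_*$. So I would invoke Proposition~\ref{prop:2.6}: since $\iota^*=\iota_*^*\colon\M\to L^\infty(\H)$ is a $*$-homomorphism, its range $\iota(\M)$ is closed under the associative product, hence under the commutator bracket $[\cdot,\cdot]$ of $L^\infty(\H)$; thus $\iota_*^*(\M_*^*)=\iota(\M)$ is a Banach-Lie subalgebra of $L^\infty(\H)$. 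Then Proposition~\ref{prop:2.6} says precisely that $\M_*$ carries the Lie-Poisson structure coinduced by $\iota_*$, i.e. that $\iota_*$ is a Poisson morphism; surjectivity of $\iota_*$ is standard (it is the predual of an injective normal $*$-homomorphism, hence a quotient map onto $\M_*$). The same argument verbatim gives the statement for $\iota'_*$.

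For \eqref{diagdual_prop1_item2}, I would unwind the definitions of the Poisson commutant. First, $(\iota_*)^*(C^\infty(\M_*,\C))=\{F\circ\iota_*\}$ consists exactly of those smooth functions on $L^1(\H)$ whose differential at every $\rho$ lies in $\iota(\M)\subseteq L^\infty(\H)\cong L^1(\H)^*$ (this uses that $\iota_*$ is a surjection with $\iota_*^*$ having closed range $\iota(\M)$, plus the chain rule $\de(F\circ\iota_*)(\rho)=\iota(\de F(\iota_*\rho))$). Using the explicit bracket \eqref{LPbracket}, $\{f,g\}(\rho)=\Tr(\rho[\de f(\rho),\de g(\rho)])$, the condition that $g$ Poisson-commutes with \emph{all} $f$ with $\de f(\rho)\in\iota(\M)$ translates into $\Tr(\rho[a,\de g(\rho)])=0$ for all $a\in\iota(\M)$ and all $\rho$; since $\rho$ ranges over all of $L^1(\H)$ and $\Tr(\rho[a,b])=\Tr([b,\rho]a)$, this forces $[\de g(\rho),L^1(\H)]\perp\iota(\M)$, which by the double-commutant theorem (and normality) is equivalent to $\de g(\rho)\in\iota(\M)'=\iota(\M')$. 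This simultaneously shows $((\iota_*)^*(\M))'=((\iota_*)^*(C^\infty(\M_*,\C)))'$ — the commutant only ``sees'' the linear functions $x\in\M$ — and identifies this Poisson commutant with $(\iota'_*)^*(C^\infty(\M'_*,\C))$. The chain \eqref{44a} is then formal: $(\iota_*)^*(C^\infty(\M_*,\C))\subseteq((\iota_*)^*(C^\infty(\M_*,\C)))''$ is the general double-commutant inclusion, while $((\iota_*)^*(C^\infty(\M_*,\C)))''\subseteq((\iota'_*)^*(C^\infty(\M'_*,\C)))'$ follows because $(\iota'_*)^*(C^\infty(\M'_*,\C))=((\iota_*)^*(C^\infty(\M_*,\C)))'$ (just established, with $\M$ and $\M'$ swapped) and taking commutants reverses inclusions: $A\subseteq A''$ gives $A'''=A'\supseteq$ hence $A''\subseteq(A')'= (\,(\iota'_*)^*(\cdots)\,)'$. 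The symmetry statement for $\M'$ is automatic since $\M''=\M$.

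For \eqref{diagdual_prop1_item3}, the plan is simply to repeat the above with the Hermitian parts: the predual maps restrict to $\iota_*\colon L^1_h(\H)\to\M_{*}^{\rm h}$ and $\iota'_*\colon L^1_h(\H)\to\M'^{\rm h}_*$, the relevant bracket is \eqref{LPbracketi}, $\{f,g\}_{\LP}(\rho)=i\Tr\rho[\de f(\rho),\de g(\rho)]$ with $\de f(\rho)\in L^\infty(\H)_h$, and one uses that $\iota^*$ maps $\M^{\rm h}$ onto $\iota(\M)_h=\iota(\M)\cap L^\infty(\H)_h$, a real Banach-Lie algebra under the commutator. The real version of Proposition~\ref{prop:2.6} (which holds by the same proof, the hermitian predual of $\M^a$ being $\M_{*}^{\rm h}$ as noted in Subsection~\ref{Subsect2.3}) gives the Poisson-morphism property, and the commutant computation goes through word for word since $\{\rho\in L^1_h(\H)\}$ still separates $L^\infty(\H)_h$ and the double-commutant theorem applies to the hermitian parts because $\iota(\M)$ is a von Neumann algebra. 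I expect the main obstacle to be the careful identification in \eqref{diagdual_prop1_item2} of the Poisson commutant of the \emph{infinite-dimensional function algebra} $(\iota_*)^*(C^\infty(\M_*,\C))$ with that of its linear part $\iota(\M)$ — i.e. proving that a smooth $g$ Poisson-commuting with all $x\in\M$ automatically has $\de g(\rho)\in\M'$ pointwise, which requires exploiting that $\rho$ ranges over the whole predual together with the bilinearity/cyclicity of the trace, and then feeding this back through the chain rule; the rest is bookkeeping with commutants.
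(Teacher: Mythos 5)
Your parts (i) and (iii), and your argument for the equality \eqref{43a}, are sound and essentially coincide with the paper's own proof (the paper likewise notes that (i) follows from Proposition~\ref{prop:2.6} and then verifies it by the same chain-rule computation you describe). The genuine problem is the central claim in your part (ii) that the Poisson commutant $\bigl((\iota_*)^*(C^\infty(\M_*,\C))\bigr)'$ can be identified with $(\iota'_*)^*(C^\infty(\M'_*,\C))$, via the deduction that $\Tr(\rho[a,\de g(\rho)])=0$ for all $a\in\iota(\M)$ and all $\rho$ ``forces $[\de g(\rho),L^1(\H)]\perp\iota(\M)$'', hence $\de g(\rho)\in\iota(\M)'$ pointwise. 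This deduction fails because $\de g(\rho)$ varies with $\rho$: for each fixed $\rho$ the commutation condition only says that the single trace-class element $[\de g(\rho),\rho]$ annihilates $\iota(\M)$, i.e.\ $\iota_*([\de g(\rho),\rho])=0$; it gives no information about $[\de g(\rho),\sigma]$ for $\sigma\neq\rho$, so one cannot conclude $\de g(\rho)\in\iota(\M)'$. Concretely, take $\iota(\M)=L^\infty(\H)$, so $\iota'(\M')=\C\1$: the function $g(\rho)=\Tr(\rho^2)$ has $[\de g(\rho),\rho]=0$, hence lies in $\bigl((\iota_*)^*(C^\infty(\M_*,\C))\bigr)'=\Cas(L^1(\H),\C)$, but it is not of the form $h\circ\Tr$, i.e.\ not in $(\iota'_*)^*(C^\infty(\M'_*,\C))$ --- this is exactly the strict inclusion \eqref{c1} that the paper records right after the proposition to show that the Poisson ``bicommutant theorem'' fails; your claimed equality would contradict it.

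The conclusion \eqref{44a} itself survives, because only a one-sided inclusion is needed and is true: since $\iota'(\M')=\iota(\M)'$, the operators $\iota(X)$ with $X\in\M$ and $\iota'(\de G(\iota'_*\rho))$ commute, so $(\iota'_*)^*(\M')\subseteq\bigl((\iota_*)^*(\M)\bigr)'$; combining this with \eqref{43a} (applied to both $\M$ and $\M'$) and the fact that taking Poisson commutants reverses inclusions gives $\bigl((\iota_*)^*(C^\infty(\M_*,\C))\bigr)''\subseteq\bigl((\iota'_*)^*(C^\infty(\M'_*,\C))\bigr)'$, which together with the trivial inclusion $\A\subseteq\A''$ yields \eqref{44a}. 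This is precisely the paper's route. So replace your claimed identification of the commutant by this inclusion and part (ii) is repaired; the remainder of your outline, including the hermitian case (iii), then goes through.
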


\begin{proof}
\eqref{diagdual_prop1_item1} 
This statement follows by Proposition~\ref{prop:2.6}. 
We give however an alternative, more direct proof, 
as this is useful for proving the items \eqref{diagdual_prop1_item2}--\eqref{diagdual_prop1_item3}.   
For arbitrary $F,G\in C^\infty(\M_*,\C)$ one has
\begin{align*}
\{F\circ \iota_*, G\circ \iota_*\}_{L^1}(\rho)
&= \Tr (\rho[\de(F\circ \iota_*)(\rho),\de( G\circ \iota_*)(\rho)]_{L^1}) \\
& =\Tr (\rho[\de F(\iota_*(\rho))\circ \iota_*,\de G (\iota_*(\rho))\circ\iota_*]_{L^1}) \\
&=\Tr (\rho[\iota(\de F(\iota_*(\rho))),\iota (\de G (\iota_*(\rho)))]_{L^1}) \\
&=\Tr (\rho\iota[\de F(\iota_*(\rho)),\de G (\iota_*(\rho))]_{\M}) \\
& =\langle\iota_*(\rho),[\de F(\iota_*(\rho)),\de G (\iota_*(\rho))]_{\M}\rangle \\
&=\{F,G\}_{\M_*}(\iota(\rho_*))
\end{align*}
The above shows that $\iota_*$ is a Poisson map. Note here that proving the above sequence of equalities  we used $X\circ \iota^*=\iota(X)$ and $\iota([X,Y]_\M)=[\iota(X), \iota(Y)]_{L^\infty}$ which are valid for $X, Y\in \M$. For $\iota_* ' :L^1\to \M_*'$ the proof is analogous.

\eqref{diagdual_prop1_item2} The condition that 
$$0
=\{F\circ \iota_*,g\}_{L^1}(\rho)
=\Tr(\rho[\iota(\de F(\iota_*(\rho))),\de g (\rho)])
=\Tr([\rho,\iota(\de F(\iota_*(\rho)))] \de g (\rho))$$
for any $F\in C^\infty(\M_*,\C)$ is equivalent to the condition 
$$ 0=\Tr([\rho,\iota(X)]\de g(\rho))$$
fulfilled for any $X\in \M$. 
The above means that $g\in (\iota_*)^*(C^\infty(\M_*,\C))'$ if and only if  $g\in (\iota_*)^*(\M)'$. 
So, the equality \eqref{43a} is valid. 
From $\iota '(\M ')=\iota(\M)'$ one obtains 
$$(\iota '_*)^*(\M ')\subset (\iota_*)^*(\M)'$$
which gives 
\begin{equation}\label{46}
(\iota _*)^*(\M )''\subset (\iota'_*)^*(\M ')'.
\end{equation}
Substituting into \eqref{46} the equality \eqref{43a} we obtain \eqref{44a}.

\eqref{diagdual_prop1_item3} 
The proofs of 
\eqref{diagdual_prop1_item1} and \eqref{diagdual_prop1_item2}
can be adapted to the real Poisson algebras $C^\infty(\M_{h*},\mathbb{R})$, $C^\infty(\M '_{h*},\mathbb{R})$ and $C^\infty(L^1_h(\H),\mathbb{R})$, whose Poisson brackets are given by \eqref{bracketi} and \eqref{LPbracketi},  respectively.
\end{proof}

	We now recall that 
	for any subset $\A$ of a Poisson algebra one has 
	\begin{equation}
	\label{diagdual_prop1_proof_open}
	\A'=\A'''.
	\end{equation}
(In fact, for any subset~$\B$ of the Poisson algebra under consideration one has the obvious inclusion $\B\subseteq\B''$. 
		Applying this for $\B=\A'$, we obtain $\A'\subseteq\A'''$. 
		On the other hand, taking commutants in both sides of the obvious inclusion $\A\subseteq\A''$, we obtain $\A'''\subseteq\A'$. 
		Thus \eqref{diagdual_prop1_proof_open} is proved.)
	Nevertheless, a general ``bicommutant theorem'' $\A=\A''$ fails to be true for general Poisson subalgebra $\A$. 
	More specifically, we show below that the first inclusion in \eqref{44a} in general fails to be an equality. 
	
	To this end let $\iota(\M)= L^\infty(\H)$ and $\iota '(\M')=\C\1$. 
Then in \eqref{diagdual} one has 
$$\iota'_*=\Tr\colon L^1(\H)\to\C\ 
\text{ and }\ \iota_*=\id\colon L^1(\H)\to \M_*=L^1(\H).$$
Consequently $(\iota_*)^*(C^\infty(\M_*),\C)=C^\infty(L^1(\H),\C)$ and 
\begin{equation}
\label{contr_eq1}
(\iota'_*)^*(C^\infty(\M'_*),\C)=\{g\circ\Tr: g\in C^\infty(\C,\C)\}.
\end{equation}

\begin{prop}
The center of the Poisson algebra $(C^\infty(L^1(\H),\C), \{\cdot,\cdot\}_{L^1})$ 
(that is, the set of Casimir functions), to be denoted by 
$$\Cas(L^1(\H),\C):=\{f\in C^\infty(L^1(\H),\C):\ \{f,C^\infty(L^1(\H),\C)\}_{L^1}=\{0\}\},$$
satisfies 
\begin{align}
\label{c1}
(\iota'_*)^*(C^\infty(\M'_*,\C))\subsetneqq & \Cas(L^1(\H),\C)=(\iota'_*)^*(C^\infty(\M'_*,\C))'',\\
\label{c2}
(\iota_*)^*(C^\infty(\M_*,\C))= &\Cas(L^1(\H),\C)'=(\iota'_*)^*(C^\infty(\M'_*,\C))'.
\end{align}
\end{prop}

\begin{proof}
In fact, denoting $\A:=(\iota'_*)^*(C^\infty(\M'_*,\C))$, 
equation~\eqref{c1} takes the form 
$$\A\subsetneqq \Cas(L^1(\H),\C)=\A''.$$
In order to prove this fact, we first note that if $g\in C^\infty(\C,\C)$, 
then  
 for arbitrary $F,G\in C^\infty(L^1(\H))$ we have 
$\{g\circ F,G\}_\LP=(g'\circ F)\cdot \{F,G\}_\LP$ as a direct consequence of \eqref{LPbracket}. 
In particular, if $F\in \Cas(L^1(\H),\C)$ then $g\circ F\in \Cas(L^1(\H),\C)$. 

Now let us define 
$$F_k\colon L^1(\H)\to\C, \quad F_k(\rho):=\Tr(\rho^k),$$ 
for any positive integer $k\ge 1$, as in \cite[page 45]{OR03}. 
It is straightforward to check that the differential of $F_k$ at an arbitrary point $\rho\in L^1(\H)$ is 
$$\de F_k(\rho)\colon L^1(\H)\to\C, \quad (\de F_k(\rho))(\delta)=\Tr(k\rho^{k-1}\delta)\text{ for all }\delta\in L^1(\H).$$ 
That is, using the trace-duality identification $L^1(\H)^*\cong L^\infty(\H)$, 
we obtain $\de F_k(\rho)=k\rho^{k-1}$ for all  $\rho\in L^1(\H)$. 
Then for any $G\in C^\infty(L^1(\H),\C)$ and $\rho\in L^1(\H)$ we have 
$\{F_k,G\}_\LP(\rho)= \Tr(\rho[k\rho^{k-1},\de G(\rho)])=0$, 
using the well-known fact that  $\Tr(\rho_1x)=\Tr(x\rho_1)$ for all $\rho_1\in L^1(\H)$ and $x\in L^\infty(\H)$. 
Thus 
$$F_k\in\Cas(L^1(\H),\C)\text{ for every }k\ge 1.$$ 
In particular $\Tr=F_1\in\Cas(L^1(\H),\C)$, and it then follows by the above paragraph along with \eqref{contr_eq1} 
that $\A\subseteq\Cas(L^1(\H),\C)$. 
By the definition of $\Cas(L^1(\H),\C)$ we have  $\Cas(L^1(\H),\C)'=C^\infty(L^1(\H),\C)$ 
hence also $\A'=C^\infty(L^1(\H),\C)$. 
Also 
 $C^\infty(L^1(\H),\C)'=\Cas(L^1(\H),\C)$, hence 
$\A''=\Cas(L^1(\H),\C)$. 

We now check that $F_k\not\in (\iota'_*)^*(C^\infty(\M'_*),\C)$ if $k\ge 2$. 
To this end we argue by contradiction and thus, by \eqref{contr_eq1}, 
let us assume that there exists $g\in C^\infty(\C,\C)$ with $F_k(\rho)=g(\Tr \rho)$ for all $\rho\in L^1(\H)$. In particular, for $\rho=zp$, where $z\in\C$ is arbitrary and $p\in L^1(\H)$ is a rank-one projection, we obtain 
$z^k=F_k(zp)=g(\Tr(zp))=g(z)$ for all $z\in\C$. 
Hence the assumption $F_k(\rho)=g(\Tr \rho)$ implies $\Tr(\rho^k)=(\Tr\rho)^k$ for all $k\in L^1(\H)$, which actually fails to be true for any operator $\rho=p_1+p_2$, where $p_1$ and $p_2$ rank-one projections with $p_1p_2=0$, which always exist if $\dim\H\ge 2$. 
This shows that the inclusion in \eqref{c1} is indeed strict. 
\end{proof}

In the following, for the greater transparence of our considerations, 
we assume that the Hilbert space $\H$ is separable, as this allows us to use a simpler coordinate description. 
Nevertheless, it is easily seen that this separability assumption is not necessary for the validity of our results. 
More specifically, let us fix an orthonormal basis $\{\ket{n}\}_{n\in \mathbb{N}}$  in the Hilbert space $\H$ 
and express $|\gamma\rangle\in \H$ and $\rho\in L^1_h(\H)$ in this basis
\begin{align}
\label{43} 
\ket{\gamma} & =\sum^\infty_{n=1}z_n\ket{n}, \\
\label{44} 
\rho & =\sum^\infty_{k,l=1}\rho_{kl}\ket{k}\bra{l},
\end{align}
where $\rho_{lk}=\ol{\rho_{kl}}$.  
Recalling that $\langle\gamma_1\mid \gamma_2\rangle$ denotes the scalar product of  $\gamma_1,\gamma_2\in \H$,  
we have used Dirac's notation in \eqref{43} and \eqref{44}.  
Using that scalar product, we define  the symplectic form $$\omega:=\de\Gamma\in\Omega^2(\H,\R),$$ 
where
\begin{equation}
\label{Gamma} 
\Gamma:=i\langle \gamma\mid \de\gamma\rangle\in\Omega^1(\H,\C),
\end{equation}
recalling that $\gamma$ stands for a generic point of $\H$. 
We note that 
$$\Gamma-\overline{\Gamma}=\de(i\langle\gamma\mid\gamma\rangle),$$ 
hence $(\H,\omega)$ is a real symplectic manifold. 
The forms $\Gamma$ and $\omega$ in the coordinates $(z_k, \ol{z}_k)$ (where $k\ge 1$) can be written as 
\begin{equation*} 
\Gamma =i\sum_{k=1}^\infty \ol{z}_k \de z_k 
\text{ and }
\omega 
=i\sum ^\infty _{k=1}\de\ol{z} _k\wedge \de z_k
\end{equation*}
and, thus the Poisson bracket of $F,G\in C^\infty(\H,\mathbb{R})$ assumes the form 
		\begin{equation}\label{braomega}
		\{F,G\}_\omega=-i\sum^\infty_{k=1}\Bigl(\frac{\partial F}{\partial z_k}\frac{\partial G}{\partial \ol{z}_k}-
		\frac{\partial G}{\partial z_k}\frac{\partial F}{\partial \ol{z}_k}\Bigr).
		\end{equation}
		Let us mention here that $(\H,\omega)$ is a real strong symplectic manifold in sense of Definition \ref{def:2.3}.
		
		The Lie-Poisson bracket \eqref{LPbracket}  of $f,g\in C^\infty(L^1_h(\H),\mathbb{R})$ in the coordinates $\rho_{kl}=\ol {\rho_{lk}}$, where $k,l\in \mathbb{N}$ is given  by 
	\begin{align} 
	\{f,g\}_{\LP}
	&=i\sum^\infty_{k,l=1}\rho_{kl}\sum ^\infty_{m=1}
	\Bigl(\frac{\partial f}{\partial \rho_{km}}\frac{\partial g}{\partial \rho_{ml}}
		-\frac{\partial g}{\partial \rho_{km}}\frac{\partial f}{\partial \rho_{ml}}\Bigr) \nonumber \\
	\label{LP2}
	&=i\sum^\infty_{k,l=1}\rho_{kl}\sum ^\infty_{m=1}\Bigl(\frac{\partial f}{\partial \rho_{km}}\frac{\partial g}{\partial \ol{\rho_{lm}}}
		-\frac{\partial g}{\partial \rho_{km}}\frac{\partial f}{\partial \ol{\rho_{lm}}}\Bigr).
			\end{align}
		Using Dirac notation we define the smooth map $\mathcal{E}:\H\to L^1_h(\H)$ by
		\begin{equation}\label{E_cal} \mathcal{E}(\gamma):=\ket{\gamma}\bra{\gamma}.
		\end{equation}
		Expressing $\mathcal{E}$ in the coordinates $(z_k,\ol{z_k})$ we obtain 
		\begin{equation}
		\label{rho} 
		\rho_{kl}=z_k\ol{z_l}.
		\end{equation}
		
\begin{prop}\label{HP}
	The map \eqref{E_cal} is a Poisson map of the real symplectic manifold $(\H,\omega)$ into the real Lie-Poisson space $(L^1_h(\H),\{\cdot,\cdot\}_{\LP})$.
\end{prop}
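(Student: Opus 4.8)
The plan is to verify the defining property of a Poisson morphism from Definition~\ref{def21}, namely that for all $f,g\in C^\infty(L^1_h(\H),\R)$ one has $\{f\circ\mathcal{E},g\circ\mathcal{E}\}_\omega=\{f,g\}_{\LP}\circ\mathcal{E}$ and $\#_\omega(f\circ\mathcal{E})=\mathcal{E}_*(\#_{\LP}f)$. Since $(\H,\omega)$ is strongly symplectic, $\mathcal{P}^\infty_\omega(\H,\R)=C^\infty(\H,\R)$, so there is no membership condition to check on the left. The most efficient route is the coordinate computation using \eqref{rho}, i.e. $\rho_{kl}=z_k\ol{z_l}$; I would work with the bracket formulas \eqref{braomega} and \eqref{LP2} directly.

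First I would compute, for $h\in C^\infty(L^1_h(\H),\R)$, the partial derivatives of $h\circ\mathcal{E}$ with respect to $z_k,\ol{z_k}$ via the chain rule applied to \eqref{rho}. Writing $h_{ml}:=\frac{\partial h}{\partial\rho_{ml}}(\mathcal{E}(\gamma))$ (evaluated after composition), one gets
\begin{equation*}
\frac{\partial(h\circ\mathcal{E})}{\partial z_k}=\sum_{l=1}^\infty \frac{\partial h}{\partial\rho_{kl}}\ol{z_l},
\qquad
\frac{\partial(h\circ\mathcal{E})}{\partial\ol{z_k}}=\sum_{l=1}^\infty \frac{\partial h}{\partial\rho_{lk}} z_l,
\end{equation*}
where one must keep track of the fact that $\rho$ is Hermitian, so the independent coordinates are, say, $\{\rho_{kl}\}_{k\le l}$ together with the reality relations $\rho_{lk}=\ol{\rho_{kl}}$; the cleanest bookkeeping is to treat $\frac{\partial h}{\partial\rho_{kl}}$ and $\frac{\partial h}{\partial\ol{\rho_{kl}}}=\frac{\partial h}{\partial\rho_{lk}}$ symmetrically exactly as is already done in passing from the first to the second line of \eqref{LP2}.

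Then I would substitute these expressions into \eqref{braomega}:
\begin{equation*}
\{f\circ\mathcal{E},g\circ\mathcal{E}\}_\omega
=-i\sum_{k=1}^\infty\Bigl(\sum_l f_{kl}\ol{z_l}\sum_m g_{mk}z_m-\sum_l g_{kl}\ol{z_l}\sum_m f_{mk}z_m\Bigr),
\end{equation*}
and reorganize the triple sum by introducing $\rho_{ml}=z_m\ol{z_l}$ so that the right-hand side becomes $i\sum_{m,l}\rho_{ml}\sum_k(f_{kl}g_{mk}-g_{kl}f_{mk})$, which after renaming indices is exactly \eqref{LP2} evaluated at $\rho=\mathcal{E}(\gamma)$, i.e. $\{f,g\}_{\LP}(\mathcal{E}(\gamma))$. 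The index matching is the only place where care is needed: one must line up $\frac{\partial f}{\partial\rho_{km}}\frac{\partial g}{\partial\ol{\rho_{lm}}}$ in \eqref{LP2} with the product of chain-rule factors, using $\frac{\partial g}{\partial\ol{\rho_{lm}}}=\frac{\partial g}{\partial\rho_{ml}}$. This establishes that $\mathcal{E}$ preserves brackets; the condition $\#_\omega(f\circ\mathcal{E})=\mathcal{E}_*(\#_{\LP}f)$ then follows from Proposition~\ref{prop:1}\eqref{prop:1_item3} together with the bracket identity, since on a strongly symplectic manifold the Hamiltonian vector field is determined by the differential and the bracket values against all functions, and $C^\infty(L^1_h(\H),\R)\circ\mathcal{E}$ separates tangent vectors of $T\H$ because, e.g., the linear functionals $\rho\mapsto\Tr(\rho a)$ for $a\in L^\infty(\H)_h$ pull back to functions whose differentials at $\gamma$ already span $T^*_\gamma\H$ (equivalently, $\#_\omega$ of such pullbacks span $T_\gamma\H$ by the strong symplectic property). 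Alternatively, and more in the spirit of the coordinate-free Definition~\ref{def21}, one can compute $\#_\omega(f\circ\mathcal{E})$ directly: it is the Hamiltonian vector field on $\H$ whose flow is $\gamma\mapsto e^{-it\,\de f(\mathcal{E}(\gamma))}\gamma$ up to first order, and $T\mathcal{E}$ sends this to $-\ad^*_{\de f(\rho)}\rho=\#_{\LP}f$ at $\rho=\mathcal{E}(\gamma)$, which is \eqref{ad2}.

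The main obstacle is purely combinatorial: correctly handling the Hermiticity constraint on $\rho$ when differentiating $h\circ\mathcal{E}$, so that the three ostensibly different-looking index patterns in \eqref{braomega} and \eqref{LP2} are matched without spurious factors of two. I expect that, once the convention $\partial h/\partial\ol{\rho_{lm}}=\partial h/\partial\rho_{ml}$ is fixed consistently as in \eqref{LP2}, the computation closes on the nose. A brief remark that the separability of $\H$ is used only to write these coordinates, and that the statement holds in general by the same argument with $\{\ket{n}\}$ replaced by an arbitrary orthonormal basis, would complete the proof.
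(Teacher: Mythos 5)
Your proposal is correct and follows essentially the same route as the paper: the paper's proof is exactly this coordinate computation, substituting the quadratic coordinate functions \eqref{rho} into \eqref{braomega} and matching with \eqref{LP2}, with the extension to general $f,g$ implicit via Proposition~\ref{prop:1}\eqref{prop:1_item3}. Your additional verification of the condition $\#_\omega(f\circ\mathcal{E})=\mathcal{E}_*(\#_{\LP}f)$ is a harmless (indeed slightly more thorough) supplement to what the paper records.
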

		
\begin{proof} Substituting $F=\rho_{kl}$ and $G=\rho_{mr}$ given by \eqref{rho} into \eqref{braomega} we obtain 
		\begin{equation}\label{HP_proof_eq1} 
		\{f\circ \mathcal{E},g\circ \mathcal{E}\}_\omega=\{f,g\}_{\LP}\circ \mathcal{E}
		\end{equation}
	for any $f,g\in C^\infty(L^1_h(\H),\mathbb{R})$.	
\end{proof}

		So, the Poisson map  $\mathcal{E}:\H\to L^1_h(\H)$ is a  symplectic realization of the real \Banach Lie-Poisson space $(L^1_h(\H),\{\cdot,\cdot\}_{\LP})$ in the sense 
	of \cite[\S 6.3]{CW}. 
	The fibre $\mathcal{E}^{-1}(\mathcal{E}(\gamma))$ of $\mathcal{E}:\H\to L^1_h(\H)$ through $\gamma\in \H$ is the orbit  $\{\lambda\gamma:\ \lambda\in {\rm U}(1)\}$ of ${\rm U}(1)$. 
	Hence, the pull-back $\mathcal{E}^*(C^\infty(L^1_h(\H), \mathbb{R}))$ of $ C^\infty(L^1_h(\H), \mathbb{R})$ is contained in the Lie algebra $(C^\infty_{{\rm U}(1)}(\H,\mathbb{R}),\{\cdot,\cdot\}_\omega)$ of smooth functions invariant with respect to the action of ${\rm U}(1)$ on $\H$.
	
	For later use in the proof of Proposition~\ref{diagdual_prop2} we make the following remark.
	
	\begin{rem}\label{prop4.03}
		Assume the above notation.  
		\begin{enumerate}[{\rm(i)}]
			\item It follows from the definition~\eqref{E_cal} and from~\eqref{HP_proof_eq1} that the mapping $\mathcal{E}:\H\to L^1_h(\H)$ is a ${\rm U}(\H)$-equivariant momentum map, where the equivariance property means  
			$\mathcal{E}(u\gamma)=u\mathcal{E}(\gamma)u^*$ for every unitary operator $u\in{\rm U}(\H)$. 
			See for instance \cite{CW} for the notion of momentum map. 
			\item For any $X=X^*\in L^\infty(\H)$, let us define the function 
			$F_X\in C^\infty(L^1_h(\H),\R)$, given by 
			$$F_X(\rho):=\Tr(X\rho)\text{ for all }\rho\in L^1_h(\H).$$
			The function $F_X\circ\mathcal{E}\in C^\infty(\H,\R)$ in the coordinates~\eqref{43} assumes the form 
			$$(F_X\circ\mathcal{E})(\gamma)=\langle\gamma \mid X\gamma\rangle
			=\sum_{k,l=1}^\infty \ol{z}_k\bra{k}X\ket{l}z_l.$$
			\item The Hamilton equation associated to $F_X\circ\mathcal{E}$ is 
			\begin{align*}
			\frac{\de z_k}{\de t}
			&=\{F_X\circ\mathcal{E},z_k\} 
			=-i\sum_{l=1}^\infty\Bigl(
			\frac{\partial(F_X\circ\mathcal{E})}{\partial z_l} \frac{\partial z_k}{\partial\ol{z}_l} 
			-\frac{\partial z_k}{\partial z_l}\frac{\partial(F_X\circ\mathcal{E})}{\partial\ol{z}_l}\Bigr) \\
			&=i\frac{\partial(F_X\circ\mathcal{E})}{\partial\ol{z}_k} 
			=\sum_{l=1}^\infty \bra{k}X\ket{l}z_l
			\end{align*}
			that is, 
			$$\frac{\de \gamma}{\de t}=iX\gamma\text{ for all }\gamma\in\H.$$
			Thus, the Hamiltonian flow $\R\ni t\mapsto \sigma_X(t)\in C^\infty(\H,\H)$ 
			generated by the function $\mathcal{E}^*(F_X)\in C^\infty(\H,\R)$ on the symplectic manifold $(\H,\omega)$ is the one-parameter unitary group  
			$$\sigma_X(t)=\exp(itX)\in{\rm U}(\H).$$ 
		\end{enumerate}
	\end{rem}
	
	According to the definition presented in \cite[\S 9.3]{CW} 
	a symplectic manifold $M$ and Poisson manifolds $P_1$ and $P_2$ form a \textbf{symplectic dual pair} if one has Poisson maps
		\begin{equation}\label{sdp}
     \xymatrix
     { 	& M 	\ar[dl]_{J_1 } \ar[dr]^{J_2} &     \\
     P_1  &	 &  P_2 
     }
     \end{equation}
		with symplectically  orthogonal fibres. 
The next proposition gives an example of a symplectic dual pair.
		
	\begin{prop}
		\label{diagdual_prop2} 
		Assuming trivial Poisson structure on $\mathbb{R}$, one obtains the pair of Poisson maps
			\begin{equation}\label{diagdualH}
     \xymatrix
     { 	& \H 	\ar[dl]_{\mathcal{E} } \ar[dr]^{\Tr\circ \mathcal{E}} &     \\
     	L^1_h(\H)  &	 &  \mathbb{R} 
     }
     \end{equation}
		such that $\Ker T_\gamma\mathcal{E} $ and $\Ker T_\gamma(\Tr\circ\mathcal{E}) $ for $\gamma\in \H$ are symplectically orthogonal to each other, i.e., the diagram \eqref{diagdualH} defines a symplectic dual pair. 
		One also has
	\begin{align}
	\label{E1} 
	\mathcal{E}^* (C^\infty(L_h^1(\H),\mathbb{R}))'
	& =(\Tr\circ \mathcal{E})^*(C^\infty(\mathbb{R},\mathbb{R})),\\
	\label{E2} 
	(\Tr\circ \mathcal{E})^*(C^\infty(\mathbb{R},\mathbb{R}))'
	&\supseteq\mathcal{E}^* (C^\infty(L_h^1(\H),\mathbb{R})),
	\end{align}
			where the commutants are taken with respect to  $\{\cdot,\cdot\}_\omega$.
				\end{prop}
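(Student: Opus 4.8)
The plan is to verify the three displayed assertions of Proposition~\ref{diagdual_prop2} in the order they are stated, using the explicit coordinate description already set up in the excerpt. First I would observe that $\mathcal{E}\colon\H\to L^1_h(\H)$ is a Poisson map by Proposition~\ref{HP}, and that $\Tr\circ\mathcal{E}\colon\H\to\R$, $\gamma\mapsto\langle\gamma\mid\gamma\rangle$, is a Poisson map onto $\R$ with the trivial Poisson structure simply because $\Tr\colon L^1_h(\H)\to\R$ is a Casimir of the Lie-Poisson bracket $\{\cdot,\cdot\}_{\LP}$ (indeed $\Tr$ is the constant functional $x\mapsto\langle x,\1\rangle$ with $\1$ central, so $\{f\circ\Tr,g\}_{\LP}=0$ for all $g$), and composition of Poisson maps is Poisson. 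This establishes that \eqref{diagdualH} is a pair of Poisson maps.

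Next I would compute the two kernels in coordinates. From \eqref{rho}, $\rho_{kl}=z_k\ol z_l$, so $T_\gamma\mathcal{E}$ sends a tangent vector $(\dot z_k,\dot{\ol z}_k)$ to $(\dot z_k\ol z_l+z_k\dot{\ol z}_l)_{k,l}$; its kernel is the set of $(\dot z_k)$ with $\dot z_k\ol z_l+z_k\dot{\ol z}_l=0$ for all $k,l$, which (contracting appropriately) forces $\dot z=i\lambda z$ for some real $\lambda$, i.e.\ $\Ker T_\gamma\mathcal{E}$ is the real line $\{i\lambda\ket{\gamma}:\lambda\in\R\}$, the tangent space to the $U(1)$-orbit through $\gamma$. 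On the other hand $\Tr\circ\mathcal{E}=\sum_k z_k\ol z_k$ has differential $\sum_k(\ol z_k\,\de z_k+z_k\,\de\ol z_k)$, so $\Ker T_\gamma(\Tr\circ\mathcal{E})=\{(\dot z_k):\operatorname{Re}\langle\gamma\mid\dot\gamma\rangle=0\}$, the real hyperplane $\omega$-orthogonal to $i\ket{\gamma}$; equivalently, using $\omega=i\sum\de\ol z_k\wedge\de z_k$, one checks directly that $\omega_\gamma(i\ket{\gamma},\cdot)$ annihilates exactly this hyperplane. Hence $\Ker T_\gamma(\Tr\circ\mathcal{E})=(\Ker T_\gamma\mathcal{E})^{\perp_\omega}$ and, since $\omega$ is a strong symplectic form on $\H$, also $(\Ker T_\gamma(\Tr\circ\mathcal{E}))^{\perp_\omega}=\Ker T_\gamma\mathcal{E}$; the two fibres are thus reciprocally $\omega$-orthogonal, which is the definition of a symplectic dual pair.

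Finally, for \eqref{E1}--\eqref{E2} I would translate the statements about Poisson commutants into invariance conditions. As already noted in the excerpt, $\mathcal{E}^*(C^\infty(L^1_h(\H),\R))=C^\infty_{U(1)}(\H,\R)$, the $U(1)$-invariant smooth functions; and $(\Tr\circ\mathcal{E})^*(C^\infty(\R,\R))=\{g(\langle\gamma\mid\gamma\rangle):g\in C^\infty(\R,\R)\}$, the functions of the norm alone. A function $F$ Poisson-commutes with all of $C^\infty_{U(1)}(\H,\R)$ iff its Hamiltonian vector field $\xi_F$ is tangent to the level sets of every $U(1)$-invariant function; since those level sets, near a generic point, cut out exactly the $U(1)$-orbits (the generic fibres of $\mathcal{E}$), $\xi_F$ must be tangent to the $U(1)$-orbits, i.e.\ proportional to the infinitesimal generator $i\ket{\gamma}$, which is the Hamiltonian vector field of $\tfrac12\langle\gamma\mid\gamma\rangle$ up to scalar; integrating, $F$ is a function of $\langle\gamma\mid\gamma\rangle$, giving \eqref{E1}. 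Conversely, if $F$ commutes with all functions of the norm, then $\xi_F$ annihilates $\de\langle\gamma\mid\gamma\rangle$, i.e.\ $\xi_F$ lies in $\Ker T_\gamma(\Tr\circ\mathcal{E})=(\R\,i\ket{\gamma})^{\perp_\omega}$, which says precisely that $\langle\xi_F,\de h\rangle=0$ for every $U(1)$-invariant $h$ (whose Hamiltonian vector field spans the complement), so $F$ is $U(1)$-invariant, giving \eqref{E2}. The main obstacle I anticipate is the converse direction of \eqref{E1}: making rigorous, in the Banach setting and at non-generic points, the step that ``$U(1)$-invariant functions separate directions transverse to the $U(1)$-orbit'' — this is clean away from $\gamma=0$ but needs care at $0$ and a density/separation argument to conclude that $\xi_F$ is genuinely tangent to the orbit rather than merely annihilated by the differentials of invariant functions. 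I would handle this by restricting attention to the open dense set $\{\gamma\neq0\}$, where the $U(1)$-action is free and $\mathcal{E}$ is a submersion onto its image, and then invoking continuity to extend the resulting identities of smooth functions to all of $\H$.
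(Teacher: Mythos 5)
Your handling of the first two assertions is correct and in substance coincides with the paper's own argument: the paper parametrizes curves in the two fibres ($\gamma(t)=\lambda(t)\gamma$ with $\vert\lambda(t)\vert=1$, respectively curves of constant norm) and checks $\omega(\dot\gamma,\dot\gamma')=0$, while you identify the kernels explicitly, $\Ker T_\gamma\mathcal{E}=i\R\gamma$ and $\Ker T_\gamma(\Tr\circ\mathcal{E})=(i\R\gamma)^{\perp_\omega}$ for $\gamma\neq0$; your version even delivers the equality of the two orthogonals (via strongness of $\omega$ and closedness of $i\R\gamma$), which the paper only asserts after proving mutual orthogonality. Both you and the paper gloss over $\gamma=0$, where the kernels of the tangent maps degenerate to all of $\H$ while the fibres reduce to $\{0\}$, so no penalty there.

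Where you genuinely diverge is \eqref{E1}--\eqref{E2}. The paper deduces these in one line from the Lie--Poisson commutant relations \eqref{c1}--\eqref{c2} (established for $\iota(\M)=L^\infty(\H)$, $\iota'(\M')=\C\1$) together with its standing identification $\mathcal{E}^*(C^\infty(L^1_h(\H),\R))=C^\infty_{U(1)}(\H,\R)$, whereas you argue directly on $(\H,\omega)$ through Hamiltonian vector fields. Your route is viable, and the gap you flag can be closed concretely: for $\gamma\neq0$ the differentials of the pullbacks of the \emph{linear} functions $\rho\mapsto\Tr(x\rho)$, $x=x^*\in L^\infty(\H)$, i.e.\ of $\gamma\mapsto\langle\gamma\mid x\gamma\rangle$, already have common kernel exactly $i\R\gamma$ (since $\{x\gamma: x=x^*\}=\R\gamma\oplus(\C\gamma)^{\perp}$), so no genericity or density argument is needed to force $\xi_F$ tangent to the $U(1)$-orbit; then $\de F$ is pointwise proportional to $\de\Vert\gamma\Vert^2$, the spheres are connected, so $F$ is radial on $\H\setminus\{0\}$ and hence on $\H$, and smoothness of the radial profile at the origin (hence membership in $(\Tr\circ\mathcal{E})^*C^\infty(\R,\R)$) follows from the Whitney even-function argument applied to $t\mapsto F(te)$ for a unit vector $e$. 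For \eqref{E2} your mechanism is slightly off: the correct and simpler statement is that commuting with the single function $N=\Tr\circ\mathcal{E}=\Vert\cdot\Vert^2$ forces $F$ to be constant along the Hamiltonian flow of $N$, which is precisely the $U(1)$-action, so $F\in C^\infty_{U(1)}(\H,\R)=\mathcal{E}^*(C^\infty(L^1_h(\H),\R))$; the phrase about Hamiltonian vector fields of invariant functions ``spanning the complement'' is not the right justification. Note finally that both your proof and the paper's rely on the identification of $\mathcal{E}^*(C^\infty(L^1_h(\H),\R))$ with the $U(1)$-invariant functions, which the paper states without proof just before the proposition, so citing it is fair; what each approach buys is that the paper's reduction to \eqref{c1}--\eqref{c2} is shorter but leaves the transfer of commutants from $L^1(\H)$ to $(\H,\omega)$ implicit, while your direct argument, once repaired as above, makes that transfer explicit.
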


	\begin{proof} 
		Let us take smooth curves 
		$$\gamma(t)\in \mathcal{E}^{-1}(\mathcal{E}(\gamma))
		\text{ and }\gamma'(t)\in (\Tr\circ \mathcal{E})^{-1}((\Tr\circ \mathcal{E})(\gamma))$$ 
		such that $\gamma(0)=\gamma'(0)=\gamma$. 
		One easily observes that $\gamma(t)=\lambda(t)\gamma$, where $\vert\lambda(t)\vert=1$ and $\lambda(0)=1$, 
		and $\langle \gamma'(t)\mid\gamma'(t)\rangle=\langle \gamma\mid\gamma\rangle$. 
		Hence, the tangent vectors 
		$$\dot\gamma:=\frac{\de}{\de t}\gamma(t)\vert_{t=0}\in T_\gamma \mathcal{E}^{-1}(\mathcal{E}(\gamma))$$ 
		and 
		$$\dot\gamma ':=\frac{\de}{\de t}\gamma '(t)|_{t=0}\in T_\gamma (\Tr\circ \mathcal{E})^{-1}((\Tr\circ \mathcal{E})(\gamma))$$ 
		satisfy 
			\begin{equation} \label{dotgamma}
			\dot \gamma=\dot\lambda \gamma
			\quad {\rm and}\quad 
			\langle\dot\gamma '\mid\gamma\rangle+\langle\gamma \mid\dot\gamma '\rangle=0
			\end{equation}
		where $\dot\lambda:=\frac{\de}{\de t}\lambda(t)|_{t=0}=-\ol{\dot\lambda}$. 
			Using \eqref{dotgamma} we find that 
			\begin{equation} \omega(\dot\gamma,\dot\gamma')
			=\frac{1}{2i}(\langle\dot\gamma \mid\dot\gamma '\rangle
			-\langle\dot\gamma ' \mid\dot\gamma \rangle
			=
			-\frac{\dot\lambda}{2i}(\langle\dot\gamma' \mid \gamma\rangle
			+\langle\gamma \mid \dot\gamma '\rangle)=0
			\end{equation}
			The above gives $(\Ker T_\gamma \mathcal{E})^\perp=\Ker T_\gamma(\Tr\circ \mathcal{E})$. 
			
We now prove \eqref{E1}--\eqref{E2}. 
To this end, we use Remark~\ref{prop4.03} above. 
So for arbitrary $X=X^*\in L^\infty(\H)$, we define the function 
$F_X\in C^\infty(L^1_h(\H),\R)$, given by 
$F_X(\rho):=\Tr(X\rho)$ for all $\rho\in L^1_h(\H)$. 
Now let us denote by $\xi_X\in \Gamma^\infty T\H$ the Hamiltonian vector field on the symplectic manifold $(\H,\omega)$ that corresponds to the function $\mathcal{E}^*(F_X)\in C^\infty(\H,\R)$, that is, the infinitesimal generator of the Hamiltonian flow $t\mapsto \sigma_X(t)$. 
Then for every $f\in \mathcal{E}^* (C^\infty(L_h^1(\H),\mathbb{R}))'$ we have 
$$
0=\{\mathcal{E}^*(F_X),f\}_\omega(\gamma)
=(\xi_X(f))(\omega)
=\frac{\de}{\de t}\Bigl\vert_{t=0}f(\sigma_X(t)\gamma). 
$$
Since $t\mapsto \sigma_X(t)$ is a complete flow, we then obtain 
$$
f\in\{\mathcal{E}^*(F_X): X\in L^\infty_h(\H)\}'
\iff 
f\circ\sigma_X(t)=f\text{ for all }t\in\R.$$
Recalling the formula $\sigma_X(t):=\exp(itX)\in{\rm U}(\H)$ from Remark~\ref{prop4.03} and using the fact that the exponential map of the unitary group ${\rm U}(\H)$ is surjective, we further obtain 
\begin{align*}
f\in\{\mathcal{E}^*(F_X): X\in L^\infty_h(\H)\}'
& \iff 
f\circ u=f\text{ for all }u\in {\rm U}(\H) \\
& \iff f(\cdot)=\chi(\Vert\cdot\Vert^2)\text{ for some }\chi\in C^\infty(\R,\R)
\end{align*}
where we also used the fact that the unitary group ${\rm U}(\H)$ acts transitively on any sphere that is centered at $0\in\H$. 
On the other hand, it is easily seen that 
$$(\Tr\circ \mathcal{E})^*(C^\infty(\mathbb{R},\mathbb{R}))
=\{\chi(\Vert\cdot\Vert^2): \chi\in C^\infty(\mathbb{R},\mathbb{R})\}$$
hence we obtain $\mathcal{E}^* (C^\infty(L_h^1(\H),\mathbb{R}))'
\subseteq (\Tr\circ \mathcal{E})^*(C^\infty(\mathbb{R},\mathbb{R}))$. 
For the converse inclusion we note that if $F\in C^\infty(L_h^1(\H),\mathbb{R})$
and $\chi\in C^\infty(\R,\R)$, then, using Proposition~\ref{HP}, 
\begin{align*}
\{F\circ\mathcal{E},\chi\circ\Tr\circ\mathcal{E}\}_\omega(\gamma)
&=\{F,\chi\circ\Tr\}_\LP(\mathcal{E}(\gamma)) \\
& = \chi'(\Tr(\mathcal{E}(\gamma)))\cdot\{F,\Tr\}_\LP(\mathcal{E}(\gamma)) 
=0
\end{align*}
for all $\gamma\in\H$, where the last equalities hold true just as in the proof of~\eqref{c1}. 
This completes the proof of \eqref{E1}. 

We now obtain 
$$\mathcal{E}^* (C^\infty(L_h^1(\H),\mathbb{R}))\subseteq 
\mathcal{E}^* (C^\infty(L_h^1(\H),\mathbb{R}))''
=(\Tr\circ \mathcal{E})^*(C^\infty(\mathbb{R},\mathbb{R}))'$$
where the last equality is obtained by just taking the commutants of both sides of~\eqref{E1}. 
This completes the proof of \eqref{E2}, and we are done. 
\end{proof}
	
	There is another symplectic dual pair canonically related to the von Neumann algebra $\iota:\M\hookrightarrow L^\infty(\H)$ (the one presented in the diagram  \eqref{diagdualH} is related to $\id:\M\to L^\infty(\H)$). 
	In order to construct it  for $\M$ and $\M'$ we define the expectation maps
	\begin{align}
	\label{E} 
	E:= & \iota_*\circ \mathcal{E}:\H\to \M_*^+ \\
	\label{Eprim} 
	E':= & \iota_*'\circ \mathcal{E}:\H\to {\M_*'}^+
	\end{align}
	Since $\mathcal{E}$ is a smooth map (a quadratic polynomial, actually) and $\iota_*$ as well as $\iota _*'$ are linear continuous maps, so, the expectation maps defined above are smooth.
	
	\begin{prop} \label{prop:44} The support map $\sigma_*: \M_*^+\to \Ll(\M)$ has the following properties
\begin{equation}\label{suppvect}
\sigma_*(E(\gamma))=[\M'\gamma]\quad \text{and} \quad \sigma_*(E'(\gamma))=[\M\gamma]
\end{equation}
for any $\gamma\in \H$, where $[\M'\gamma]$ and $[\M\gamma]$ are the orthogonal projections on the closed subspaces $\ol{\M'\gamma}$ and $\ol{\M\gamma}$, respectively.
\end{prop}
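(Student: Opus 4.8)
The plan is first to unwind the definitions so that $E(\gamma)$ and $E'(\gamma)$ become the vector functionals on $\M$ and $\M'$ determined by $\gamma\in\H$, and then to identify the abstract support $\sigma_*$ of such a positive normal functional with the orthogonal projection onto the closure of the orbit of $\gamma$ under the commutant. Concretely, for $x\in\M$ one has $\langle E(\gamma),x\rangle=\Tr(\ket{\gamma}\bra{\gamma}\iota(x))=\langle\gamma\mid x\gamma\rangle$, after identifying $\M$ with $\iota(\M)\subseteq L^\infty(\H)$, and similarly $\langle E'(\gamma),x'\rangle=\langle\gamma\mid x'\gamma\rangle$ for $x'\in\M'$. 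I would also record that, for any $\varphi\in\M_*^+$, the projection $\sigma_*(\varphi)\in\Ll(\M)$ as defined in Subsection~\ref{Subsect3.1} coincides with the usual support projection of $\varphi$, i.e.\ the smallest $p\in\Ll(\M)$ with $\varphi(\1-p)=0$: the Cauchy--Schwarz inequality for the positive functional $\varphi$ yields $[\M\varphi]^0=\{x\in\M:\varphi(xx^*)=0\}$, which is the weak-$*$ closed right ideal $e\M$ whose projections $q$ are exactly those with $\varphi(q)=0$; the same computation for $[\varphi\M]^0$ gives $\sigma_l(\varphi)=\sigma_r(\varphi)=\1-e$, which is precisely the support. (Alternatively one may invoke \cite[Ch.~III, Th.~4.2]{Ta02}.)

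Next I would establish the key fact: the support of the vector functional $\omega_\gamma:=\langle\gamma\mid(\cdot)\gamma\rangle$ on $\M$ equals the orthogonal projection $[\M'\gamma]$ onto $\ol{\M'\gamma}$. For any $p\in\Ll(\M)$ one has $\omega_\gamma(\1-p)=\|\gamma\|^2-\|p\gamma\|^2$, which vanishes if and only if $p\gamma=\gamma$; hence $\sigma_*(E(\gamma))$ is the smallest projection in $\M$ fixing $\gamma$. Since $\ol{\M'\gamma}$ is $\M'$-invariant, $[\M'\gamma]\in(\M')'=\M$, and as $\gamma\in\M'\gamma$ we get $[\M'\gamma]\gamma=\gamma$, so $\sigma_*(E(\gamma))\leq[\M'\gamma]$. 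Conversely, if $p\in\Ll(\M)$ satisfies $p\gamma=\gamma$, then $pa'\gamma=a'p\gamma=a'\gamma$ for every $a'\in\M'$, so $p$ fixes $\M'\gamma$ and, by continuity, $\ol{\M'\gamma}$ pointwise, whence $[\M'\gamma]\leq p$. The two inequalities give $\sigma_*(E(\gamma))=[\M'\gamma]$, which is the first formula in \eqref{suppvect}.

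The second formula follows by applying the same argument to $\M'$ in place of $\M$: since $E'(\gamma)$ is the vector functional of $\gamma$ on $\M'$, its support is the projection onto $\ol{(\M')'\gamma}=\ol{\M''\gamma}=\ol{\M\gamma}$ by the bicommutant theorem, i.e.\ $\sigma_*(E'(\gamma))=[\M\gamma]$. I expect the only slightly delicate point to be the bookkeeping of the first paragraph, namely checking that the abstractly defined $\sigma_*$ (via annihilators of the $\M$-invariant subspaces $[\M\varphi],[\varphi\M]\subseteq\M_*$) agrees on $\M_*^+$ with the operational characterization as the smallest $p$ with $\varphi(\1-p)=0$; once that identification is available, what remains is the short Hilbert-space computation above.
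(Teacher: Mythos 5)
Your proposal is correct and follows essentially the same route as the paper: both identify $\sigma_*(E(\gamma))$ as the smallest projection $p\in\Ll(\M)$ fixing $\gamma$ (equivalently, with $pE(\gamma)=E(\gamma)p=E(\gamma)$), obtain $\sigma_*(E(\gamma))\leq[\M'\gamma]$ from $[\M'\gamma]\gamma=\gamma$, and obtain the reverse inequality from the fact that the support fixes $\gamma$ and commutes with $\M'$, hence fixes $\ol{\M'\gamma}$; the second formula is by the same symmetry. Your extra paragraph reconciling the annihilator definition of $\sigma_*$ with the "smallest $p$ with $\varphi(\1-p)=0$" characterization is a harmless elaboration of what the paper takes as definitional.
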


\begin{proof} 
	By definition the support $\sigma_*(E(\gamma))\in \M$ of $E(\gamma)$ is the smallest projection in $\M$ such that $\sigma_*(E(\gamma))E(\gamma)=E(\gamma)\sigma_*(E(\gamma))=E(\gamma)$. Since $[\M'\gamma]\gamma=\gamma$ one has $[\M'\gamma]E(\gamma)=E(\gamma)[\M'\gamma]=E(\gamma)$. 
	
	From the above two facts we obtain that $\sigma_*(E(\gamma))\leq [\M'\gamma]$. 
	On the other hand from $\sigma_*(E(\gamma))E(\gamma)=E(\gamma)$  we have    $\sigma_*(E(\gamma))\gamma=\gamma$. 
	Thus we obtain $\sigma_*(E(\gamma))\M'\gamma=\M'\gamma$ which implies that $[\M'\gamma]\leq\sigma_*(E(\gamma)).$
\end{proof}

The following fact goes back to \cite[Lemma 4.2(2)]{Ha73}, 
but we give it here in a form that is suitable for our purposes in this paper.

	\begin{prop}\label{prop:2} 
	If $\gamma_1,\gamma_2\in\H$ and $E(\gamma_1)=E(\gamma_2)\in \M_*^+$ (resp. $E'(\gamma_1)=E'(\gamma_2)\in {\M'}_*^+$), then there exists a unique $u'\in \U(\M')$ (resp. $u\in \U(\M)$) satisfying $u'\gamma_1=\gamma_2$ and $u^{'*}u'=[\M \gamma_1]$ (resp. $u\gamma_1=\gamma_2$ and $u^*u=[\M'\gamma_1]$). 
	Moreover $u'u^{'*}=[\M \gamma_2]$ (resp. $u^*u=[\M'\gamma_1]$).
\end{prop}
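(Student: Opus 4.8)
The plan is to reduce the statement to the classical fact about vectors with the same associated functional on a von Neumann algebra, applied to the commutant~$\M'$ (respectively~$\M$), and then to produce the required partial isometry by a polar-decomposition argument on the Hilbert space. First I would observe that the hypothesis $E(\gamma_1)=E(\gamma_2)$ means, by definition~\eqref{E} and~\eqref{iota*}, that $\langle x'\gamma_1\mid\gamma_1\rangle=\langle x'\gamma_2\mid\gamma_2\rangle$ for all $x'\in\M'$; equivalently the functionals $\omega_{\gamma_1}$ and $\omega_{\gamma_2}$ on~$\M'$ agree. By Proposition~\ref{prop:44} we have $\sigma_*(E(\gamma_i))=[\M'\gamma_i]$, and since the two functionals coincide so do their supports, giving $[\M'\gamma_1]=[\M'\gamma_2]$; I will also need the ``other-side'' support equality $[\M\gamma_1]=[\M\gamma_2]$, which follows from the same computation applied to $E'$, or directly from the fact that two cyclic projections onto $\ol{\M'\gamma_i}$ carry equal functionals.

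Next I would construct~$u'$. Consider the densely defined map $\M\gamma_1\to\M\gamma_2$ sending $a\gamma_1\mapsto a\gamma_2$ for $a\in\M$. This is well defined and isometric because
\[
\|a\gamma_2\|^2=\langle a^*a\gamma_2\mid\gamma_2\rangle=\langle a^*a\gamma_1\mid\gamma_1\rangle=\|a\gamma_1\|^2,
\]
where the middle equality uses $a^*a\in\M$ and the equality of the $\M$-functionals of $\gamma_1,\gamma_2$ (i.e. $E'(\gamma_1)=E'(\gamma_2)$, which holds in the $\M'$-version of the statement; in the stated $\M$-version one argues symmetrically with the roles of $\M$ and $\M'$ interchanged, using $E(\gamma_1)=E(\gamma_2)$). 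Hence it extends to a surjective isometry $\ol{\M\gamma_1}\to\ol{\M\gamma_2}$, which we extend by zero on the orthogonal complement to obtain a partial isometry $u'$ on~$\H$ with $u'^*u'=[\M\gamma_1]$ and $u'u'^*=[\M\gamma_2]$, and $u'\gamma_1=\gamma_2$ (take $a=\1$). That $u'\in\M'$ follows because $u'$ intertwines the~$\M$-action: for $a,b\in\M$, $u'(b\,a\gamma_1)=ba\gamma_2=b\,u'(a\gamma_1)$ on the dense subspace, and $u'b=bu'=0$ on the complement since $[\M\gamma_1],[\M\gamma_2]\in\M'$ commute with~$\M$; thus $u'$ commutes with all of~$\M$.

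For uniqueness, suppose $v'\in\U(\M')$ also satisfies $v'\gamma_1=\gamma_2$ and $v'^*v'=[\M\gamma_1]$. Then for every $a\in\M$, $v'(a\gamma_1)=av'\gamma_1=a\gamma_2=u'(a\gamma_1)$, so $u'$ and $v'$ agree on the dense subspace $\M\gamma_1$ of $\ol{\M\gamma_1}=\mathrm{ran}\,[\M\gamma_1]$, hence on all of that range by continuity; and both vanish on $(\1-[\M\gamma_1])\H$ since their initial projections are $[\M\gamma_1]$. Therefore $u'=v'$. Finally, $u'u'^*=[\M\gamma_2]$ has already been recorded in the construction. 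The $\M'$-to-$\M$ symmetry of the argument gives the parenthetical version verbatim.

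The main obstacle, and the only genuinely substantive point, is the well-definedness and isometry of the map $a\gamma_1\mapsto a\gamma_2$, i.e. the passage from equality of the associated \emph{normal positive functionals} to equality of $\langle a^*a\gamma_1\mid\gamma_1\rangle$ and $\langle a^*a\gamma_2\mid\gamma_2\rangle$ for all $a$ in the appropriate algebra; everything else is bookkeeping about supports (via Proposition~\ref{prop:44}) and the standard verification that an intertwiner of a von Neumann algebra lies in its commutant. This is exactly the content of the classical argument in \cite[Lemma 4.2(2)]{Ha73}, and I would cite it for this core step while spelling out the support identifications and the commutant membership in the notation of the present paper.
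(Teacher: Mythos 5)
Your core construction coincides with the paper's proof: the densely defined map $a\gamma_1\mapsto a\gamma_2$ ($a\in\M$), shown isometric from equality of the relevant vector functionals, extended by zero to a partial isometry $u'$ with $u'^*u'=[\M\gamma_1]$ and $u'u'^*=[\M\gamma_2]$, membership in $\M'$ via intertwining of the $\M$-action, and uniqueness by density of $\M\gamma_1$ in the initial space. However, there is a genuine error in how you attach the hypothesis to this construction. By \eqref{iota*} and \eqref{E}, $E(\gamma)\in\M_*^+$ is the vector functional restricted to $\M$, i.e.\ $\langle E(\gamma),x\rangle=\langle\gamma\mid x\gamma\rangle$ for $x\in\M$; hence $E(\gamma_1)=E(\gamma_2)$ says precisely $\Vert a\gamma_1\Vert=\Vert a\gamma_2\Vert$ for all $a\in\M$, which is exactly what your isometry step needs, and no interchange of the roles of $\M$ and $\M'$ is required for the stated version. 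Your opening sentence instead reads $E(\gamma_1)=E(\gamma_2)$ as equality of functionals on $\M'$ (this is also inconsistent with your own use of Proposition~\ref{prop:44}, whose support $[\M'\gamma]$ lies in $\M$), and as a result you pair the construction of $u'\in\U(\M')$ with the hypothesis $E'(\gamma_1)=E'(\gamma_2)$ and propose to handle the stated version ``symmetrically''. Taken literally, that pairing proves a false statement: for $\M=L^\infty(\H_0)$ acting on $\H=\H_0$ one has $\M'=\C\1$, so $E'(\gamma_1)=E'(\gamma_2)$ only says $\Vert\gamma_1\Vert=\Vert\gamma_2\Vert$, and for orthogonal unit vectors no partial isometry in $\C\1$ carries $\gamma_1$ to $\gamma_2$. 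The repair is simply to read the definition of $E$ correctly; the rest of your argument then proves the main version verbatim, and the parenthetical version follows by exchanging $\M$ and $\M'$, exactly as in the paper.

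A second, smaller slip: the asserted ``other-side support equality'' $[\M\gamma_1]=[\M\gamma_2]$ does not follow from $E(\gamma_1)=E(\gamma_2)$; you cannot ``apply the same computation to $E'$'', since $E'(\gamma_1)=E'(\gamma_2)$ is not among the hypotheses, and in general these two projections are only Murray--von Neumann equivalent in $\M'$ (via $u'$ itself), not equal --- take $\M=\C\1$ and two orthogonal unit vectors. Fortunately nothing in your construction, commutant argument, or uniqueness argument uses this equality; the statement only requires $u'^*u'=[\M\gamma_1]$ and $u'u'^*=[\M\gamma_2]$, so this claim should simply be deleted.
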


\begin{proof} 
	If $E(\gamma_1)=E(\gamma_2)$ then for arbitrary $x\in\M$ one has 
$E(\gamma_1)(x^*x)=E(\gamma_2)(x^*x)$, 
which is equivalent to $\Vert x\gamma_1\Vert=\Vert x\gamma_2\Vert$. 
This shows that there exists a unique partial isometry $u':\H\to\H$ with $u'^*u'=[\M\gamma_1]
$   
and $u'(x\gamma_1)=x\gamma_2$ for all $x\in\M$. 
Moreover, $u'u'^*=
[\M\gamma_2]
$. 

If $y=y^*\in\M$, then it is clear that both $\overline{\M\gamma_1}$ 
and its orthogonal complement are invariant with respect 
to the operator $y$. 
Now if $\delta\in \Ker u'$, then $y\delta\in \Ker u'$, 
hence $u'y\delta=yu'\delta=0$. 
On the other hand, for arbitrary $x\in\M$ one has 
$$u'y(x\gamma_1)=u'(yx\gamma_1)=yx\gamma_2=yu'(x\gamma_1),$$ 
and therefore $u'y\delta=yu\delta$ for all $\delta\in\overline{\M\gamma_1}=(\Ker u')^\perp$. 
Consequently $u'y=yu'$ for all $y=y^*\in\M$, hence $u'\in\M'$. 

To check uniqueness of $u'$, let  $w'\in\U(\M')$ satisfying $w'\gamma_1=\gamma_2$ and $w'^*w'=[\M\gamma_1]$. 
In particular, $\Ker w'=\Ker u'=\overline{\M\gamma_1}^\perp$. 
Moreover, for arbitrary $x\in\M$ one has $w'(x\gamma_1)=xw'\gamma_1=x\gamma_2$, hence $w'=u'$ on $\overline{\M\gamma_1}=(\Ker u')^\perp=(\Ker w')^\perp$. 
Thus $u'=w'$, and this completes the proof of the statement in the case $E(\gamma_1)=E(\gamma_2)$. 
The case $E'(\gamma_1)=E'(\gamma_2)$ then follows from the preceding case, 
interchanging $\M$ and $\M'$. 
\end{proof}

From Proposition \ref{prop:2} we conclude: 
					
\begin{cor}\label{cor:1}
\begin{enumerate}[{\rm(i)}]
\item\label{cor:1_item1} 
The groupoid of partial isometries $\U(\M)\tto\Ll(\M)$ acts on 
$\H$ 
by 
\begin{equation}\label{actionUH} 
\U(\M)*_\mu\H\ni(u,\gamma)\to u\gamma\in \H
\end{equation}
where the momentum map $\mu:\H\to \Ll(\M)$ is 
\begin{equation}
\label{mom}
\mu(\gamma):=[\M'\gamma]
\end{equation}
 and $(u,\gamma)\in \U(\M)*_\mu\H$ if and only if $\br(u)=u^*u=\mu(\gamma)$. 
 This action is free.  
\item\label{cor:1_item2} 
The orbits of the groupoid action \eqref{actionUH} are the fibres $E'^{-1}(E'(\gamma))$ of the expectation map $E'\colon \H\to \M_*'^+$.
\item\label{cor:1_item3} 
The expectation map $E:\H\to \M_*^+$ is equivariant with respect to the actions  of the groupoid $\U(\M)\tto \Ll(\M)$ on $\H$ and on $\M_*^+$, see \eqref{actionUH} and \eqref{coaction} respectively, i.e.,
\begin{equation*}
E(u\gamma)=uE(\gamma)u^*.
\end{equation*}
\item The above assertions \eqref{cor:1_item1}--\eqref{cor:1_item3}
hold also for the groupoid $\U(\M')\tto\Ll(\M')$ if one defines the momentum map $\mu':\H\to \Ll(\M')$ by $\mu'(\gamma):=[\M\gamma]$ and replaces $E'$ by $E$.
\end{enumerate}
\end{cor}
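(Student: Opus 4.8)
The plan is to deduce all four assertions from Proposition~\ref{prop:2} together with the elementary fact that every $u\in\U(\M)$ commutes with every element of $\M'$, invoking Proposition~\ref{prop:44} only to keep track of the momentum maps.

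First I would verify that $(u,\gamma)\mapsto u\gamma$, with momentum map $\mu(\gamma)=[\M'\gamma]$ and composability condition $\br(u)=u^*u=\mu(\gamma)$, satisfies the axioms of a groupoid action (cf.\ \cite[Def. 1.6.10]{Ma05}). The unit axiom $\mu(\gamma)\cdot\gamma=[\M'\gamma]\gamma=\gamma$ is immediate because $\1\in\M'$. For the covariance $\mu(u\gamma)=\bl(u)=uu^*$ one uses $\M'u\gamma=u\M'\gamma$ (commutation of $u\in\M$ with $\M'$) together with the observation that a partial isometry with initial projection $u^*u=[\M'\gamma]$ maps $\overline{\M'\gamma}=u^*u\H$ isometrically onto its range $uu^*\H$, so that $\overline{\M'u\gamma}=\overline{u\M'\gamma}=uu^*\H$, i.e.\ $[\M'u\gamma]=uu^*$. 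Associativity for composable pairs is associativity of operator multiplication, once one notes that $u^*u=ww^*$ gives $(uw)^*(uw)=w^*w$. Freeness is the one point that genuinely needs input: if $u\gamma=\gamma$ with $u^*u=[\M'\gamma]$, then $u$ and the projection $[\M'\gamma]\in\U(\M)$ both fix $\gamma$ and have initial projection $[\M'\gamma]$, so the uniqueness clause of Proposition~\ref{prop:2} — applied with $\gamma_1=\gamma_2=\gamma$ and the trivial equality $E'(\gamma)=E'(\gamma)$ — forces $u=[\M'\gamma]=\mu(\gamma)$, i.e.\ $u$ is the identity arrow.

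For~\eqref{cor:1_item2}, the inclusion of an orbit into a fibre of $E'$ is a short computation: for $u^*u=[\M'\gamma]$ and $x'\in\M'$,
\[
\langle E'(u\gamma),x'\rangle=\langle u\gamma\mid x'u\gamma\rangle=\langle u\gamma\mid ux'\gamma\rangle=\langle u^*u\gamma\mid x'\gamma\rangle=\langle\gamma\mid x'\gamma\rangle=\langle E'(\gamma),x'\rangle ,
\]
so $E'(u\gamma)=E'(\gamma)$. The reverse inclusion is exactly the existence clause of Proposition~\ref{prop:2}: if $E'(\delta)=E'(\gamma)$, there is $u\in\U(\M)$ with $u\gamma=\delta$ and $u^*u=[\M'\gamma]=\mu(\gamma)$, so $(u,\gamma)\in\U(\M)*_\mu\H$ and $\delta$ lies on the orbit of $\gamma$. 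For~\eqref{cor:1_item3}, Proposition~\ref{prop:44} gives $\sigma_*(E(\gamma))=[\M'\gamma]=\mu(\gamma)$, so $(u,\gamma)\in\U(\M)*_\mu\H$ forces $(u,E(\gamma))\in\U(\M)*\M_*^+$ and the claimed identity is meaningful; then for $x\in\M$,
\[
\langle E(u\gamma),x\rangle=\langle u\gamma\mid xu\gamma\rangle=\langle\gamma\mid u^*xu\,\gamma\rangle=\langle E(\gamma),u^*xu\rangle=\langle uE(\gamma)u^*,x\rangle
\]
by the definitions of $y\varphi$, $\varphi y$ recalled before \eqref{coaction} and of $E$ in \eqref{E}. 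Finally, assertion~(iv) follows by interchanging $\M$ and $\M'$ throughout, which is legitimate since Propositions~\ref{prop:2} and~\ref{prop:44} are already stated symmetrically in $\M$ and $\M'$.

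The main (and essentially the only non-formal) obstacle is the invocation of Proposition~\ref{prop:2}, used once for the freeness of the action and once for the surjectivity of the orbit map onto the fibre of $E'$; everything else is routine bookkeeping with partial isometries and the commutation $\M\leftrightarrow\M'$. The one thing to be careful about is to keep track consistently of which of the ``primed/unprimed'' versions of Propositions~\ref{prop:2}, \ref{prop:44} and of the maps $E,E',\mu,\mu'$ is being used in each step, since the action of $\U(\M)$ naturally pairs with the fibres of $E'$ while $E$ is the equivariant map.
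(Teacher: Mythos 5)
Your proposal is correct and follows essentially the same route as the paper: the paper simply states that the corollary follows from Proposition~\ref{prop:2} (together with the computation of supports in Proposition~\ref{prop:44}), and your argument fills in exactly those routine verifications — the action axioms, the computation $E'(u\gamma)=E'(\gamma)$, the equivariance of $E$, and the use of the existence/uniqueness clauses of Proposition~\ref{prop:2} for surjectivity onto the fibres of $E'$ and for freeness. No gaps.
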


Summarizing the statement of Corollary \ref{cor:1} we obtain the family of fibre bundles presented in the following diagram:
\begin{equation}
\label{diag1}
\xymatrix{
	\H \ar[d]_{E} & E'^{-1}(\rho'_0) \ar@{_{(}->}[l] \ar[d]_{E}  &  P_0 \ar[l]^{\ }_{\ \ \ \iota_{\gamma_0}} \ar[d]^{\bl} \ar@{^{(}->}[r]& \U(\M) \ar[d]^{\bl} \\
	\M_*^+ &  \Oc_{\rho_0} \ar@{_{(}->}[l] \ar[r]^{\sigma_*} & \Ll_{p_0}(\M) \ar@{^{(}->}[r] & \Ll(\M)
}
\end{equation}
where $\rho_0=E(\gamma_0)$, $\rho_0'=E'(\gamma_0)$, $p_0=[\M'\gamma_0]=\sigma_*(\rho_0)$ and $p_0'=[\M\gamma_0]$ are defined by choice of $\gamma_0\in \H$.
The lower horizontal arrow in the middle of  \eqref{diag1} is the support map $\sigma_*:\M_*^+\to \Ll(\M)$, 
see Section~\ref{Sect3}. 
		
The upper horizontal arrow in \eqref{diag1} is defined for $u\in P_0$, for the definition $P_0$ see \eqref{P0}, by
\begin{equation}
		\label{pi} 
		\iota_{\gamma_0}(u):=u\gamma_0.
\end{equation}
We recall here that for $u\in P_0$ one has $u^*u=p_0=\sigma_*(\rho_0)=[\M'\gamma_0]$. 
We also recall that $\br(u)=u^*u$ and $\bl(u)=uu^*$ are right and left supports of $u\in \U(\M)$, respectively. 
It follows from Proposition~\ref{prop:2} that the definition \eqref{pi} is correct 
and $\iota_{\gamma_0}\colon P_0\stackrel{\sim}{\to} E'^{-1}(\rho_0')$ is a bijection.
		
\begin{prop}
\begin{enumerate}[{\rm(i)}]
\item The natural actions of $\U(\M)\tto \Ll(\M)$ on all objects of \eqref{diag1} are transitive and its actions on $E'^{-1}(\rho_0')$ and on $P_0$ are also free.
\item All arrows in \eqref{diag1} are equivariant maps with respect to these actions.
\end{enumerate}
\end{prop}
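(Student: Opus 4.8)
The plan is to fix $\gamma_0\in\H$ and, as in \eqref{diag1}, put $\rho_0:=E(\gamma_0)$, $\rho_0':=E'(\gamma_0)$, $p_0:=[\M'\gamma_0]=\sigma_*(\rho_0)$. The groupoid acting throughout \eqref{diag1} is $\U(\M)\tto\Ll(\M)$, acting on $\H$ by \eqref{actionUH}, on $\M_*^+$ by the coadjoint action \eqref{coaction}, on $\Ll(\M)$ by the inner (anchor) action \eqref{action}, and on $\U(\M)$ by left translation with moment map $\bl$; the objects $E'^{-1}(\rho_0')$, $\Oc_{\rho_0}$, $\Ll_{p_0}(\M)$ and $P_0=\br^{-1}(p_0)$ carry the corresponding restricted actions, and only arrows of the transitive subgroupoid $\U_{p_0}(\M)\tto\Ll_{p_0}(\M)$ are composable with their points. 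The main point for (i) is that each of these four objects is by construction a single orbit, so transitivity is immediate from facts already established; the ambient objects $\H$, $\M_*^+$, $\Ll(\M)$, $\U(\M)$ are disjoint unions of such orbits, the orbit through the distinguished point ($\gamma_0$, $\rho_0$, $p_0$, or the unit $p_0$) being precisely the corresponding object of \eqref{diag1}. For (ii) I would verify equivariance arrow by arrow, the four inclusions being trivially equivariant since each domain carries by definition the restricted action.

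For the transitivity of the action on $P_0$, given $u,v\in P_0$ I would take $w:=vu^*$, which satisfies $w^*w=uv^*vu^*=up_0u^*=uu^*$ and $ww^*=vu^*uv^*=vp_0v^*=vv^*$, hence $w\in\U_{p_0}(\M)$ with $\br(w)=uu^*=\bl(u)$ and $wu=vp_0=v$. The action on $P_0$ is free: if $wu=u$ with $\br(w)=\bl(u)=uu^*$, then $w=w\br(w)=w(uu^*)=(wu)u^*=uu^*$, the unit at $\bl(u)$. For $E'^{-1}(\rho_0')$, transitivity holds because it is a single orbit of \eqref{actionUH} by Corollary~\ref{cor:1}\eqref{cor:1_item2}, and freeness follows from Corollary~\ref{cor:1}\eqref{cor:1_item1}; for $\Oc_{\rho_0}$ it is a single coadjoint orbit by the definition \eqref{orbit}; and $\Ll_{p_0}(\M)$ is a single orbit of the inner action, namely the Murray--von Neumann class of $p_0$, as recalled in Subsection~\ref{Subsect2.4}.

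For (ii), equivariance of $E\colon\H\to\M_*^+$ is exactly Corollary~\ref{cor:1}\eqref{cor:1_item3}, i.e. $E(u\gamma)=uE(\gamma)u^*$, the moment maps being compatible because $\mu=\sigma_*\circ E$ by Proposition~\ref{prop:44} and \eqref{mom}; the restriction $E\colon E'^{-1}(\rho_0')\to\Oc_{\rho_0}$ inherits equivariance and is onto because $E(u\gamma_0)=u\rho_0u^*$. For $\bl\colon\U(\M)\to\Ll(\M)$, if $\br(w)=\bl(v)$ then $\bl(wv)=wvv^*w^*=w\bl(v)w^*$, which is the action \eqref{action} of $w$ on $\bl(v)$; restricting to $P_0$ this is the bundle projection $\bl_0$, which lands in $\Ll_{p_0}(\M)$ since $u^*u=p_0$ forces $uu^*\sim p_0$. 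For $\sigma_*\colon\Oc_{\rho_0}\to\Ll_{p_0}(\M)$, when $\br(u)=\sigma_*(\rho)$ one has $\sigma_*(u\rho u^*)=uu^*=u\sigma_*(\rho)u^*$ (the elementary fact $\sigma_*(u\rho u^*)=uu^*$ for $u^*u=\sigma_*(\rho)$, already used in the proof of Lemma~\ref{stab}), again matching \eqref{action}. Finally, for $\iota_{\gamma_0}\colon P_0\to E'^{-1}(\rho_0')$, $\iota_{\gamma_0}(u)=u\gamma_0$: for $w\in\U_{p_0}(\M)$ with $\br(w)=\bl(u)$ one has $wu\in P_0$ and $\iota_{\gamma_0}(wu)=wu\gamma_0=w\cdot\iota_{\gamma_0}(u)$, and the composability conditions coincide because $\mu(u\gamma_0)=\sigma_*(E(u\gamma_0))=\sigma_*(u\rho_0u^*)=uu^*=\bl(u)$.

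I do not expect a serious obstacle here: everything reduces to the orbit descriptions already obtained (Corollary~\ref{cor:1}, Subsection~\ref{Subsect2.4}, and \eqref{orbit}) and to routine identities for partial isometries. The point that deserves the most care is the bookkeeping of the several anchor/moment maps occurring in the diagram when checking that $E$ and $\iota_{\gamma_0}$ are equivariant --- that is, that the various ``composability'' conditions genuinely agree --- which is precisely what is ensured by $\mu=\sigma_*\circ E$ (Proposition~\ref{prop:44}) together with the identity $\sigma_*(u\rho_0u^*)=uu^*$.
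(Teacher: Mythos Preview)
Your proposal is correct and takes essentially the same approach as the paper, whose entire proof reads ``Straightforward.'' You have simply unpacked what the authors leave implicit: that the four middle objects $E'^{-1}(\rho_0')$, $P_0$, $\Oc_{\rho_0}$, $\Ll_{p_0}(\M)$ are single orbits by their very definitions (via Corollary~\ref{cor:1}, the orbit description \eqref{orbit}, and the Murray--von~Neumann equivalence class), and that the equivariance checks reduce to the identities $E(u\gamma)=uE(\gamma)u^*$, $\bl(wv)=w\bl(v)w^*$, $\sigma_*(u\rho u^*)=u\sigma_*(\rho)u^*$, $\iota_{\gamma_0}(wu)=w\iota_{\gamma_0}(u)$, together with the moment-map compatibility $\mu=\sigma_*\circ E$. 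Your explicit verification of freeness on $P_0$ via $w=w\br(w)=(wu)u^*=uu^*$ and your care in matching composability conditions across the diagram are exactly the routine checks the authors had in mind.
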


\begin{proof} 
Straightforward. 
\end{proof}
		
Let us recall that $P_0$ is the total space of a $U_{0}$-principal bundle over $\Ll_{p_0}(\M):=\{up_0u^*: u\in P_0\}$. 
		The \Banach smooth manifold structures on $P_0$ and on $\Ll_{p_0}(\M)$ were described in \cite{OJS1} and \cite{OJS2},  
		where it was shown also that $P_0(\Ll_{p_0}(\M), \bl_0, U_{0})$ is a $U_{0}$-principal bundle in sense  of the category of smooth \Banach manifolds. 
		See also Subsection~\ref{Subsect2.5}.

Regarding $(\H,\omega)$ as a real symplectic Hilbert manifold let us investigate the injection 
$
\iota_{\gamma_0}\colon P_0\to \H$ as a smooth map of these \Banach manifolds. 
To this end 
let us take a smooth curve $]-\epsilon,\epsilon[\ni t\mapsto u(t)\in P_0$ through $u(0)=u$. 
The corresponding curve in $\H$ is $\gamma(t):=\iota_{\gamma_0}(u(t))=u(t)\gamma_0$. 
It is important to remember that 
\begin{equation*}
u(t)^*u(t)=p_0=[\M'\gamma_0]
\end{equation*}
for $t\in ]-\epsilon,\epsilon[$. 
Thus $\dot{u}:=\frac{\de}{\de t}u(t)|_{t=0}\in T_uP_0$ satisfies 
\begin{align}
\label{uu} 
u^*\dot{u}+\Bigl(u^*\dot{u}\Bigr)^*=0,\\
\dot{u}p_0=\dot{u}. \nonumber
 \end{align}
From \eqref{uu} we see that $x:=u^*\dot{u}\in p_0\M p_0$ and $x+x^*=0$.  
Thus the real Banach space $T_uP_0$ tangent to $P_0$ at $u$ is 
\begin{equation}
\label{TuP0}
T_uP_0=\{ \dot{u}\in \M p_0:\ u^*\dot{u}\in ip_0\M^h p_0\},
\end{equation}
where $\M^h$ is the Hermitian part  of $\M$. 
So, for the map $T_u(\iota_{\gamma_0})\colon T_uP_0\to T_{\iota_{\gamma_0}(u)}\H$ tangent to  
$\iota_{\gamma_0}\colon P_0\to \H$ at $u\in P_0$ one has 
\begin{equation*}
T_u(\iota_{\gamma_0})(T_uP_0)=\{\dot{u}\gamma_0=\dot{u}u^*\gamma: \quad \dot{u}\in T_uP_0\},
\end{equation*}
where $\gamma=u\gamma_0$.

\begin{prop}\label{P0imm}
 For the principal bundle $(P_0,\Ll_{p_0}(\M),\bl_0,U_0)$, 
 its total space $P_0$ is a weakly immersed  submanifold of $\H$ 
 via $\iota_{\gamma_0}:P_0\hookrightarrow\H$. 
\end{prop}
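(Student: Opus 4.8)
The plan is to show two things: $\iota_{\gamma_0}$ is a smooth immersion in the sense that its tangent map is injective at every point of $P_0$, and that it is injective as a map of sets. The second part is essentially already available: by Corollary~\ref{cor:1}\eqref{cor:1_item2} the image $\iota_{\gamma_0}(P_0)$ is the fibre $E'^{-1}(\rho_0')$ and by Proposition~\ref{prop:2} (together with the discussion around~\eqref{pi}) the map $\iota_{\gamma_0}\colon P_0\to E'^{-1}(\rho_0')$ is a bijection, so injectivity is for free. Smoothness of $\iota_{\gamma_0}$ follows from the fact that it is the restriction to the submanifold $P_0\subseteq\U(\M)\subseteq\M$ of the continuous linear (hence smooth) map $\M\to\H$, $a\mapsto a\gamma_0$; here one uses that $\M\subseteq L^\infty(\H)$ acts boundedly on $\H$.

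The heart of the matter is injectivity of $T_u\iota_{\gamma_0}$ for each $u\in P_0$. First I would use the description of the tangent space $T_uP_0$ recorded in~\eqref{TuP0}, namely $T_uP_0=\{\dot u\in\M p_0:\ u^*\dot u\in ip_0\M^h p_0\}$, and the formula $T_u\iota_{\gamma_0}(\dot u)=\dot u\gamma_0$. So I must show: if $\dot u\in\M p_0$ with $u^*\dot u\in ip_0\M^h p_0$ and $\dot u\gamma_0=0$, then $\dot u=0$. Writing $x:=u^*\dot u\in p_0\M p_0$, the condition $\dot u\gamma_0=0$ gives, after multiplying by $u^*$ on the left, $x\gamma_0=u^*\dot u\gamma_0=u^*\cdot 0=0$ — but wait, one should be careful, since $u u^*=\bl(u)$ need not be the identity; however $\dot u=\dot u p_0$ and $u^* u=p_0$, so $\dot u=\dot u p_0=\dot u u^* u$, hence $\dot u\gamma_0=0$ is actually equivalent to $\dot u u^*(u\gamma_0)=0$, i.e. $\dot u u^*\gamma=0$ where $\gamma=u\gamma_0$. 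Rather than chasing this, the cleaner route is: from $\dot u\gamma_0=0$ and $\dot u=uu^*\dot u+(\1-uu^*)\dot u$... no — better to use $x=u^*\dot u$ directly: $x\gamma_0 = u^*\dot u\gamma_0=0$. Since $x\in ip_0\M^h p_0$, write $x=ih$ with $h=h^*\in p_0\M p_0$, so $h\gamma_0=0$. Now $p_0=[\M'\gamma_0]$ is the support of $\rho_0=E(\gamma_0)$, equivalently the projection onto $\overline{\M'\gamma_0}$, and $h\in p_0\M p_0$ commutes with all of $\M'$; for any $a'\in\M'$ we then get $h a'\gamma_0=a' h\gamma_0=0$, so $h$ annihilates the dense subspace $\M'\gamma_0$ of $\mathrm{ran}\,p_0$, whence $h p_0=0$, i.e. $h=p_0 h p_0=0$. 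Therefore $x=0$, i.e. $u^*\dot u=0$, and finally $\dot u=\dot u p_0=\dot u(u^*u)=(\dot u u^*)u$; but also $u^*\dot u=0$ gives $\dot u u^*\dot u=0$...

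Here the subtlety I anticipate as the main obstacle is recovering $\dot u=0$ from $u^*\dot u=0$ together with $\dot u\in\M p_0$. The point is that $\dot u\in T_uP_0$ also lives in the range of the left multiplication by $\bl(u)=uu^*$ in an appropriate sense: differentiating $u(t)u(t)^*u(t)=u(t)$ (which holds since each $u(t)\in P_0\subseteq\U(\M)$, so $u(t)$ is a partial isometry with $u(t)^*u(t)=p_0$, hence $u(t)u(t)^*u(t)=u(t)$) at $t=0$ gives $\dot u u^*u+u\dot u^*u+uu^*\dot u=\dot u$, i.e. $\dot u p_0+u\dot u^*u+uu^*\dot u=\dot u$, and since $\dot u p_0=\dot u$ this yields $u\dot u^*u=-uu^*\dot u=-u(u^*\dot u)$. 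If $u^*\dot u=0$ then $u\dot u^*u=0$, so $\dot u^*u=u^*(u\dot u^*u)=0$ as well (multiplying by $u^*$ and using $u^*u=p_0$ acting from the left... actually $u^*\cdot u\dot u^*u=p_0\dot u^*u=\dot u^*u$ because $\dot u^*u\in p_0\M p_0$ — one checks $\dot u^* = p_0\dot u^*$ from $\dot u=\dot u p_0$). So both $u^*\dot u=0$ and $\dot u^*u=0$; then $\dot u=\dot u p_0=\dot u u^*u$, and $\dot u u^*=(u\dot u^*)^*=0^*=0$, giving $\dot u=0$. I would lay this linear-algebra argument out carefully, citing~\eqref{TuP0}, Proposition~\ref{prop:2}, and Proposition~\ref{prop:44} for the identification $p_0=[\M'\gamma_0]$, and conclude that $T_u\iota_{\gamma_0}$ is injective, so that $\iota_{\gamma_0}\colon P_0\hookrightarrow\H$ is a weak immersion in the sense recalled in Section~\ref{Sect2}.
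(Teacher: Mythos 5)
Your reduction to the vertical part loses exactly the information you need, and the final step does not close. Concretely: from $\dot u\gamma_0=0$ you only retain $u^*\dot u\gamma_0=0$, and your (correct) commutation argument then gives $u^*\dot u=0$. But $u^*\dot u=0$ alone cannot imply $\dot u=0$: it is precisely the equation cutting out the horizontal subspace $(\1-uu^*)\M p_0\subseteq T_uP_0$ (compare \eqref{ThuP0} and Lemma~\ref{lemma1}\eqref{lemma1_item1}), which is nonzero whenever $\bl(u)\ne\1$ acts nontrivially on $\M p_0$. The specific slip is the line ``$\dot u u^*=(u\dot u^*)^*=0^*=0$'': you established only $u\dot u^*u=0$ (equivalently $\dot u^*u=0$, the adjoint of what you already had), not $u\dot u^*=0$; for a horizontal vector $\dot u=(\1-uu^*)w$ with $w\in\M p_0$ one has $u^*\dot u=0$ and $u\dot u^*u=0$ while $\dot u\ne0$ in general, so no purely algebraic manipulation of these relations can finish the proof.

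The fix is to apply the commutation argument you used for $h$ directly to $\dot u$ itself, keeping the full hypothesis $\dot u\gamma_0=0$; this is exactly the paper's (one-line) proof. Since $\dot u\in\M$ commutes with $\M'$, for every $a'\in\M'$ one has $\dot u a'\gamma_0=a'\dot u\gamma_0=0$, so $\dot u$ annihilates the subspace $\M'\gamma_0$, which is dense in the range of $p_0=[\M'\gamma_0]$; hence $\dot u p_0=0$, and since $\dot u=\dot u p_0$ by \eqref{TuP0}, we get $\dot u=0$. With this replacement (and your remarks on set-theoretic injectivity via Proposition~\ref{prop:2} and smoothness of $a\mapsto a\gamma_0$, which are fine), the proof is complete and there is no need for the detour through the polar-type identity obtained by differentiating $u(t)u(t)^*u(t)=u(t)$.
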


\begin{proof}
 For proving $\Ker T_u(\iota_{\gamma_0})=\{0\}$, we note that for $v\in \Ker T_u(\iota_{\gamma_0})$ one has $v\gamma_0=0$.
Hence, we have $v\M'\gamma_0=0$ what implies  $0=vp_0=v$.
The above shows that $\iota_{\gamma_0}\colon P_0\hookrightarrow\H$ is an injective weak immersion.
\end{proof}

Recalling from Section~\ref{Sect2} the definition of a weak immersion, 
we emphasize that in general the real vector subspace $T_u(\iota_{\gamma_0})(T_uP_0)$ 
is not closed in $T_\gamma\H\cong\H$. 
Assume for simplicity that $\H$ is separable and let $T_u(\iota_{\gamma_0})(T_uP_0)$ 
be closed in $T_\gamma\H\cong\H$. 
Then 
$T_u(\iota_{\gamma_0})\colon T_uP_0\to T_{\iota_{\gamma_0}(u)}\H$ 
is an injective operator whose range is a closed $\R$-linear subspace of the Hilbert space $\H$, 
and this implies that the Banach space $T_uP_0$ is separable (and it is actually topologically isomorphic 
to a separable real Hilbert space). 

On the other hand, it follows by \eqref{TuP0} for $u=p_0$ that $T_uP_0$ is a closed $\R$-linear subspace of $\M$ 
(with $ip_0\M^h p_0\subseteq T_uP_0$), hence we obtain that the $W^*$-algebra $p_0\M p_0$ is separable, 
and then $\dim(p_0\M p_0)<\infty$. 
Thus, if $\dim(p_0\M p_0)=\infty$ (which is always the case for instance 
if $\M$ is a type~II or type~III factor and $0\ne p_0\in\Ll(\M)$), then  $T_u(\iota_{\gamma_0})(T_uP_0)$ 
fails to be closed in $T_\gamma\H\cong\H$. 

We now recall from the discussion following \eqref{pi} that one has the bijection 
$$\iota_{\gamma_0}\colon P_0\to E'^{-1}(E'(\gamma_0)).$$
We use this bijection to transport the manifold structure of $P_0$ to $E'^{-1}(E'(\gamma_0))$, 
and then the inclusion map $E'^{-1}(E'(\gamma_0))\hookrightarrow\H$ is a weak immersion by Proposition~\ref{P0imm}. 
One can similarly define a manifold structure on $E^{-1}(E(\gamma_0))$ for arbitrary $\gamma_0\in\H$. 
We use these manifold structures in Proposition~\ref{prop:4.4} below. 

	\begin{prop}\label{prop:4.4}
	The diagram
	\begin{equation}\label{diagdual3}
\xymatrix
{ 	& \H 	\ar[dl]_{E } \ar[dr]^{E'} &     \\
	\M_{h*}  &	 &  {\M'}_{h*} 
}
\end{equation}
gives a symplectic dual pair (see \eqref{sdp}), 
in the sense that for any $\gamma\in \H$ the fibres tangent spaces $\Ker T_\gamma E'=T_\gamma E'^{-1}(E'(\gamma))$ and $\Ker T_\gamma E=T_\gamma E^{-1}(E(\gamma))$ are symplectically orthogonal. 
One also has 
\begin{align}\label{E3} E'^*(C^\infty(\M'_{h*}, \mathbb{R}))
& \subseteq E^*(C^\infty(\M_{h*}, \mathbb{R}))', \\
\label{E4} 
E^*(C^\infty(\M_{h*}, \mathbb{R}))
& \subseteq E '^*(C^\infty(\M '_{h*}, \mathbb{R}))'
\end{align}
\end{prop}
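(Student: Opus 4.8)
The plan is to establish the symplectic-orthogonality of the fibres first and then deduce the commutation relations from it, mirroring the structure of the proof of Proposition~\ref{diagdual_prop2} but now using the groupoid action of $\U(\M')\tto\Ll(\M')$ and $\U(\M)\tto\Ll(\M)$ in place of the $U(1)$-action. By Corollary~\ref{cor:1}\eqref{cor:1_item2}, the fibre $E'^{-1}(E'(\gamma))$ is exactly the orbit of $\gamma$ under the action \eqref{actionUH} of $\U(\M)\tto\Ll(\M)$, so $\Ker T_\gamma E'=T_\gamma(E'^{-1}(E'(\gamma)))$ consists of vectors of the form $x\gamma$ with $x\in\M$, $x+x^*=0$ (more precisely, tangent vectors to curves $u(t)\gamma$ with $u(t)\in\U(\M)$, $u(t)^*u(t)=[\M'\gamma]$, giving $\dot\gamma=x\gamma$ with $x\in\mu(\gamma)\M^a\mu(\gamma)$ after writing $x=\dot u u^*$ as in the computation around \eqref{uu}--\eqref{TuP0}). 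Symmetrically, $\Ker T_\gamma E$ consists of vectors $x'\gamma$ with $x'\in\M'$, $x'+x'^*=0$.

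First I would verify the symplectic orthogonality by direct computation. For $\dot\gamma=x\gamma$ with $x=-x^*\in\M$ and $\dot\gamma'=x'\gamma$ with $x'=-x'^*\in\M'$, one computes
\begin{equation*}
\omega_\gamma(\dot\gamma,\dot\gamma')
=\frac{1}{2i}\bigl(\langle x\gamma\mid x'\gamma\rangle-\langle x'\gamma\mid x\gamma\rangle\bigr)
=\frac{1}{2i}\bigl(\langle \gamma\mid x^*x'\gamma\rangle-\langle \gamma\mid x'^*x\gamma\rangle\bigr).
\end{equation*}
Since $x\in\M$ and $x'\in\M'$ commute, $x^*x'=x'x^*$ and $x'^*x=xx'^*$, so using $x^*=-x$, $x'^*=-x'$ the bracket becomes $\langle\gamma\mid(-x'x-(-xx'))\gamma\rangle=\langle\gamma\mid(xx'-x'x)\gamma\rangle=0$. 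Hence $\omega_\gamma(\dot\gamma,\dot\gamma')=0$, which gives $T_\gamma E'^{-1}(E'(\gamma))\subseteq(T_\gamma E^{-1}(E(\gamma)))^\perp$ and, by symmetry, the reverse inclusion needed for the dual-pair property; here one should be a little careful that ``symplectically orthogonal'' is meant in the weak sense (each fibre's tangent space is contained in the symplectic orthogonal of the other), since in this infinite-dimensional setting one cannot expect equality of dimensions as in finite dimensions.

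Next, for the commutation relations \eqref{E3}--\eqref{E4}: given $F\in C^\infty(\M'_{h*},\R)$ and $H\in C^\infty(\M_{h*},\R)$, the Hamiltonian vector field of $F\circ E'$ is tangent to the fibres of $E'$ (because $F\circ E'$ is constant along each such fibre and the fibre is coisotropic — being an orbit whose tangent space is its own symplectic orthogonal's... actually it suffices that its tangent space lies in the orthogonal of the other fibre, combined with $E$-invariance), hence $\{F\circ E',H\circ E\}_\omega=(\#_\omega(F\circ E'))(H\circ E)=\de(H\circ E)(\#_\omega(F\circ E'))$. The vector field $\#_\omega(F\circ E')$ lies in the characteristic distribution of curves moving $\gamma$ inside $E'^{-1}(E'(\gamma))$, and $H\circ E$ is constant on such curves since $E$ is $\U(\M)$-equivariant (Corollary~\ref{cor:1}\eqref{cor:1_item3}) while on a fixed $E'$-fibre the value $E(\gamma)=uE(\gamma_0)u^*$ traces a coadjoint orbit $\Oc_{\rho_0}$ — but $H\circ E$ composed with this need not be constant. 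So the cleaner route, which I would actually take, is: by the symplectic orthogonality just proved, $\#_\omega(H\circ E)(\gamma)\in\Ker T_\gamma E'$ since $\de(H\circ E)$ annihilates $\Ker T_\gamma E$, whose symplectic orthogonal is $\Ker T_\gamma E'$... wait — more directly, $\de(H\circ E)$ vanishes on $\Ker T_\gamma E$, and $\omega$-nondegeneracy plus the orthogonality identity forces $\#_\omega(H\circ E)(\gamma)$ to lie in the double-orthogonal, which contains $\Ker T_\gamma E'$; then $\{F\circ E',H\circ E\}_\omega=-\de(F\circ E')(\#_\omega(H\circ E))=0$ because $F\circ E'$ is constant on $E'$-fibres and $\#_\omega(H\circ E)$ is tangent to them. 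I would also derive \eqref{E3}--\eqref{E4} directly from the chain of commutant inclusions \eqref{c1}--\eqref{c2} and \eqref{43a}--\eqref{44a} together with the factorizations $E=\iota_*\circ\mathcal{E}$, $E'=\iota'_*\circ\mathcal{E}$ and Proposition~\ref{HP}, exactly as \eqref{E1}--\eqref{E2} were obtained from \eqref{c1}--\eqref{c2} in Proposition~\ref{diagdual_prop2}; this is the safest argument since it avoids delicate issues about whether Hamiltonian vector fields exist and are tangent to the (weakly immersed, non-closed) fibres.

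The main obstacle I anticipate is precisely this last point: in infinite dimensions one cannot blithely argue ``$\de(H\circ E)$ kills $\Ker T_\gamma E$, hence by nondegeneracy $\#_\omega(H\circ E)$ lies in $(\Ker T_\gamma E)^{\perp\perp}=\Ker T_\gamma E'$'', because $(\Ker T_\gamma E)^{\perp\perp}$ need not equal $\Ker T_\gamma E'$ unless these spaces are closed, and we have just noted (in the discussion after Proposition~\ref{P0imm}) that $T_\gamma E'^{-1}(E'(\gamma))$ is typically \emph{not} closed in $\H$. The way around this is to work with the explicit form of the tangent vectors ($x\gamma$, $x'\gamma$ with $x\in\M^a$, $x'\in\M'^a$) and the explicit Hamiltonian vector fields of linear functionals $f^X(\rho)=\langle\rho,X\rangle$ on the Lie--Poisson spaces, pulled back by $E$ and $E'$, for which one checks by hand that $\#_\omega(f^X\circ E)(\gamma)=(\text{something in }\M'^a)\gamma\in\Ker T_\gamma E'$ — indeed $E(\gamma)=\langle\gamma\mid(\cdot)\gamma\rangle$ restricted to $\M$, and its Hamiltonian flow is the unitary flow $e^{itX}$ acting on $\gamma$ with $X\in\M^h$, which moves $\gamma$ within $E'^{-1}(E'(\gamma))$ because $e^{itX}\in\M$ commutes with $\M'$, hence preserves $\|x'\gamma\|$ for $x'\in\M'$, i.e. preserves $E'(\gamma)$. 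Since such linear functionals generate (in the sense of separating tangent vectors and spanning differentials) the relevant Poisson algebras, this suffices, and then \eqref{E3}--\eqref{E4} follow. I would therefore present the proof in two clean parts: the direct $\omega$-orthogonality computation above, and the deduction of \eqref{E3}--\eqref{E4} from \eqref{c1}--\eqref{c2}, \eqref{43a}--\eqref{44a}, Proposition~\ref{HP}, and the factorizations \eqref{E}--\eqref{Eprim}, noting the analogy with Proposition~\ref{diagdual_prop2}.
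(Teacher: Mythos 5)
Your proposal is correct and takes essentially the same route as the paper: you write the fibre tangent vectors as $x\gamma$ with $x\in\M^a$ (resp. $x'\gamma$ with $x'\in(\M')^a$) and the vanishing of $\omega_\gamma(x\gamma,x'\gamma)$ is exactly the paper's commutator computation with $p_\gamma\dot u p_\gamma\in\M$ and $p'_\gamma\dot u' p'_\gamma\in\M'$, while \eqref{E3}--\eqref{E4} are deduced, as in the paper, from \eqref{44a} together with the factorizations $E=\iota_*\circ\mathcal{E}$, $E'=\iota'_*\circ\mathcal{E}$ and the fact that these are Poisson maps. Two small remarks: the parenthetical claim that $\dot\gamma=x\gamma$ with $x\in\mu(\gamma)\M^a\mu(\gamma)$ is slightly off, since curves $u(t)\gamma$ also produce off-corner directions $(1-p_\gamma)\dot u p_\gamma\,\gamma$, but any such tangent vector can still be written as $x\gamma$ with $x\in\M^a$ (replace $b:=(1-p_\gamma)\dot u p_\gamma$ by $b-b^*$, using $b^*\gamma=0$), so the computation you actually use is unaffected; and \eqref{c1}--\eqref{c2} concern only the special case $\iota(\M)=L^\infty(\H)$ and are not needed, since \eqref{44a} suffices.
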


\begin{proof} 
	Let us consider two smooth curves $\gamma(t)\in E^{-1}(E(\gamma))$ and $\gamma'(t)\in  E'^{-1}(E'(\gamma))$, where $t\in ]-\epsilon,\epsilon[$ through $\gamma$, i.e., $\gamma(0)=\gamma'(0)=\gamma$. According to Proposition \ref{prop:2} we can represent them as $$\gamma(t)=u'(t)\gamma\text{ and }\gamma'(t)=u(t)\gamma,$$ 
	where $u'(t)\in \U(\M ')$ and $u(t)\in \U(\M)$ satisfy:
\begin{align}
\label{1} 
&u(0)=[\M'\gamma]=:p_\gamma \qquad u(t)^* u(t)=p_\gamma\\
\label{2} 
&u'(0)=[\M\gamma]=:p_\gamma ' \qquad u'(t)^* u'(t)=p_\gamma '
\end{align}
From \eqref{1}--\eqref{2} one obtains the following relations
\begin{align}
\label{3} 
(p_\gamma\dot u p_\gamma)^*+p_\gamma\dot u p_\gamma
&=0,\quad \dot\gamma=\dot u'\gamma, \quad p_\gamma\gamma=\gamma\\
\label{4} 
(p'_\gamma\dot u' p'_\gamma)^*+p'_\gamma\dot u' p'_\gamma
&=0,\quad \dot\gamma '=\dot u\gamma, \quad p_\gamma'\gamma=\gamma
\end{align}
where we use the notation 
$$\dot\gamma:=\frac{\de}{\de t}\gamma(t)\Bigl\vert_{t=0},\  
\dot\gamma ':=\frac{\de}{\de t}\gamma '(t)\Bigl\vert_{t=0},\  
\dot u:=\frac{\de}{\de t}u(t)\Bigl\vert_{t=0},\  
\dot u ':=\frac{\de}{\de t}u '(t)\Bigl\vert_{t=0}$$ 
for tangent vectors. 
Using \eqref{3}--\eqref{4} we have
\begin{align} 
\omega (\dot\gamma,\dot\gamma ')
&=\frac{1}{2i}\langle\gamma\mid p_\gamma' p_\gamma(\dot u'^*\dot u-\dot u^*\dot u')p_\gamma p_{\gamma '}\gamma\rangle \nonumber \\
&=\frac{1}{2i}\langle\gamma\mid ((p'_\gamma\dot u' p'_\gamma)^*p_\gamma\dot u p_\gamma-p_\gamma\dot u p_\gamma(p'_\gamma\dot u' p'_\gamma)^*)\gamma\rangle 
\nonumber \\
&=
\frac{1}{2i}\langle\gamma\mid [p_\gamma\dot u p_\gamma,p'_\gamma\dot u' p'_\gamma]\gamma\rangle \nonumber \\
\label{5}
&=0,
\end{align}
i.e. the tangent spaces $T_\gamma(E^{-1}(E(\gamma))$  and $T_\gamma(E'^{-1}(E'(\gamma))$ are symplectically orthogonal. 
One obtains the last equality in \eqref{5} since $p_\gamma\dot u p_\gamma\in \M$ and $p'_\gamma\dot u' p'_\gamma\in \M'$. 
The inclusions \eqref{E3}--\eqref{E4} follow from \eqref{44a} 
since $E$ and $E'$ are Poisson maps.
\end{proof}

In Section~\ref{Sect6} we will investigate the symplectic dual pair presented in \eqref{diagdual3} for a standard form of a von Neumann algebra $\iota:\M\hookrightarrow L^\infty(\H)$.

 \section{Coadjoint orbits of the groupoid $\U(\M)\tto \Ll(\M)$}
\label{Sect5}

In this section we investigate the orbits of the natural action of the groupoid $\U(\M)\tto \Ll(\M)$ on the positive cone $\M_*^+$ in the predual $\M_*$. 
That action is called here the \emph{coadjoint action of the groupoid $\U(\M)\tto \Ll(\M)$} because of its close relation to the coadjoint action of the unitary group of~$\M$. 
In Section~\ref{Subsect5.1} we endow these groupoid coadjoint orbits with invariant weakly symplectic structures obtained by the reduction procedure whose input is the symplectic structure of the Hilbert space~$\H$ (Theorem~\ref{thm:orbit}). 
Then, in Subsection~\ref{Subsect5.2}, we show that the type of the von Neumann algebra~$\M$ can be read off the coadjoint orbits of the \Banach Lie groupoid $\U(\M)\tto \Ll(\M)$.

 \subsection{Weakly symplectic structure of the coadjoint orbits of $\U(\M)\tto \Ll(\M)$}
\label{Subsect5.1}
					
As shown in Section~\ref{Sect4}, the groupoid $\U(\M)\tto \Ll(\M)$ acts on $(\H,\omega)$ in a free and symplectic way. 
The reduction of the symplectic form $\omega$ to the orbits of this action will be described in this subsection.

Recall from the diagram \eqref{diag1} that $E'^{-1}(\rho'_0)\cong P_0$ is a weakly immersed submanifold of the real symplectic manifold $(\H,\omega)$. 
Therefore, one can consider the reduction of the symplectic form $\omega=\de\Gamma$ to $E'^{-1}(\rho'_0)$.

Also recall the 1-form $\Gamma$ defined in 
 \eqref{Gamma}. 
  By pull-back of $\Gamma$  
 to $E'^{-1}(\rho'_0)$, 
 i.e., 
 using the change of variables 
 $\iota_{\gamma_0}\colon P_0\to E'^{-1}(\rho'_0)$, 
 $u\mapsto \gamma:=u\gamma_0$
 in~\eqref{Gamma}, 
 we obtain the differential forms 
\begin{align}
\Gamma_0(u)
:=& (\iota_{\gamma_0}^*\Gamma)(u) 
=  i\langle u\gamma_0\mid \de(u\gamma_0)\rangle 
=  i\langle \gamma_0\mid u^*\de(u\gamma_0)\rangle 
=  i\langle \rho_0,u^*\de u\rangle \nonumber \\
\label{Gamma0}
= & i\langle\rho_0,\alpha(u)\rangle
\end{align}
and 
\begin{equation}
\label{dGamma}
\de\Gamma_0(u)=i\langle\rho_0,\de\alpha(u)\rangle
=i\langle\rho_0,\Omega(u)\rangle-i\langle\rho_0,\frac{1}{2}[\alpha(u),\alpha(u)]\rangle
\end{equation}
on $P_0$, where  $\alpha\in \Gamma^\infty(T^*P_0, p_0i\M^h p_0)$ is the connection form defined in \eqref{alpha} and $\Omega$ is its curvature form defined in \eqref{Omega}.

\begin{lem} \label{lemma1}
\begin{enumerate}[{\rm(i)}] 
\item\label{lemma1_item1} 
The horizontal $T^h_uP_0$ and vertical $T^v_uP_0$ components of $$T_uP_0=T^h_uP_0\oplus T^v_uP_0 $$ 
with respect to the connection form $\alpha$ are given by 
\begin{equation} \label{Th}  
T^h_uP_0:=\{(1-uu^*)\dot{u}:\ \dot{u} \in T_uP_0\}
\end{equation}
and
\begin{align} \label{Tv} 
T^v_uP_0:=
&\Ker T_u(\bl_0)=\{\dot{u}\in T_uP_0:\ u^*\dot{u}+\dot{u}^*u=0\} \\
=
&\{ux:\ x\in ip_0\M^h p_0\}=\{uu^*\dot{u}:\  \dot{u}\in T_uP_0\}, \nonumber
\end{align}
respectively.
\item\label{lemma1_item2} 
One has the orthogonality relation $T^h_uP_0\perp T^v_uP_0$ with respect to $d\Gamma_0$, i.e., 
\begin{equation} 
\de\Gamma_0(u)(\dot{u}^h, \dot{u}^v)=0
\end{equation}
for any $\dot{u}^h\in T^h_uP_0$ and $\dot{u}^v\in T^v_uP_0$
\item\label{lemma1_item3} 
The curvature of $\alpha$ is the  $2$-form given by 
\begin{equation}\label{Omega1} 
\Omega(u)(\dot{u}_1,\dot{u}_2)
=\frac{1}{2}(\dot{u}^*_1(1-uu^*)\dot{u}_2
-\dot{u}^*_2(1-uu^*)\dot{u}_1).
\end{equation}
One has also 
\begin{equation}\label{dGamma1} 
\de\Gamma_0(u)(\dot{u}_1, \dot{u}_2)
=\frac{i}{2} 
\langle \rho_0, \dot{u}_1^{h*} \dot{u}_2^{h}
-\dot{u}_2^{h*} \dot{u}_1^{h} \rangle
-i\langle\rho_0,[x_1,x_2]\rangle
\end{equation}
where $x_1=u^*\dot{u}_1$, $x_2=u^*\dot{u}_2$.
\end{enumerate}
\end{lem}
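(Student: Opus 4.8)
The plan is to verify the three assertions by direct computation using the explicit formulas for the connection form $\alpha(u)=u^*\de u$, its curvature $\Omega$ from \eqref{Omega}, and the pulled-back form $\Gamma_0(u)=i\langle\rho_0,\alpha(u)\rangle$ from \eqref{Gamma0}. Throughout I will work with tangent vectors $\dot u\in T_uP_0$ characterized by \eqref{TuP0}, i.e., $\dot u=\dot u\,p_0$ and $u^*\dot u\in ip_0\M^h p_0$, and I will use repeatedly that $u^*u=p_0$ (differentiating gives $u^*\dot u+\dot u^* u=0$, consistent with the anti-Hermitian property, since $u^*\dot u$ anti-Hermitian means $(u^*\dot u)^*=-u^*\dot u$, i.e. $\dot u^*u=-u^*\dot u$).

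\textbf{Assertion \eqref{lemma1_item1}.} The vertical subspace is by definition $\Ker T_u(\bl_0)$ where $\bl_0(u)=uu^*$; differentiating gives $T_u(\bl_0)(\dot u)=\dot u u^*+u\dot u^*$, so $\dot u\in T^v_uP_0$ iff $\dot u u^*+u\dot u^*=0$. Multiplying this relation by $u^*$ on the left and $u$ on the right and using $u^*u=p_0$ one gets $u^*\dot u p_0 + u^* u\dot u^* u = u^*\dot u + p_0\dot u^* u$; combined with $u^*\dot u+\dot u^*u=0$ one checks the equivalent descriptions: $\dot u=uu^*\dot u$ (purely vertical), equivalently $\dot u=ux$ with $x=u^*\dot u\in ip_0\M^h p_0$, which is exactly the image of the fundamental vector field $\xi_x(u)=ux$ from \eqref{230}. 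For the horizontal subspace, the connection $\alpha$ was fixed in \eqref{alpha}, so $T^h_uP_0=\Ker\alpha(u)=\{\dot u\in T_uP_0: u^*\dot u=0\}$; I claim this equals $\{(\1-uu^*)\dot u:\dot u\in T_uP_0\}$. Indeed $u^*(\1-uu^*)\dot u=(u^*-u^*uu^*)\dot u=(u^*-p_0u^*)\dot u=0$ since $p_0u^*=u^*uu^*=u^*$; conversely if $u^*\dot u=0$ then $uu^*\dot u=0$ so $\dot u=(\1-uu^*)\dot u$. This also shows $T_uP_0=T^h_uP_0\oplus T^v_uP_0$ via $\dot u=(\1-uu^*)\dot u+uu^*\dot u$, and one must check the vertical part $uu^*\dot u$ indeed lies in $T_uP_0$ and is vertical: $u^*(uu^*\dot u)=p_0u^*\dot u=u^*\dot u$ which is in $ip_0\M^hp_0$, fine, and $uu^*(uu^*\dot u)=uu^*\dot u$ confirms verticality.

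\textbf{Assertion \eqref{lemma1_item3}, curvature formula.} Here the main computation is to expand $\Omega=\de u^*\wedge\de u+\tfrac12[u^*\de u,u^*\de u]$ evaluated on $(\dot u_1,\dot u_2)$. The first term gives $\dot u_1^*\dot u_2-\dot u_2^*\dot u_1$ (with the convention making $\de u^*\wedge\de u$ antisymmetric). The bracket term $\tfrac12[u^*\de u,u^*\de u]$ evaluated on the pair produces $\tfrac12\big((u^*\dot u_1)(u^*\dot u_2)-(u^*\dot u_2)(u^*\dot u_1)\big)\cdot\!(\text{anti-sym})$ — more precisely it contributes $\tfrac12[x_1,x_2]-\tfrac12[x_2,x_1]=[x_1,x_2]$ after antisymmetrizing with $x_i=u^*\dot u_i$; but since $\Omega$ is horizontal (a curvature form), projecting $\dot u_i$ onto horizontal parts $\dot u_i^h=(\1-uu^*)\dot u_i$ is harmless and one rewrites $\dot u_1^*\dot u_2-\dot u_2^*\dot u_1$ as $\tfrac12$ of itself plus the bracket contribution using $\dot u_i^*u u^*\dot u_j=(u^*\dot u_i)^*(u^*\dot u_j)$ and $u^*\dot u_i$ anti-Hermitian; collecting terms yields exactly $\Omega(u)(\dot u_1,\dot u_2)=\tfrac12\big(\dot u_1^*(\1-uu^*)\dot u_2-\dot u_2^*(\1-uu^*)\dot u_1\big)$. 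Then \eqref{dGamma} reads $\de\Gamma_0(u)=i\langle\rho_0,\Omega(u)\rangle-\tfrac{i}{2}\langle\rho_0,[\alpha(u),\alpha(u)]\rangle$; substituting the curvature formula for the first term gives the $\tfrac{i}{2}\langle\rho_0,\dot u_1^{h*}\dot u_2^h-\dot u_2^{h*}\dot u_1^h\rangle$ contribution (noting $(\1-uu^*)\dot u_j=\dot u_j^h$), and the second term is precisely $-i\langle\rho_0,[x_1,x_2]\rangle$ with $x_i=u^*\dot u_i$; this proves \eqref{dGamma1}.

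\textbf{Assertion \eqref{lemma1_item2}.} This now follows by a short calculation from \eqref{dGamma1}: if $\dot u^h$ is horizontal then its horizontal part is itself but $u^*\dot u^h=0$ so $x^h=0$; if $\dot u^v$ is vertical then $(\1-uu^*)\dot u^v=0$ so $\dot u^{v,h}=0$. Plugging $\dot u_1=\dot u^h$, $\dot u_2=\dot u^v$ into \eqref{dGamma1}: the first term has $\dot u_1^h=\dot u^h$ but $\dot u_2^h=0$, so it vanishes; the second term has $x_1=u^*\dot u^h=0$, so $[x_1,x_2]=0$. Hence $\de\Gamma_0(u)(\dot u^h,\dot u^v)=0$.

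\textbf{Main obstacle.} The genuinely delicate point is the precise bookkeeping of signs and the factor $\tfrac12$ when expanding $\de u^*\wedge\de u$ and the Maurer-Cartan term $[\alpha,\alpha]$ — i.e., matching the combinatorial antisymmetrization conventions so that the curvature identity \eqref{Omega1} and the two contributions in \eqref{dGamma1} come out with exactly the stated coefficients; the rest is routine manipulation using only $u^*u=p_0$, $p_0u^*=u^*$, the anti-Hermiticity of $u^*\dot u$ on $T_uP_0$, and the already-established decomposition from part \eqref{lemma1_item1}. I would double-check the conventions against \eqref{230} and \eqref{alphaequiv}--\eqref{Omegaequiv} to pin down signs.
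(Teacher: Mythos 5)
Your proposal is correct and takes essentially the same route as the paper: part (i) via the kernel of $\alpha$ and of $T_u\bl_0$ together with the decomposition $\dot u=(\1-uu^*)\dot u+uu^*\dot u$, and part (iii) by evaluating the curvature with the identities $u^*u=p_0$, $\dot u_i^*uu^*\dot u_j=(u^*\dot u_i)^*(u^*\dot u_j)$ and anti-Hermiticity of $u^*\dot u_i$, then substituting into \eqref{dGamma}. The only cosmetic differences are that you obtain (ii) as a corollary of \eqref{dGamma1} rather than directly from the decomposition, and that your flagged concern about the wedge/bracket normalization factors simply mirrors the paper's own conventions, so nothing essential is missing.
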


\begin{proof} 
\eqref{lemma1_item1} 
By definition one has 
\begin{align} 
T^h_uP_0
& =\{\dot{u}\in T_uP_0:\ \langle\alpha,\dot{u}\rangle=0\} 
 =\{\dot{u}\in T_uP_0:\ u^*\dot{u}=0\} \nonumber \\
\label{ThuP0}
& =\{\dot{u}\in \M p_0:\ u^*\dot{u}=0\}
\end{align}
where the last equation follows by \eqref{TuP0}. 
Hence by the decomposition
\begin{equation}
\label{decu*} 
\dot{u}=(1-uu^*)\dot{u}+uu^*\dot{u}
\end{equation}
of $\dot{u}\in T_uP_0$ one obtains $T_u^hP_0=(1-uu^*)T_hP_0$. 
We recall that $\bl_0:P_0\to\Ll_{p_0}(\M)$ is given by  $\bl_0(u)=uu^*$. 
Hence we have $T_u(\bl_0)\dot{u}=\dot{u}u^*+u\dot{u}^*$. 
This gives the first equality in \eqref{Tv}. 
Since $U_0$ acts on $\bl_0^{-1}(uu^*)$ in transitive  and free way, we obtain that $\dot{u}\in \Ker T_u(\bl_0)$ if and only if $\dot{u}=ux$, where $x\in ip_0\M^h p_0$. 
So, the second equality in \eqref{Tv} is valid. 
In order to obtain the last equality in \eqref{Tv} we use the decomposition \eqref{decu*} and note that $u^*\dot{u}\in ip_0\M^h p_0$.

\eqref{lemma1_item2}
This follows from \eqref{Th}, \eqref{Tv} and  \eqref{decu*}.

\eqref{lemma1_item3} By definition one has 
\begin{align} 
\Omega(u)(\dot{u}_1, \dot{u}_2)
:=
&
\de\alpha(\dot{u}_1^h, \dot{u}_2^h) 
=
 (\de u^*\wedge \de u)(\dot{u}_1^h, \dot{u}_2^h) 
=
\frac{1}{2}(\dot{u}_1^{h*} \dot{u}_2^h-\dot{u}_2^{h*} \dot{u}_1^h)
\nonumber \\
\label{Omega2}
= &
\frac{1}{2}({\dot{u}_1}^*(1-uu^*) \dot{u}_2-\dot{u}_2^*(1-uu^*) \dot{u}_1).
\end{align}
In order to prove \eqref{Omega1} we substitute $\dot{u}_1, \dot{u}_2\in T_uP_0$ into \eqref{dGamma} decomposed according to \eqref{decu*} using next \eqref{Th} and \eqref{Tv}.
\end{proof}

From the $U_0$-equivariance properties \eqref{alphaequiv} and \eqref{Omegaequiv} and \eqref{Gamma0}--\eqref{dGamma}  it follows that $\Gamma_0$ and $d\Gamma_0$ are $U_{\rho_0}$-invariant differential forms on  $P_0$.  
They also satisfy
\begin{equation}
\label{cos}
\xi_x\llcorner \Gamma_0=i\langle \rho_0,x\rangle\quad 
\text{and} \quad \xi_x\llcorner \de\Gamma_0=0,
\end{equation}
where $\xi_x$ is the fundamental vector field generated by $x\in \ug_{\rho_0}$ (= the Lie algebra of $U_{\rho_0}$). 
The property \eqref{cos} follows from \eqref{230} and
\begin{equation} \label{cos2}
0=\mathcal{L}_{\xi_x}\Gamma_0=\xi_x\llcorner \de\Gamma_0+\de(\xi_x\llcorner \Gamma_0)=
\xi_x\llcorner \de\Gamma_0+i\de\langle\rho_0,x\rangle=\xi_x\llcorner \de\Gamma_0
\end{equation}
From the above we conclude that there exists on $\mathcal{O}_{\rho_0}$ a closed differential $2$-form $\widetilde{\omega}_{\rho_0}$ such that $\tilde\pi_0^*\widetilde{\omega}_{\rho_0}=\de\Gamma_0$, where $\tilde\pi_0:P_0\to P_0/U_{\rho_0}\cong\mathcal{O}_{\rho_0}$ is the quotient map. 

The invariance property of the 2-form $\widetilde{\omega}_{\rho_0}\in\Omega^2(\Oc_{\rho_0})$ pointed out in the following theorem 
implies that $\widetilde{\omega}_{\rho_0}$ actually depends only on the groupoid orbit $\Oc_{\rho_0}$ and not on the choice of the point $\rho_0$ in that orbit, just as it is the case with coadjoint group orbits in Remark~\ref{Kir}. 
Nevertheless, just as in the case of the orbit $\Oc_{\rho_0}$ we choose to indicate explicitly the point $\rho_0$ that is used for the construction of the differential form~$\widetilde{\omega}_{\rho_0}$.

\begin{thm} \label{thm:orbit} The coadjoint orbit 
$\Oc_{\rho_0}$ 
of $\U(\M)\tto \Ll(\M)$ is a weak symplectic manifold whose symplectic structure is given by 
$\widetilde{\omega}_{\rho_0}$. 
The weak symplectic form 
$\widetilde{\omega}_{\rho_0}$ 
is invariant with respect to the coadjoint actions of $\U(\M)\tto \Ll(\M)$.
\end{thm}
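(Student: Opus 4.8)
The plan is to verify two things: first, that $\widetilde{\omega}_{\rho_0}$ is indeed a weakly symplectic form on $\Oc_{\rho_0}$ in the sense of Definition~\ref{def:2.3}, i.e. closed and non-singular; second, that it is invariant under the coadjoint action of $\U(\M)\tto\Ll(\M)$. Closedness is essentially free: by construction $\widetilde{\pi}_0^*\widetilde{\omega}_{\rho_0}=\de\Gamma_0$ is exact, hence closed, and since $\widetilde{\pi}_0\colon P_0\to P_0/U_{\rho_0}\cong\Oc_{\rho_0}$ is a surjective submersion, $\de\widetilde{\omega}_{\rho_0}$ pulls back to $\de\de\Gamma_0=0$, which forces $\de\widetilde{\omega}_{\rho_0}=0$. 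So the heart of the matter is non-singularity.

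For non-singularity I would work at a point $\rho=\pi_0(u)\in\Oc_{\rho_0}$ and identify, via the submersion $\widetilde{\pi}_0$, the tangent space $T_\rho\Oc_{\rho_0}$ with $T_uP_0/T_u^vP_0\cong T_u^hP_0$, using the horizontal/vertical splitting from Lemma~\ref{lemma1}\eqref{lemma1_item1}. Under this identification $\widetilde{\omega}_{\rho_0}$ is represented by the restriction of $\de\Gamma_0(u)$ to $T_u^hP_0\times T_u^hP_0$, which by Lemma~\ref{lemma1}\eqref{lemma1_item3} (formula \eqref{dGamma1}, with the $[x_1,x_2]$ term dropping out on horizontal vectors since there $u^*\dot u=0$) is
\begin{equation*}
\de\Gamma_0(u)(\dot u_1^h,\dot u_2^h)=\frac{i}{2}\langle\rho_0,\dot u_1^{h*}\dot u_2^h-\dot u_2^{h*}\dot u_1^h\rangle .
\end{equation*}
So I must show: if $\dot u_1^h\in T_u^hP_0$ satisfies $\langle\rho_0,\dot u_1^{h*}\dot u_2^h-\dot u_2^{h*}\dot u_1^h\rangle=0$ for all $\dot u_2^h\in T_u^hP_0$, then $\dot u_1^h$ is vertical, i.e. $\dot u_1^h=0$ in $T_u^hP_0$. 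Here I would translate everything to $u=u^0$ a fixed partial isometry with $\bl(u^0)=\bl_0(u^0)$, reducing to the vertex group $U_0=U(p_0\M p_0)$ and the faithful normal state $\rho_0$ on $p_0\M p_0$; the horizontal space at $p_0$ is $\{(\1-p_0)\xi: \xi\in\M p_0\}$ roughly speaking, but more usefully one reduces along the fibre bundle \eqref{P03} to the Kirillov--Kostant--Souriau situation. Indeed $\Oc_{\rho_0}$ fibres over $\Ll_{p_0}(\M)$ with fibre the group coadjoint orbit $U_0/U_{\rho_0}$ (Theorem~\ref{thm:32}\eqref{thm:32_item2}), and on the group orbit direction the form is exactly the KKS form, which is weakly symplectic by Remark~\ref{Kir}; on the base direction $\Ll_{p_0}(\M)$ the form is controlled by the curvature term $\Omega$ paired with $\rho_0$, and Lemma~\ref{lemma1}\eqref{lemma1_item2} says the two directions are $\de\Gamma_0$-orthogonal. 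So the degeneracy kernel splits as the direct sum of the (trivial) kernels along the fibre and along the base, and I have to check the base contribution $\langle\rho_0,\Omega(u)(\cdot,\cdot)\rangle$ is also non-degenerate on $T_u^hP_0$ modulo what is already accounted for — this is where faithfulness of $\rho_0$ on $p_0\M p_0$ enters, via the positivity $\langle\rho_0, a^*a\rangle>0$ for $a\ne 0$ combined with the explicit form \eqref{Omega1} of $\Omega$. I expect this non-degeneracy computation — disentangling the horizontal space, the curvature pairing, and the faithfulness of $\rho_0$ — to be the main obstacle; everything else is formal.

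For invariance, I would use the equivariance of the expectation map $E\colon\H\to\M_*^+$ from Corollary~\ref{cor:1}\eqref{cor:1_item3} together with the observation that the action of $u\in\U(\M)$ on $\H$ by $\gamma\mapsto u\gamma$ is symplectic (it preserves $\omega$, indeed it preserves $\Gamma=i\langle\gamma\mid\de\gamma\rangle$ on the relevant fibre since $u^*u$ acts as the identity on $\M'\gamma$). Concretely, the coadjoint action of an arrow $w\in\U(\M)$ with $\br(w)=\sigma_*(\rho_0)$ sends $\Oc_{\rho_0}$ to $\Oc_{w\rho_0 w^*}$ (which, by transitivity of the groupoid action on the orbit, is the same orbit), and at the level of the bundle $P_0$ it is covered by the map $P_0\to P_{w}$, $u\mapsto wu$, which intertwines the connection forms $\alpha$ and hence the forms $\Gamma_0$ up to the identification $\langle\rho_0,u^*\de u\rangle=\langle w\rho_0 w^*,(wu)^*\de(wu)\rangle$. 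Passing to the quotient by $\widetilde{\pi}_0$ then shows the pullback of $\widetilde{\omega}_{w\rho_0 w^*}$ under this coadjoint map equals $\widetilde{\omega}_{\rho_0}$, which (since the orbit and the form do not depend on the chosen base point, as already noted before the theorem) is the asserted invariance. The only care needed is to make the identification of base points consistent with the remark preceding the theorem that $\widetilde{\omega}_{\rho_0}$ depends only on the orbit; that remark itself follows from the same computation applied to arrows in the vertex group, so I would prove the general equivariance statement first and derive the base-point independence as the special case $w\in U_0$.
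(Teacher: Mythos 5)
Your outline follows the paper's own route: work upstairs on $P_0$ with $\de\Gamma_0=\widetilde{\pi}_0^*\widetilde{\omega}_{\rho_0}$, use the $\de\Gamma_0$-orthogonal horizontal/vertical splitting of Lemma~\ref{lemma1}\eqref{lemma1_item2}, treat the vertical block $-i\langle\rho_0,[x_1,x_2]\rangle$ as the Kirillov--Kostant--Souriau pairing whose kernel is exactly $\ug_{\rho_0}$ (hence dies in the quotient $P_0/U_{\rho_0}$), and obtain invariance from the computation $\Gamma_0(vu)=\Gamma_0(u)$ for a fixed composable arrow $v$ --- your ``intertwining of $\alpha$ up to the pairing with $\rho_0$'' is precisely this, using $v^*v=\bl(u)=uu^*$ and $\rho_0p_0=\rho_0$; note also that left translation by a composable arrow does not change the right support, so it maps $P_0=\br^{-1}(p_0)$ into itself and there is no separate bundle ``$P_w$''. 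One slip to fix: the identification $T_{\pi_0(u)}\Oc_{\rho_0}\cong T_uP_0/T^v_uP_0\cong T^h_uP_0$ is wrong, since $\Ker T_u\widetilde{\pi}_0$ consists only of the $U_{\rho_0}$-directions $\{ux: x\in\ug_{\rho_0}\}$ and not of all of $T^v_uP_0$; the orbit tangent space also contains the fibre directions $\ug_0/\ug_{\rho_0}$, exactly as in your later (correct) fibration picture from Theorem~\ref{thm:32}\eqref{thm:32_item2}, so the sentence claiming $\widetilde{\omega}_{\rho_0}$ is represented by $\de\Gamma_0$ restricted to $T^h_uP_0\times T^h_uP_0$ must be dropped.

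The genuine gap is that you defer the horizontal non-degeneracy as an expected ``main obstacle'' without supplying the idea that resolves it, and this is the only nonformal step of the whole proof. The missing observation is that $T^h_uP_0=\{\dot u\in\M p_0:\ u^*\dot u=0\}$ (see \eqref{ThuP0}) is a \emph{complex} subspace of $\M p_0$, so in \eqref{dGamma1} (where the commutator term vanishes on horizontal vectors) one may test $\dot u^h$ against $i\dot u^h$, which gives $\de\Gamma_0(u)(\dot u^h,i\dot u^h)=\tfrac{i}{2}\langle\rho_0,2i\,\dot u^{h*}\dot u^h\rangle=-\langle\rho_0,\dot u^{h*}\dot u^h\rangle$; since $\dot u^{h*}\dot u^h\in p_0\M p_0$ and $\rho_0$ is faithful on $p_0\M p_0$ (because $\sigma_*(\rho_0)=p_0$), its vanishing forces $\dot u^h=0$. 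With this inserted, the $\de\Gamma_0$-orthogonality of the two blocks shows the degeneracy kernel of $\de\Gamma_0(u)$ is contained in $\{ux:x\in\ug_{\rho_0}\}=\Ker T_u\widetilde{\pi}_0$, so $\widetilde{\omega}_{\rho_0}$ is weakly symplectic, and your invariance argument (from which, as you say, base-point independence follows by taking arrows in the vertex group) then coincides with the paper's.
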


\begin{proof}
Substituting $vu\in \U(\M)$, where $\br(v)=\bl(u)$, such that $v$ is a fixed element of $\U(\M)$,  into \eqref{Gamma0} instead of $u\in \U(\M)$ and using $\br(v)=v^*v=\bl(u)=uu^*$, we find that
\begin{align*}
\Gamma_0(vu)
&=i\langle \gamma_0\mid (vu)^*\de (vu)\gamma_0\rangle 
=i\langle \gamma_0\mid u^*v^*v\de u\gamma_0\rangle 
=i\langle \gamma_0\mid u^*uu^*\de u\gamma_0\rangle \\
&=i\langle \gamma_0\mid u^*\de u\gamma_0\rangle 
=\Gamma_0(u).
\end{align*}
The above shows, that $\de\Gamma_0$ is a differential $2$-form on $P_0$ invariant with respect to the left action of $\U(\M)\tto\Ll(\M)$ on $P_0$. 
Thus, since $E:E'^{-1}(\rho_0')\to \mathcal{O}_{\rho_0}$ is an equivariant map with respect to the actions of $\U(\M)\tto\Ll(\M)$ on $E'^{-1}(\rho_0')$  and on $\mathcal{O}_{\rho_0}$, we find that 
$\widetilde{\omega}_{\rho_0}$ 
is also a invariant closed differential $2$-form on the orbit $\mathcal{O}_{\rho_0}$.

In order to prove that 
$\widetilde{\omega}_{\rho_0}$ 
is nondegenerate 
we use Lemma~\ref{lemma1}. 
Namely, the $\de\Gamma_0$-orthogonality of subspaces $T^h_uP_0$ and  $T^v_uP_0$ allows us to consider this question for each of these components separately. 
Firstly let us consider the horizontal  component $T^h_uP_0$. 
Substituting $\dot{u}_1^h=\dot{u}^h\in T^h_uP_0$ and $\dot{u}_2^h=i\dot{u}^h\in T^h_uP_0$ into \eqref{Omega2} 
(where we recall from the last equality in \eqref{ThuP0} that $T^h_uP_0$ is a complex linear subspace of $\M p_0$)
we obtain that 
\begin{equation}
\label{rho0}
\langle \rho_0,\dot{u}^{h*}\dot{u}^h\rangle=0
\iff 
\dot{u}^h=0.
\end{equation}
The above fact follows from the positivity of $\rho_0$ on $p_0\M p_0$ and from $\sigma_*(\rho_0)=p_0$. 
We note also that $\dot{u}^{h*}\dot{u}^h\in p_0\M p_0$. 
From \eqref{rho0} we conclude that the horizontal component of the right-hand side  of equality \eqref{dGamma1} is non-singular.

Rewriting the vertical components of the right-hand side of equality \eqref{dGamma1}  as follows 
\begin{equation} 
-i\langle\rho_0,[x_1,x_2]\rangle=i\langle \ad ^*_{x_1}\rho_0,x_2\rangle
\end{equation}
and assuming that $\langle\rho_0,[x_1,x_2]\rangle=0$ for all $x_2\in ip_0\M^h p_0$ we obtain $\ad ^*_{x_1}\rho_0=0$ i.e., $x_1\in \ug_{\rho_0}\subseteq \ug_0$, 
where $\ug_{\rho_0}$ is Lie algebra of $U_{\rho_0}$ and $\ug_0=ip_0\M^hp_0$ is the Lie algebra of $U_0=U(p_0\M p_0)$. 
This means that the degeneracy vectors of $\de\Gamma_0(u)$ restricted to the vertical part  $T^v_uP_0$ are tangent to the fibres of $\tilde\pi_0:P_0\to P_0/U_{\rho_0}\cong \mathcal{O}_0$. 
Summarizing the facts from above,  we conclude  from \eqref{cos2} that 
$\widetilde{\omega}_{\rho_0}$ 
is a weakly symplectic form.
\end{proof}

\begin{rem}\label{Kir2}
	\normalfont
	We conclude this subsection by comparing the results presented in Lemma~\ref{lemma1} and Theorem~\ref{thm:orbit} with the ones obtained in 
	\cite{BR05} and \cite{OR03} 
	for the coadjoint orbits  of $U(\M)$ in $\M^+_*$. 
	For this reason  let us consider  the subgroupoid $\U(\M)_{\rm unit}\tto\Ll(\M)$ of the groupoid 
	$\U(\M)\tto\Ll(\M)$ defined as follows. 
	By definition the partial isometry $u\in \U(\M)$ belongs to $\U(\M)_{\rm unit}$ if and only if it has an extension to a unitary element $U\in U(\M)$ of $\M$. 	
	For a finite $W^*$-algebra $\M$ that extension is always possible, that is, $\U(\M)_{\rm unit}=\U(\M)$.
	(A $W^*$-algebra $\M$ is \emph{finite} if and only if any two projections in $\M$ are unitary equivalent whenever they are Murray-von Neumann equivalent, see e.g., \cite{Sakai}. 
	If this is not the case, then $\M$ is called \emph{infinite}.)
	
	So, if the $W^*$-algebra $\M$ is finite
	the coadjoint orbits of $\U(\M)\tto\Ll(\M)$ are the same as the coadjoint orbits of $U(\M)$ in $\M_*^+$.
	For an infinite $W^*$-algebra $\M$ the coadjoint orbits of $\U(\M)\tto\Ll(\M)$ split into disjoint unions of coadjoint orbits of the unitary group ${\rm U}(\M)$, as shown in Proposition~\ref{comp} below. 
	
	The weakly symplectic form 
	$\widetilde{\omega}_{\rho_0}$ 
	after restriction to 
	the connected component of $\rho_0\in\Oc_{\rho_0}$, that is,   
	$$\Ad^*_{{\rm U}(\M)}\rho_0\cong {\rm U}(\M)/{\rm U}(\M)_{\rho_0}\subseteq P_0/{\rm U}(\M)_{\rho_0}\cong\U(\M)_{\rho_0}$$  
	agrees with the one defined by Kirillov-Kostant-Souriau construction recalled in Remark~\ref{Kir}. 
	
	In order to see this, we 
	need the surjective mapping 
	$$\rg_{p_0}\colon {\rm U}(\M)\to P_0\cap \U(\M)_{\rm unit},\quad  \rg_{p_0}(u):=up_0$$
	and 
	the mapping
	$$\kappa\colon P_0\cap \U(\M)_{\rm unit}\to \Ad^*_{{\rm U}(\M)}\rho_0,\quad  
	\kappa(v):=v\rho_0 v^*.$$ 
	For any $\dot{u}_1,\dot{u}_2\in T_{\1}({\rm U}(\M))=\ug(\M)=\M^a$ one has 
	\begin{align*} 
	\bigl((\iota_{\gamma_0}\circ\rg_{p_0})^*(\omega)\bigr)(\dot{u}_1,\dot{u}_2)
	&=
	\omega(\dot{u}_1\gamma_0,\dot{u}_2\gamma_0) 
	=2\Im\langle\dot{u}_1\gamma_0\mid\dot{u}_2\gamma_0\rangle \\
	&=-2i(\langle\dot{u}_1\gamma_0\mid\dot{u}_2\gamma_0\rangle
	-\langle\dot{u}_2\gamma_0\mid\dot{u}_1\gamma_0\rangle) \\
	&=2i\langle\gamma_0\mid(\dot{u}_1\dot{u}_2-\dot{u}_2\dot{u}_1)\gamma_0\rangle
	\end{align*}
	and therefore 
	\begin{equation}
	\label{kir2}
	\bigl((\iota_{\gamma_0}\circ\rg_{p_0})^*(\omega)\bigr)(\dot{u}_1,\dot{u}_2)
	=\langle 2iE(\gamma_0),[\dot{u}_1,\dot{u}_2]\rangle.
	\end{equation}
	Since the 2-form $\widetilde{\omega}_{\rho_0}\in\Omega^2(\Oc_{\rho_0})$ is invariant under the transitive action of the groupoid $\U(\M)\tto\Ll(\M)$  on $\Oc_{\rho_0}$, 
	it follows 
	that 
	the restriction of 
	$\widetilde{\omega}_{\rho_0}$
	to the unitary coadjoint orbit $\Ad^*_{U(\M)}\rho_0$ is invariant with respect to the coadjoint action of the unitary group $U(\M)$. 
	It then follows by \eqref{kir2} that restriction of 
	$\widetilde{\omega}_{\rho_0}$
	to the unitary coadjoint orbit $\Ad^*_{U(\M)}\rho_0$ agrees with the weakly symplectic form obtained from $\rho_0$ by the Kirillov-Kostant-Souriau construction from Remark~\ref{Kir}. 
\end{rem}

\begin{cor}\label{cor:orbit}
The weakly symplectic form 
$\widetilde{\omega}_{\rho_0}$ 
from Theorem~\ref{thm:orbit} satisfies $S^\perp_{\widetilde{\omega}_{\rho_0}}=\{0\}$ 
hence it gives rise to a Poisson structure in the sense of Definition~\ref{def21}, for which the inclusion map $\Oc_{\rho_0}\hookrightarrow\M_{*}^{\rm h}$ is a Poisson map. 
\end{cor}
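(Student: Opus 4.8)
The plan is to deduce the corollary from Proposition~\ref{wP} together with Theorem~\ref{thm:orbit}, exactly in the spirit of the Kirillov--Kostant--Souriau discussion in Remark~\ref{Kir}. By Theorem~\ref{thm:orbit}, $(\Oc_{\rho_0},\widetilde{\omega}_{\rho_0})$ is a weakly symplectic manifold, so the only thing left to verify is the condition $S^\perp_{\widetilde{\omega}_{\rho_0}}=\{0\}$, i.e. that the characteristic distribution of the associated Poisson structure is $\de\Gamma_0$-nondegenerate in the sense of Subsection~\ref{Subsect2.2}. The cleanest route is to show that the characteristic distribution is in fact \emph{all} of $T_\rho\Oc_{\rho_0}$ at every point $\rho\in\Oc_{\rho_0}$; then $S^\perp_{\widetilde{\omega}_{\rho_0}}=\{0\}$ is immediate from nondegeneracy of $\widetilde{\omega}_{\rho_0}$, and Proposition~\ref{wP} applies verbatim.

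First I would, for each $x\in\M^a$ (the Lie algebra of $U(\M)$, or more precisely for $x\in p_0\M^a p_0$ after restriction to the relevant vertex group), introduce the linear coordinate function $f^x\colon\Oc_{\rho_0}\to\R$, $f^x(\rho):=\langle 2i\rho,x\rangle$ (the factor matching the normalization in \eqref{kir2}), and show that the fundamental vector field $\xi_x$ of the coadjoint action, $\xi_x(\rho)=\frac{\de}{\de t}\big|_{t=0}\exp(tx)\,\rho\,\exp(-tx)$, is a Hamiltonian vector field for $f^x$ with respect to $\widetilde{\omega}_{\rho_0}$, i.e. $\widetilde{\omega}_{\rho_0}(\xi_x,\cdot)=\de f^x$. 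This is essentially the content of the computation \eqref{kir2} pulled back through $\iota_{\gamma_0}\circ\rg_{p_0}$ and pushed down to the orbit via the quotient map $\tilde\pi_0$, using $\tilde\pi_0^*\widetilde{\omega}_{\rho_0}=\de\Gamma_0$ and the equivariance of $E$; the restriction of $\widetilde{\omega}_{\rho_0}$ to the unitary orbit already agrees with the KKS form by Remark~\ref{Kir2}, and since $\U(\M)\tto\Ll(\M)$ acts transitively on $\Oc_{\rho_0}$ the vector fields $\xi_x$ span the full tangent space $T_\rho\Oc_{\rho_0}$ at every $\rho$. Hence $f^x\in\P^\infty_{\widetilde{\omega}_{\rho_0}}(\Oc_{\rho_0},\R)$ for every $x$, and $\{(\#f)_\rho: f\in\P^\infty_{\widetilde{\omega}_{\rho_0}}(\Oc_{\rho_0},\R)\}=T_\rho\Oc_{\rho_0}$.

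Next, from the fact that the characteristic distribution fills the whole tangent space, nondegeneracy of $\widetilde{\omega}_{\rho_0}$ gives $S^\perp_{\widetilde{\omega}_{\rho_0}}=\{0\}$ directly, so Proposition~\ref{wP} yields the Poisson structure $(\P^\infty_{\widetilde{\omega}_{\rho_0}}(\Oc_{\rho_0},\R),\{\cdot,\cdot\}_{\widetilde{\omega}_{\rho_0}},\#_{\widetilde{\omega}_{\rho_0}})$ on $\Oc_{\rho_0}$ in the sense of Definition~\ref{def21}. Finally, to see that the inclusion $\Oc_{\rho_0}\hookrightarrow\M^{\rm h}_*$ is a Poisson map, I would check the two conditions in the definition of a morphism of Poisson manifolds from Subsection~\ref{Subsect2.1}: for the linear coordinate functions $g^x(\varphi)=\langle 2i\varphi,x\rangle$ on $\M^{\rm h}_*$ (which, together with smooth functions of them, generate the relevant Poisson algebra and whose differentials lie in $\M^a$), one has $g^x\circ\iota=f^x\in\P^\infty_{\widetilde{\omega}_{\rho_0}}(\Oc_{\rho_0},\R)$, and both $\{g^x\circ\iota,g^y\circ\iota\}_{\widetilde{\omega}_{\rho_0}}$ and $\{g^x,g^y\}_{\LP}\circ\iota$ equal $\rho\mapsto\langle 2i\rho,[x,y]\rangle$ by \eqref{bracketi} and the Hamiltonian identity above, while $\#_{\widetilde{\omega}_{\rho_0}}(g^x\circ\iota)=\xi_x=\iota^*(\#_{\LP}g^x)$ by construction of the Lie--Poisson bracket \eqref{bracketi} and the coadjoint action.

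The main obstacle I anticipate is not the algebraic identities but the functional-analytic subtlety of the claim ``$\#f$ ranges over all of $T_\rho\Oc_{\rho_0}$'': in the weakly symplectic setting $\hat{\flat}_\rho$ need not be surjective, so one must genuinely exhibit, for \emph{every} tangent vector, a function in $\P^\infty_{\widetilde{\omega}_{\rho_0}}(\Oc_{\rho_0},\R)$ whose Hamiltonian vector field realizes it; this is where transitivity of the groupoid action and the fact that the fundamental vector fields $\xi_x$ exhaust the tangent space at each point are essential, together with the verification (already implicit in Remark~\ref{Kir2} and \eqref{cos}--\eqref{cos2}) that each $f^x$ really does satisfy \eqref{omega1}. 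Once that surjectivity onto the tangent space is in hand, everything else is a routine unwinding of definitions parallel to the last paragraph of Remark~\ref{Kir}.
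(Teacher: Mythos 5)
Your proposal is correct and follows essentially the same route as the paper: the paper's proof simply invokes Remark~\ref{Kir2} (identification of $\widetilde{\omega}_{\rho_0}$ on each connected component, i.e.\ each unitary orbit, with the Kirillov--Kostant--Souriau form) and then the conclusion of Remark~\ref{Kir}, whose content — linear functions $f^x$ with Hamiltonian vector fields the fundamental vector fields of the coadjoint action, spanning of the tangent spaces, hence $S^\perp_{\widetilde{\omega}_{\rho_0}}=\{0\}$, Proposition~\ref{wP}, and the Poisson property of the inclusion — is exactly what you unpack explicitly. The only cosmetic difference is that you argue on the whole groupoid orbit at once (justifying the spanning by transitivity) where the paper argues component-by-component via the unitary orbits, which amounts to the same thing since tangent spaces only see connected components.
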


\begin{proof}
It follows by Remark~\ref{Kir2} that the restriction of 
$\widetilde{\omega}_{\rho_0}$  
to an arbitrary connected component of the groupoid orbit~$\Oc_{\rho_0}$ agrees with the weakly symplectic form obtained as in Remark~\ref{Kir} for the unitary group~$G=U(\M)$. 
Then the assertion follows by the conclusion of Remark~\ref{Kir}. 
\end{proof}

 \subsection[On $W^*$-algebras with special properties]{Special properties of $W^*$-algebras encoded in coadjoint orbits of $\U(\M)\tto \Ll(\M)$}
\label{Subsect5.2}

The following proposition  shows that for any $W^*$-algebra $\M$ the coadjoint orbits of the unitary group $U(\M)$ are the connected components of the orbits of the groupoid  $\U(\M)\ast \M_*^+\tto\M_*^+$.  
Here we use the topology on the groupoid orbits which corresponds to their \Banach manifold structures constructed above. 

\begin{prop}\label{comp}
	For an arbitrary $W^*$-algebra $\M$, if $\rho,\rho_0\in\M_*^+$ with $\rho$ in the coadjoint orbit $\mathcal{O}_{\rho_0}=\U(\M).\rho_0$ of the groupoid $\U(\M)\tto\M_*^+$, then the following properties are equivalent: 
	\begin{enumerate}[{\rm(i)}]
		\item\label{comp_item1}
		$\rho$ belongs to the connected component of $\rho_0$ in $\U(\M).\rho_0$. 
		\item\label{comp_item2} $\rho$ belongs to the unitary orbit  $U(\M).\rho_0$. 
		\item\label{comp_item3} The support projections $\sigma_*(\rho_0),\sigma_*(\rho)\in\Ll(\M)$ are unitary equivalent. 
	\end{enumerate}
\end{prop}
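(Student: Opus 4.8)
The plan is to prove the cycle of implications $\eqref{comp_item1}\Rightarrow\eqref{comp_item3}\Rightarrow\eqref{comp_item2}\Rightarrow\eqref{comp_item1}$, since the last implication is essentially trivial: the unitary group $U(\M)$ is connected (being the exponential image of the Banach-Lie algebra $\M^a$, or by the standard spectral argument), hence $U(\M).\rho_0$ is a connected subset of $\U(\M).\rho_0$ containing $\rho_0$, and therefore lies in the connected component of $\rho_0$.

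For $\eqref{comp_item1}\Rightarrow\eqref{comp_item3}$ the idea is that the support map $\sigma_*\colon\M_*^+\to\Ll(\M)$, restricted to a single groupoid orbit $\Oc_{\rho_0}=\U(\M).\rho_0$, coincides (under the identification $\Oc_{\rho_0}\cong P_0/U_{\rho_0}$ from Theorem~\ref{thm:32}) with the bundle projection $\widetilde{\sigma}_*$ onto $\Ll_{p_0}(\M)$, whose fibers are the connected homogeneous spaces $U_0/U_{\rho_0}$. The key point is that $\Ll_{p_0}(\M)$ decomposes into its own connected components, and $\sigma_*$ maps connected components of $\Oc_{\rho_0}$ into connected components of $\Ll_{p_0}(\M)$; so if $\rho$ lies in the connected component of $\rho_0$, then $\sigma_*(\rho)$ lies in the connected component of $\sigma_*(\rho_0)=p_0$ inside $\Ll_{p_0}(\M)$. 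Now I would invoke the fact (again via the exponential map in $\M^a$, or directly: two Murray--von Neumann equivalent projections that can be joined by a norm-continuous path of projections are unitarily equivalent) that the connected component of $p_0$ in $\Ll_{p_0}(\M)$ is precisely the unitary equivalence class $\{Up_0U^*: U\in U(\M)\}$; this is a classical fact about projections in von Neumann algebras and can be cited from \cite{Ta02} or \cite{Sakai}. Hence $\sigma_*(\rho)$ and $\sigma_*(\rho_0)=p_0$ are unitarily equivalent.

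For $\eqref{comp_item3}\Rightarrow\eqref{comp_item2}$: suppose $\sigma_*(\rho)=q$, $\sigma_*(\rho_0)=p_0$, and there is a unitary $V\in U(\M)$ with $Vp_0V^*=q$. Since $\rho\in\U(\M).\rho_0$, there is a partial isometry $w\in\U(\M)$ with $w^*w=p_0$, $ww^*=q$, and $w\rho_0 w^*=\rho$. Then $V^*w$ is a partial isometry with $(V^*w)^*(V^*w)=w^*VV^*w=w^*w=p_0$ and $(V^*w)(V^*w)^*=V^*ww^*V=V^*qV=p_0$, so $V^*w\in U(p_0\M p_0)=U_0$. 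Extend $V^*w$ to a unitary: more precisely, $V(V^*w)=w$ shows that $V$ followed by the unitary extension of $V^*w\in U_0$ to $U(\M)$ (namely $V^*w+(\1-p_0)$, which is unitary in $\M$) gives a unitary $U:=V(V^*w+(\1-p_0))\in U(\M)$ satisfying $Up_0=V(V^*w)=w$; consequently $U\rho_0 U^*=Up_0\rho_0 p_0U^*=(Up_0)\rho_0(Up_0)^*=w\rho_0 w^*=\rho$, so $\rho\in U(\M).\rho_0$.

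The main obstacle is making precise and correctly citing the topological statement used in $\eqref{comp_item1}\Rightarrow\eqref{comp_item3}$, namely that the connected component of $p_0$ in $\Ll_{p_0}(\M)$ (with its Banach-manifold topology, equivalently the norm topology on projections) is exactly the unitary orbit of $p_0$, together with the compatibility of $\sigma_*$ with connected components. The first part is standard operator algebra (homotopic projections in norm are unitarily equivalent via a unitary in the connected component of the identity); the second part follows because $\sigma_*$ restricted to $\Oc_{\rho_0}$ is a locally trivial bundle with connected fiber $U_0/U_{\rho_0}$ over $\Ll_{p_0}(\M)$ (Theorem~\ref{thm:32}\eqref{thm:32_item1}), so it induces a bijection on connected components. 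Everything else is routine manipulation of partial isometries and polar-type decompositions already available in the paper.
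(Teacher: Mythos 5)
Your proof is correct and rests on the same key ingredients as the paper's: Theorem~\ref{thm:32} (the locally trivial bundle $\sigma_*\colon\Oc_{\rho_0}\to\Ll_{p_0}(\M)$ with connected fibers $U_0/U_{\rho_0}$), the classical fact that the connected component of $p_0$ in $\Ll_{p_0}(\M)$ is exactly its unitary orbit, and the same extension trick for \eqref{comp_item3}$\Rightarrow$\eqref{comp_item2} --- indeed your unitary $U=V(V^*w+(\1-p_0))=w+V(\1-p_0)$ is literally the paper's $\widetilde{v}=v+u(\1-p_0)$. The only structural difference is how the cycle is closed: the paper proves \eqref{comp_item1}$\Leftrightarrow$\eqref{comp_item3} in both directions (using that the fibration with connected fibers induces a bijection on connected components) together with the trivial \eqref{comp_item2}$\Rightarrow$\eqref{comp_item3} via equivariance of $\sigma_*$, so it never needs to discuss the unitary action on the orbit topologically; you instead prove \eqref{comp_item2}$\Rightarrow$\eqref{comp_item1} directly from connectedness of $U(\M)$. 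That step is fine, but it hides a point you should make explicit: the topology on the groupoid orbit is the Banach-manifold (quotient) topology of $P_0/U_{\rho_0}$, which is a priori finer than the norm topology induced from $\M_*$, so one must check that $U(\M)\to\Oc_{\rho_0}$, $U\mapsto U\rho_0U^*$, is continuous for that finer topology; this holds because it factors as the norm-continuous map $U\mapsto Up_0\in P_0$ followed by the quotient submersion $P_0\to P_0/U_{\rho_0}$ (the paper itself uses this map in Remark~\ref{Kir2}). With that remark added, your argument is complete and is essentially a reorganization of the paper's proof rather than a genuinely different one.
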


\begin{proof}
	Let $p_0:=\sigma(\rho_0)$. 
	
	``\eqref{comp_item1}$\Leftrightarrow$\eqref{comp_item3}'': 
	We know from Theorem~\ref{thm:32}\eqref{thm:32_item2} that the support mapping 
	$$\sigma_*\colon \Oc_{\rho_0}\to\Ll_{p_0}(\M)$$ 
	is a locally trivial fibration having its typical fiber $U(p_0\M p_0)/U_{\rho_0}$. 
	Since the unitary group of any $W^*$-algebra is connected, it follows that the fibers of the fibration $\sigma_*$ are connected. 
	Then it is straightforward to show that a subset $C\subseteq \Ll_{p_0}(\M)$ is connected if and only if its preimage $\sigma_*^{-1}(C)$ is a connected subset of $\Oc_{\rho_0}$. 
	On the other hand it is well known that the connected component of $p_0$ in $\Ll_{p_0}(\M)$ is the unitary orbit of $p_0$
	
	``\eqref{comp_item2}$\Rightarrow$\eqref{comp_item3}'': 
	This is clear since $\sigma_*(u\rho u^*)=u\sigma_*(\rho)u^*$ for every unitary element $u\in U(\M)$. 
	
	``\eqref{comp_item3}$\Rightarrow$\eqref{comp_item2}'': 
	Denoting $p:=\sigma_*(\rho)\in\Ll(\M)$, it follows by \eqref{comp_item3} that there exists $u\in U(\M)$ with $p=up_0u^*$. 
	
	On the other hand $\rho\in\U(\M).\rho_0$ by hypothesis, hence there exists $v\in\U(\M)$ with $\br(v)=p_0$ and $\rho=v\rho_0 v^*$. 
	It is straightforward to check that $u^*v\in\U(\M)$ with $\br(u^*v)=\bl(u^*v)=p_0$ and $vu^*\in\U(\M)$ with $\br(vu^*)=\bl(vu^*)=p$. 
	Using these relations, it is then easily seen that one can extend the partial isometry $v$ to the unitary operator 
	$$\widetilde{v}:=v+u(\1-p_0).$$
	More specifically,  one has $\widetilde{v}\in U(\M)$ 
	and $\widetilde{v}\rho_0\widetilde{v}^*=\rho$, 
	hence $\rho$ belongs to the orbit of $\rho_0$ with respect to the action of the unitary group $U(\M)$ on $\M_*^+$. 
\end{proof}

\begin{cor}\label{conn}
	If $\M$ is a $W^*$-algebra, then then the following assertions are equivalent: 
	\begin{enumerate}[{\rm(i)}]
		\item\label{conn_item1} Every orbit of $\U(\M)\ast \M_*^+\tto\M_*^+$ is connected.  
		\item\label{conn_item3} The orbits of the groupoid $\U(\M)\ast \M_*^+\tto\M_*^+$ are exactly the orbits of the coadjoint action of the \Banach Lie group $U(\M)$ on $\M_*^+$. 
		\item\label{conn_item2} The $W^*$-algebra $\M$ is finite. 
	\end{enumerate}
\end{cor}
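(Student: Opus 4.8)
The plan is to prove the equivalences by a cycle of implications, using Proposition~\ref{comp} as the main tool. First I would establish the easy equivalence \eqref{conn_item1}$\Leftrightarrow$\eqref{conn_item3}: by Proposition~\ref{comp}, the connected component of $\rho_0$ in its groupoid orbit $\U(\M).\rho_0$ coincides with the unitary orbit $U(\M).\rho_0$. Hence a groupoid orbit is connected precisely when it equals the unitary orbit of any (hence every) one of its points, and this holding for all orbits is exactly assertion~\eqref{conn_item3}.

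Next I would prove \eqref{conn_item2}$\Rightarrow$\eqref{conn_item1}. Assume $\M$ is finite. Given $\rho,\rho_0\in\M_*^+$ in the same groupoid orbit, by Proposition~\ref{comp}\eqref{comp_item3} their support projections $\sigma_*(\rho)$ and $\sigma_*(\rho_0)$ are Murray--von Neumann equivalent (this is built into membership in the orbit: if $\rho=v\rho_0 v^*$ with $v\in\U(\M)$, $\br(v)=\sigma_*(\rho_0)$, then $\bl(v)=\sigma_*(\rho)$ and $v$ witnesses $\sigma_*(\rho_0)\sim\sigma_*(\rho)$). Since $\M$ is finite, Murray--von Neumann equivalent projections are unitarily equivalent, so condition~\eqref{comp_item3} of Proposition~\ref{comp} holds, and therefore condition~\eqref{comp_item1} holds, i.e.\ $\rho$ lies in the connected component of $\rho_0$. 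As $\rho$ was arbitrary in the orbit, the orbit is connected.

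Finally I would prove the contrapositive of \eqref{conn_item1}$\Rightarrow$\eqref{conn_item2}: if $\M$ is not finite, exhibit a disconnected groupoid orbit. Since $\M$ is infinite, there exist two projections $p_0,p\in\Ll(\M)$ that are Murray--von Neumann equivalent but not unitarily equivalent; say $v\in\U(\M)$ with $v^*v=p_0$, $vv^*=p$. Pick any faithful normal state $\rho_0$ on $p_0\M p_0$, regarded as an element of $\M_*^+$ with $\sigma_*(\rho_0)=p_0$, and set $\rho:=v\rho_0 v^*\in\M_*^+$, so $\sigma_*(\rho)=p$ and $\rho\in\U(\M).\rho_0$. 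By Proposition~\ref{comp}, since $\sigma_*(\rho_0)$ and $\sigma_*(\rho)$ are not unitarily equivalent, $\rho$ does not lie in the connected component of $\rho_0$ in $\U(\M).\rho_0$; hence this orbit is disconnected, contradicting~\eqref{conn_item1}. This closes the cycle $\eqref{conn_item1}\Leftrightarrow\eqref{conn_item3}$, $\eqref{conn_item2}\Rightarrow\eqref{conn_item1}$, $\neg\eqref{conn_item2}\Rightarrow\neg\eqref{conn_item1}$, proving all three assertions equivalent.

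The only delicate point is the existence, in an infinite $W^*$-algebra, of Murray--von Neumann equivalent projections that are \emph{not} unitarily equivalent, together with a faithful normal state on the corner $p_0\M p_0$; the former is precisely the standard characterization of non-finiteness quoted before the corollary (and recorded in references such as \cite{Sakai}), while the latter exists on any $W^*$-algebra with separable predual and more generally can be replaced by a faithful normal \emph{weight}-type argument or by restricting to a suitable $\sigma$-finite corner, so no real obstacle arises. Everything else is a direct application of Proposition~\ref{comp}.
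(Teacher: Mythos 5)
Your strategy is the intended one: the paper's entire proof of this corollary is an appeal to Proposition~\ref{comp}, and your first two steps carry that out correctly. The equivalence of \eqref{conn_item1} and \eqref{conn_item3} is a direct reading of Proposition~\ref{comp}, and the implication from finiteness to connectedness of every orbit is exactly as you say: the supports of any two points of a groupoid orbit are Murray--von Neumann equivalent by construction, finiteness upgrades this to unitary equivalence, and Proposition~\ref{comp} then places every point of the orbit in the connected component of $\rho_0$.

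The gap is in your last step, and it is precisely the point you dismiss as ``no real obstacle''. To obtain $\rho_0\in\M_*^+$ with $\sigma_*(\rho_0)=p_0$ you need a faithful normal \emph{state} on $p_0\M p_0$, so $p_0$ must be a countably decomposable ($\sigma$-finite) projection; the support of any element of $\M_*^+$ is automatically of this kind. But an infinite $W^*$-algebra need not contain a $\sigma$-finite projection that is Murray--von Neumann equivalent yet not unitarily equivalent to another projection. Take $\M=L^\infty(\H_0)$ with $\H_0$ nonseparable: a $\sigma$-finite projection has separable range, every projection Murray--von Neumann equivalent to it also has separable range, and then both orthocomplements have dimension $\dim\H_0$, so the two projections are unitarily equivalent. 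Hence your construction cannot produce a disconnected orbit there; in fact, by Proposition~\ref{comp} every groupoid orbit of this (infinite) algebra is connected, so the implication $\neg\eqref{conn_item2}\Rightarrow\neg\eqref{conn_item1}$ cannot be established without an additional hypothesis. Neither of your proposed repairs helps: weights do not give elements of $\M_*^+$, and passing to a $\sigma$-finite corner does not help because $\sigma$-finiteness is inherited under Murray--von Neumann equivalence (via $x\mapsto vxv^*$), so the obstruction persists. Your argument is complete under the assumption that $\M$ is $\sigma$-finite (for instance, has separable predual), which is evidently the setting in which the statement is meant to be applied; for arbitrary $W^*$-algebras the final step, as written, fails.
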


\begin{proof}
	Use Proposition~\ref{comp}. 
\end{proof}

For the following corollary we recall that a $W^*$-algebra $\M$ is  \emph{type~III} 
if for every projection $p\in\Ll(\M)\setminus\{0\}$ there exists 
another projection $q\in\Ll(\M)$ which is Murray-von Neumann equivalent to $p$ and satisfies $q\le p$ and $q\ne p$. 

\begin{cor}\label{III}
	Let the $W^*$-algebra $\M$ be a factor, and consider the following assertions: 
	\begin{enumerate}[{\rm(i)}]
		\item\label{III_item1} Every orbit of the groupoid $\U(\M)\ast\M_*^+\tto\M_*^+$ is either $\{0\}$ or the disjoint union of exactly two 
		coadjoint orbits of the unitary group $U(\M)$. 
		\item\label{III_item2} The factor $\M$ is type~III. 
	\end{enumerate}
	Then  \eqref{III_item2}$\Rightarrow$\eqref{III_item1} and, if $\M$ has separable predual, then also \eqref{III_item1}$\Rightarrow$\eqref{III_item2}. 
\end{cor}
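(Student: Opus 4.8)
The plan is to exploit the dictionary established in Proposition~\ref{comp}, which says that the connected components of a groupoid orbit $\U(\M).\rho_0$ are exactly the unitary orbits of $U(\M)$ inside it, and that these components are in bijective correspondence (via the support map $\sigma_*$) with the unitary-equivalence classes of projections lying in the single Murray--von Neumann equivalence class $\Ll_{p_0}(\M)$, where $p_0=\sigma_*(\rho_0)$. Thus the statement ``every nonzero orbit is a disjoint union of exactly two unitary orbits'' translates into the purely lattice-theoretic statement: for every nonzero projection $p_0\in\Ll(\M)$, the Murray--von Neumann class of $p_0$ splits into exactly two unitary-equivalence classes of projections in~$\M$.

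For the implication \eqref{III_item2}$\Rightarrow$\eqref{III_item1}: assume $\M$ is a type~III factor. In a type~III factor every nonzero projection is Murray--von Neumann equivalent to $\1$, and also to $\1-p$ for any projection $p\ne\1$; hence there are exactly two Murray--von Neumann classes of projections, namely $\{0\}$ and $\Ll(\M)\setminus\{0\}$. For the unitary-equivalence refinement, recall that two projections $p,q$ in a von Neumann algebra are unitarily equivalent iff $p\sim q$ and $\1-p\sim\1-q$ (the Murray--von Neumann comparison applied to the complements). In a type~III factor, if $p\ne 0$ and $p\ne\1$ then $p\sim\1-p\sim\1-0=\1\not\sim 0$, so the complements $\1-p$ and $\1-\1=0$ are \emph{not} equivalent; on the other hand if $p=\1$ then of course $p$ is unitarily equivalent only to $\1$. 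Hence the nonzero Murray--von Neumann class breaks into precisely two unitary classes: the class of $\1$ (projections with $\1-p\sim 0$, i.e. $p=\1$) and the class of any proper nonzero projection. Feeding this through Proposition~\ref{comp}, every orbit $\U(\M).\rho_0$ with $\rho_0\ne 0$ has exactly two connected components, each a $U(\M)$-coadjoint orbit, and the zero orbit is $\{0\}$. This gives~\eqref{III_item1}.

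For the converse under the separable-predual hypothesis: assume \eqref{III_item1}. First, $\M$ cannot be finite, since by Corollary~\ref{conn} finiteness would force every orbit to be connected, i.e. a single unitary orbit, contradicting the ``exactly two'' clause (one uses that a nonzero $\M$ has a nonzero orbit). So $\M$ is an infinite factor; with separable predual it is a factor of type~$\mathrm{I}_\infty$, $\mathrm{II}_\infty$, or~$\mathrm{III}$. In a type~$\mathrm{I}_\infty$ or type~$\mathrm{II}_\infty$ factor the projections are classified up to Murray--von Neumann equivalence by a dimension function taking infinitely many distinct values (the nonnegative integers together with $\infty$ in type~$\mathrm{I}_\infty$, the interval $[0,\infty]$ in type~$\mathrm{II}_\infty$), and distinct dimension values give distinct groupoid orbits; hence in those cases there are infinitely many distinct nonzero orbits, but the structure of each orbit as a disjoint union of unitary orbits is not the obstruction here — rather, one examines a single fixed Murray--von Neumann class and counts its unitary subclasses. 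In a $\mathrm{I}_\infty$ or $\mathrm{II}_\infty$ factor, pick $p_0$ with $0\ne p_0$ and both $p_0$ and $\1-p_0$ infinite (possible since the factor is properly infinite); then $p_0\sim\1$ and one can find within the Murray--von Neumann class of $p_0$ projections with arbitrarily many distinct ``co-dimensions'', yielding at least three unitary-equivalence classes, contradicting~\eqref{III_item1}. Hence $\M$ must be type~III. The main obstacle I anticipate is getting the last counting argument airtight: one must carefully produce, inside a single Murray--von Neumann class, at least three pairwise non-unitarily-equivalent projections in the $\mathrm{I}_\infty$ and $\mathrm{II}_\infty$ cases (equivalently, show the map $p\mapsto[\1-p]$ on that class has at least three values), and one should phrase the whole converse so that it only uses the separability hypothesis exactly where the type classification of infinite factors with separable predual is invoked. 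The type~III direction itself is routine once the unitary-equivalence criterion $p\approx q\iff(p\sim q\text{ and }\1-p\sim\1-q)$ is in hand.
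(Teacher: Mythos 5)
Your proposal is correct and follows essentially the same route as the paper: both directions run through Proposition~\ref{comp}, reducing the count of unitary orbits inside a groupoid orbit to the count of unitary-equivalence classes inside a single Murray--von Neumann class of projections, with the type~III classification of projections giving exactly two classes and the corank (respectively co-trace) invariant in the type~$\mathrm{I}_\infty$/$\mathrm{II}_\infty$ cases giving at least three. Your use of Corollary~\ref{conn} to exclude finite factors and the explicit criterion that $p$ and $q$ are unitarily equivalent iff $p\sim q$ and $\1-p\sim\1-q$ are just cleaner packagings of steps the paper treats implicitly (its $A\sqcup B$ partition and its ``easily seen'' dismissal of types I and II).
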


\begin{proof}
	``\eqref{III_item2}$\Rightarrow$\eqref{III_item1}'': 
	We must prove that if $0\ne \rho_0\in\M_*^+\setminus\{0\}$ then there exists $\rho_1\in\M_*^+$ for which one has the disjoint union of unitary group orbits
	$$\U(\M).\rho_0=U(\M).\rho_0\sqcup U(\M).\rho_1.$$ 
	Denoting $p_0:=\sigma(\rho_0)\in\Ll(\M)\setminus\{0\}$ 
	and $U(\M).p_0:=\{up_0u^*: u\in U(\M)\}$, one obtains the disjoint union
	$$
	\U(\M).\rho_0 =A\sqcup B$$
	where $A:=\{v\rho_0 v^*: \br(v)=p_0,\ \bl(v)\in U(\M).p_0\}$ 
	and $B:=\{v\rho_0 v^*:\br(v)=p_0,\ \bl(v)\not\in U(\M).p_0\}$. 
	Since $\M$ is a factor, it is well known that $\M$ is type~III if and only if any two projections from $\Ll(\M)\setminus\{0,\1\}$ are unitary equivalent. 
	Hence all functionals from the set $B$ have unitary equivalent supports. 
	On the other hand, all functionals from $A$ have unitary equivalent supports by definition of~$A$. 
	Then, by Proposition~\ref{comp}, one has $A=U(\M).\rho_0$ and also $B=U(\M).\rho_1$ for arbitrary $\rho_1\in B$. 
	
	``\eqref{III_item1}$\Rightarrow$\eqref{III_item2}'': 
	Since $\M$ has separable predual, there exists $\rho\in\M_*^+$ with $\sigma(\rho)=1$, that is, $\rho$ is faithful. 
	For arbitrary $p\in\Ll(\M)$ if we define $\rho_p\in\M_*^+$ by $\rho_p(x):=\rho(pxp)$, then $\sigma(\rho_p)=p$. 
	Moreover, for every $v\in\U(\M)$ with $\br(v)=p$, one has $\sigma(v\rho_pv^*)=\bl(v)$. 
	Hence 
	$$\{\sigma_*(\varphi):\varphi\in\U(\M).\rho_p\}=\Ll_p(\M)\text{ for every }p\in\Ll(\M).$$
	Then, taking into account the assumption \eqref{III_item1} and Proposition~\ref{comp}, we obtain the following conclusion: 
	for every $p\in\Ll(\M)$, if $p_1,p_2\in\Ll_p(\M)$ and neither $p_1$ nor $p_2$ is unitary equivalent to $p$, then $p_1$ is unitary equivalent to $p_2$. 
	It is easily seen that this condition is not satisfied by factors of type~I or type~II, hence $\M$ is type~III. 
\end{proof}

In the following  we use the notation $$(\M_*^+)_r:=\{\rho\in\M_*^+:\Vert\rho\Vert=r\}$$ 
for every $r\in[0,\infty)$. 
We also recall from \cite[\S 4]{Co73} that if $\M$ is a factor with separable predual, then $\M$ is called \emph{type~III$_1$} 
if it is type~III and for every $\rho\in\M_*^+$ with $\Vert\rho\Vert=1$ and $\sigma_*(\rho)=\1$ 
its corresponding modular operator $\Delta_\rho$ has its spectrum equal to~$[0,\infty)$. 

\begin{prop}\label{III1}
	If the $W^*$-algebra $\M$ is a factor with separable predual, 
	then the following conditions are equivalent: 
	\begin{enumerate}[{\rm(i)}]
		\item\label{III1_item1} $\M$ is type~III$_1$. 
		\item\label{III1_item2} For every $\rho_0\in\M_*^+$ its unitary orbit $U(\M).\rho_0$ is dense in $(\M_*^+)_{\Vert\rho_0\Vert}$. 
		\item\label{III1_item3} For every $\rho_0\in\M_*^+$ its groupoid orbit $\U(\M).\rho_0$ is dense in $(\M_*^+)_{\Vert\rho_0\Vert}$. 
	\end{enumerate}
\end{prop}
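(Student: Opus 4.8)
The plan is to prove the chain of equivalences \eqref{III1_item1}$\Leftrightarrow$\eqref{III1_item2} and \eqref{III1_item2}$\Leftrightarrow$\eqref{III1_item3}. The implication \eqref{III1_item3}$\Rightarrow$\eqref{III1_item2} would be obtained by noting that, since $\M$ is a factor with separable predual, it follows from Corollary~\ref{III} that the groupoid orbit $\U(\M).\rho_0$ is a finite union of unitary orbits (in the type~III case, at most two, one of which is the unitary orbit of a faithful functional of norm $\Vert\rho_0\Vert$). A finite union of sets is dense in a connected set only if one of the pieces is already dense there, so density of $\U(\M).\rho_0$ in $(\M_*^+)_{\Vert\rho_0\Vert}$ forces density of some $U(\M).\rho_1$; but every $U(\M).\rho_1$ with $\Vert\rho_1\Vert=\Vert\rho_0\Vert$ is contained in $(\M_*^+)_{\Vert\rho_0\Vert}$ and, being a union of unitary orbits again, one reduces to the orbit of a faithful functional. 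The reverse implication \eqref{III1_item2}$\Rightarrow$\eqref{III1_item3} is immediate since $U(\M).\rho_0\subseteq\U(\M).\rho_0\subseteq(\M_*^+)_{\Vert\rho_0\Vert}$.

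The heart of the matter is \eqref{III1_item1}$\Leftrightarrow$\eqref{III1_item2}, and here I would reduce everything to the case $\sigma_*(\rho_0)=\1$ (i.e.\ $\rho_0$ faithful). For the ``only if'' direction I would invoke Connes's characterization of type~III$_1$ factors via the asymptotic ratio set / the $S$-invariant: $\M$ is type~III$_1$ precisely when $S(\M)=[0,\infty)$, and by Connes--St\o rmer this is equivalent to the statement that for any two faithful normal states $\varphi,\psi$ and any $\varepsilon>0$ there is a unitary $u\in U(\M)$ with $\Vert u\varphi u^*-\psi\Vert<\varepsilon$; this is exactly the norm-density of $U(\M).\rho_0$ in the sphere $(\M_*^+)_1$ after rescaling, and it extends to non-faithful $\rho_0$ because any positive normal functional of norm $r$ can be norm-approximated by faithful ones of norm $r$ (add $\varepsilon\psi_0$ for a fixed faithful state $\psi_0$ and renormalize), while unitary conjugation is norm-continuous. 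For the ``if'' direction I would argue contrapositively: if $\M$ is type~III but not type~III$_1$, then $S(\M)\cap(0,\infty)$ is a proper closed subgroup $\lambda^{\Z}$ or $\{1\}$ of $\R_+^\times$, and the modular spectrum obstruction (again via the Connes--St\o rmer transitivity theorem, which fails precisely in this case) produces a faithful state $\rho_0$ whose unitary orbit is \emph{not} dense, as the Connes cocycle distance $d(u\rho_0u^*,\psi)$ stays bounded away from $0$ for $\psi$ with incompatible modular spectrum; and if $\M$ is semifinite (type~I or II) the unitary orbit of a faithful $\rho_0$ is never norm-dense in the sphere because, e.g., the trace-compatible functionals form a norm-closed proper invariant subset in the type~II$_1$ case, and in the type~I / type~II$_\infty$ case the spectral data (eigenvalue list, resp.\ the fact that $\rho_0$ has finite trace against a fixed trace) is a unitary invariant that is not dense.

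I would organize the write-up as: first dispose of \eqref{III1_item2}$\Leftrightarrow$\eqref{III1_item3} using Corollary~\ref{III} and the connectedness remarks; then prove \eqref{III1_item1}$\Rightarrow$\eqref{III1_item2} citing \cite{Co73} and the Connes--St\o rmer transitivity theorem for the $S$-invariant, together with the elementary reduction from arbitrary $\rho_0$ to faithful $\rho_0$; and finally \eqref{III1_item2}$\Rightarrow$\eqref{III1_item1} by the contrapositive, splitting into the semifinite case and the type~III$_\lambda$ ($\lambda<1$) / type~III$_0$ case, in each of which a unitary-invariant quantity (the modular spectrum of $\Delta_{\rho_0}$, which by hypothesis $\sigma_*(\rho_0)=\1$ is well defined and which is the relevant obstruction from the definition of type~III$_1$ recalled just before the proposition) separates the orbit from a dense subset of the sphere.

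The main obstacle, and the place where care is needed, is the ``only if'' implication \eqref{III1_item1}$\Rightarrow$\eqref{III1_item2}: one must correctly cite and apply the Connes(--St\o rmer) result that characterizes type~III$_1$ by norm-density (equivalently, by the transitivity of the unitary group action on faithful states up to norm $\varepsilon$), and then handle the passage to non-faithful $\rho_0$ cleanly. The reverse direction is more routine once one commits to the definition of type~III$_1$ via the spectrum of $\Delta_\rho$ as recalled in the excerpt, because then non-III$_1$ gives a concrete spectral obstruction; and the equivalence with \eqref{III1_item3} is essentially a formal consequence of Corollary~\ref{III}.
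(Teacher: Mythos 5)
Your reduction of the problem to \eqref{III1_item1}$\Leftrightarrow$\eqref{III1_item2} via Connes--St\o rmer is exactly what the paper does for that equivalence (it simply cites \cite[Th.~4]{CoSt78}, resp.\ \cite[Ch.~XII, Th.~5.12]{Ta03a}, so your separate contrapositive argument for \eqref{III1_item2}$\Rightarrow$\eqref{III1_item1} via modular spectra, and the reduction to faithful $\rho_0$, are unnecessary detours), and \eqref{III1_item2}$\Rightarrow$\eqref{III1_item3} is indeed immediate. The genuine gap is your proof of \eqref{III1_item3}$\Rightarrow$\eqref{III1_item2}. First, Corollary~\ref{III} gives the decomposition of a groupoid orbit into \emph{exactly two} unitary orbits only under the hypothesis that $\M$ is type~III; for a factor whose type is not yet known this is circular, and it is simply false that a groupoid orbit is always a finite union of unitary orbits: in a type~I$_\infty$ or II$_\infty$ factor, if $\sigma_*(\rho_0)$ is an infinite projection with infinite complement, the unitary orbits inside $\U(\M).\rho_0$ are distinguished by the (dimension or trace of the) complement of the support, so there are infinitely many of them. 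Second, the topological principle you invoke --- ``a finite union of sets dense in a connected set has a dense member'' --- is false in general (cover $[0,1]$ by the rationals of $[0,\tfrac12]$ and of $[\tfrac12,1]$). What makes the conclusion work in the type~III case is the extra structure the paper exploits: the quantity $d([\rho_1],[\rho_2]):=\inf\{\Vert u_1\rho_1u_1^*-u_2\rho_2u_2^*\Vert: u_1,u_2\in U(\M)\}$ satisfies the triangle inequality, so closures of unitary orbits in $(\M_*^+)_r$ are pairwise equal or disjoint, and density of one unitary orbit propagates to all of them; combined with connectedness (convexity) of $(\M_*^+)_r$ this yields \eqref{III1_item2} from \eqref{III1_item3} when there are only two orbits. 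You supply neither this pseudo-metric argument nor anything replacing it.

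Moreover, nothing in your proposal rules out the semifinite factors under hypothesis \eqref{III1_item3}: your trace-invariant remarks appear only inside your \eqref{III1_item2}$\Rightarrow$\eqref{III1_item1} sketch, i.e.\ they concern density of \emph{unitary} orbits, whereas your route needs \eqref{III1_item3}$\Rightarrow$\eqref{III1_item2} for \emph{all} factors with separable predual. The paper handles this by a case analysis on the type: if $\M$ is finite, the groupoid orbit of the normalized trace is a single point; if $\M$ is I$_\infty$ or II$_\infty$ with trace $\tau$, the groupoid orbit of $\tau_p=\tau(p)^{-1}\tau(p\,\cdot)$ (with $\tau(p)<\infty$) is exactly $\{\tau_q: q\in\Ll_p(\M)\}$, which is not dense by the estimate $\Vert\tau_p-\tau_q\Vert\ge 2\bigl(1-\tfrac{\tau(p)}{\tau(q)}\bigr)$ from \cite[p.~92]{CoStHa85}; and only in the type~III case does it use Corollary~\ref{III} together with the density-transfer argument above, then concludes III$_1$ from the cited equivalence \eqref{III1_item1}$\Leftrightarrow$\eqref{III1_item2}. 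Some quantitative input of this kind (an explicit non-dense groupoid orbit in the semifinite infinite case) is indispensable and is missing from your plan.
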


\begin{proof}
	``\eqref{III1_item1}$\Leftrightarrow$\eqref{III1_item2}'': 	
	This is the Connes-St\o rmer theorem \cite[Th. 4]{CoSt78} (or \cite[Ch. XII, Th. 5.12]{Ta03a}). 
	
	``\eqref{III1_item2}$\Rightarrow$\eqref{III1_item3}'': 	
	This is clear since 
	$U(\M).\rho_0\subseteq\U(\M).\rho_0$. 
	
	``\eqref{III1_item3}$\Rightarrow$\eqref{III1_item1}'': 	
	If $\M$ is finite (hence type I$_n$ for $n=1,2,\dots$ or type II$_1$), then it has a faithful trace $\tau_0\in \M_*^+$, hence $\sigma(\tau_0)=\1$ and $\Vert\tau_0\Vert=1$. 
	For every $v\in\U(\M)$ with $v^*v=\sigma(\tau_0)=1$ we also have $vv^*=\1$, since $\M$ is finite, and then $v\tau_0 v^*=\tau_0$ by the trace property of~$\tau_0$. 
	Thus $\U(\M).\tau_0=\{\tau_0\}$, which is not dense in $(\M_*^+)_1$. 
	
	If $\M$ is a semifinite factor (hence type I$_\infty$ or type II$_\infty$), then it has a faithful semifinite normal trace $\tau\colon\M^+\to[0,\infty]$. 
	For any $p\in\Ll(\M)$ with $\tau(p)<\infty$ define $\tau_p\in(\M_*^+)_1$ by $\tau_p(x):=\frac{1}{\tau(p)}\tau(px)$ for all $x\in\M$, hence $\sigma(\tau_p)=p$. 
	If $v\in\U(\M)$ with $v^*v=p$  
	then, using the trace property of $\tau$, we obtain  
	\begin{align*}
	(v\tau_p v^*)(x)
	&=\tau_p(v^*xv)  
	=\frac{1}{\tau(p)}\tau(pv^*xv) 
	=\frac{1}{\tau(p)}\tau(vpv^*x) 
	=\frac{1}{\tau(p)}\tau(vv^*x) \\
	&=\frac{1}{\tau(vv^*)}\tau(vv^*x)
	\end{align*}
	hence $v\tau_p v^*=\tau_{vv^*}$. 
	This shows that 
	$$\U(\M).\tau_p=\{\tau_q:q\in\Ll_p(\M)\}.$$
	On the other hand, as noted in \cite[page 92]{CoStHa85}, for all $p,q\in\Ll(\M)$ with $p\le q$ and $\tau(q)<\infty$ one has 
	$\Vert \tau_p-\tau_q\Vert\ge 2\Bigl(1-\frac{\tau(p)}{\tau(q)}\Bigr)$. 
	It then follows that $\U(\M).\tau_p$ is not dense in $(\M_*^+)$ for any $p\in\Ll(\M)$ with $\tau(p)<\infty$. 
	
	If $\M$ is type~III, then let $\rho_0\in\M_*^+\setminus\{0\}$ with $\Vert\rho_0\Vert=:r_0$. 
	Using Corollary~\ref{III}, one then obtains the disjoint union into two unitary orbits 
	$$\Oc_{\rho_0}=\U(\M).\rho_0\sqcup \U(\M).\rho_{00}$$
	for suitable $\rho_{00}\in \Oc_{\rho_0}$
	Since $\Oc_{\rho_0}$ is dense in $(\M_*^+)_{r_0}$ by hypothesis, it follows that at least one of these unitary orbits is dense in $(\M_*^+)_{r_0}$. 
	
	We now prove that actually all unitary orbits of elements of $(\M_*^+)_{r_0}$ are dense in $(\M_*^+)_{r_0}$. 
	To this end, for any $\rho_1,\rho_2\in\M_*^+$ let us denote 
	$$d([\rho_1],[\rho_2]):=\inf\{\Vert u_1\rho_1u_1^*-u_2\rho_2u_2^*\Vert :u_1,u_2\in U(\M)\}.$$
	The following facts are straightforward: 
	\begin{itemize}
		\item For all $\rho_1,\rho_2,\rho_3\in\M_*^+$ one has 
		$d([\rho_1],[\rho_3])\le d([\rho_1],[\rho_2])+d([\rho_2],[\rho_3])$. 
		\item If $\rho_1\in\M_*^+$ and $r:=\Vert \rho_1\Vert$, then the unitary orbit $U(\M).\rho_1$ is norm-dense in $(\M_*^+)_r$ if and only if for all $\rho_2\in\M_*^+$ with $\Vert\rho_2\Vert=r$ one has $d([\rho_1],[\rho_2])=0$. 
	\end{itemize}
	It then easily follows by these facts that if there exists $\rho_1\in\M_*^+$ whose unitary orbit $U(\M).\rho_1$ is norm-dense in $(\M_*^+)_r$, where $r:=\Vert \rho_1\Vert$, then for every $\rho_2\in\M_*^+$ with $\Vert\rho_2\Vert=r$ the unitary orbit $U(\M).\rho_2$ is norm-dense in $(\M_*^+)_r$. 
	
	Consequently, for every $\rho\in (\M_*^+)_{r_0}$ its unitary orbit $U(\M).\rho$ is dense in $(\M_*^+)_{r_0}$. 
	It then follows by \eqref{III1_item1}$\Leftrightarrow$\eqref{III1_item2} that $\M$ is type~III$_1$. 
\end{proof}

 \section{
 	Presymplectic  
 	structure of the 
 standard	Lie groupoid $\widetilde{\H}\tto\M_*^+$}
\label{Sect6}

This section is the core of the present paper. 
We consider the symplectic dual pair~\eqref{diagdual3}  
in the case when the von Neumann algebra $\M\subseteq L^\infty(\H)$ is in standard form in the sense of \cite{Ha73} and \cite{Ha75}, as recalled below. 
In this setting of a standard form, we show that the complex Hilbert space $\H$ has the structure of a groupoid $\H\tto\M_*^+$ which we call  the \emph{standard groupoid} of $\M$ and actually has several isomorphic realizations that involve various data of the standard form of $\M$ (Theorem~\ref{thm:53} and Proposition~\ref{isomorphisms}). 
What is crucial for the present paper is that the standard groupoid actually has the structure of a \Banach Lie groupoid that is denoted $\widetilde{\H}\tto\M_*^+$ in order to emphasize that the manifold structure of its total space is different from the ordinary Hilbert space structure of $\H$. 
More specifically, $\widetilde{\H}$ is a Banach foliation of $\H$, 
in the sense that the identity map is a weak immersion $\widetilde{\H}\to\H$ (Theorem~\ref{grpd_act}). 
Finally, we show that the symplectic structure of $\H$ is compatible with the Lie groupoid structure $\widetilde{\H}\tto\M_*^+$ in the following sense: 
By pull-back via the weak immersion mentioned above, one obtains a presymplectic 2-form $\widetilde{\bomega}\in\Omega^2(\widetilde{\H})$ that is multiplicative in the usual sense of finite-dimensional Lie groupoid theory (Proposition~\ref{graph}).  
We introduce the notation  
$$(\widetilde{\H},\widetilde{\bomega})\tto\M_*^+$$ 
for the Lie groupoid $\widetilde{\H}\tto \M_*^+$ endowed with its multiplicative presymplectic form~$\widetilde{\bomega}$. 
We also obtain quite complete information on the leaf space of the degeneracy-foliation of $\widetilde{\bomega}$ (Proposition~\ref{SQ6}). 

 \subsection{The standard  groupoid $\H\tto\M_*^+$}
\label{Subsect6.1}

A sufficient condition for the standard form is to have a separating cyclic vector $\Omega\in\H$. 
This assumption allows us to define 
\begin{equation*}
S_0(x\Omega):=x^*\Omega\text{ and }F_0(x'\Omega):=x'^*\Omega
\end{equation*}
for all $x\in\M$ and $x'\in\M'$. 
These are antilinear operators having their dense domains $D(S_0)=\M\Omega$ and $D(F_0)=\M'\Omega$, respectively. 
The closures of these operators 
$$S:=\overline{S_0}\text{ and }F:=\overline{F_0}$$ 
have the polar decompositions 
\begin{equation}\label{JD}
S=J\Delta^{1/2}\text{ and }F=J\Delta^{-1/2}
\end{equation}
where $\Delta$ is a positive self-adjoint operator called the modular operator, while $J$ is an anti-unitary involution, that is, $J=J^*$ and $J^2=\1$. 
One also has 
\begin{equation}
\Delta=FS,\ \Delta^{-1}=SF,\ \Delta^{-1/2}=J\Delta^{1/2}J.
\end{equation}
More details can be found for instance in \cite[\S 2.5]{LP2}. 

One has by the theorem of Tomita-Takesaki 
\begin{equation}
j(\M)=\M'
\end{equation}
where $j(x):=JxJ$ for all $x\in\M$, and 
\begin{equation}
\Delta^{it}\M\Delta^{-it}=\M
\end{equation}
for all $t\in\R$. 
In the modular theory of Tomita-Takesaki, 
one associates to the pair $(\H,\Omega)$ the natural positive cone~$\P$. 
 This is a self-dual cone defined by 
\begin{equation}
\P:=\overline{\{xj(x)\Omega\mid x\in\M\}}
\end{equation}
and has the following properties: 
\begin{align}
	& \P = \overline{\Delta^{1/4}\M^+\Omega}
	=\overline{\Delta^{-1/4}\M'^+\Omega},\\
	& \Delta^{it}\P=\P \text{ for all }t\in\R,\\
	& J\xi=\xi\text{ for all }\xi\in\P,\\
	& \P\cap (-\P)=\{0\}.
\end{align}
Any $\xi\in\H$ with $J\xi=\xi$ has a unique decomposition $\xi=\xi_+-\xi_-$ with $\xi_\pm\in\P$ and $\xi_+\perp\xi_-$. 
See \cite[Prop. 2.5.28]{LP2}. 

The following universality property of $\P$ holds: 
If $\xi\in\P$ is a cyclic vector for $\M$, with its corresponding modular conjugation $\J_\xi$ and natural positive cone $\P_\xi$, then one has $J_\xi=J$ and $\P_\xi=\P$ by \cite[Prop. 2.5.30]{LP2}. 

Replacing $\M$ by $\M'$ in the above considerations, we note that for the modular objects associated to $\M'$ are given by $J'=J$, $\P'=\P$, and $\Delta'=\Delta^{-1}$.

Without any assumption on existence of cyclic separating vectors, a  4-tuple $(\M,\H,J,\P)$ is called a \emph{standard form} \cite[Def. 2.1]{Ha75} of the von Neumann algebra $\M\subseteq L^\infty(\H)$ if 
$J\colon\H\to\H$ is a conjugation and $\P\subseteq\H$ is a self-dual cone satisfying  $J\M J=\M'$, $JxJ=x^*$ if $x\in\M\cap\M'$, 
$J\rho=\rho$ for all $\rho\in\P$, and $x j(x)\P\subseteq\P$ for all $x\in\M$, where $j(x):=JxJ\in\M'$ as above.  

With this terminology, we now reformulate in the following way one of the basic theorems of modular theory.  
See \cite[Th. 2.17]{Ha73} or, alternatively, \cite[Lemma 2.10]{Ha75}. 

\begin{prop}\label{basic_homeo}
	If $(\M,\H,J,\P)$ is a standard form, then 
	the expectation mappings $E\colon\H\to\M_*^+$ and $E'\colon\H\to{\M'}_*^+$ when restricted to the natural positive cone $\P\subseteq\H$ give homeomorphisms 
	\begin{equation}\label{ar1}
	\xymatrix{
		\M_*^+ & \ar[l]_{E\vert_{\P}}  \P  \ar[r]^{E'\vert_{\P}} & {\M'}_*^+	}
	\end{equation}	
where $\P$, $\M_*^+$, and ${\M'}_*^+$ are endowed with their topologies  inherited from the norm topologies of $\H$, $\M_*$, and $\M'_*$, respectively. 
\end{prop}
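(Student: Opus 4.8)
The plan is to reduce Proposition~\ref{basic_homeo} to the classical Tomita--Takesaki result \cite[Th. 2.17]{Ha73} by checking that the maps written here agree with the maps in that theorem. Recall that $E=\iota_*\circ\mathcal{E}$ and $E'=\iota_*'\circ\mathcal{E}$, so for $\gamma\in\P$ one has $\langle E(\gamma),x\rangle=\langle\gamma\mid x\gamma\rangle$ for $x\in\M$ and $\langle E'(\gamma),x'\rangle=\langle\gamma\mid x'\gamma\rangle$ for $x'\in\M'$; that is, $E\vert_\P$ and $E'\vert_\P$ are precisely the maps $\gamma\mapsto\omega_\gamma\vert_\M$ and $\gamma\mapsto\omega_\gamma\vert_{\M'}$ sending a cone vector to the associated vector functional. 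First I would record that by Haagerup's theorem (the statement in \cite[Th.~2.17]{Ha73}, see also \cite[Prop. 2.5.31]{LP2}) the map $\P\to\M_*^+$, $\gamma\mapsto\omega_\gamma\vert_\M$, is a bijection which is a homeomorphism when $\P$ carries the norm topology of $\H$ and $\M_*^+$ the norm topology of $\M_*$; by symmetry of the standard form $(\M,\H,J,\P)$ under interchanging $\M$ and $\M'$ (using $J\M J=\M'$, $J\P=\P$), the same applies to $\P\to{\M'}_*^+$, $\gamma\mapsto\omega_\gamma\vert_{\M'}$. This gives both homeomorphisms in \eqref{ar1} directly.

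The steps in order: (1) unwind the definitions \eqref{E}, \eqref{Eprim}, \eqref{E_cal}, \eqref{iota*} to identify $E\vert_\P$ and $E'\vert_\P$ with the vector-functional maps on the cone; (2) invoke \cite[Th.~2.17]{Ha73} (equivalently the homeomorphism property in \cite[Prop. 2.5.31]{LP2}) to conclude that $E\vert_\P\colon\P\to\M_*^+$ is a norm-to-norm homeomorphism --- injectivity and the norm estimate $\|\gamma-\eta\|^2\le\|\omega_\gamma-\omega_\eta\|\le\|\gamma-\eta\|\,\|\gamma+\eta\|$ for $\gamma,\eta\in\P$ being the quantitative heart of that theorem, together with surjectivity which is the existence-and-uniqueness statement of the standard form; (3) apply the same to $E'$, observing that $(\M',\H,J,\P)$ is again a standard form (indeed the modular data for $\M'$ are $J'=J$, $\P'=\P$, $\Delta'=\Delta^{-1}$ as recalled in the text preceding the statement), so step (2) applied to $\M'$ in place of $\M$ yields the homeomorphism $E'\vert_\P\colon\P\to{\M'}_*^+$.

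Alternatively, if one prefers to keep the proof self-contained modulo only the surjectivity part, one can separate concerns: continuity of $E$ and $E'$ is immediate since $\mathcal{E}$ is a (quadratic) polynomial map and $\iota_*,\iota_*'$ are bounded linear; injectivity of $E\vert_\P$ follows from Proposition~\ref{prop:2}, since $E(\gamma_1)=E(\gamma_2)$ with $\gamma_1,\gamma_2\in\P$ produces a unique $u'\in\U(\M')$ with $u'\gamma_1=\gamma_2$, and then the universality property of the cone ($J_{\gamma_i}=J$, $\P_{\gamma_i}=\P$ for cyclic cone vectors, \cite[Prop. 2.5.30]{LP2}) combined with $J\gamma_i=\gamma_i$ forces $u'=u'^*u'$ to be a projection and $\gamma_1=\gamma_2$; the inverse map is continuous because of the norm inequality above, which one reads off from $\|\omega_\gamma-\omega_\eta\|\ge$ (something controlling $\|\gamma-\eta\|$) using $J$-invariance and self-duality of $\P$. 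The one ingredient that genuinely cannot be bypassed and is the main obstacle is surjectivity of $E\vert_\P\colon\P\to\M_*^+$ --- equivalently, that every normal positive functional on $\M$ has a (unique) implementing vector in the natural cone; this is exactly the substance of Haagerup's standard-form theorem and I would simply cite \cite{Ha73,Ha75} (and \cite{LP2}) for it rather than reprove it.
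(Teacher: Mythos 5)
Your proposal is correct and follows essentially the same route as the paper: the paper states Proposition~\ref{basic_homeo} precisely as a reformulation of Haagerup's theorem \cite[Th. 2.17]{Ha73}, with no further argument beyond identifying $E\vert_{\P}$, $E'\vert_{\P}$ with the vector-functional maps on the cone and using the symmetry between $\M$ and $\M'$ in the standard form. Your unwinding of the definitions and the citation of Haagerup (plus \cite{LP2}) is exactly what is intended; the partially self-contained alternative you sketch is optional and not needed.
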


Now define $j_*\colon \M'_*\to\M_*$ by 
$$\langle j_*(\rho'),x\rangle=\overline{\langle\rho',j(x)\rangle}$$
for all $\rho'\in\M'_*$ and $x\in\M$. 
Then one has for every $\gamma\in\H$ and $x\in\M$
\begin{align*}
\langle j_*(\vert\gamma\rangle \langle\gamma\vert),x\rangle
& =\overline{\langle\vert\gamma\rangle \langle\gamma\vert,j(x)\rangle} 
 =\langle j(x)\gamma\mid\gamma\rangle 
 =\langle JxJ\gamma\mid\gamma\rangle 
 =\langle J\gamma\mid xJ\gamma\rangle \\
&=\langle \vert J\gamma\rangle \langle J\gamma\vert, x\rangle 
\end{align*}
hence 
\begin{equation}\label{j*}
j_*\circ E'=E\circ J\qquad\text{ and }\qquad j'_*\circ E=E'\circ J.
\end{equation}
By 
Proposition \ref{basic_homeo} one has the homeomorphisms
\begin{equation}
\label{PHP}
\xymatrix{
	\M_*^+ \ar[r]^{\bepsilon} &   \P   & {\M'}_*^+ \ar[l]_{\bepsilon'}	}
\end{equation}	
inverse to the ones presented in \eqref{ar1}. 
Using \eqref{PHP}, one obtains surjective continuous maps
	\begin{equation}	
	\xymatrix{
		\P & \ar[l]_{\tilde \bt}  \H  \ar[r]^{\tilde \bs} & \P	}
	\end{equation}
		defined by 
$$\tilde \bs:=\bepsilon'\circ E'\text{ and }\tilde \bt:=\bepsilon\circ E.$$ 
		It follows by \eqref{j*} that
		\begin{equation}\label{ts}
		\tilde \bt=\tilde \bs\circ J\qquad {\rm and}\qquad \tilde \bs=\tilde \bt\circ J.
		\end{equation}
		From now on,  
		the following notation will be used: 
		\begin{equation}\label{gst} 
		\vert\gamma\vert:=\tilde \bs(\gamma)\qquad {\rm and}\qquad \vert\gamma\vert':=\tilde \bt(\gamma),
		\end{equation}
		where $\gamma\in \H$. As a consequence of Proposition \ref{prop:2} one obtains two polar decompositions
		\begin{equation}\label{g} \gamma=u|\gamma|\qquad{\rm and }\qquad \gamma=u'|\gamma|'
		\end{equation}
		of $\gamma\in \H$, where the partial isometries $u\in \U(\M)$ and $u'\in \U(\M')$ are defined in a unique way by $$u^*u=[\M'|\gamma|]=:p_{|\gamma|}$$ 
		and 
		$$u'^*u'=[\M|\gamma|']=:p'_{|\gamma|'}.$$ 
		Note also that $uu^*=[\M'\gamma]$ and $u'u'^*=[\M\gamma]$. 
		The components of these polar decompositions are related by 
		\begin{align} 
		\label{ju1} 
		u'& =j(u^*) \\
		\label{ju2} 
		|\gamma|' & =u j(u)|\gamma|=:\beta(u)|\gamma|.
		\end{align}
		In order to show \eqref{ju1}--\eqref{ju2} we note that substituting $u'$ and $|\gamma|'$ given by \eqref{ju1}--\eqref{ju2} into the second formula of \eqref{g} one obtains 
		\begin{equation} 
		u'|\gamma|'=j(u^*)u j(u)|\gamma|=uj(u^*u)|\gamma|=uJu^*u|\gamma|=uJ|\gamma|=u|\gamma|=\gamma.
		\end{equation}
		So, \eqref{ju1}--\eqref{ju2} follow by the uniqueness  of the polar decomposition $\gamma=u'|\gamma|'$. 
		From \eqref{ts}, \eqref{gst}, and \eqref{ju2}, one obtains that 
		\begin{equation}\label{ga} 
		|\gamma|'=|J\gamma|,\qquad |\gamma|=|J\gamma|'
		\qquad {\rm and}\qquad  
		J\gamma=u^*|J\gamma|.
		\end{equation}
		
		\begin{lem}\label{lem:52}
		\begin{enumerate}[{\rm(i)}]
		\item\label{lem:52_item1} 
		The expectation map $E:\P\to \M_*^+$ satisfies
		\begin{align}
		\label{Egam} 
		\sigma_*(E(|\gamma|))& =[\M'|\gamma|]\\
		\label{Egamma} E(\beta(u)|\gamma|)& =E(uj(u)|\gamma|)=uE(|\gamma|)u^*
		\end{align}
		where $u^*u=\sigma_*(E(|\gamma|))$.
		\item\label{lem:52_item2} 
		One has the action $\beta:\U(\M)\ast_\mu \P\to \P$ of $\U(\M)\tto \Ll(\M)$ on $\P$ defined by \eqref{ju2} with its corresponding momentum map 
		$$\mu:\P\to\Ll(\M),\quad \mu(|\gamma|):=[\M'\gamma],$$ 
		see also \eqref{mom}.
		\item\label{lem:52_item3} 
		The momentum maps $\mu:\P\to\Ll(\M)$ and $\sigma_*:\M_*^+\to \Ll(\M)$ satisfy
	\begin{equation}\label{diagdual31}
\xymatrix
{\P 	 \ar[dr]_{\mu }  	\ar[rr]^E_\sim& &  {\M^+_*} \ar[dl]^{\sigma_*}
	    \\
  	&{\Ll(\M)}&  }
\end{equation}		
	hence, the expectation map $E:\P\to \M_*^+$ intertwines the actions $\Ad_*:\U(\M)\ast_{\sigma_*}\M^+_*\to \M^+_*$ and $\beta:\U(\M)\ast_\mu \P\to \P$  of $\U(\M)\tto \Ll(\M)$ defined in \eqref{coaction} and \eqref{ju2}, respectively.
		\end{enumerate}
		\end{lem}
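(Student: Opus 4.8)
The plan is to verify each of the three assertions of Lemma~\ref{lem:52} directly, essentially unwinding the definitions already established in this section, and then invoking Proposition~\ref{prop:2} and Proposition~\ref{prop:44} (as applied to elements of~$\P$) for the content that is not purely formal.

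\textbf{Proof of \eqref{lem:52_item1}.} For the first identity \eqref{Egam}, I would simply note that $|\gamma|\in\P$ and that $E(|\gamma|)$ is, by definition, $\iota_*(\ket{|\gamma|}\bra{|\gamma|})$; then Proposition~\ref{prop:44} applied to the vector $|\gamma|\in\H$ gives $\sigma_*(E(|\gamma|))=[\M'|\gamma|]$, which is exactly \eqref{Egam}. For the equivariance identity \eqref{Egamma}, I would start from the defining relation $\beta(u)|\gamma|=uj(u)|\gamma|$ in \eqref{ju2} and compute $E(uj(u)|\gamma|)=\iota_*\bigl(\ket{uj(u)|\gamma|}\bra{uj(u)|\gamma|}\bigr)$. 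Since $j(u)=JuJ\in\M'$ commutes with every $x\in\M$, for $x\in\M$ one has
\begin{equation*}
\langle E(uj(u)|\gamma|),x\rangle=\langle x\, uj(u)|\gamma|\mid uj(u)|\gamma|\rangle=\langle j(u)^* u^* x u\, j(u)|\gamma|\mid |\gamma|\rangle=\langle u^*xu\, |\gamma|\mid |\gamma|\rangle,
\end{equation*}
where in the last step I used $j(u)^*j(u)=j(u^*u)=j([\M'|\gamma|])$ acts as the identity on $|\gamma|$ (because $[\M'|\gamma|]'=[\M|\gamma|]$ in the commutant picture, and $J[\M'|\gamma|]J$ fixes $|\gamma|$), together with $u^*xu\in\M$ commuting past. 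This equals $\langle E(|\gamma|),u^*xu\rangle=\langle uE(|\gamma|)u^*,x\rangle$, giving \eqref{Egamma}. The hypothesis $u^*u=\sigma_*(E(|\gamma|))=[\M'|\gamma|]$ is precisely what makes the cancellation legitimate and ensures $(u,E(|\gamma|))$ lies in the domain of the coadjoint action.

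\textbf{Proof of \eqref{lem:52_item2}.} Here I would check the two groupoid-action axioms. First, well-definedness and the momentum constraint: by \eqref{ju2} the formula $\beta(u)|\gamma|=uj(u)|\gamma|$ is defined exactly when $u^*u=[\M'|\gamma|]=\mu(|\gamma|)$, matching the compatibility condition $\br(u)=\mu(|\gamma|)$ for an action of $\U(\M)\tto\Ll(\M)$; moreover by \eqref{Egam}--\eqref{Egamma} combined with Proposition~\ref{prop:44} one gets $\mu(\beta(u)|\gamma|)=[\M'\,\beta(u)|\gamma|]=\sigma_*(uE(|\gamma|)u^*)=uu^*$, so the momentum map is equivariant. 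Second, the composition law: for composable $v,u$ with $\br(v)=\bl(u)=uu^*$ one computes $\beta(v)\beta(u)|\gamma|=vj(v)uj(u)|\gamma|=vu\,j(vu)|\gamma|$ (using that $j$ is a homomorphism and that elements of $\M$ and $\M'$ commute), which is $\beta(vu)|\gamma|$; and $\beta(\mu(|\gamma|))|\gamma|=[\M'|\gamma|]j([\M'|\gamma|])|\gamma|=|\gamma|$. Also $\beta(u)|\gamma|\in\P$: this is the cone-invariance $uj(u)\P\subseteq\P$ from the definition of standard form (extended to partial isometries $u\in\U(\M)$, or obtained by approximating and using that $\P$ is closed). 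That completes the verification that $\beta$ is a groupoid action.

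\textbf{Proof of \eqref{lem:52_item3}.} The triangle \eqref{diagdual31} is just the combination of \eqref{Egam} (which says $\sigma_*\circ E = \mu$ on $\P$) with the fact, from Proposition~\ref{basic_homeo}, that $E\colon\P\to\M_*^+$ is a bijection (indeed homeomorphism). The intertwining statement then follows from \eqref{Egamma}, which reads $E(\beta(u)|\gamma|)=uE(|\gamma|)u^*=\Ad_*(u,E(|\gamma|))$, so that $E$ conjugates the action $\beta$ on $\P$ to the coadjoint action $\Ad_*$ on $\M_*^+$, both over the same momentum map into $\Ll(\M)$. The main obstacle I anticipate is the careful bookkeeping around the partial-isometry support projections---in particular justifying that $j(u)^*j(u)=j(u^*u)$ acts as the identity on $|\gamma|$ (equivalently, that $J[\M'|\gamma|]J$ fixes $|\gamma|$), and that the cone $\P$ is invariant under $uj(u)$ for merely \emph{partial} isometries $u$ rather than unitaries; both are handled by reducing to the support of $|\gamma|$ and using the universality property of $\P$ together with Proposition~\ref{prop:2}, but they require attention to avoid circularity with the very polar decompositions \eqref{g}--\eqref{ju2} being used.
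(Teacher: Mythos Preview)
Your proof is correct and follows essentially the same route as the paper: Proposition~\ref{prop:44} for \eqref{Egam}, a direct computation of $\langle E(uj(u)|\gamma|),x\rangle$ for \eqref{Egamma}, and then formal deduction of \eqref{lem:52_item2}--\eqref{lem:52_item3}. Regarding your flagged obstacle, the paper handles $j(u^*u)|\gamma|=|\gamma|$ more directly via $j(u^*u)|\gamma|=Ju^*uJ|\gamma|=Ju^*u|\gamma|=J|\gamma|=|\gamma|$ (using $J|\gamma|=|\gamma|$ twice and $u^*u=[\M'|\gamma|]$), avoiding your detour through $[\M|\gamma|]$; and the invariance $uj(u)\P\subseteq\P$ for partial isometries is already contained in the standard-form axiom $xj(x)\P\subseteq\P$ for all $x\in\M$, so no approximation is needed.
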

	
\begin{proof} 
\eqref{lem:52_item1}			
Equality~\eqref{Egam} follows from Proposition~\ref{prop:44}. For every $x\in \M$ one has 
\allowdisplaybreaks
\begin{align*} 
\langle E(\beta(u)\vert\gamma\vert),x\rangle
& =\langle E(uj(u)\vert\gamma\vert),x\rangle 
=\langle uj(u)\vert\gamma\vert \mid xuj(u)\vert\gamma\vert\rangle  \\
& =\langle \vert\gamma\vert \mid u^*j(u^*)xuj(u)\vert\gamma\vert\rangle  
=\langle \vert\gamma\vert \mid u^*xuj(u^*u)\vert\gamma\vert\rangle  \\
&=\langle \vert\gamma\vert\mid u^*xuJu^*u\vert\gamma\vert\rangle  
 =\langle \vert\gamma\vert\mid u^*xuJ[\M'\vert\gamma\vert]\vert\gamma\vert\rangle  \\
& =\langle \vert\gamma\vert\mid u^*xuJ\vert\gamma\vert\rangle  
 =\langle \vert\gamma\vert \mid u^*xu\vert\gamma\vert\rangle  \\
&=\langle uE(\vert\gamma\vert)u^*,x\rangle. 
\end{align*}
The above proves \eqref{Egamma}.
		
\eqref{lem:52_item2}
Let us take $(u_1,j(u_2)u_2\vert\gamma\vert),(u_2,\vert\gamma\vert)\in \U(\M)\ast_\mu\P$, i.e., 
$$u_1^*u_1=[\M'j(u_2)u_2\vert\gamma\vert]\text{ and } u_2^*u_2=[\M'\vert\gamma\vert].$$ 
Assuming $u_1^*u_1=u_2u_2^*$ and using 
$$j(u_1)u_1(j(u_2)u_2\vert\gamma\vert)=(j(u_1u_2)u_1u_2)\vert\gamma\vert$$
we obtain that $(u_1u_2,j(u_1u_2)u_1u_2\vert\gamma\vert)\in \U(\M)\ast_\mu\P$, i.e., 
$$(u_1u_2)^*u_1u_2=[\M'j(u_1u_2)u_1u_2\vert\gamma\vert].$$
		
\eqref{lem:52_item3}
The commutativity of the diagram \eqref{diagdual31} follows from \eqref{Egam}. The second statement of~
\eqref{lem:52_item3} 
follows from \eqref{Egamma} and \eqref{diagdual31}.
\end{proof}

Now, let us define the product
\begin{equation}
\label{prod} 
\H*\H\ni(\gamma_1,\gamma_2)\mapsto \gamma_1\bullet\gamma_2\in \H,
\end{equation}
where $\H*\H:=\{(\gamma_1,\gamma_2)\in \H\times\H: \tilde\bs(\gamma_1)=\tilde \bt(\gamma_2)\}$ as follows. 
We take the polar  decompositions $\gamma_1=u_1|\gamma_1|$ and $\gamma_2=u_2|\gamma_2|$. 
From $\tilde\bs(\gamma_1)=\tilde \bt(\gamma_2)$ we have $|\gamma_1|=|\gamma_2|'=u_2j(u_2)|\gamma_2|$. 
Then, by 
Lemma \ref{lem:52}\eqref{lem:52_item1}, see \eqref{Egam} and \eqref{Egamma}, it follows that $u_1^*u_1=u_2u_2^*$. 
Hence, the right hand side of 
\begin{equation}\label{ast1} 
\gamma_1\bullet\gamma_2:=u_1u_2|\gamma_2|
\end{equation}
is well defined.
		
Finally, by 
\begin{equation}
\label{incl}\tilde\bepsilon:\P\to \H
\end{equation}
we denote the inclusion of $\P$ into $\H$.

\begin{thm}\label{thm:53}
		If $(\M,\H,J,\P)$ is a standard form of  $\M$, then $\H$ is the space of arrows (morphisms) of the groupoid $\H\tto\P$ on base $\P$, having the following structure maps:
\begin{itemize}
	\item the inverse map $J:\H\to \H$
	
	\item the source map $\tilde\bs:\H\to \P$
	
	\item the target map $\tilde\bt:\H\to \P$
	
	\item the multiplication $\H*\H\to \H$
	
	\item the object inclusion map $\tilde\bepsilon:\P\hookrightarrow \H$
\end{itemize}
defined in \eqref{JD}, \eqref{ts}, \eqref{prod},  \eqref{incl}, respectively. 
	\end{thm}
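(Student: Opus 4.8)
The plan is to verify directly that the five structure maps listed in the statement satisfy the groupoid axioms, exploiting the polar decomposition $\gamma = u\vert\gamma\vert$ from \eqref{g} and its compatibility with the $\U(\M)\tto\Ll(\M)$-action on $\P$ established in Lemma~\ref{lem:52}. The key observation is that the map $\gamma\mapsto(u,\vert\gamma\vert)$ identifies $\H$ with pairs $(u,\rho)$ where $\rho\in\M_*^+$ (via $E\colon\P\xrightarrow{\sim}\M_*^+$) and $u^*u=\sigma_*(E(\rho))$, that is, with the total space $\U(\M)\ast\M_*^+$ of the coadjoint action groupoid. So one strategy is to prove directly that this correspondence transports the groupoid operations \eqref{s}--\eqref{e} to the operations in the statement; this is the content of the forthcoming Proposition~\ref{isomorphisms}/\ref{531} referenced in the introduction. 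But since the theorem is stated before that isomorphism is formally invoked, I would give a self-contained verification.

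First I would check that the structure maps are well defined. For $\tilde\bs,\tilde\bt$ this is immediate from Proposition~\ref{basic_homeo} (the expectation maps restricted to $\P$ are homeomorphisms, so $\tilde\bs=\bepsilon'\circ E'$ and $\tilde\bt=\bepsilon\circ E$ are honest continuous maps $\H\to\P$), and $J$ and $\tilde\bepsilon$ are well defined by construction. The one nontrivial point is that the product \eqref{ast1} is well defined: given $(\gamma_1,\gamma_2)\in\H\ast\H$ with polar decompositions $\gamma_i=u_i\vert\gamma_i\vert$, the composability condition $\tilde\bs(\gamma_1)=\tilde\bt(\gamma_2)$ means $\vert\gamma_1\vert=\vert\gamma_2\vert'=u_2 j(u_2)\vert\gamma_2\vert=\beta(u_2)\vert\gamma_2\vert$. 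Applying $E$ and using \eqref{Egamma} of Lemma~\ref{lem:52}\eqref{lem:52_item1}, we get $E(\vert\gamma_1\vert)=u_2 E(\vert\gamma_2\vert)u_2^*$, whence $\sigma_*(E(\vert\gamma_1\vert))=u_2 u_2^*$ by Proposition~\ref{prop:44}; since $u_1^*u_1=\sigma_*(E(\vert\gamma_1\vert))$ we conclude $u_1^*u_1=u_2 u_2^*$, so $u_1 u_2\in\U(\M)$ and $u_1 u_2\vert\gamma_2\vert$ makes sense. I would also record that $\vert u_1 u_2\vert\gamma_2\vert\,\vert=\vert\gamma_2\vert$ (the factor $u_1 u_2$ is a partial isometry with right support $u_2^*u_2=\sigma_*(E(\vert\gamma_2\vert))$), so the polar decomposition of $\gamma_1\bullet\gamma_2$ is exactly $(u_1 u_2)\vert\gamma_2\vert$.

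Next I would verify the groupoid axioms one by one, working throughout in terms of polar data $(u,\vert\gamma\vert)$. Compatibility of the product with source and target: $\tilde\bs(\gamma_1\bullet\gamma_2)=\vert\gamma_2\vert=\tilde\bs(\gamma_2)$ directly, and $\tilde\bt(\gamma_1\bullet\gamma_2)=\beta(u_1 u_2)\vert\gamma_2\vert=u_1 u_2 j(u_1 u_2)\vert\gamma_2\vert=u_1 j(u_1) u_2 j(u_2)\vert\gamma_2\vert=\beta(u_1)\vert\gamma_1\vert=\tilde\bt(\gamma_1)$, using $u_2 j(u_2)\vert\gamma_2\vert=\vert\gamma_1\vert$ and that $j(u_1)$ commutes with $u_2$. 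Associativity: $(\gamma_1\bullet\gamma_2)\bullet\gamma_3=(u_1 u_2)u_3\vert\gamma_3\vert=u_1(u_2 u_3)\vert\gamma_3\vert=\gamma_1\bullet(\gamma_2\bullet\gamma_3)$, which reduces to associativity in $\M$. The units: for $\rho\in\P$ one has $\tilde\bepsilon(\rho)=\rho$ with polar decomposition $u=\sigma_*(E(\rho))=p_\rho$, so $\gamma\bullet\tilde\bepsilon(\tilde\bs(\gamma))=u p_\rho\vert\gamma\vert=u\vert\gamma\vert=\gamma$ since $p_\rho\vert\gamma\vert=\vert\gamma\vert$, and similarly $\tilde\bepsilon(\tilde\bt(\gamma))\bullet\gamma=\gamma$. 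Finally the inverse: from \eqref{ga} the polar decomposition of $J\gamma$ is $J\gamma=u^*\vert J\gamma\vert$ with $\vert J\gamma\vert=\vert\gamma\vert'$; hence $\tilde\bs(J\gamma)=\vert\gamma\vert'=\tilde\bt(\gamma)$ and $\tilde\bt(J\gamma)=\vert J\gamma\vert'=\vert\gamma\vert=\tilde\bs(\gamma)$, while $(J\gamma)\bullet\gamma=u^* u\vert\gamma\vert=\vert\gamma\vert=\tilde\bepsilon(\tilde\bs(\gamma))$ and $\gamma\bullet(J\gamma)=u u^*\vert J\gamma\vert=\vert J\gamma\vert=\tilde\bepsilon(\tilde\bt(\gamma))$. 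The main obstacle, and where care is needed, is precisely the interplay between the two polar decompositions \eqref{g} (the $\M$-side and the $\M'$-side) and the identities \eqref{ju1}--\eqref{ju2}, \eqref{ga}: one must consistently keep track of which support projections appear and use that elements of $\M$ commute with elements of $\M'$ (in particular $u$ with $j(v)$). Everything else is bookkeeping on top of Lemma~\ref{lem:52} and Propositions~\ref{prop:44}, \ref{prop:2}, \ref{basic_homeo}.
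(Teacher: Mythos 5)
Your proposal is correct and follows essentially the same route as the paper: an axiom-by-axiom verification of the groupoid structure using the polar decompositions \eqref{g}, the identities \eqref{ju1}--\eqref{ju2} and \eqref{ga}, and Lemma~\ref{lem:52} (with the well-definedness of the product, which the paper records just before the theorem, correctly included). The only cosmetic difference is that the paper first proves $J(\gamma_1\bullet\gamma_2)=(J\gamma_2)\bullet(J\gamma_1)$ and deduces the target-compatibility and the second unit law from it, whereas you compute these directly via $\beta(u_1u_2)\vert\gamma_2\vert$ and the commutation of $u$ with $j(v)$, which is equally valid.
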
 

	\begin{proof} 
	To prove that $\H\tto \P$ is a groupoid, we must check the following conditions (cf. \cite[Def. 1.1.1]{Ma05}):
\begin{enumerate}
	\item\label{grpd_lemma_proof_item1} 
	If $(\gamma_1,\gamma_2)\in\H\ast\H$, then $(J\gamma_2,J\gamma_1)\in\H\ast\H$ and 
	$J(\gamma_1\bullet\gamma_2)=(J\gamma_2)\bullet(J\gamma_1)$. 
	\item\label{grpd_lemma_proof_item2} $\widetilde\bs(\gamma_1\bullet\gamma_2)=\widetilde\bs(\gamma_2)$ 
	and $\widetilde\bt(\gamma_1\bullet\gamma_2)=\widetilde\bt(\gamma_1)$ 
	if $(\gamma_1,\gamma_2)\in\H\ast\H$. 
	\item\label{grpd_lemma_proof_item3} $(\gamma_1\bullet\gamma_2)\bullet\gamma_3=\gamma_1\bullet(\gamma_2\bullet\gamma_3)$ if $(\gamma_1,\gamma_2),(\gamma_2,\gamma_3)\in\H\ast\H$.
	\item\label{grpd_lemma_proof_item4} $\widetilde\bs(\widetilde\bepsilon(\rho))=\widetilde\bt(\widetilde\bepsilon(\rho))=\rho$ if $\rho\in\P$. 
	\item\label{grpd_lemma_proof_item5} $\gamma\bullet\widetilde\bepsilon(\widetilde\bs(\gamma))
	=\widetilde\bepsilon(\widetilde\bt(\gamma))\bullet\gamma=\gamma$ for all $\gamma\in\H$. 
	\item\label{grpd_lemma_proof_item6} 
	One has $(\gamma,J\gamma),(J\gamma,\gamma)\in\H\ast\H$, and 
	moreover $J\gamma\bullet\gamma=\widetilde\bepsilon(\widetilde\bs(\gamma))$ 
	and $\gamma\bullet J\gamma=\widetilde\bepsilon(\widetilde\bt(\gamma))$, for all $\gamma\in\H$. 
\end{enumerate}
	In order to do that, we will use the properties of the structural maps mentioned above.  
	We will use also the polar decompositions $\gamma_i=u_i\vert\gamma_i\vert$ for $i=1,2,3$ and $\gamma=u\vert\gamma\vert$.

\eqref{grpd_lemma_proof_item1} 
If $(\gamma_1,\gamma_2)\in\H\ast\H$, 
then $\vert\gamma_1\vert=\vert J\gamma_2\vert$, 
that is, $\vert J\gamma_2\vert=\vert J(J\gamma_1)\vert$, 
hence $(J\gamma_2,J\gamma_1)\in\H\ast\H$. 
Moreover, by \eqref{ast1}, one has the polar decomposition 
$\gamma_1\bullet\gamma_2=(u_1u_2)\vert\gamma_2\vert$, hence, 
using \eqref{ju1}, \eqref{ju2}, and \eqref{ga}, we obtain
\begin{align*}
J(\gamma_1\bullet\gamma_2)
&= J((u_1u_2)\vert\gamma_2\vert) 
=(u_1u_2)^*\vert J((u_1u_2)\vert\gamma_2\vert)\vert 
=(u_1u_2)^*(u_1u_2)j(u_1u_2)\vert\gamma_2\vert\\
&=u_2^*u_2j(u_1u_2)\vert\gamma_2\vert 
=u_2^*u_2j(u_1) j(u_2)\vert\gamma_2\vert 
=u_2^* j(u_1) u_2 j(u_2)\vert\gamma_2\vert \\
&=u_2^* j(u_1) \vert J\gamma_2\vert 
 =u_2^* j(u_1) \vert \gamma_1\vert 
=u_2^* J\gamma_1 
=(J\gamma_2)\bullet(J\gamma_1),
\end{align*}
where we also used the compatibility assumption $\vert J\gamma_2\vert=\vert\gamma_1\vert$.  
Note that this assumption implies $u_1^*u_1=u_2u_2^*$.

\eqref{grpd_lemma_proof_item2} 
Since $(\gamma_1,\gamma_2)\in\H\ast\H$, 
one has $\vert\gamma_1\vert=\vert J\gamma_2\vert$. 
Then, by \eqref{ast1} as above, 
one has the polar decomposition 
$\gamma_1\bullet\gamma_2=(u_1u_2)\vert\gamma_2\vert$, hence 
$$\widetilde\bs(\gamma_1\bullet\gamma_2)
=\vert\gamma_1\bullet\gamma_2\vert=\vert\gamma_2\vert
=\widetilde\bs(\gamma_2).$$
Moreover, using the above \eqref{grpd_lemma_proof_item1}, 
and the equality $\widetilde\bt=\widetilde\bs\circ J\colon\H\to\P$, one obtains 
$$\widetilde\bt(\gamma_1\bullet\gamma_2)
=\widetilde\bs(J(\gamma_1\bullet\gamma_2))
=\widetilde\bs((J\gamma_2)\bullet(J\gamma_1))
=\widetilde\bs(J\gamma_1)
=\widetilde\bt(\gamma_1)$$
as required.

\eqref{grpd_lemma_proof_item3}
One has the polar decompositions 
$\gamma_1\bullet\gamma_2=(u_1u_2)\vert\gamma_2\vert$ 
and $\gamma_2\bullet\gamma_3=(u_2u_3)\vert\gamma_3\vert$, hence 
$$(\gamma_1\bullet\gamma_2)\bullet\gamma_3
=(u_1u_2)u_3\vert\gamma_3\vert
=u_1(u_2u_3)\vert\gamma_3\vert
=\gamma_1\bullet(\gamma_2\bullet\gamma_3)$$
as claimed. 

\eqref{grpd_lemma_proof_item4}
For any $\rho\in\P$ one has $\vert\rho\vert=\rho$ and $J\rho=\rho$, 
hence $\widetilde\bs(\rho)=\widetilde\bt(\rho)=\rho$.  
The assertion then follows since $\widetilde\epsilon(\rho)=\rho$. 

\eqref{grpd_lemma_proof_item5}
One has 
$\bepsilon(\widetilde\bs(\gamma))=\bepsilon(\vert\gamma\vert)
=\vert\gamma\vert\in\P$ 
hence
$$\gamma\bullet\widetilde\bepsilon(\widetilde\bs(\gamma))
=u\widetilde\bepsilon(\widetilde\bs(\gamma))
=u\vert\gamma\vert=\gamma.$$
Replacing $\gamma$ by $J\gamma$ in the above equality we obtain 
$(J\gamma)\bullet\widetilde\bepsilon(\widetilde\bs(J\gamma))
=J\gamma$ hence, using 
$\widetilde\bt=\widetilde\bs\circ J$, 
it follows that 
$(J\gamma)\bullet\widetilde\bepsilon(\widetilde\bt(\gamma))
=J\gamma$. 
Now, applying $J$ to both sides of this equality and using \eqref{grpd_lemma_proof_item1}, one obtains 
$\widetilde\bepsilon(\widetilde\bt(\gamma))\bullet\gamma=\gamma$, 
as required. 

\eqref{grpd_lemma_proof_item6} 
It is clear that 
$\widetilde\bs(\gamma)=\vert\gamma\vert=\widetilde\bt(J\gamma)$ 
and $\widetilde\bs(J\gamma)=\vert J\gamma\vert=\widetilde\bt(\gamma)$, 
hence  $(\gamma,J\gamma),(J\gamma,\gamma)\in\H\ast\H$. 
Moreover, using the polar decomposition $J\gamma=u^*\vert J\gamma\vert$
one obtains 
$$J\gamma\bullet\gamma
=u^*u\vert\gamma\vert
=\vert\gamma\vert
=\widetilde\bepsilon(\widetilde\bs(\gamma)).$$
Replacing $\gamma$ by $J\gamma$ in the above equality, 
one has 
$$\gamma\bullet J\gamma
=\widetilde\bepsilon(\widetilde\bs(J\gamma))
=\widetilde\bepsilon(\widetilde\bt(\gamma))$$
and this completes the proof. 
\end{proof}

	\begin{rem}\label{rem:54}
	Since $\H\tto\P$ is a groupoid  and 
	$E\vert_{\P}\colon\P\to\M_*^+$ is bijective, it then follows that $\H\tto\M_*^+$ is a groupoid for which  $J$ is  the inversion map, while the groupoid product is given by  \eqref{prod}. 
	However, the source, target and objects inclusion maps are defined by 
\begin{align}
\bs:= & (E|_\P)\circ \widetilde\bs\\
\bt:= & (E|_\P)\circ \widetilde\bt\\
\bepsilon:= & \widetilde\bepsilon \circ  (E|_\P)^{-1},
\end{align}
respectively.
\end{rem}

	In the following, we will call $\H\tto\M_*^+$ the \emph{standard groupoid}. 
	The following proposition summarizes the various realizations of the standard groupoid.
	
	\begin{prop}\label{isomorphisms}
	One has the natural isomorphisms of groupoids 
	\begin{equation}\label{531}
\xymatrix{
		\U(\M)\ast_{\sigma_*} \M_*^+ \ar@<-.5ex>[d] \ar@<.5ex>[d]  
		\ar[r]^{(\id,\bepsilon)} 
	&\U(\M)\ast_\mu \P \ar@<-.5ex>[d] \ar@<.5ex>[d]  \ar[r]^{\quad{\Theta}}
	\ar[r]
	& \H \ar@<-.5ex>[d] \ar@<.5ex>[d] \ar[r]^{\id}
	& \H \ar@<-.5ex>[d] \ar@<.5ex>[d]
	 \\
	  \M_*^+  \ar[r]^{\bepsilon} 
	& \P \ar[r]^{\id}
	& \P \ar[r]^{E\vert_{\P}}
	& \M_*^+}
\end{equation}
where $\Theta:\U(\M)\ast_\mu\P\stackrel{\sim}{\rightarrow}\H$ is defined by $\Theta(u,\rho):=u\rho$ for $u^*u=\mu(\rho)=[\M'\rho]$.
\end{prop}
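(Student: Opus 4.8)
The plan is to verify that the three displayed arrows in~\eqref{531} are groupoid isomorphisms, reading the diagram from left to right. All the underlying bijections on the arrow sets and on the bases have already been produced in the preceding material (Propositions~\ref{23August2019}, \ref{prop:2}, and~\ref{basic_homeo}, together with Theorem~\ref{thm:53} and Remark~\ref{rem:54}); what remains is essentially bookkeeping, namely to check that each pair of maps is compatible with source, target, multiplication, inversion, and object inclusion. I would organize this as three lemmas, one per square.

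First, for the square on the right: by Remark~\ref{rem:54} the standard groupoid $\H\tto\M_*^+$ \emph{is} the groupoid $\H\tto\P$ of Theorem~\ref{thm:53} with its base transported along the homeomorphism $E\vert_\P\colon\P\to\M_*^+$ of Proposition~\ref{basic_homeo}. Thus the identity on $\H$ together with $E\vert_\P$ on the base is a groupoid isomorphism by the very definition of the structural maps $\bs=(E\vert_\P)\circ\widetilde\bs$, $\bt=(E\vert_\P)\circ\widetilde\bt$, $\bepsilon=\widetilde\bepsilon\circ(E\vert_\P)^{-1}$, while the inversion $J$ and product~$\bullet$ are unchanged. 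This step is purely formal.

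Second, for the middle square: the map $\Theta\colon\U(\M)\ast_\mu\P\to\H$, $\Theta(u,\rho):=u\rho$, is a bijection by the polar decomposition $\gamma=u\vert\gamma\vert$ with $u^*u=[\M'\vert\gamma\vert]$ from~\eqref{g}; its inverse sends $\gamma$ to $(u,\vert\gamma\vert)$. To see it is a morphism over $\id_\P$, I would check: $\widetilde\bs(u\rho)=\rho$, which is exactly $\bs_*(u,\rho)=\rho$ in the action groupoid; $\widetilde\bt(u\rho)=\vert u\rho\vert'=u\rho u^*$ by Lemma~\ref{lem:52}\eqref{lem:52_item1} (i.e.\ \eqref{Egam}--\eqref{Egamma} identifying $\vert\gamma\vert'=\beta(u)\vert\gamma\vert=uj(u)\vert\gamma\vert$ with the functional $u\rho u^*$ under $E\vert_\P$); the object inclusion $\bepsilon_*(\rho)=(\sigma_*(\rho),\rho)=([\M'\rho],\rho)$ maps to $[\M'\rho]\rho=\rho=\widetilde\bepsilon(\rho)$; and, for composability $\rho_1=u_2\rho_2 u_2^*$, the product $(u_1,\rho_1)\cdot(u_2,\rho_2)=(u_1u_2,\rho_2)$ maps to $(u_1u_2)\rho_2$, which equals $(u_1\rho_1)\bullet(u_2\rho_2)=u_1u_2\vert\gamma_2\vert$ by the definition~\eqref{ast1} of $\bullet$, once one checks $u_1^*u_1=u_2u_2^*$ (this is the content of Lemma~\ref{lem:52}\eqref{lem:52_item1} again, via $\vert\gamma_1\vert=\vert\gamma_2\vert'$). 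Finally inversion: $\bi_*(u,\rho)=(u^*,u\rho u^*)$ maps to $u^*(u\rho u^*)$, and one checks this equals $J(u\rho)$ using $J\gamma=u^*\vert J\gamma\vert$ from~\eqref{ga} together with $\vert J\gamma\vert=\vert\gamma\vert'=u\rho u^*$.

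Third, the left square: the pair $(\id,\bepsilon)$ on $\U(\M)\ast_{\sigma_*}\M_*^+\to\U(\M)\ast_\mu\P$ over $\bepsilon\colon\M_*^+\to\P$ is a groupoid isomorphism because $\bepsilon=(E\vert_\P)^{-1}$ intertwines the support map $\sigma_*$ on $\M_*^+$ with the momentum map $\mu$ on $\P$ (this is precisely the commuting triangle~\eqref{diagdual31} of Lemma~\ref{lem:52}\eqref{lem:52_item3}) and intertwines the two coadjoint-type actions $\Ad_*$ and $\beta$ of $\U(\M)\tto\Ll(\M)$ (again Lemma~\ref{lem:52}\eqref{lem:52_item3}); hence it induces an isomorphism of the associated action groupoids, the structural maps of an action groupoid being determined by the action together with its momentum map. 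The composite of the three isomorphisms then recovers, in particular, the maps $\Phi$ and $\Xi$ appearing in diagram~\eqref{24August2019} of the introduction.

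The only point requiring genuine care---the ``main obstacle''---is the compatibility of $\Theta$ with the groupoid product, i.e.\ identifying the ad-hoc product $\gamma_1\bullet\gamma_2=u_1u_2\vert\gamma_2\vert$ on $\H$ with the transparent product $(u_1u_2,\rho_2)$ on the action groupoid. This hinges on the equivalences $\widetilde\bs(\gamma_1)=\widetilde\bt(\gamma_2)\iff\vert\gamma_1\vert=\vert\gamma_2\vert'\iff u_1^*u_1=u_2u_2^*$, which is exactly where the nontrivial input of modular theory (through Lemma~\ref{lem:52}, ultimately Proposition~\ref{prop:2} and the Tomita--Takesaki structure of the standard form) enters; everything else is a routine check of the groupoid axioms in coordinates provided by the polar decompositions~\eqref{polar}, \eqref{g}, and~\eqref{ga}.
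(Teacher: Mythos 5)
Your proposal is correct and takes essentially the same route as the paper, whose own proof of Proposition~\ref{isomorphisms} is simply ``by straightforward verification'': your square-by-square check via Remark~\ref{rem:54}, Lemma~\ref{lem:52}, Proposition~\ref{prop:2}, and the polar decompositions \eqref{g} and \eqref{ga} is exactly that verification written out. The only cosmetic point is that for $\rho\in\P$ the target of $(u,\rho)$ in $\U(\M)\ast_\mu\P$ is the vector $\beta(u)\rho=uj(u)\rho$ rather than literally ``$u\rho u^*$'', a conflation of vectors with functionals that you already flag yourself by passing through $E\vert_{\P}$.
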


\begin{proof} 
By straightforward verification.
\end{proof}

 \subsection{Lie groupoid structure of $\widetilde\H\tto\M_*^+$}
\label{Subsect6.2}

In order to prove Theorem \ref{grpd_act}, which is crucial for the investigation of the Lie groupoid structure of $\H\tto\M_*^+$, we formulate  the following two lemmas. 
We recall from \eqref{ju2} the notation 
$\beta(u)=u j(u)$ for any partial isometry $u\in\U(\M)$. 

\begin{lem}\label{lem:56}
If $\gamma_0\in\P$, $\rho_0=E(\gamma_0)\in\M_*^+$, $p_0=\sigma_*(\rho_0)\in\Ll(\M)$, and $P_0=\br^{-1}(p_0)\subseteq\U(\M)$, 
then the following assertions hold: 
\begin{enumerate}[{\rm(i)}]
	\item\label{SQ1} 
	If $u\in\M$ and $u^*u\gamma_0=\gamma_0$, 
	then $\vert u\gamma_0\vert'=\beta(u)\gamma_0\in\P$. 
	\item\label{SQ2_item1} 
	If $a\in\M$ satisfies $j(a)\gamma_0=a^*\gamma_0$,  then 
	$\rho_0 a=a\rho_0$. 
	\item\label{SQ2_item2} One has $U_{\rho_0}=\{u\in P_0: \vert u\gamma_0\vert=\gamma_0\}
	=\{u\in P_0: \beta(u)\gamma_0=\gamma_0\}
	=\{u\in P_0: j(u)\gamma_0=u^*\gamma_0\}$.
	\item\label{SQ2_item3} 
	The mapping 
	$\beta_{\gamma_0}\colon P_0\to\H$,   $\beta_{\gamma_0}(u):=\beta(u)\gamma_0$, 
	satisfies 
	\begin{equation}\label{b}\beta_{\gamma_0}^{-1}(\beta_{\gamma_0}(u))=\{ug: g\in U_{\rho_0}\} \text{ for all }u\in P_0.
	\end{equation}
	\item\label{SQ2_item4} 
	For every $u\in P_0$ one has 
	$\Ker T_u(\beta_{\gamma_0})=\{\dot{u}\in T_u P_0: u^*\dot{u}\rho_0=\rho_0u^*\dot{u}\}$. 
\end{enumerate}
\end{lem}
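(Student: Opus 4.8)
The plan is to verify the five assertions of Lemma~\ref{lem:56} in the order listed, using repeatedly the basic identities of the standard form collected above, namely $\beta(u)=uj(u)$, $J\gamma=u^*|J\gamma|$, $|\gamma|'=uj(u)|\gamma|=\beta(u)|\gamma|$, the equivariance $E(\beta(u)|\gamma|)=uE(|\gamma|)u^*$ from Lemma~\ref{lem:52}\eqref{lem:52_item1}, and the characterization $\sigma_*(E(\gamma))=[\M'\gamma]$ from Proposition~\ref{prop:44}. Throughout, $\gamma_0\in\P$ is fixed with $\rho_0=E(\gamma_0)$, $p_0=\sigma_*(\rho_0)=[\M'\gamma_0]$, and elements $u\in P_0$ satisfy $u^*u=p_0$, hence $u^*u\gamma_0=[\M'\gamma_0]\gamma_0=\gamma_0$.

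First, for \eqref{SQ1}: if $u^*u\gamma_0=\gamma_0$ then $u\gamma_0=u(u^*u)\gamma_0$, and since $\gamma_0\in\P$ we have $J\gamma_0=\gamma_0$; computing $|u\gamma_0|'=\tilde\bt(u\gamma_0)$ via the polar decomposition of $u\gamma_0$ and the formulas \eqref{ju1}--\eqref{ju2} gives $|u\gamma_0|'=\beta(u)|u\gamma_0|$; but $|u\gamma_0|=\tilde\bs(u\gamma_0)$ and one checks directly from $u^*u\gamma_0=\gamma_0\in\P$ together with Proposition~\ref{prop:44} that $|u\gamma_0|=\gamma_0$, so $|u\gamma_0|'=\beta(u)\gamma_0$; finally $J(\beta(u)\gamma_0)=\beta(u)\gamma_0$ since $\beta(u)\gamma_0=|u\gamma_0|'\in\P$ by the universality of the cone, so $\beta(u)\gamma_0\in\P$ as claimed. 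For \eqref{SQ2_item1}: the hypothesis $j(a)\gamma_0=a^*\gamma_0$ (with $\gamma_0$ cyclic-type for the relevant corner, or more precisely using that $\gamma_0$ represents $\rho_0$) should be unwound by testing against vectors $x\gamma_0$, $x\in\M$: one has $\langle\rho_0,ax\rangle=\langle a^*\gamma_0\mid x\gamma_0\rangle=\langle j(a)\gamma_0\mid x\gamma_0\rangle=\langle\gamma_0\mid j(a)^*x\gamma_0\rangle=\langle\gamma_0\mid xj(a)^*\gamma_0\rangle$ (using $j(a)^*=j(a^*)\in\M'$ commutes with $x\in\M$) $=\langle xa\gamma_0\mid\gamma_0\rangle$ wait — more carefully, $\langle\gamma_0\mid x j(a^*)\gamma_0\rangle=\langle\gamma_0\mid x a\gamma_0\rangle=\langle\rho_0,xa\rangle$, using again $j(a^*)\gamma_0=(a^*)^*\gamma_0=a\gamma_0$ which is the hypothesis with $a$ replaced by $a^*$; hence $\langle\rho_0,ax\rangle=\langle\rho_0,xa\rangle$ for all $x$, i.e. $\rho_0 a=a\rho_0$.

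Assertion \eqref{SQ2_item2} is then a synthesis: $u\in U_{\rho_0}$ means $u\in P_0$ and $u\rho_0 u^*=\rho_0$; by \eqref{Egamma} this is $E(\beta(u)\gamma_0)=\rho_0=E(\gamma_0)$, and since both $\beta(u)\gamma_0$ (by \eqref{SQ1}) and $\gamma_0$ lie in $\P$ where $E|_\P$ is injective (Proposition~\ref{basic_homeo}), this is equivalent to $\beta(u)\gamma_0=\gamma_0$; unwinding $\beta(u)=uj(u)$ and using $u^*u\gamma_0=\gamma_0$ gives the equivalence with $j(u)\gamma_0=u^*\gamma_0$; finally the identification with $\{u\in P_0:|u\gamma_0|=\gamma_0\}$ follows since $|u\gamma_0|=\gamma_0$ was already established for all $u\in P_0$ in the proof of \eqref{SQ1}, so the relevant constraint is on $|u\gamma_0|'=\beta(u)\gamma_0$ — I should double-check here whether the intended reading is $|u\gamma_0|=\gamma_0$ unconditionally or whether it encodes the stabilizer condition, and adjust which of $\tilde\bs,\tilde\bt$ appears; this bookkeeping with source versus target is the one genuinely error-prone point. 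For \eqref{SQ2_item3}: the map $\beta_{\gamma_0}(u)=\beta(u)\gamma_0$ satisfies $\beta_{\gamma_0}(ug)=\beta(ug)\gamma_0=ug\,j(ug)\gamma_0=ugj(g)j(u)\gamma_0$; using $\beta_{\gamma_0}(u)=\beta_{\gamma_0}(u')\iff E(\beta(u)\gamma_0)=E(\beta(u')\gamma_0)\iff u\rho_0u^*=u'\rho_0u'^*$ (by \eqref{Egamma} and injectivity of $E|_\P$ on the cone), i.e. $u'^{*}u\in U_{\rho_0}$-translate of $u'$; combined with $u,u'\in P_0$ one gets $u'=ug$ with $g=u^*u'\in U_{\rho_0}$, which is exactly \eqref{b}.

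Finally \eqref{SQ2_item4}: differentiate. For $u\in P_0$ and $\dot u\in T_uP_0$ (so $u^*\dot u\in ip_0\M^hp_0$ by \eqref{TuP0}), compute $T_u\beta_{\gamma_0}(\dot u)=\dot u\,j(u)\gamma_0+u\,j(\dot u)\gamma_0=\dot u j(u)\gamma_0+u j(\dot u)\gamma_0$. Using $J\gamma_0=\gamma_0$ and the earlier identities one rewrites this and applies $u^*\cdot$ on the left (legitimate since $u^*u=p_0$ and everything lives in the support); the condition $T_u\beta_{\gamma_0}(\dot u)=0$ becomes, after moving everything through $j$ and $J$ and invoking the equivalence from \eqref{SQ2_item1} applied to $a=u^*\dot u$, precisely $u^*\dot u\,\rho_0=\rho_0\,u^*\dot u$; conversely the same computation read backwards shows this commutation forces the kernel condition. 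I expect the main obstacle to be keeping the $\M$-vs-$\M'$ and source-vs-target conventions straight across \eqref{ju1}--\eqref{ga}, and making sure that when I pass from a vector identity like $j(a)\gamma_0=a^*\gamma_0$ to the operator identity $a\rho_0=\rho_0 a$ I am using the fact that $\gamma_0\in\P$ represents $\rho_0$ faithfully on the corner $p_0\M p_0$ (equivalently $p_0=[\M'\gamma_0]$), rather than needing $\gamma_0$ to be cyclic-separating for all of $\M$; once that is pinned down, each item is a short computation.
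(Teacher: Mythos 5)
Items (i)--(iv) of your proposal are essentially sound and run close to the paper's own argument, with a few caveats worth fixing. In (i), your route through \eqref{ju1}--\eqref{ju2} presupposes that $u$ is the partial isometry occurring in the polar decomposition of $u\gamma_0$; that is automatic for $u\in P_0$, but the assertion is stated for arbitrary $u\in\M$ with $u^*u\gamma_0=\gamma_0$, and the paper instead computes directly that $E(\beta(u)\gamma_0)=E(u\gamma_0)$ and obtains $\beta(u)\gamma_0\in\P$ from the standard-form axiom $xj(x)\P\subseteq\P$, which covers the general case. In (ii), the identity $j(a^*)\gamma_0=a\gamma_0$ is not literally ``the hypothesis with $a$ replaced by $a^*$''; it follows from the hypothesis by applying the antiunitary $J$ and using $J\gamma_0=\gamma_0$ --- a one-line repair, after which your computation is exactly the paper's. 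Your suspicion in (iii) is correct: since $E'(u\gamma_0)=E'(\gamma_0)$, one has $\vert u\gamma_0\vert=\gamma_0$ for \emph{every} $u\in P_0$, so the first set must be read with a prime, $\vert u\gamma_0\vert'=\gamma_0$, and this is precisely how the paper's proof of (iii) proceeds; note also that the implication $j(u)\gamma_0=u^*\gamma_0\Rightarrow\beta(u)\gamma_0=\gamma_0$ needs a small extra step (substitution only yields $uu^*\gamma_0$; compare norms, $\Vert u^*\gamma_0\Vert=\Vert u\gamma_0\Vert=\Vert\gamma_0\Vert$, to force $uu^*\gamma_0=\gamma_0$). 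Your (iv), via injectivity of $E\vert_{\P}$ together with $\sigma_*(u\rho_0u^*)=uu^*$, is a clean equivalent of the paper's vector manipulation.

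The genuine gap is in (v): ``the same computation read backwards'' is not available. First, item (ii) is a one-way implication; its converse for $a=u^*\dot{u}\in ip_0\M^h p_0$ (commutation with $\rho_0$ implies $j(a)\gamma_0=a^*\gamma_0$) is true but needs a separate argument, e.g.\ differentiating the group-level characterization in (iii) along $t\mapsto\exp(tx)\in U_{\rho_0}$. Second, and more seriously, passing from the kernel identity $\dot{u}j(u)\gamma_0+uj(\dot{u})\gamma_0=0$ to a condition on $u^*\dot{u}$ (multiplying by $u^*j(u^*)$) annihilates the horizontal component $(\1-uu^*)\dot{u}$, and that information cannot be recovered by reversing the computation: every horizontal $\dot{u}$ has $u^*\dot{u}=0$, hence trivially $u^*\dot{u}\rho_0=\rho_0u^*\dot{u}$, yet such $\dot{u}$ need not lie in the kernel. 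Concretely, for $\M=M_2(\C)$ in standard form on the Hilbert-Schmidt operators, with $\gamma_0=e_{11}$, $u=p_0=e_{11}$, $\dot{u}=e_{21}\in T_uP_0$, one has $u^*\dot{u}=0$ while $T_u(\beta_{\gamma_0})\dot{u}=e_{21}+e_{12}\ne0$; so the right-hand side as printed (and as you reproduce it) is strictly larger than the kernel. What is actually true, and what Lemma~\ref{SQ3} needs, is $\Ker T_u(\beta_{\gamma_0})=u\,\ug_{\rho_0}=\{ux:\ x\in ip_0\M^h p_0,\ x\rho_0=\rho_0x\}$: from the kernel identity, multiplying by $\1-uu^*$ gives $(\1-uu^*)\dot{u}\,j(u)\gamma_0=0$, and since $[\M'j(u)\gamma_0]=p_0$ and $(\1-uu^*)\dot{u}\in\M p_0$, this forces $(\1-uu^*)\dot{u}=0$; then $\dot{u}=u(u^*\dot{u})$, and the commutation condition on $u^*\dot{u}$, obtained as in your computation and reversed via the differentiated form of (iii), identifies the kernel with $u\,\ug_{\rho_0}$. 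So your (v) must record verticality; the claimed reversibility is exactly where the proof breaks.
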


\begin{proof}
	\eqref{SQ1}
For every $x\in\M$,  
\begin{align*}
\langle E(\beta(u)\gamma_0),x\rangle
&=\langle uj(u)\gamma_0\mid xuj(u)\gamma_0\rangle 
 =\langle j(u)^*uj(u)\gamma_0\mid xu\gamma_0\rangle \\
&=\langle uj(u)^*j(u)\gamma_0\mid xu\gamma_0\rangle 
=\langle uj(u^*u)\gamma_0\mid xu\gamma_0\rangle \\
&=\langle uJu^*uJ\gamma_0\mid xu\gamma_0\rangle 
=\langle u\gamma_0\mid xu\gamma_0\rangle \\
&=\langle E(u\gamma_0),x\rangle
\end{align*}
where we used $J\gamma_0=\gamma_0$. 
We thus obtain $ E(\beta(u)\gamma_0)=E(u\gamma_0)$ and then, since $\beta(u)\gamma_0\in\P$, it follows that $\vert u\gamma_0\vert'=\beta(u)\gamma_0\in\P$. 

\eqref{SQ2_item1} 
Since $J\gamma_0=\gamma_0$, one has 
$$j(a)\gamma_0=a^*\gamma_0
\iff Ja\gamma_0=a^*\gamma_0\iff a\gamma_0=Ja^*\gamma_0\iff a\gamma_0=j(a^*)\gamma_0.$$
For every $x\in\M$ one then obtains by the hypothesis on $a$, 
\begin{align*}
\langle \rho_0a,x\rangle
&=\langle \rho_0,ax\rangle 
=\langle\gamma_0\mid ax\gamma_0\rangle 
=\langle a^*\gamma_0\mid x\gamma_0\rangle 
=\langle j(a)\gamma_0\mid x\gamma_0\rangle \\
&=\langle \gamma_0\mid xj(a^*)\gamma_0\rangle 
=\langle \gamma_0\mid xa\gamma_0\rangle 
=\langle a\rho_0,x\rangle
\end{align*}
where we also used $j(a)^*=j(a^*)$ and $j(a)^*\in\M'$.

\eqref{SQ2_item2} 
By $\U(\M)$-equivariance of $E\colon\P\to\M_*^+$ and the hypothesis $\gamma_0\in\P$ one has for any $u\in P_0$, 
\begin{align*}
u\in U_{\rho_0} 
&\iff u\rho_0 u^*=\rho_0 
\iff E(u\gamma_0)=E(\gamma_0)  
 \iff \vert u\gamma_0\vert '=\gamma_0 \\
&\iff \beta(u)\gamma_0=\gamma_0
\end{align*}
where the last equivalence follows by \eqref{SQ1}. 
Moreover, using the relations $u^*u\gamma_0=\gamma_0$ and $j(u)\in\M'$, 
it is easily checked that for every $u\in P_0$ one has $\beta(u)\gamma_0=\gamma_0$ if and only if $j(u)\gamma_0=u^*\gamma_0$. 

\eqref{SQ2_item3} The inclusion ``$\supseteq$'' in \eqref{b} follows by the above equality $U_{\rho_0}=\{u\in P_0: \beta(u)\gamma_0=\gamma_0\}$ along with the property $\beta(xy)=\beta(x)\beta(y)$ which holds for all $x,y\in\M$. 

For the opposite inclusion ``$\subseteq$'', if $u_1,u_2\in P_0$ satisfy  $\beta_{\gamma_0}(u_1)=\beta_{\gamma_0}(u_2)$, then $u_1j(u_1)\gamma_0=u_2j(u_2)\gamma_0$, which easily implies $u_2^*u_1\gamma_0=j(u_1^*u_2)\gamma_0$. 
Denoting $g:=u_2^*u_1$, we then obtain $g\in U_\rho$ by \eqref{SQ2_item2}, 
and on the other hand $u_2g=u_1$. 

\eqref{SQ2_item4} 
Since $\beta_{\gamma_0}(u)=uj(u)\gamma_0$, one has 
$$T_u(\beta_{\gamma_0})\colon T_uP_0\to\H,\quad 
T_u(\beta_{\gamma_0})\dot{u}=\dot{u}j(u)\gamma_0+u j(\dot{u})\gamma_0,$$
where we recall that 
$$T_uP_0=\{\dot{u}\in\M p_0\mid u^*\dot{u}\in ip_0\M^h p_0\},$$
see \eqref{TuP0}.
Since $j(\M)=\M'$, we obtain for any $\dot{u}\in T_uP_0$
\begin{align*}
\dot{u}\in \Ker T_u(\beta_{\gamma_0}) 
& \iff 
j(u)\dot{u}\gamma_0=-u j(\dot{u})\gamma_0 
 \iff 
-u^*\dot{u}\gamma_0=j(u^*\dot{u})\gamma_0 \\
& \iff 
u^*\dot{u}\rho_0=\rho_0u^*\dot{u}
\end{align*}
where the last equivalence follows by \eqref{SQ2_item1} since 
$u^*\dot{u}\in i\M^h$. 
\end{proof}

In order to prove the next theorem 
we first  
prove that the ``square-root homeomorphism'' $(E\vert_{\P})^{-1}\colon\M_*^+\to\P$ is a weak immersion (in particular, is smooth) along the coadjoint groupoid orbits:

\begin{lem}\label{SQ3}
	If $(\M,\H,J,\P)$ is a standard form, $\gamma_0\in\P$, $\rho_0:=E(\gamma_0)\in\M_*^+$,  and 
	we define  $\bepsilon:=(E\vert_{\P})^{-1}\colon\M_*^+\to\P$, 
	then the injective mapping 
	$\bepsilon\vert_{\Oc_{\rho_0}}\colon\Oc_{\rho_0}\to\H$
	is a weak immersion. 
\end{lem}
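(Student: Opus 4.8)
The plan is to factor $\bepsilon\vert_{\Oc_{\rho_0}}$ through the principal bundle $P_0$ and reduce the claim to a statement about the map $\beta_{\gamma_0}\colon P_0\to\H$, $u\mapsto\beta(u)\gamma_0=uj(u)\gamma_0$, that was analyzed in Lemma~\ref{lem:56}. Recall from Theorem~\ref{thm:32}\eqref{thm:32_item1} that $\widetilde\pi_0\colon P_0\to P_0/U_{\rho_0}$ is a $U_{\rho_0}$-principal bundle and that the canonical bijection $P_0/U_{\rho_0}\xrightarrow{\sim}\Oc_{\rho_0}$, $uU_{\rho_0}\mapsto u\rho_0 u^*$, is a diffeomorphism defining the smooth structure of $\Oc_{\rho_0}$. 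On the other hand, by Lemma~\ref{lem:56}\eqref{SQ1} together with the $\U(\M)$-equivariance of $E$ one has $E(\beta(u)\gamma_0)=E(u\gamma_0)=u\rho_0 u^*$, so $\bepsilon(u\rho_0 u^*)=\beta(u)\gamma_0=\beta_{\gamma_0}(u)$, i.e. $\beta_{\gamma_0}=\bepsilon\circ\pi_0$ where $\pi_0(u)=u\rho_0 u^*$. Thus one has the commutative diagram
$$\xymatrix{
	P_0 \ar[d]_{\widetilde\pi_0} \ar[r]^{\beta_{\gamma_0}} & \H \\
	P_0/U_{\rho_0} \ar[r]^{\sim} & \Oc_{\rho_0} \ar@{^{(}->}[u]_{\bepsilon\vert_{\Oc_{\rho_0}}}
}$$
exactly as in the proof of Theorem~\ref{thm:32}\eqref{thm:32_item3}.

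Given this diagram, the proof reduces to two things: (a) $\beta_{\gamma_0}$ is smooth; and (b) for every $u\in P_0$ one has $\Ker T_u(\beta_{\gamma_0})=\Ker T_u(\widetilde\pi_0)$. Smoothness of $\beta_{\gamma_0}$ is immediate since $u\mapsto uj(u)\gamma_0$ is a composition of the bounded bilinear multiplication and the bounded real-linear map $j$, restricted to the submanifold $P_0\hookrightarrow\M p_0$. For (b), Lemma~\ref{lem:56}\eqref{SQ2_item3} gives $\beta_{\gamma_0}^{-1}(\beta_{\gamma_0}(u))=uU_{\rho_0}$, which is precisely the fiber of $\widetilde\pi_0$ through $u$; since $U_{\rho_0}$ acts freely and $\widetilde\pi_0$ is a submersion, this yields the inclusion $\Ker T_u(\widetilde\pi_0)\subseteq\Ker T_u(\beta_{\gamma_0})$ (the tangent space to the fiber is contained in the kernel of the tangent map). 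For the reverse inclusion I would invoke Lemma~\ref{lem:56}\eqref{SQ2_item4}, which identifies $\Ker T_u(\beta_{\gamma_0})=\{\dot u\in T_uP_0: u^*\dot u\,\rho_0=\rho_0\,u^*\dot u\}$, i.e. $u^*\dot u\in i\M_{\rho_0}$ where $\M_{\rho_0}$ is the centralizer Lie algebra from Lemma~\ref{stab}; on the other hand $\Ker T_u(\widetilde\pi_0)=\{ux: x\in\ug_{\rho_0}\}$ and by the identification $\ug_{\rho_0}=\mathfrak u_{\rho_0}=ip_0\M^h p_0\cap\M_{\rho_0}$ (again Lemma~\ref{stab}) these two descriptions coincide, using that $u^*\dot u\in ip_0\M^hp_0$ already for $\dot u\in T_uP_0$ by \eqref{TuP0}.

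Once $\Ker T_u(\beta_{\gamma_0})=\Ker T_u(\widetilde\pi_0)$ is established, the standard argument finishes the proof: since $\widetilde\pi_0$ is a surjective submersion with a smooth local section $\beta_p$ near any point (cf. the proof of Proposition~\ref{gauge_isom}, Step~1), the map $\bepsilon\vert_{\Oc_{\rho_0}}=\beta_{\gamma_0}\circ\beta_p$ read through the section is smooth, so $\bepsilon\vert_{\Oc_{\rho_0}}$ is smooth on $\Oc_{\rho_0}$; and for injectivity of $T_\rho(\bepsilon\vert_{\Oc_{\rho_0}})$ one notes that if $v\in T_\rho(\Oc_{\rho_0})$ lies in its kernel, choosing $u$ with $\widetilde\pi_0(u)=u\rho_0u^*$ corresponding to $\rho$ and a preimage $\dot u\in T_uP_0$ with $T_u(\widetilde\pi_0)\dot u=v$, one gets $T_u(\beta_{\gamma_0})\dot u=0$, hence $\dot u\in\Ker T_u(\widetilde\pi_0)$ by the kernel equality, hence $v=T_u(\widetilde\pi_0)\dot u=0$. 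This gives that $\bepsilon\vert_{\Oc_{\rho_0}}$ is an injective weak immersion.

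The main obstacle is step (b), specifically the reverse inclusion $\Ker T_u(\beta_{\gamma_0})\subseteq\Ker T_u(\widetilde\pi_0)$: it requires matching the "centralizer" description of $\Ker T_u(\beta_{\gamma_0})$ coming from the modular-theoretic computation in Lemma~\ref{lem:56}\eqref{SQ2_item4} with the Lie-algebra description of $\Ker T_u(\widetilde\pi_0)$ coming from the principal-bundle structure in Lemma~\ref{stab}, and this hinges on the fact that $\mathfrak u_{\rho_0}$ is exactly the anti-Hermitian part of the centralizer $(p_0\M p_0)_{\rho_0}$, which is where the Takesaki conditional-expectation splitting \eqref{split_E} enters. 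Everything else is formal diagram-chasing of the kind already carried out for Theorem~\ref{thm:32}\eqref{thm:32_item3}. I expect the actual write-up to be short, essentially quoting Lemma~\ref{lem:56} and Lemma~\ref{stab} and the diagram above.
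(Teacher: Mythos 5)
You follow the paper's route exactly: the commutative diagram $\beta_{\gamma_0}=\bepsilon\vert_{\Oc_{\rho_0}}\circ\pi_0$ obtained from Lemma~\ref{lem:56}\eqref{SQ1} and the equivariance of $E$, smoothness of $\beta_{\gamma_0}$, and the reduction of the weak-immersion property to the kernel identity $\Ker T_u(\beta_{\gamma_0})=\Ker T_u(\widetilde{\pi}_0)$. The gap is in your step (b), precisely in the inclusion $\Ker T_u(\beta_{\gamma_0})\subseteq\Ker T_u(\widetilde{\pi}_0)$ that the lemma actually needs. The two descriptions you claim coincide do not: the set $\{\dot{u}\in T_uP_0:\ u^*\dot{u}\,\rho_0=\rho_0\,u^*\dot{u}\}$ contains the whole horizontal subspace $(\1-uu^*)\M p_0\subseteq T_uP_0$, because every horizontal $\dot{u}$ satisfies $u^*\dot{u}=0$, whereas $\Ker T_u(\widetilde{\pi}_0)=\{ux:\ x\in\ug_{\rho_0}\}$ consists of vertical vectors only; and the horizontal directions are nonzero whenever $(\1-uu^*)\M p_0\ne\{0\}$, which is the generic situation. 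Matching $\ug_{\rho_0}$ with the anti-Hermitian part of the centralizer (Lemma~\ref{stab}) is the easy part and does not touch this issue, since the condition $u^*\dot{u}\in\ug_{\rho_0}$ puts no constraint on $(\1-uu^*)\dot{u}$. Note also that, read literally, the equality asserted in Lemma~\ref{lem:56}\eqref{SQ2_item4} has the same overstatement (the middle ``$\iff$'' in its proof, obtained by multiplying by $j(u^*)u^*$, is not reversible on the horizontal component); if one took it at face value, the inclusion you need --- and with it the weak-immersion conclusion --- would fail.

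What is missing is the verification that every $\dot{u}\in\Ker T_u(\beta_{\gamma_0})$ is vertical, and this uses the vector $\gamma_0$ itself through $[\M'\gamma_0]=p_0$, not only $\rho_0$: from $\dot{u}\,j(u)\gamma_0+u\,j(\dot{u})\gamma_0=0$, applying $\1-uu^*$ and using $(\1-uu^*)u=0$ gives $j(u)(\1-uu^*)\dot{u}\,\gamma_0=0$; multiplying by $j(u^*)$ and using $J\gamma_0=\gamma_0$, $p_0\gamma_0=\gamma_0$ (so that $j(p_0)\gamma_0=\gamma_0$) yields $(\1-uu^*)\dot{u}\,\gamma_0=0$, hence $(\1-uu^*)\dot{u}$ annihilates $\overline{\M'\gamma_0}$, so $(\1-uu^*)\dot{u}=(\1-uu^*)\dot{u}\,p_0=0$ since $\dot{u}\in\M p_0$. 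Only after this does the (valid) forward direction of Lemma~\ref{lem:56}\eqref{SQ2_item4} give $u^*\dot{u}\in\ug_{\rho_0}$ and hence $\dot{u}=u(u^*\dot{u})\in\Ker T_u(\widetilde{\pi}_0)$. This verticality argument is exactly the content that the paper's terse proof delegates to ``Lemma~\ref{lem:56}\eqref{SQ2_item4} along with the proof of Theorem~\ref{thm:32}\eqref{thm:32_item3}'', where the analogous statement for $\Ker T_u(\pi_0)$ is extracted from the kernel equation (there the horizontal part is killed using positivity and faithfulness of $\rho_0$ on $p_0\M p_0$). Apart from this point, your diagram chase, the smoothness argument, and the final deduction of injectivity of $T_\rho(\bepsilon\vert_{\Oc_{\rho_0}})$ agree with the paper.
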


\begin{proof}
	For $p_0:=\sigma_*(\rho_0)$, $P_0:=\br^{-1}(p_0)\subseteq\U(\M)$, 
	$\pi_0\colon P_0\to\Oc_{\rho_0}$, $\pi_0(u)=u\rho_0u^*$, 
	and $\beta_{\gamma_0}\colon P_0\to\H$, $\beta_{\gamma_0}(u):=\beta(u)\gamma_0$, 
	we note that the diagram 
	\begin{equation}
	\label{SQ3_proof_eq1}
	\xymatrix{P_0 \ar[d]_{\pi_0} \ar[dr]^{\beta_{\gamma_0}}& \\
		\Oc_{\rho_0} \ar[r]_{\bepsilon\vert_{\Oc_{\rho_0}}} & \H
	}
	\end{equation}
	is commutative since, 
	by Lemma~\ref{lem:56}\eqref{SQ1} and $\U(\M)$-equivariance of $E$, 
	$$E(\beta_{\gamma_0}(u))=E(\vert u\gamma_0\vert')
	=E(u\gamma_0)=uE(\gamma_0)u^*=u\rho_0u^*=\pi_0(u).$$
	Then, in the commutative diagram~\eqref{SQ3_proof_eq1}, 
	the mapping $\pi_0$ is a submersion while $\beta_{\gamma_0}$ is clearly smooth, hence $\bepsilon\vert_{\Oc_{\rho_0}}\colon\Oc_{\rho_0}\to\H$
	is smooth. 
	
	To prove that $\bepsilon\vert_{\Oc_{\rho_0}}$ is a weak immersion, it follows by the above commutative diagram that it suffices to prove that $\Ker T_u(\pi_0)=\Ker T_u(\beta_{\gamma_0})$ for arbitrary $u\in P_0$. 
	But this follows by Lemma~\ref{lem:56}\eqref{SQ2_item4} along with the proof of Theorem~\ref{thm:32}\eqref{thm:32_item3}. 
\end{proof}

The following theorem relates the \Banach Lie groupoid  $\U(\M)*\M_*^+\tto \M_*^+$ from Theorem~\ref{coadj_grpd} to a standard form of the $W^*$-algebra~$\M$. 

\begin{thm}\label{grpd_act}
	Let $(\M,\H,J,\P)$ be a standard form.
		Then the groupoid isomorphism 
			\begin{equation}\label{grpd_act_item1}
			\xymatrix{
		\U(\M)\ast\M_*^+ \ar@<-.5ex>[d] \ar@<.5ex>[d] \ar[r]^{\Phi} 
		& \H \ar@<-.5ex>[d] \ar@<.5ex>[d]\\
		\M_*^+ \ar[r]^{\id} & \M_*^+,}
	\end{equation}
where 
\begin{equation}
\Phi(v,\varphi):=v\bepsilon(\varphi)
\end{equation}
is equal to the composition of the groupoid isomorphisms from \eqref{531}, defines a  bijective weak immersion $\Phi\colon \U(\M)\ast\M_*^+\to\H$. 
	\end{thm}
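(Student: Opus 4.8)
The plan is to reduce the statement about the global map $\Phi\colon\U(\M)\ast\M_*^+\to\H$ to the restrictions to the transitive subgroupoids, exactly as was done for Theorem~\ref{coadj_grpd}. Recall that both $\U(\M)\ast\M_*^+$ and the manifold structure on the domain decompose as disjoint unions over $\rho_0\in\M_*^+$ of the transitive pieces $\U_{p_0}(\M)\ast\Oc_{\rho_0}$ (with $p_0=\sigma_*(\rho_0)$), which are honest Banach--Lie groupoids by Proposition~\ref{gauge_isom}. On the target side, by Proposition~\ref{isomorphisms} the map $\Phi$ is the composition $\U(\M)\ast_{\sigma_*}\M_*^+\xrightarrow{(\id,\bepsilon)}\U(\M)\ast_\mu\P\xrightarrow{\Theta}\H$, so on each transitive piece $\Phi$ sends $\U_{p_0}(\M)\ast\Oc_{\rho_0}$ onto the orbit $\tilde\bt^{-1}(\tilde\bt(\gamma_0))\cup\cdots$, or more precisely onto the set $\{v\bepsilon(\varphi):\varphi\in\Oc_{\rho_0},\ \br(v)=\sigma_*(\varphi)\}$, and this is exactly the total space $E'^{-1}$-type orbit carrying the manifold structure coming from $P_0$. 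Bijectivity of $\Phi$ is already contained in Proposition~\ref{isomorphisms} (it is a groupoid isomorphism), so the only real content is the weak-immersion property, and since the domain is a disjoint union of the transitive pieces it suffices to check that $\Phi$ restricted to each $\U_{p_0}(\M)\ast\Oc_{\rho_0}$ is a smooth map with everywhere-injective tangent map.

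For the transitive piece, I would use the gauge-groupoid presentation of Proposition~\ref{gauge_isom}: $\U_{p_0}(\M)\ast\Oc_{\rho_0}\cong\frac{P_0\times P_0}{U_{\rho_0}}$ via $[\Psi]$, so it is enough to analyze the composite $\frac{P_0\times P_0}{U_{\rho_0}}\to\H$, $[(u,v)]\mapsto \Phi([\Psi]([u,v]))=uv^*\bepsilon(v\rho_0v^*)$. By Lemma~\ref{lem:56}\eqref{SQ1} and $\U(\M)$-equivariance of $E$ one has $\bepsilon(v\rho_0 v^*)=\vert v\gamma_0\vert'=\beta(v)\gamma_0=vj(v)\gamma_0$, hence the composite is $[(u,v)]\mapsto uv^*vj(v)\gamma_0=uj(v)\gamma_0=u\gamma_0$ (using $v^*v=p_0$, $p_0\gamma_0=\gamma_0$, $j(v)\in\M'$ commutes with $u\in\M$ and $j(v)\gamma_0$... — one should be slightly careful here: the clean identity is $uv^*\cdot vj(v)\gamma_0 = u j(v)\gamma_0$, and since $j(v)\in\M'$ and $u\in\M$ this is $u j(v)\gamma_0$; I would then simplify this further, but in any case it factors through the smooth maps $P_0\times P_0\to\H$). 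Concretely, I would prove that the map $P_0\times P_0\to\H$, $(u,v)\mapsto uv^*\beta(v)\gamma_0$ is smooth (it is built from the bilinear continuous operator multiplication, the smooth involution-type maps $v\mapsto j(v)$, and the smooth inclusion $P_0\hookrightarrow\U(\M)\subseteq L^\infty(\H)$ acting on the fixed vector $\gamma_0$), and it is $U_{\rho_0}$-invariant by construction, hence descends to a smooth map on $\frac{P_0\times P_0}{U_{\rho_0}}$ using that the quotient projection is a submersion (Step~1 of the proof of Proposition~\ref{gauge_isom}).

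For injectivity of the tangent map on the transitive piece I would argue as follows. Fix $[(u,v)]$; using local charts $\widetilde\kappa_p$ from \eqref{kappa_tilde} one may reduce to computing at a representative of the form $(\beta_p(\rho),w)$, or more simply one may use the commutative triangle $P_0\times P_0\to\frac{P_0\times P_0}{U_{\rho_0}}\xrightarrow{\Phi\circ[\Psi]}\H$ together with the surjectivity of the tangent map of the first arrow: a tangent vector to $\frac{P_0\times P_0}{U_{\rho_0}}$ lies in $\Ker T(\Phi\circ[\Psi])$ iff its lifts $(\dot u,\dot v)\in T_uP_0\times T_vP_0$ satisfy $T_{(u,v)}$ (of $(u,v)\mapsto uv^*\beta(v)\gamma_0$) $(\dot u,\dot v)=0$ modulo the vertical directions $\{(ux,vx):x\in\ug_{\rho_0}\}$. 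Differentiating $uv^*\beta(v)\gamma_0$ and using $\beta(v)\gamma_0=\bepsilon(v\rho_0 v^*)$ together with $T(\bepsilon|_{\Oc_{\rho_0}})$ being injective (Lemma~\ref{SQ3}) and the description $T_uP_0$ from \eqref{TuP0}, the condition collapses to $\dot u u^* = -u\dot v^* v v^* + (\text{terms in }\M')$... — more cleanly, I would invoke that the composite equals the map $\beta_{\gamma_0}\colon P_0\to\H$ precomposed with the left-translation action, whose kernel on tangent spaces is identified in Lemma~\ref{lem:56}\eqref{SQ2_item4} as $\{\dot u\in T_uP_0: u^*\dot u\rho_0=\rho_0 u^*\dot u\}$, i.e. precisely the vertical tangent space $\Ker T_u(\widetilde\pi_0)$ of the $U_{\rho_0}$-principal bundle; these vertical directions are killed upon passing to $\Oc_{\rho_0}\cong P_0/U_{\rho_0}$, which is exactly what is being quotiented in $\frac{P_0\times P_0}{U_{\rho_0}}$. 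Hence the tangent map of $\Phi$ on each transitive piece is injective, and assembling over all $\rho_0\in\M_*^+$ completes the proof.

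The main obstacle I anticipate is the bookkeeping in the tangent-map computation: one must correctly track which contributions live in $\M$ versus $\M'$ (since $u\in\M$ while $j(v)\in\M'$ and the two commute, many cross-terms vanish), correctly use that $\gamma_0$ is $J$-fixed and $p_0$-supported, and correctly match the resulting degeneracy subspace with the kernel $\Ker T_u(\widetilde\pi_0)$ that is being modded out in the gauge groupoid $\frac{P_0\times P_0}{U_{\rho_0}}$. The algebraic identities are all available — Lemma~\ref{lem:56}, Lemma~\ref{SQ3}, and the chart formulas from the proof of Proposition~\ref{gauge_isom} — so the difficulty is purely organizational rather than conceptual; none of the hard analytic work (closedness issues, non-reflexivity of the model spaces) actually intervenes, because the weak-immersion notion requires nothing about the range of the tangent maps.
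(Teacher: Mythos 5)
Your overall strategy is the same as the paper's: decompose $\U(\M)\ast\M_*^+$ into its transitive pieces, present $\U_{p_0}(\M)\ast\Oc_{\rho_0}$ via the gauge groupoid $\frac{P_0\times P_0}{U_{\rho_0}}$, and reduce everything to the two-variable map $\Psi_{\gamma_0}\colon P_0\times P_0\to\H$, $(u,v)\mapsto uv^*\bepsilon(v\rho_0 v^*)=uj(v)\gamma_0$ (your intermediate ``$=u\gamma_0$'' is a slip, which you then correct). Bijectivity and smoothness are handled essentially as in the paper (the paper uses Lemma~\ref{SQ3} plus the diagram $\Phi\circ\Psi=\Psi_{\gamma_0}$; your descent through the submersion $P_0\times P_0\to\frac{P_0\times P_0}{U_{\rho_0}}$ is the same thing in different words).

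There is, however, a genuine gap at the decisive step, the injectivity of the tangent map. What must be shown is that $\Ker T_{(u,v)}\Psi_{\gamma_0}$ is no larger than the \emph{diagonal} space $\{(ux,vx): x\in\ug_{\rho_0}\}=\Ker T_{(u,v)}\Psi$, since only the diagonal $U_{\rho_0}$-action is quotiented in the gauge groupoid. You propose to ``invoke that the composite equals $\beta_{\gamma_0}$ precomposed with left translation'' and to conclude via Lemma~\ref{lem:56}\eqref{SQ2_item4}; but that lemma identifies the kernel of the \emph{one-variable} map $\beta_{\gamma_0}$, whereas the differential $(\dot u,\dot v)\mapsto \dot u\,j(v)\gamma_0+u\,j(\dot v)\gamma_0$ genuinely mixes an $\M$-direction and an $\M'$-direction, and a priori its kernel could contain vectors $(ux,vy)$ with $x\neq y$. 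This is not a phantom worry: the degeneracy space of the pulled-back form $\Psi_{\gamma_0}^*\omega$ is exactly the larger space $\{(ux,vy): x,y\in\ug_{\rho_0}\}$ (Proposition~\ref{SQ5} and the remark following it), so distinguishing the diagonal from the product is precisely the content of the weak-immersion claim. The paper closes this by reducing the kernel equation to $x\gamma_0+j(y)\gamma_0=0$ with $x=u^*\dot u$, $y=v^*\dot v\in ip_0\M^h p_0$, setting $a:=x-y$ so that $a\gamma_0=j(a)\gamma_0$, and then using the self-duality of the cone $\P$ together with $a^2\le 0$ to get $0\le\langle\gamma_0\mid aj(a)\gamma_0\rangle=\langle\rho_0,a^2\rangle\le 0$, hence $a=0$ because $\sigma_*(\rho_0)=p_0$; only then does $x=y\in\ug_{\rho_0}$ follow. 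This positivity argument, which uses the standard-form structure in an essential way, has no counterpart in your proposal, so your assertion that ``the condition collapses'' is unsubstantiated; contrary to your closing remark, the difficulty here is conceptual rather than organizational.
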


\begin{proof} 
Taking into account the structural maps of the groupoid $\H\tto\M_*^+$ given in Theorem~\ref{thm:53} and Remark \ref{rem:54}, it is straightforward to check that 
	the mapping $\Phi$ from the statement is a groupoid isomorphism, with its inverse 
	$$\Phi^{-1}\colon\H\to \U(\M)\ast\M_*^+,\quad 
	\gamma\mapsto 
	(v_\gamma, E(\vert\gamma\vert)) $$
	where we recall the bijective map $E\vert_{\P}\colon\P\to\M_*^+$ given by 
	Proposition~\ref{basic_homeo}, whose inverse
	gives the object inclusion map $\bepsilon\colon\M_*^+\to\P\hookrightarrow\H$ of the  groupoid $\H\tto\P$,  
	and the polar decomposition of an arbitrary vector $\gamma\in\H$ is written as $\gamma=v_\gamma\vert\gamma\vert$. 
	Here we note that for arbitrary $\gamma\in\H$ one has 
	$(v_\gamma, E(\vert\gamma\vert))\in \U(\M)\ast\M_*^+$ since 
	$$\br(v_\gamma)=v_\gamma^* v_\gamma=p_{\vert\gamma\vert}
	=\sigma_*(E(\vert\gamma\vert))$$
	by  \eqref{suppvect}. 

	We recall that the Lie groupoid $\U(\M)\ast\M_*^+\tto\M_*^+$ is the disjoint union of its transitive Lie subgroupoids $\U_{p_0}(\M)\ast\Oc_{\rho_0}\tto\Oc_{\rho_0}$ 
parameterized by the coadjoint groupoid orbits $\Oc_{\rho_0}$. 
It then follows by Lemma~\ref{SQ3} that $\Phi$ is smooth, taking into account the smooth structure of $\U_{p_0}(\M)\ast\Oc_{\rho_0}$. 

To prove that $\Phi$ is a weak immersion, let $\gamma_0\in\P$ arbitrary and denote $\rho_0:=E(\gamma_0)\in\M_*^+$, $p_0:=\sigma_*(\rho_0)\in\Ll(\M)$, and $P_0:=\br^{-1}(p_0)\subseteq\U(\M)$, as usual. 
One then has the commutative diagram 
\begin{equation}
\label{SQ4_proof_eq1}
\xymatrix{P_0 \times P_0\ar[d]_{\Psi} \ar[dr]^{\Psi_{\gamma_0}}& \\
	\U_{p_0}(\M)\ast\Oc_{\rho_0} \ar[r]^{\ \ \Phi} & \H
}
\end{equation}
where the bottom arrow is 
$\Phi\vert_{\U_{p_0}(\M)\ast\Oc_{\rho_0}}\colon \U_{p_0}(\M)\ast\Oc_{\rho_0} \to \H$, 
and moreover 
\begin{align}
\Psi & \colon P_0\times P_0\to\U_{p_0}(\M)\ast\Oc_{\rho_0}, 
\quad \Psi(u,v)=(uv^*,v\rho_0 v^*), \nonumber\\
\label{SQ4_proof_eq1.5}
\Psi_{\gamma_0} & \colon P_0\times P_0\to\H, 
\quad \Psi_{\gamma_0}(u,v)=u j(v)\gamma_0
\end{align} 
for $(u,v)\in P_0\times P_0$.
The diagram \eqref{SQ4_proof_eq1} is commutative since 
\begin{align*}
\Phi(\Psi(u,v))
&=\Phi(uv^*,v\rho_0 v^*) 
=uv^*\bepsilon(v\rho_0 v^*) 
=uv^*\bepsilon(E(v\gamma_0)) 
=uv^*\bepsilon(E(\vert v\gamma_0\vert')) \\
&=uv^*\vert v\gamma_0\vert' 
=uv^*\beta(v)\gamma_0 
=u j(v)\gamma_0 
=\Psi_{\gamma_0}(u,v)
\end{align*}
where we used the equalities $\vert v\gamma_0\vert'=\beta(v)\gamma_0=vj(v)\gamma_0$ given by \eqref{g} and \eqref{ju2}. 

It follows by the commutative diagram \eqref{SQ4_proof_eq1} that, in order to prove that its bottom arrow 
$\Phi\vert_{\U(\M)\ast\Oc_{\rho_0}}\colon \U(\M)\ast\Oc_{\rho_0} \to \H$ is a weak immersion, it suffices to prove the equality
\begin{equation}\label{SQ4_proof_eq2}
\Ker (T_{(u,v)}\Psi)=\Ker (T_{(u,v)}(\Psi_{\gamma_0}))\subseteq T_{(u,v)}(P_0\times P_0)
\end{equation}
for arbitrary $(u,v)\in P_0\times P_0$. 
Recalling the diagonal action of $U_{\rho_0}$ from the right on $P_0\times P_0$ and the isomorphism of the gauge groupoid $\frac{P_0\times P_0}{U_{\rho_0}}$ onto $\U(\M)\ast\Oc_{\rho_0}$, it is easily seen that 
\begin{equation}\label{SQ4_proof_eq3}
\Ker (T_{(u,v)}\Psi)=\{(ux,vx) \in T_u P_0\times T_v P_0: x\in T_{p_0}(U_{\rho_0})\}
\end{equation}
where the Lie algebra $T_{p_0}(U_{\rho_0})$ of the  Lie group $U_{\rho_0}$ is given by 
\begin{equation}\label{SQ4_proof_eq4}
T_{p_0}(U_{\rho_0})=\{x\in i p_0\M^h p_0: x\gamma_0=-j(x)\gamma_0\}
\end{equation}
by Lemma~\ref{lem:56}(\eqref{SQ2_item1}--\eqref{SQ2_item2}). 
On the other hand, it directly follows by Lemma~\ref{lem:56}\eqref{SQ2_item2} that the mapping~$\Psi_{\gamma_0}$ is constant on the orbits of the aforementioned diagonal action  of $U_{\rho_0}$ on $P_0\times P_0$, 
and this implies $\Ker (T_{(u,v)}\Psi)\subseteq \Ker (T_{(u,v)}(\Psi_{\gamma_0}))$.

To prove the remaining inclusion $\Ker (T_{(u,v)}\Psi)\supseteq \Ker (T_{(u,v)}(\Psi_{\gamma_0}))$ for \eqref{SQ4_proof_eq2}, 
we note that, by the definition of $\Psi_{\gamma_0}$, one has for arbitrary $u,v\in P_0$, 
\begin{equation}\label{SQ4_proof_eq5}
T_{(u,v)}(\Psi_{\gamma_0})\colon T_u P_0\times T_v P_0\to\H, \quad 
(T_{(u,v)}(\Psi_{\gamma_0}))(\dot{u},\dot{v})
=\dot{u}j(v)\gamma_0+uj(\dot{v})\gamma_0
\end{equation}
where $\dot{u},\dot{v}\in\M p_0$, $j(v),j(\dot{v})\in\M'$, and $u^*u\gamma_0=j(v^*v)\gamma_0=\gamma_0$. 
Therefore
\begin{align*}
(T_{(u,v)}(\Psi_{\gamma_0}))(\dot{u},\dot{v})=0
& \implies u^*j(v^*)\dot{u}j(v)\gamma_0+u^*j(v^*)uj(\dot{v})\gamma_0=0 \\
& \iff u^*\dot{u}\gamma_0+j(v^*\dot{v})\gamma_0=0.
\end{align*}
Denoting $x:=u^*\dot{u}$ and $y:=v^*\dot{v}$, 
one has $x,y\in i p_0\M^h p_0$ since $\dot{u}\in T_u P_0$ and $\dot{v}\in T_v P_0$. 
On the other hand, the above equality $x\gamma_0+j(y)\gamma_0=0$ is equivalent to $x\gamma_0+Jy\gamma_0=0$, which further implies $j(x)\gamma_0+y\gamma_0=0$, and substracting this from $x\gamma_0+j(y)\gamma_0=0$
we obtain 
$$a\gamma_0=j(a)\gamma_0.$$
were $a:=x-y\in i\M^h$. 
Then, using the fact that $0\le \langle\gamma_1\mid\gamma_2\rangle$ for all $\gamma_1,\gamma_2\in\P$ 
and on the other hand $\gamma_0,aj(a)\gamma_0\in\P$,   
one has 
\begin{equation*}
0\le  \langle \gamma_0\mid aj(a)\gamma_0\rangle 
=\langle \gamma_0\mid a^2\gamma_0\rangle 
=\langle \rho_0,a^2\rangle\le0 
\end{equation*}
where the last inequality holds true since $a^2\le 0$ as $a\in i \M^h$. 
Consequently $\langle \rho_0,a^2\rangle=0 $ and then, using 
$a\in i p_0\M^h p_0$ and $p_0=\sigma_*(\rho_0)$, we obtain  $a^2=0$, hence $a=0$, that is, $x=y$. 
Recalling from the above that $x\gamma_0+j(y)\gamma_0=0$, 
and taking into account \eqref{SQ4_proof_eq4}, we finally obtain 
$$\Ker (T_{(u,v)}(\Psi_{\gamma_0}))\subseteq 
\{(ux,vx) \in T_u P_0\times T_v P_0: x\in T_{p_0}(U_{\rho_0})\}.$$
That is, by \eqref{SQ4_proof_eq3}, one has 
$\Ker (T_{(u,v)}\Psi)\supseteq \Ker (T_{(u,v)}(\Psi_{\gamma_0}))$. 
This completes the proof of \eqref{SQ4_proof_eq2}, and we are done. 
\end{proof}

Theorem~\ref{grpd_act} 
shows that the original manifold structure of the Hilbert space $\H$ can be refined to a manifold structure to be denoted by 
$$\widetilde{\H},$$ 
transported from the Lie groupoid $\U(\M)\ast\M_*^+\tto\M_*^+$ via the bijective mapping $\Phi\colon \U(\M)\ast\M_*^+\to\H$. 
{\bf The standard groupoid $\H\tto\M_*^+$ is thus endowed with a unique Lie groupoid structure, to be denoted by} 
$$\widetilde{\H}\tto\M_*^+,$$  for which the diagram \eqref{grpd_act_item1} is an isomorphism of Lie groupoids if $\H$ is replaced by~$\widetilde{\H}$, 
where $\U(\M)\ast \M_*^+\tto \M_*^+$ is a Lie groupoid by Theorem~\ref{coadj_grpd}. 
The manifold $\widetilde{\H}$ should be regarded as a singular foliation of the Hilbert space $\H$, whose leaves are diffeomorphic to the transitive subgroupoids of the Lie groupoid $\U(\M)\ast \M_*^+\tto \M_*^+$. 
In particular, the topology and the manifold structure of $\widetilde{\H}$ are richer than the original topology and manifold structure of $\H$, and the identity mapping of $\H$ gives a bijective immersion $\widetilde{\Phi}\colon\widetilde{\H}\to\H$.

\begin{cor}\label{grpd_proj}
	One has a Lie groupoid morphism
	$$\xymatrix{
		\widetilde{\H} \ar@<-.5ex>[d] \ar@<.5ex>[d] \ar[r]^{\widetilde{\pr}_1} 
		& \U(\M) \ar@<-.5ex>[d] \ar@<.5ex>[d]\\
		\M_*^+ \ar[r]^{\sigma_*} & \Ll(\M)}$$
	where $\widetilde{\pr}_1(\gamma)=v_\gamma$ if $\gamma=v_\gamma\vert\gamma\vert$ is the polar decomposition of any $\gamma\in\H$, see~\eqref{g}.
\end{cor}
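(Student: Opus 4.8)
The plan is to verify that $\widetilde{\pr}_1\colon\widetilde\H\to\U(\M)$ together with $\sigma_*\colon\M_*^+\to\Ll(\M)$ respects all five structural maps of the two Lie groupoids, and then to check smoothness. First I would use the isomorphism $\Phi\colon\U(\M)\ast\M_*^+\to\widetilde\H$ from Theorem~\ref{grpd_act} to transport the problem: under $\Phi(v,\varphi)=v\bepsilon(\varphi)$, the polar decomposition of $\gamma=\Phi(v,\varphi)$ is exactly $\gamma=v\vert\gamma\vert$ with $\vert\gamma\vert=\bepsilon(\varphi)\in\P$ and $v=v_\gamma\in\U(\M)$, so $\widetilde{\pr}_1\circ\Phi$ is simply the projection $(v,\varphi)\mapsto v$. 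Hence it is equivalent to show that the pair $(\U(\M)\ast\M_*^+\to\U(\M),\ \M_*^+\to\Ll(\M))$ given by $(v,\varphi)\mapsto v$ and $\varphi\mapsto\sigma_*(\varphi)$ is a morphism of Lie groupoids, which is routine from the explicit structural maps in \eqref{s}--\eqref{e} and the identities $\sigma_*(u\rho u^*)=uu^*$ (for $u^*u=\sigma_*(\rho)$) and $\sigma_*(\sigma_*(\rho))=\sigma_*(\rho)$.

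Concretely I would check, for $(v,\varphi)\in\U(\M)\ast\M_*^+$: source compatibility $\br(v)=v^*v=\sigma_*(\varphi)=\sigma_*(\bs_*(v,\varphi))$, which is precisely \eqref{suppvect}; target compatibility $\bl(v)=vv^*=\sigma_*(v\varphi v^*)=\sigma_*(\bt_*(v,\varphi))$; multiplicativity $\widetilde{\pr}_1((v,\varphi)\cdot(w,\delta))=\widetilde{\pr}_1(vw,\delta)=vw=\widetilde{\pr}_1(v,\varphi)\widetilde{\pr}_1(w,\delta)$, where the composability on the $\U(\M)$-side is guaranteed by $\br(v)=\bl(w)$, which holds because $\sigma_*(\varphi)=\sigma_*(w\delta w^*)=\bl(w)$; the inverse, $\widetilde{\pr}_1(\bi_*(v,\varphi))=\widetilde{\pr}_1(v^*,v\varphi v^*)=v^*=(\widetilde{\pr}_1(v,\varphi))^{-1}$ in $\U(\M)$; and the unit, $\widetilde{\pr}_1(\bepsilon_*(\varphi))=\widetilde{\pr}_1(\sigma_*(\varphi),\varphi)=\sigma_*(\varphi)=\bepsilon(\sigma_*(\varphi))$ where $\bepsilon\colon\Ll(\M)\hookrightarrow\U(\M)$ is the unit inclusion of $\U(\M)\tto\Ll(\M)$. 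Each of these is a one-line algebraic identity.

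For smoothness, I would argue that $\widetilde{\pr}_1$ and $\sigma_*$ are smooth by working on the transitive pieces. On the total space, $\widetilde\H$ is the disjoint union of the subgroupoids $\U_{p_0}(\M)\ast\Oc_{\rho_0}$, and under $\Phi$ the map $\widetilde{\pr}_1$ becomes the first-factor projection, which is smooth since $\U_{p_0}(\M)\ast\Oc_{\rho_0}$ carries the product-type charts $\widetilde\nu^w_{p\tilde p}$ constructed in the proof of Proposition~\ref{gauge_isom} (Step~2), whose first components are the coordinates of $\U(\M)$ in the chart $\Theta_{p\tilde p}$; on the base, $\sigma_*\colon\M_*^+\to\Ll(\M)$ is smooth because in the chart $\chi^w_p$ (respectively $\varphi_p$) of $\Oc_{\rho_0}$ and $\Ll_{p_0}(\M)$ it is the projection $(y_p,gU_{\rho_0})\mapsto y_p$, as recorded in \eqref{sigma_star_loc}. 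Since $\U(\M)\tto\Ll(\M)$ is the disjoint union of its transitive subgroupoids $\U_{p_0}(\M)\tto\Ll_{p_0}(\M)$ and $\widetilde{\pr}_1$ maps $\U_{p_0}(\M)\ast\Oc_{\rho_0}$ into $\U_{p_0}(\M)$, $\sigma_*$ maps $\Oc_{\rho_0}$ into $\Ll_{p_0}(\M)$, this gives global smoothness. There is no real obstacle here; the only point requiring a little care is the bookkeeping of composability conditions, namely that $\br(v)=\bl(w)$ in $\U(\M)$ follows from $\bs_*(v,\varphi)=\bt_*(w,\delta)$ in $\M_*^+$ together with the identity $\sigma_*(w\delta w^*)=\bl(w)$, so that $\widetilde{\pr}_1$ genuinely sends composable pairs to composable pairs.
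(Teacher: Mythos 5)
Your proposal is correct and follows essentially the same route as the paper: reduce via the isomorphism $\Phi$ of Theorem~\ref{grpd_act} to the statement that the Cartesian projection $\pr_1\colon\U(\M)\ast\M_*^+\to\U(\M)$ together with $\sigma_*$ is a Lie groupoid morphism. The only difference is that the paper simply cites \cite[Appendix]{OS} for this last fact, whereas you verify the structural-map compatibilities and the smoothness in charts explicitly.
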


\begin{proof}
	The assertion follows by Proposition~\ref{grpd_act} 
	along with the fact that one has a Lie-groupoid morphism 
	$$\xymatrix{
		\U(\M)\ast\M_*^+\ar@<-.5ex>[d] \ar@<.5ex>[d] \ar[r]^{\pr_1} 
		& \U(\M) \ar@<-.5ex>[d] \ar@<.5ex>[d]\\
		\M_*^+ \ar[r]^{\sigma_*} & \Ll(\M)}$$
	defined by the Cartesian projection 
	$\pr_1\colon \U(\M)\ast\M_*^+\to \U(\M)$, $(v,\varphi)\mapsto v$, 
	as noted in \cite[Appendix]{OS}.
\end{proof}

 \subsection{Presymplectic structure of the standard groupoid
}
\label{Subsect6.3}
We recall the refined manifold structure $\widetilde{\H}$, the bijective immersion $\widetilde{\Phi}\colon\widetilde{\H}\to\H$, and the strongly symplectic form $\omega\in\Omega^2(\H,\R)$ of the Hilbert space~$\H$. 
Then one has the Lie groupoid $\widetilde{\H}\tto\P$, 
whose underlying abstract groupoid is the groupoid $\H\tto\P$ from Theorem~\ref{thm:53}. 
One can then define the pullback differential form 
\begin{equation}\label{tildeomega}
\widetilde{\bomega}:=\widetilde{\Phi}^*\omega\in\Omega^2(\widetilde{\H},\R)
\end{equation}
which is a 2-form on the Lie groupoid $\widetilde{\H}\tto\M_*^+$. 
Since the symplectic form $\omega$ is closed, it follows that the 2-form $\widetilde{\bomega}$ is also closed, however it is degenerate, as we will see below, and therefore we call it a \emph{presymplectic form}. 

The main point of this subsection is to study the compatibility between $\widetilde{\bomega}$ and the Lie groupoid structure  $\widetilde{\H}\tto\P$:  
We prove that this presymplectic form is multiplicative in the usual sense of finite-dimensional Lie groupoid theory and moreover we give a rather precise description of the foliation defined by the kernel of this closed 2-form. 
It will thus turn out that the 
augmented standard groupoid $(\widetilde{\H},\widetilde{\bomega})\tto\M_*^+$ is an (infinite-dimensional) Lie groupoid that shares some of the features of 
finite-dimensional Lie groupoids with multiplicative symplectic structure as in 
\cite{BuCWZ04}. 
(See also \cite{DZ05}.)

We now prepare for the proof of Proposition~\ref{graph} below, which  shows the aforementioned multiplicativity property of the presymplectic form~$\widetilde{\bomega}$. 
We denote by $\Delta$ 
the graph of the groupoid multiplication map $\mu\colon\widetilde{\H}\ast\widetilde{\H}\to\widetilde{\H}$, 
that is, 
$$\Delta:=\{(\gamma_1,\gamma_2,\gamma_1\bullet\gamma_2)\in (\widetilde{\H}\ast\widetilde{\H})\times\widetilde{\H}: (\gamma_1,\gamma_2)\in \widetilde{\H}\ast\widetilde{\H}\}.$$ 
Let us now define the complex-valued differential 1-form 
\begin{equation}\label{(5)}
\widetilde{\Gamma}^{++-}:={\rm pr}_1^*\widetilde{\Gamma}+{\rm pr}_2^*\widetilde{\Gamma}-\bmu^*\widetilde{\Gamma} \in\Omega^1(\widetilde{\H}\ast\widetilde{\H},\C)
\end{equation}
where $\widetilde{\Gamma}:=\widetilde{\Phi}^*(\Gamma)\in\Omega^1(\widetilde{\H},\C)$ with 
$\Gamma\in\Omega^1(\H,\C)$ being the complex-valued 1-form defined 
in~\eqref{Gamma}, and one uses pull-backs of $\widetilde{\Gamma}$ with respect to the Cartesian projections ${\rm pr}_1,{\rm pr}_2\colon\widetilde{\H}\ast\widetilde{\H}\to\widetilde{\H}$ and with respect to the groupoid multiplication $\bmu\colon\widetilde{\H}\ast\widetilde{\H}\to\widetilde{\H}$. 

\begin{lem}\label{18May2019}
	The differential 1-form $\widetilde{\Gamma}^{++-}$  
	is an exact 1-form, more specifically
	\begin{equation}\label{(7)}
	\widetilde{\Gamma}^{++-}=\de(\frac{1}{2}\Vert \tilde\bs\circ{\rm pr}_2(\cdot)\Vert^2).
	\end{equation}
\end{lem}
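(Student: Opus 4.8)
The idea is to compute both sides of \eqref{(7)} pointwise, reducing everything to the polar decompositions and the scalar product of $\H$. Recall that by \eqref{Gamma} one has $\Gamma=i\langle\gamma\mid\de\gamma\rangle$, so that $\widetilde{\Gamma}=\widetilde{\Phi}^*\Gamma$ is, in the coordinates supplied by the immersion $\widetilde{\Phi}\colon\widetilde{\H}\to\H$, again given by $i\langle\gamma\mid\de\gamma\rangle$ evaluated on tangent vectors pushed forward to $\H$. Thus, for a composable pair $(\gamma_1,\gamma_2)\in\widetilde{\H}\ast\widetilde{\H}$ and a tangent vector $(\dot\gamma_1,\dot\gamma_2)$ to $\widetilde{\H}\ast\widetilde{\H}$ at that point, we must show
\begin{equation*}
i\langle\gamma_1\mid\dot\gamma_1\rangle+i\langle\gamma_2\mid\dot\gamma_2\rangle-i\langle\gamma_1\bullet\gamma_2\mid\de(\gamma_1\bullet\gamma_2)(\dot\gamma_1,\dot\gamma_2)\rangle
=\de\Bigl(\tfrac12\Vert\widetilde\bs(\gamma_2)\Vert^2\Bigr)(\dot\gamma_2).
\end{equation*}

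First I would write out the polar decompositions $\gamma_1=u_1\vert\gamma_1\vert$, $\gamma_2=u_2\vert\gamma_2\vert$, with $\vert\gamma_1\vert=\vert\gamma_2\vert'=u_2 j(u_2)\vert\gamma_2\vert$ from the compatibility condition $\widetilde\bs(\gamma_1)=\widetilde\bt(\gamma_2)$, so that by \eqref{ast1} the product is $\gamma_1\bullet\gamma_2=u_1u_2\vert\gamma_2\vert$, and moreover $u_1^*u_1=u_2u_2^*$. The key computation is to evaluate $\langle\gamma_1\bullet\gamma_2\mid\de(\gamma_1\bullet\gamma_2)\rangle$. Using $\gamma_1\bullet\gamma_2=u_1u_2\vert\gamma_2\vert$ and Leibniz, $\de(\gamma_1\bullet\gamma_2)=(\de u_1)u_2\vert\gamma_2\vert+u_1(\de u_2)\vert\gamma_2\vert+u_1u_2\de\vert\gamma_2\vert$. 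Pairing against $u_1u_2\vert\gamma_2\vert$ and pulling the partial isometries across via $u_i^*u_i$-relations, together with $u_1^*u_1=u_2u_2^*$ and $\gamma_i=u_i\vert\gamma_i\vert$, should collapse the first two terms to exactly $\langle\gamma_1\mid\de\gamma_1\rangle+\langle\gamma_2\mid\de\gamma_2\rangle$ minus a correction, and the last term $\langle u_1u_2\vert\gamma_2\vert\mid u_1u_2\de\vert\gamma_2\vert\rangle=\langle\vert\gamma_2\vert\mid u_2^*u_2\de\vert\gamma_2\vert\rangle=\langle\vert\gamma_2\vert\mid\de\vert\gamma_2\vert\rangle$, since $u_2^*u_2=\sigma_*(\vert\gamma_2\vert)$ fixes $\vert\gamma_2\vert$ and commutes with its differential. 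The term $\langle\vert\gamma_2\vert\mid\de\vert\gamma_2\vert\rangle$ is real (being half the differential of $\Vert\vert\gamma_2\vert\Vert^2=\Vert\widetilde\bs(\gamma_2)\Vert^2$), so after multiplying by $i$ it contributes purely imaginary terms that must be matched against the real right-hand side $\de(\tfrac12\Vert\widetilde\bs(\gamma_2)\Vert^2)$; I expect that the genuinely imaginary pieces in $i\widetilde\Gamma^{++-}$ cancel and only this real differential survives — which is precisely \eqref{(7)}.

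The step I expect to be the main obstacle is bookkeeping the differentials of the partial isometries $u_1,u_2$ as one moves along $\widetilde{\H}\ast\widetilde{\H}$: the decomposition $\gamma=u\vert\gamma\vert$ is not smooth on all of $\H$ but only along the leaves, and one must use the Lie-groupoid structure $\widetilde{\H}\tto\M_*^+$ (equivalently the model $\U(\M)\ast\M_*^+$) to make sense of $\de u_i$. Concretely I would work in the chart provided by $\Phi^{-1}(\gamma)=(v_\gamma,E(\vert\gamma\vert))$ and the local charts of $\U(\M)\ast\M_*^+$ from Section~\ref{Sect3}, so that $u_i$ and $\vert\gamma_i\vert$ become smooth functions of the chart coordinates; then the computation above is a legitimate identity of 1-forms on the manifold $\widetilde{\H}\ast\widetilde{\H}$. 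A secondary point to be careful with is that $\de(\gamma_1\bullet\gamma_2)$ must be computed as the differential of the groupoid multiplication $\bmu$, i.e. with $(\dot\gamma_1,\dot\gamma_2)$ constrained to be tangent to $\widetilde{\H}\ast\widetilde{\H}$; the constraint $\widetilde\bs(\gamma_1)=\widetilde\bt(\gamma_2)$ is exactly what guarantees $u_1^*u_1=u_2u_2^*$ holds infinitesimally, which is what makes the manipulations above valid. Once the identity of 1-forms is verified at one (hence every) point, \eqref{(7)} follows, and in particular $\widetilde\Gamma^{++-}$ is exact as claimed.
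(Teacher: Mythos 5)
Your setup is the same as the paper's: parameterize a composable pair by its polar data $\gamma_1=u_1\xi_1$, $\gamma_2=u_2\xi_2$ with $\xi_1=u_2j(u_2)\xi_2$ and $u_1^*u_1=u_2u_2^*$, $u_2^*u_2\xi_2=\xi_2$, and compute $\widetilde{\Gamma}^{++-}$ pointwise; your remark that the polar data are smooth only along the leaves, so the identity should be read through the chart $\Phi^{-1}$ and the model $\U(\M)\ast\M_*^+$, is also consistent with how the paper works. The problem is that the proof of the lemma \emph{is} the computation you postpone: the phrases ``should collapse \ldots minus a correction'' and ``I expect that the genuinely imaginary pieces \ldots cancel'' stand exactly where the only nontrivial content has to go. After expanding $\langle\gamma_1\bullet\gamma_2\mid\de(\gamma_1\bullet\gamma_2)\rangle$ by Leibniz, the terms containing $\de u_1$ and $\de u_2$ are not removed by merely ``pulling partial isometries across'': they are absorbed only by using the modular data in an essential way, namely that $j(u_2)\in\M'$ commutes with $\M$, that $J$ is antiunitary, and that $J\xi=\xi$ for $\xi\in\P$. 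For instance, the $\de u_1$-term is handled by the identity $\langle\xi_1\mid u_1^*(\de u_1)\,\xi_1\rangle=\langle u_2\xi_2\mid u_1^*(\de u_1)\,u_2\xi_2\rangle$, which holds because $\xi_1=j(u_2)u_2\xi_2$ and $j(u_2^*u_2)$ fixes $u_2\xi_2$ (this uses $J\xi_2=\xi_2$ and $u_2^*u_2\xi_2=\xi_2$); the paper's proof runs through precisely such steps, including the commutation $(u_1u_2)^*\de(u_1u_2)\,j(u_2)=j(u_2)(u_1u_2)^*\de(u_1u_2)$ and the vanishing of the real part of $\langle\xi_2\mid u_2^*\de u_2\,\xi_2\rangle$. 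None of these mechanisms appear in your sketch, so the claimed collapse is asserted rather than established; the ``main obstacle'' you single out (differentiability of the polar decomposition) is in fact the easy part.

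Moreover, the endgame you anticipate is not what happens, and this shows you have not yet seen how the identity closes. Nothing ``imaginary cancels'': the whole expression reduces to the single term $\langle\xi_2\mid\de\xi_2\rangle$ (equivalently $\langle\xi_1\mid\de\xi_1\rangle$), and the final step — the paper's \eqref{(9)} — is the observation that, because the source stays in the $J$-fixed real subspace containing $\P$, this term is automatically real and equals $\tfrac12\de\Vert\xi_2\Vert^2=\tfrac12\de\Vert\tilde\bs\circ{\rm pr}_2\Vert^2$. This $J$-reality argument is the missing idea; it is also what resolves your worry about the factor $i$ (the paper drops the $i$ of \eqref{Gamma} when writing \eqref{(6)}; keeping it, $\widetilde{\Gamma}^{++-}$ is $i$ times a real exact form, hence still exact, which is all Proposition~\ref{graph} needs). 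I would add that your way of organizing the computation — expanding only the product term and matching it against $\langle\gamma_1\mid\de\gamma_1\rangle+\langle\gamma_2\mid\de\gamma_2\rangle$ — can indeed be carried through and is even slightly more economical than the paper's chain of equalities, but only once the above Tomita--Takesaki identities are invoked; also note that, since the product term enters with a minus sign, the ``correction'' you leave unspecified must come out as $2\langle\xi_2\mid\de\xi_2\rangle$ for the bookkeeping to close, which again requires the $J$-reality of the source.
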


\begin{proof}
In polar coordinates, an arbitrary point of $\Delta$ assumes the form 
\begin{equation}\label{(1)}
(\gamma_1,\gamma_2,\gamma_1\bullet\gamma_2)=(u_1\xi_1,u_2\xi_2,u_1u_2\xi_2)
\end{equation}
where $\gamma_1=u_1\xi_1$ and $\gamma_2=u_2\xi_2$ with $u_1,u_2\in\U(\M)$ and $\xi_1,\xi_2\in\P\subseteq\H$ satisfy the constraints 
\begin{align}
\label{(2)}
\xi_1& =u_2 j(u_2)\xi_2 \\
\label{(3)} 
u_1^*u_1& =u_2u_2^*,\ u_1^*u_1\xi_1=\xi_1, \ u_2^*u_2\xi_2=\xi_2. 
\end{align} 
Using \eqref{(2)} we obtain 
\begin{equation}\label{(4)}
(\gamma_1,\gamma_2,\gamma_1\bullet\gamma_2)=
(u_1u_2 j(u_2)\xi_2,u_2\xi_2,u_1u_2\xi_2)
\end{equation}
which allows us to parameterize the graph $\Delta$ by $(u_1,u_2,\xi_2)$ 
for $u_1,u_2\in\U(\M)$ and $\xi_2\in\P$ satisfying in particular~\eqref{(3)}. 

Expressing $\widetilde{\Gamma}^{++-}$ in the coordinates $(\gamma_1,\gamma_2)\in\H\ast\H$ we find 
\begin{equation}\label{(6)}
\widetilde{\Gamma}^{++-}(\gamma_1,\gamma_2)
=\langle\gamma_1\mid\de\gamma_1\rangle+\langle\gamma_2\mid\de\gamma_2\rangle-\langle\gamma_1\bullet\gamma_2\mid\de(\gamma_1\bullet\gamma_2)\rangle.
\end{equation}	
Using \eqref{(4)} and \eqref{(6)} we obtain 
	\allowdisplaybreaks
	\begin{align}
	\widetilde{\Gamma}^{++-}(u_1\xi_1,u_2\xi_2)
	=&\langle u_1\xi_1\mid\de(u_1\xi_1)\rangle+\langle u_2\xi_2\mid\de(u_2\xi_2)\rangle 
	-\langle u_1u_2\xi_2\mid\de(u_1u_2\xi_2)\rangle 
	\nonumber \\
	=&\langle u_1 u_2j(u_2)\xi_2\mid\de(u_1u_2j(u_2)\xi_2)\rangle+\langle u_2\xi_2\mid\de(u_2\xi_2)\rangle \nonumber \\
	&
	-\langle u_1u_2\xi_2\mid\de(u_1u_2\xi_2)\rangle 
	\nonumber \\
	=&\langle j(u_2)\xi_2\mid(u_1u_2)^*\de(u_1u_2)j(u_2)\xi_2\rangle \nonumber \\
	&
	+\langle (u_1u_2)^*u_1u_2j(u_2)\xi_2\mid\de(j(u_2)\xi_2) \rangle 
	\nonumber \\
	&
	+\langle \xi_2\mid (u_2^*\de u_2)\xi_2\rangle
	+\langle u_2^*u_2\xi_2\mid \de\xi_2\rangle  
	\nonumber \\
	& -\langle \xi_2\mid(u_1u_2)^*\de(u_1u_2)\xi_2 \rangle
	-\langle (u_1u_2)^*(u_1u_2)\xi_2\mid \de\xi_2\rangle
	\nonumber \\
	=&\langle j(u_2^*u_2)\xi_2\mid (u_1u_2)^*\de(u_1u_2)\xi_2\rangle
	+ \langle j(u_2)\xi_2\mid \de(j(u_2)\xi_2)\rangle
	\nonumber \\
	&+ \langle \xi_2\mid u_2^*\de u_2\xi_2\rangle
	+\langle \xi_2\mid \de\xi_2\rangle 
	\nonumber \\ 
	&-\langle \xi_2\mid (u_1u_2)^*\de(u_1u_2)\xi_2\rangle
	-\langle \xi_2\mid \de\xi_2\rangle 
	\nonumber \\
	=& \langle Ju_2^*u_2\xi_2\mid (u_1u_2)^*\de(u_1u_2)\xi_2\rangle
	+\langle Ju_2\xi_2\mid J\de(u_2\xi_2)\rangle \nonumber \\
	&
	+\langle \xi_2\mid u_2^*\de u_2\xi_2\rangle 
	\nonumber \\ 
	&
	-\langle \xi_2\mid (u_1u_2)^*\de(u_1u_2)\xi_2\rangle 
	\nonumber \\
	=&\overline{\langle\xi_2\mid(u_2^*\de u_2)\xi_2\rangle}
	+\overline{\langle\xi_2\mid\de\xi_2\rangle} 
	+\langle \xi_2\mid u_2^*\de u_2\xi_2\rangle 
	\nonumber \\
	=&
	\label{(8)}
	\overline{\langle\xi_2\mid\de\xi_2\rangle}. 
	\end{align}
	To obtain the above equalities we used the conditions~\eqref{(3)} and $J\xi_1=\xi_1$, $J\xi_2=\xi_2$. 
	We used also the commutation relation 
	$$(u_1u_2)^*\de(u_1u_2)j(u_2)=j(u_2)(u_1u_2)^*\de(u_1u_2).$$
	For $\xi_1,\xi_2\in{\mathcal L}:=\{\gamma\in\H : J\gamma=\gamma\}$ one has 
	\begin{align}
	\de\langle\xi_2\mid\xi_2\rangle
	&=\langle\de\xi_2\mid\xi_2\rangle
	+\langle\xi_2\mid\de\xi_2\rangle 
	=\langle\de(J\xi_2)\mid\xi_2\rangle
	+\langle\xi_2\mid\de\xi_2\rangle
	\nonumber \\
	&\label{(9)}
	=\overline{\langle\de\xi_2\mid J\xi_2\rangle}
	+\langle\xi_2\mid\de\xi_2\rangle 
	=2\langle\xi_2\mid\de\xi_2\rangle. 
	\end{align}
	Now \eqref{(7)} follows by \eqref{(8)} and \eqref{(9)}. 
\end{proof}

For the following proposition we define 
$$\widetilde{\bomega}^{++-}:={\rm pr}_1^*\widetilde{\bomega}+{\rm pr}_2^*\widetilde{\bomega}-{\rm pr}_3^*\widetilde{\bomega}
\in\Omega^2(\widetilde{\H}\times\widetilde{\H}\times\widetilde{\H}).$$
where ${\rm pr}_j\colon \widetilde{\H}\times\widetilde{\H}\times\widetilde{\H}\to\widetilde{\H}$ for $j=1,2,3$ are the natural Cartesian projections. 

\begin{prop}\label{graph}
The presymplectic form $\widetilde{\bomega}\in\Omega^2(\widetilde{\H})$ from \eqref{tildeomega} is multiplicative on the Lie groupoid $\widetilde{\H}\tto\P$, in the sense that 
\begin{equation}\label{graph_eq1}
{\rm pr}_1^*\widetilde{\bomega}+{\rm pr}_2^*\widetilde{\bomega}=\bmu^*\widetilde{\bomega} \in\Omega^2(\widetilde{\H}\ast\widetilde{\H},\R).
\end{equation}
Moreover, the graph of the groupoid multiplication $\Delta$ is an isotropic submanifold of the presymplectic manifold $(\widetilde{\H}\times\widetilde{\H}\times\widetilde{\H},\widetilde{\bomega}^{++-})$. 
\end{prop}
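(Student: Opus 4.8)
The plan is to derive the multiplicativity identity \eqref{graph_eq1} directly from Lemma~\ref{18May2019} by applying the exterior derivative. Recall that $\widetilde{\bomega}=\widetilde{\Phi}^*\omega=\widetilde{\Phi}^*(\de\Gamma)=\de\widetilde{\Gamma}$, so that each term ${\rm pr}_j^*\widetilde{\bomega}=\de({\rm pr}_j^*\widetilde{\Gamma})$ and $\bmu^*\widetilde{\bomega}=\de(\bmu^*\widetilde{\Gamma})$, all viewed as 2-forms on the manifold $\widetilde{\H}\ast\widetilde{\H}$. Hence
\begin{equation*}
{\rm pr}_1^*\widetilde{\bomega}+{\rm pr}_2^*\widetilde{\bomega}-\bmu^*\widetilde{\bomega}
=\de\bigl({\rm pr}_1^*\widetilde{\Gamma}+{\rm pr}_2^*\widetilde{\Gamma}-\bmu^*\widetilde{\Gamma}\bigr)
=\de\widetilde{\Gamma}^{++-}
=\de\,\de\bigl(\tfrac12\Vert\tilde\bs\circ{\rm pr}_2(\cdot)\Vert^2\bigr)=0,
\end{equation*}
using Lemma~\ref{18May2019} and $\de^2=0$. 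Since all the forms in \eqref{graph_eq1} are real (being pull-backs of the real symplectic form $\omega$), and $\widetilde{\Gamma}^{++-}$ itself is complex-valued only because $\Gamma$ is, one should check that the real part is what matters: indeed $\omega=\de\Gamma=\de\,{\rm Re}\,\Gamma$ up to the exact correction noted after \eqref{Gamma}, i.e. $\Gamma-\overline\Gamma=\de(i\langle\gamma\mid\gamma\rangle)$, so taking $\de$ kills the ambiguity and \eqref{graph_eq1} follows. I would spell this last point out carefully, since it is the only place where the complex-versus-real bookkeeping could go wrong.

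For the second assertion, I would reduce the isotropy of $\Delta$ in $(\widetilde{\H}\times\widetilde{\H}\times\widetilde{\H},\widetilde{\bomega}^{++-})$ to \eqref{graph_eq1}. Let $\iota_\Delta\colon\Delta\hookrightarrow\widetilde{\H}\times\widetilde{\H}\times\widetilde{\H}$ be the inclusion; since $\Delta$ is precisely the graph of $\bmu$, the map $\widetilde{\H}\ast\widetilde{\H}\to\Delta$, $(\gamma_1,\gamma_2)\mapsto(\gamma_1,\gamma_2,\gamma_1\bullet\gamma_2)$ is a diffeomorphism (in the refined manifold structures), and under it ${\rm pr}_j\circ\iota_\Delta$ corresponds to ${\rm pr}_j$ for $j=1,2$ and to $\bmu$ for $j=3$. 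Therefore the pullback of $\widetilde{\bomega}^{++-}={\rm pr}_1^*\widetilde{\bomega}+{\rm pr}_2^*\widetilde{\bomega}-{\rm pr}_3^*\widetilde{\bomega}$ to $\Delta$ is identified with ${\rm pr}_1^*\widetilde{\bomega}+{\rm pr}_2^*\widetilde{\bomega}-\bmu^*\widetilde{\bomega}\in\Omega^2(\widetilde{\H}\ast\widetilde{\H},\R)$, which vanishes by \eqref{graph_eq1}. Hence $\iota_\Delta^*\widetilde{\bomega}^{++-}=0$, i.e. $\Delta$ is isotropic.

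The main obstacle, which I expect to be modest but worth a careful remark, is verifying that $\Delta$ is genuinely a submanifold of the triple product $\widetilde{\H}\times\widetilde{\H}\times\widetilde{\H}$ (in the refined structures) so that ``isotropic submanifold'' is meaningful, and that the identification $\widetilde{\H}\ast\widetilde{\H}\cong\Delta$ used above is a diffeomorphism of manifolds rather than merely of sets. This follows from the fact that $\widetilde{\H}\tto\M_*^+$ is a Lie groupoid (Theorem~\ref{grpd_act} together with Theorem~\ref{coadj_grpd}), so the multiplication $\bmu\colon\widetilde{\H}\ast\widetilde{\H}\to\widetilde{\H}$ is smooth and $\widetilde{\H}\ast\widetilde{\H}$ is a submanifold of $\widetilde{\H}\times\widetilde{\H}$; then $\Delta$, as the graph of a smooth map out of a manifold, is a submanifold of $(\widetilde{\H}\ast\widetilde{\H})\times\widetilde{\H}\subseteq\widetilde{\H}\times\widetilde{\H}\times\widetilde{\H}$. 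I would state this explicitly at the start of the proof and then proceed with the two computations above. One caveat to flag: here ``isotropic'' is used in the weak sense appropriate to presymplectic Banach manifolds, namely $\iota_\Delta^*\widetilde{\bomega}^{++-}=0$, with no claim about $\Delta$ being maximal among such submanifolds; I would add a sentence clarifying this, since in the strong finite-dimensional theory one would expect ``Lagrangian'' rather than merely ``isotropic''.
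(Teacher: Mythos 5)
Your proposal is correct and follows essentially the same route as the paper: applying $\de$ to the exactness statement of Lemma~\ref{18May2019} to get \eqref{graph_eq1}, and then identifying $\Delta$ with $\widetilde{\H}\ast\widetilde{\H}$ via the graph of $\bmu$ to deduce that $\iota_\Delta^*\widetilde{\bomega}^{++-}=0$. Your added remarks on the real-versus-complex bookkeeping and on $\Delta$ being a submanifold in the refined structures merely make explicit points the paper treats as immediate, so there is no substantive difference.
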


\begin{proof}
It folows by Lemma~\ref{18May2019} that $\de\widetilde{\Gamma}^{++-}=0$ 
hence, by \eqref{(5)},  one has 
${\rm pr}_1^*\de\widetilde{\Gamma}+{\rm pr}_2^*\de\widetilde{\Gamma}=\bmu^*\de\widetilde{\Gamma}$. 
Then, since $\de\widetilde{\Gamma}=\widetilde{\bomega}$, 
one obtains~\eqref{graph_eq1}. 
Finally, since $\widetilde{\H}\tto\P$ is a Lie groupoid, 
it is straightforward to prove that $\Delta$ is a submanifold of $\widetilde{\H}\times\widetilde{\H}\times\widetilde{\H}$, 
and then \eqref{graph_eq1} implies that $\Delta$ is moreover an isotropic submanifold of the presymplectic manifold $(\widetilde{\H}\times\widetilde{\H}\times\widetilde{\H},\widetilde{\bomega}^{++-})$. 	
\end{proof}

We now investigate the foliation corresponding to the degeneracy kernel of the multiplicative  presymplectic form $\widetilde{\bomega}\in\Omega^2(\widetilde{\H})$ on the Lie groupoid $\widetilde{\H}\tto\P$. 

\begin{prop}\label{SQ5}
	Let $\gamma_0\in\P$ be arbitrary, 
	denote $\rho_0:=E(\gamma_0)\in\M_*^+$, $p_0:=\sigma_*(\rho_0)\in\Ll(\M)$, 
and $P_0:=\br^{-1}(p_0)\subseteq\U(\M)$,  
	and use $\Psi_{\gamma_0}\colon P_0\times P_0\to\H$ from \eqref{SQ4_proof_eq1.5} to define the skew-symmetric bilinear form 
	$$
(\Psi_{\gamma_0}^*\omega)(u,v)\colon (T_uP_0\times T_v P_0)\times(T_uP_0\times T_v P_0)\to\R$$ 
by 
	$$((\Psi_{\gamma_0}^*\omega)(u,v))((\dot{u}_1,\dot{v}_1),(\dot{u}_2,\dot{v}_2))
	:=\Im\langle (T_{(u,v)}(\Psi_{\gamma_0}))(\dot{u}_1,\dot{v}_1)\mid 
	(T_{(u,v)}(\Psi_{\gamma_0}))(\dot{u}_2,\dot{v}_2)\rangle.$$
	Then one has 
	\begin{equation}\label{SQ5_eq1}
	((\Psi_{\gamma_0}^*\omega)(u,v))((\dot{u}_1,\dot{v}_1),(\dot{u}_2,\dot{v}_2))
	=\Im\langle \dot{u}_1\gamma_0\mid 
	\dot{u}_2\gamma_0\rangle
	-\Im\langle \dot{v}_1\gamma_0\mid 
	\dot{v}_2\gamma_0\rangle
	\end{equation}
	and 
	\begin{equation}\label{SQ5_eq2}
	(T_uP_0\times T_v P_0)^{\perp_{\Psi_{\gamma_0}^*\omega}}
	=\{(ux,vy)\in T_uP_0\times T_v P_0: x,y\in T_{p_0}(U_{\rho_0})\}.
	\end{equation}
\end{prop}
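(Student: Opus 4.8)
The plan is to compute $\Psi_{\gamma_0}^*\omega$ explicitly using the formula for $T_{(u,v)}(\Psi_{\gamma_0})$ from \eqref{SQ4_proof_eq5}, namely $(T_{(u,v)}(\Psi_{\gamma_0}))(\dot u,\dot v)=\dot u j(v)\gamma_0+u j(\dot v)\gamma_0$, and then to expand the inner product $\langle\cdot\mid\cdot\rangle$ bilinearly. First I would substitute the two tangent vectors into the definition of $(\Psi_{\gamma_0}^*\omega)(u,v)$ and obtain four terms; the diagonal terms $\langle\dot u_1 j(v)\gamma_0\mid\dot u_2 j(v)\gamma_0\rangle$ and $\langle u j(\dot v_1)\gamma_0\mid u j(\dot v_2)\gamma_0\rangle$ should reduce, after moving $j(v)^*j(v)=j(v^*v)$ and $u^*u$ past the $\M$-elements and using $v^*v\gamma_0=\gamma_0$, $u^*u\gamma_0=\gamma_0$ together with $j(\M)=\M'$ commuting with $\M$, to $\langle\dot u_1\gamma_0\mid\dot u_2\gamma_0\rangle$ and $\langle\dot v_1\gamma_0\mid\dot v_2\gamma_0\rangle$ respectively. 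The sign flip on the $\dot v$ term in \eqref{SQ5_eq1} is the crucial point and comes precisely from the antilinearity of $j$ (so $\langle uj(\dot v_1)\gamma_0\mid uj(\dot v_2)\gamma_0\rangle=\overline{\langle j(\dot v_1)^* \text{-conjugate stuff}\rangle}$), combined with the self-adjointness $J=J^*$, $J^2=\1$, yielding $\langle j(\dot v_1)\gamma_0\mid j(\dot v_2)\gamma_0\rangle=\overline{\langle \dot v_1\gamma_0\mid\dot v_2\gamma_0\rangle}=\langle\dot v_2\gamma_0\mid\dot v_1\gamma_0\rangle$, so that taking imaginary parts flips the sign.

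Second, I would handle the cross terms $\langle\dot u_1 j(v)\gamma_0\mid u j(\dot v_2)\gamma_0\rangle$ and $\langle u j(\dot v_1)\gamma_0\mid\dot u_2 j(v)\gamma_0\rangle$. Moving $u^*$ to the left factor and using $u^*u\gamma_0=\gamma_0$ reduces the first to $\langle u^*\dot u_1 j(v)\gamma_0\mid j(\dot v_2)\gamma_0\rangle$; writing $x_1:=u^*\dot u_1\in i p_0\M^h p_0$ and remembering $j(v)\in\M'$, $j(\dot v_2)\in\M'$ one can push the $\M'$-elements around (they commute with $x_1\in\M$) and use $J\gamma_0=\gamma_0$ to turn $j(v)\gamma_0$ and $j(\dot v_2)\gamma_0$ into expressions involving $v^*v=p_0$, $v^*\dot v_2$. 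The point I expect to make here is that these two cross terms are complex conjugates of each other (by the Hermitian-symmetry of the scalar product and the fact that the relevant operators are either self-adjoint or related by $j$), so their contribution to the imaginary part cancels; this is what leaves only the two "diagonal" contributions in \eqref{SQ5_eq1}. I would be careful to track where $v^*v\gamma_0=\gamma_0$ is used to kill the $v$-dependence on the right-hand side.

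Third, for \eqref{SQ5_eq2} I would compute the $(\Psi_{\gamma_0}^*\omega)$-orthogonal complement of $T_uP_0\times T_vP_0$ inside itself. Writing a generic tangent vector as $(\dot u,\dot v)$ with $x=u^*\dot u$, $y=v^*\dot v\in i p_0\M^h p_0$, formula \eqref{SQ5_eq1} says the pairing depends on $(\dot u,\dot v)$ only through $\dot u\gamma_0$ and $\dot v\gamma_0$, equivalently through $x\gamma_0$ and $y\gamma_0$. So $(\dot u_1,\dot v_1)$ is in the orthogonal complement iff $\Im\langle\dot u_1\gamma_0\mid\dot u_2\gamma_0\rangle=\Im\langle\dot v_1\gamma_0\mid\dot v_2\gamma_0\rangle$ for all $(\dot u_2,\dot v_2)$; taking $\dot v_2=0$ forces $\Im\langle\dot u_1\gamma_0\mid\dot u_2\gamma_0\rangle=0$ for all $\dot u_2\in T_uP_0$, and taking $\dot u_2=0$ forces the analogous condition on $\dot v_1$. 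Because $T_uP_0$ is (via $x\mapsto ux$) identified with $i p_0\M^h p_0$ which is a complex subspace, the $\omega$-degeneracy condition $\Im\langle x_1\gamma_0\mid x_2\gamma_0\rangle=0$ for all $x_2\in ip_0\M^h p_0$ is equivalent (replacing $x_2$ by $ix_2$) to $\langle x_1\gamma_0\mid x_2\gamma_0\rangle=0$, i.e. $\langle\rho_0, x_2^* x_1\rangle=0$ for all such $x_2$; invoking $p_0=\sigma_*(\rho_0)$ and positivity of $\rho_0$ on $p_0\M p_0$ (exactly the argument already used for \eqref{rho0} in the proof of Theorem~\ref{thm:orbit}), this forces $x_1\in\ug_{\rho_0}=T_{p_0}(U_{\rho_0})$. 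Actually, I expect the cleaner route is: the degeneracy vectors of $\Psi_{\gamma_0}^*\omega$ are those with $(T_{(u,v)}\Psi_{\gamma_0})(\dot u,\dot v)=0$, because $\omega$ itself is \emph{strongly} symplectic on $\H$, so $(T_uP_0\times T_vP_0)^{\perp_{\Psi_{\gamma_0}^*\omega}}=\Ker(T_{(u,v)}\Psi_{\gamma_0})$, and this kernel was already identified in \eqref{SQ4_proof_eq2}--\eqref{SQ4_proof_eq4} in the proof of Theorem~\ref{grpd_act} as $\{(ux,vx): x\in T_{p_0}(U_{\rho_0})\}$ — wait, that gives a \emph{diagonal} condition $x=y$, which is strictly smaller than \eqref{SQ5_eq2}; so in fact the orthogonal complement is genuinely larger than the kernel because $\omega$ restricted to the \emph{image} $\Psi_{\gamma_0}(P_0\times P_0)$ may be degenerate, and the honest computation via \eqref{SQ5_eq1} as above is the one that produces the independent conditions $x\in T_{p_0}(U_{\rho_0})$ and $y\in T_{p_0}(U_{\rho_0})$ separately. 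The main obstacle, then, is bookkeeping: carefully distinguishing the $\omega$-orthogonal complement of the \emph{subspace} $T_uP_0\times T_vP_0$ (which lands in $\H\times\H$ via the two summands of $T\Psi_{\gamma_0}$ with a relative sign) from the kernel of $T\Psi_{\gamma_0}$, and getting the conjugations and the $J$-fixed-point relations $v^*v\gamma_0=\gamma_0$, $u^*u\gamma_0=\gamma_0$, $J\gamma_0=\gamma_0$ to cooperate so that all $u,v$-dependence disappears from the right-hand sides.
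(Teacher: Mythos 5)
Your strategy is the same as the paper's: expand the four terms of the scalar product, obtain the two diagonal contributions with the sign flip coming from the antilinearity of $J$ (together with $u^*u\gamma_0=v^*v\gamma_0=\gamma_0$, $J\gamma_0=\gamma_0$), discard the cross terms from the imaginary part, and then deduce \eqref{SQ5_eq2} from \eqref{SQ5_eq1} combined with the degeneracy analysis already performed for $\de\Gamma_0$ in the proof of Theorem~\ref{thm:orbit}; your self-correction that the radical of $\Psi_{\gamma_0}^*\omega$ is strictly larger than $\Ker T\Psi_{\gamma_0}$ is exactly the right observation, and is what makes \eqref{SQ5_eq2} consistent with \eqref{SQ4_proof_eq2}. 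However, two of your intermediate justifications are not correct as stated. For the cross terms: they are not complex conjugates of each other (the conjugate of $\langle \dot u_1 j(v)\gamma_0\mid u j(\dot v_2)\gamma_0\rangle$ involves the pair $(\dot v_2,\dot u_1)$, not $(\dot v_1,\dot u_2)$). What is true, and what the paper proves, is that each cross term is \emph{individually real}: after moving $u^*$ and $j(v^*)$ across, it equals $-\langle\gamma_0\mid x\,j(y)\gamma_0\rangle$ with $x=u^*\dot u_i$, $y=v^*\dot v_k$ anti-hermitian elements of $p_0\M p_0$; since $x\in\M$ commutes with $j(y)\in\M'$, the product $x\,j(y)$ is self-adjoint, so the expectation is real and contributes nothing to the imaginary part.

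For \eqref{SQ5_eq2}, your reduction to the statement that the radical of $(\dot u_1,\dot u_2)\mapsto\Im\langle\dot u_1\gamma_0\mid\dot u_2\gamma_0\rangle$ on $T_uP_0$ equals $u\,T_{p_0}(U_{\rho_0})$ is the right target, but your mechanism is flawed: $T_uP_0$ is \emph{not} identified with $ip_0\M^h p_0$ via $x\mapsto ux$ (that accounts only for the vertical subspace \eqref{Tv}; the horizontal subspace $(\1-uu^*)\M p_0$ of \eqref{Th} is missing unless $u$ is unitary), and $ip_0\M^h p_0$ is a real, not complex, subspace, so the substitution $x_2\mapsto ix_2$ leaves it. The correct argument splits $T_uP_0=T^h_uP_0\oplus T^v_uP_0$ as in Lemma~\ref{lemma1}: on the horizontal (genuinely complex) part the $i\dot u^h$-substitution and the faithfulness argument \eqref{rho0} give nondegeneracy, while on the vertical part the radical is computed via $\ad^*_{x}\rho_0=0$, i.e.\ $x\in\ug_{\rho_0}$ --- not via \eqref{rho0}. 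This is precisely the content of the proof of Theorem~\ref{thm:orbit}, which the paper simply cites at this point; once you replace your identification of $T_uP_0$ by this horizontal/vertical analysis (applied independently in the $\dot u$- and $\dot v$-slots of \eqref{SQ5_eq1}), your proof of \eqref{SQ5_eq2} goes through and coincides with the paper's.
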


\begin{proof}
	One has by \eqref{SQ4_proof_eq5}, 
	\begin{equation}\label{SQ5_proof_eq1}
	((\Psi_{\gamma_0}^*\omega)(u,v))((\dot{u}_1,\dot{v}_1),(\dot{u}_2,\dot{v}_2))
	=\Im\langle \dot{u}_1j(v)\gamma_0+uj(\dot{v}_1)\gamma_0\mid 
	\dot{u}_2j(v)\gamma_0+uj(\dot{v}_2)\gamma_0\rangle.
	\end{equation}
	Moreover 
	\begin{align}
	\langle \dot{u}_1j(v)\gamma_0+uj(\dot{v}_1)\gamma_0\mid 
	& \dot{u}_2j(v)\gamma_0+uj(\dot{v}_2)\gamma_0\rangle 
	\nonumber\\
	=& 
	\langle \dot{u}_1j(v)\gamma_0\mid 
	\dot{u}_2j(v)\gamma_0\rangle
	+\langle uj(\dot{v}_1)\gamma_0\mid 
	uj(\dot{v}_2)\gamma_0\rangle
	\nonumber \\
	& +\langle uj(\dot{v}_1)\gamma_0\mid 
	\dot{u}_2j(v)\gamma_0\rangle
	+ \langle \dot{u}_1j(v)\gamma_0\mid 
	uj(\dot{v}_2)\gamma_0\rangle\nonumber \\
	= 
	&\langle \dot{u}_1\gamma_0\mid 
	\dot{u}_2\gamma_0\rangle
	+\langle \dot{v}_2\gamma_0\mid 
	\dot{v}_1\gamma_0\rangle \nonumber \\
	& 
	\label{SQ5_proof_eq2}
	+\langle j(v^*\dot{v}_1)\gamma_0\mid 
	u^*\dot{u}_2\gamma_0\rangle
	+\langle u^*\dot{u}_1\gamma_0\mid 
	j(v^*\dot{v}_2)\gamma_0\rangle
	\end{align}
	Here we used the equalities 
	\begin{align*}
	\langle uj(\dot{v}_1)\gamma_0\mid 
	uj(\dot{v}_2)\gamma_0\rangle
	& =\langle j(\dot{v}_1)u^*u\gamma_0\mid 
	j(\dot{v}_2)\gamma_0\rangle 
	=\langle j(\dot{v}_1)\gamma_0\mid 
	j(\dot{v}_2)\gamma_0\rangle \\
	&=\langle J\dot{v}_1J\gamma_0\mid 
	J\dot{v}_2J\gamma_0\rangle 
	=\langle \dot{v}_2\gamma_0\mid 
	\dot{v}_1\gamma_0\rangle.
	\end{align*}
	On the other hand, denoting $x_1:=u^*\dot{u}_1$ and $y_2:=v^*\dot{v}_2$, 
	one has $x_1^*=-x_1$, $y_2^*=-y_2$, $j(y_2)^*=-j(y_2)$, 
	and $x_1j(y_2)=j(y_2)x_1$, hence $(x_1j(y_2))^*=x_1j(y_2)$, 
	and then 
	$$\langle u^*\dot{u}_1\gamma_0\mid 
	j(v^*\dot{v}_2)\gamma_0\rangle
	=\langle x_1\gamma_0\mid 
	j(y_2)\gamma_0\rangle=-\langle \gamma_0\mid 
	x_1j(y_2)\gamma_0\rangle\in\R.$$
	Similarly $\langle j(v^*\dot{v}_1)\gamma_0\mid 
	u^*\dot{u}_2\gamma_0\rangle\in\R$, 
	and we then obtain by \eqref{SQ5_proof_eq1} and \eqref{SQ5_proof_eq2}, 
	\begin{equation*}
	((\Psi_{\gamma_0}^*\omega)(u,v))((\dot{u}_1,\dot{v}_1),(\dot{u}_2,\dot{v}_2))
	=\Im\langle \dot{u}_1\gamma_0\mid 
	\dot{u}_2\gamma_0\rangle
	+\Im\langle \dot{v}_2\gamma_0\mid 
	\dot{v}_1\gamma_0\rangle
	\end{equation*}
	which is equivalent to the equality \eqref{SQ5_eq1} from the statement.
	
	Furthermore, it follows by \eqref{SQ5_eq1} and 
	Theorem~\ref{thm:orbit} 
	on obtaining the symplectic form of $\Oc_{\rho_0}$ by reduction of the symplectic form of $\H$ that \eqref{SQ5_eq2} holds true as well. 
\end{proof}

\begin{rem}
	Proposition~\ref{SQ5} shows that, in the diagram \eqref{SQ4_proof_eq1}, 
	if we denote by $\omega$ the canonical symplectic structure of the Hilbert space $\H$, then $\Psi_{\gamma_0}^*(\omega)$ is a closed 2-form on $P_0\times P_0$ whose degeneracy $\Ker \Psi_{\gamma_0}^*(\omega) \subseteq T(P_0\times P_0)$ satisfies 
	$$\underbrace{\{(ux,vx):u,v\in P_0, x\in T_{p_0}(U_{\rho_0})\}}_{\Ker(T\Psi)}
	\subsetneqq 
	\underbrace{\{(ux,vy):u,v\in P_0, x,y\in T_{p_0}(U_{\rho_0})\}}_{\Ker \Psi_{\gamma_0}^*(\omega)}$$
	and then, reduction of the 2-form $\Psi_{\gamma_0}^*(\omega)$ 
	via the mapping $\Psi\colon P_0\times P_0\to \U(\M)\ast\Oc_{\rho_0}$ exists but it is always a degenerate 2-form on 
	$\U(\M)\ast\Oc_{\rho_0}$ 
	(and similarly for the gauge groupoid $\frac{P_0\times P_0}{U_{\rho_0}}$). 
	See also \cite{Ni97} and \cite{CGM17} for other instances of symplectic reduction in infinite dimensions. 
\end{rem}

We are now in a position to describe the foliation determined by the degeneracy kernel of $\widetilde{\bomega}$, which we call for short the \emph{degeneracy-foliation of  $\widetilde{\bomega}$}. 
It is important to point out that formula~\eqref{SQ6_eq1} below can be regarded as an infinite-dimensional version of the description of multiplicative 2-forms on finite-dimensional Lie groupoids given in \cite[Lemma 3.1(iv)]{BuCWZ04}.  

\begin{prop}\label{SQ6}
In the setting of Proposition~\ref{SQ5}, denote $$\Oc_{\gamma_0}:=\{uj(u)\gamma_0: u\in P_0\}\subseteq\P$$ 
and 
$$\widetilde{\H}_{\gamma_0}:=\widetilde{\bs}^{-1}(\Oc_{\gamma_0})\subseteq\widetilde{\H}.$$ 
Then $\widetilde{\H}_{\gamma_0}\tto\Oc_{\gamma_0}$ is a transitive subgroupoid of the Lie groupoid $\widetilde{\H}\tto\P$ 
and the following assertions hold. 
\begin{enumerate}[{\rm(i)}] 
 \item\label{SQ6_item1} 
 The mapping $(\widetilde{\bt},\widetilde{\bs})\colon \widetilde{\H}_{\gamma_0}\to \Oc_{\gamma_0}\times\Oc_{\gamma_0}$ is a surjective submersion, 
and its fibers are the leaves of the degeneracy-foliation of  $\widetilde{\bomega}$
\item\label{SQ6_item2} 
The orbit $\Oc_{\gamma_0}$ has a unique weakly symplectic structure $\widetilde{\bomega}_{{\gamma_0}}\in\Omega^2(\Oc_{\gamma_0},\R)$ 
that satisfies 
\begin{equation}\label{SQ6_eq1}
\widetilde{\bomega}\vert_{\widetilde{\H}_{\gamma_0}}
=(\widetilde{\bt},\widetilde{\bs})^*(\widetilde{\omega}_{{\gamma_0}}\oplus(-\widetilde{\omega}_{{\gamma_0}}))
=\widetilde{\bt}^*\widetilde{\omega}_{{\gamma_0}}-\widetilde{\bs}^*\widetilde{\omega}_{{\gamma_0}}.
\end{equation}
Moreover, the mapping $\bepsilon\vert_{\Oc_{\rho_0}}\colon \Oc_{\rho_0}\to\Oc_{\gamma_0}$ 
is a symplectomorphism from the weakly symplectic manifold $(\Oc_{\rho_0},\widetilde{\omega}_{\rho_0})$ in Theorem~\ref{thm:orbit} 
onto $(\Oc_{\gamma_0},\widetilde{\omega}_{{\gamma_0}})$. 
\end{enumerate}
\end{prop}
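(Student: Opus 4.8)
The plan is to exploit the commutative diagram \eqref{SQ4_proof_eq1}, which expresses $\Phi\vert_{\U_{p_0}(\M)\ast\Oc_{\rho_0}}\circ\Psi=\Psi_{\gamma_0}$, together with the analysis of $\Psi_{\gamma_0}^*\omega$ carried out in Proposition~\ref{SQ5}. First I would identify $\widetilde{\H}_{\gamma_0}$ concretely: since $\widetilde{\bs}=\bepsilon'\circ E'$ and the groupoid orbit through $\gamma_0$ (with respect to the action underlying $\widetilde{\H}\tto\P$) is exactly $\Psi_{\gamma_0}(P_0\times P_0)$, one sees that $\widetilde{\H}_{\gamma_0}$ is the image of $P_0\times P_0$ under $\widetilde{\Phi}^{-1}\circ\Psi_{\gamma_0}$, hence is the transitive subgroupoid of $\widetilde{\H}\tto\P$ over $\Oc_{\gamma_0}$, isomorphic (via the diagram) to the transitive subgroupoid $\U_{p_0}(\M)\ast\Oc_{\rho_0}\tto\Oc_{\rho_0}$ transported through the symplectomorphism candidate $\bepsilon\vert_{\Oc_{\rho_0}}\colon\Oc_{\rho_0}\to\Oc_{\gamma_0}$ (which is a bijective weak immersion by Lemma~\ref{SQ3}, and whose smooth-manifold-isomorphism status we transport from $\Phi$ being a bijective weak immersion in Theorem~\ref{grpd_act}). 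The target/source pair $(\widetilde{\bt},\widetilde{\bs})$ on $\widetilde{\H}_{\gamma_0}$ corresponds under $\Psi$ to $(u,v)\mapsto(uj(u)\gamma_0,vj(v)\gamma_0)$ modulo the diagonal $U_{\rho_0}$-action, i.e.\ essentially to $\widetilde{\pi}_0\times\widetilde{\pi}_0\colon P_0\times P_0\to(P_0/U_{\rho_0})\times(P_0/U_{\rho_0})$; this is a surjective submersion because $\widetilde{\pi}_0$ is a principal bundle projection (Theorem~\ref{thm:32}\eqref{thm:32_item1}), giving assertion~\eqref{SQ6_item1} once we match its fibers with the degeneracy leaves.

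For \eqref{SQ6_item1}, the key computation is already done: by \eqref{SQ5_eq2} the degeneracy kernel of $\Psi_{\gamma_0}^*\omega$ at $(u,v)$ is $\{(ux,vy): x,y\in T_{p_0}(U_{\rho_0})\}$, which is exactly the tangent space to the fiber of $\widetilde{\pi}_0\times\widetilde{\pi}_0$ through $(u,v)$, hence the tangent space to the fiber of $(\widetilde{\bt},\widetilde{\bs})$. Since $\widetilde{\bomega}=\widetilde{\Phi}^*\omega$ and, along $\widetilde{\H}_{\gamma_0}$, pulling back further by $\Psi$ gives $\Psi_{\gamma_0}^*\omega$, the kernel of $\widetilde{\bomega}\vert_{\widetilde{\H}_{\gamma_0}}$ is precisely the image of that degeneracy kernel; this is an integrable distribution whose integral leaves are the fibers of $(\widetilde{\bt},\widetilde{\bs})$, establishing \eqref{SQ6_item1}.

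For \eqref{SQ6_item2}, I would define $\widetilde{\omega}_{\Oc_{\gamma_0}}$ by pushing forward: formula \eqref{SQ5_eq1} says $\Psi_{\gamma_0}^*\omega$ splits as $\widetilde{\pi}_1^*\eta-\widetilde{\pi}_2^*\eta$ where $\eta$ is the closed $2$-form on $P_0$ given by $(\dot u_1,\dot u_2)\mapsto\Im\langle\dot u_1\gamma_0\mid\dot u_2\gamma_0\rangle=\tfrac12\de\Gamma_0$ — exactly the form which, by the argument preceding Theorem~\ref{thm:orbit}, descends through $\widetilde{\pi}_0$ to the weakly symplectic form $\widetilde{\omega}_{\rho_0}$ on $\Oc_{\rho_0}\cong P_0/U_{\rho_0}$. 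Transporting $\widetilde{\omega}_{\rho_0}$ by the diffeomorphism $\bepsilon\vert_{\Oc_{\rho_0}}\colon\Oc_{\rho_0}\to\Oc_{\gamma_0}$ defines $\widetilde{\omega}_{\Oc_{\gamma_0}}$; then \eqref{SQ6_eq1} follows by comparing the pullback of $\widetilde{\bt}^*\widetilde{\omega}_{\Oc_{\gamma_0}}-\widetilde{\bs}^*\widetilde{\omega}_{\Oc_{\gamma_0}}$ along $\Psi$ with \eqref{SQ5_eq1}, and uniqueness follows since $(\widetilde{\bt},\widetilde{\bs})$ is a surjective submersion. Weak symplecticity of $\widetilde{\omega}_{\Oc_{\gamma_0}}$ is inherited from $\widetilde{\omega}_{\rho_0}$ via Theorem~\ref{thm:orbit}; closedness is automatic since $\widetilde{\bomega}$ is closed. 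The last clause — that $\bepsilon\vert_{\Oc_{\rho_0}}$ is a symplectomorphism — holds by construction. The main obstacle I anticipate is purely bookkeeping: carefully justifying that the manifold structures (on $\Oc_{\gamma_0}$, on $\widetilde{\H}_{\gamma_0}$, on the fibers) transported through the various identifications $\Phi$, $\Psi$, $\widetilde{\pi}_0$ agree with those for which the cited submersion/principal-bundle statements hold, so that "descends to a smooth form" and "fibers are leaves of a foliation" are legitimate rather than merely formal; once that is pinned down, every analytic input is supplied by Proposition~\ref{SQ5}, Lemma~\ref{SQ3}, and Theorem~\ref{thm:orbit}.
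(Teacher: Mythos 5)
Your proposal is correct and follows essentially the same route as the paper's proof: identify $\widetilde{\H}_{\gamma_0}=\Psi_{\gamma_0}(P_0\times P_0)$ via the diagram \eqref{SQ4_proof_eq1} and Proposition~\ref{gauge_isom}, read off the degeneracy leaves from \eqref{SQ5_eq2}, obtain \eqref{SQ6_eq1} from \eqref{SQ5_eq1} together with the reduction in Theorem~\ref{thm:orbit} and Lemma~\ref{SQ3}, and get uniqueness from $(\widetilde{\bt},\widetilde{\bs})$ being a surjective submersion. The only caveat is the harmless normalization in your identification of $(\dot u_1,\dot u_2)\mapsto\Im\langle\dot u_1\gamma_0\mid\dot u_2\gamma_0\rangle$ with a multiple of $\de\Gamma_0$, which does not affect the argument.
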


\begin{proof}
It follows by \eqref{ju2} that $\Oc_{\gamma_0}$ is the orbit of the groupoid $\H\tto\P$ passing through $\gamma_0\in\P$, and then $\widetilde{\H}_{\gamma_0}\tto\Oc_{\gamma_0}$ is a transitive subgroupoid of the Lie groupoid $\widetilde{\H}\tto\P$. 
Moreover, we obtain 
$$\Psi_{\gamma_0}(P_0\times P_0)=\widetilde{\H}_{\gamma_0}$$ 
by Proposition~\ref{isomorphisms} and the commutative diagram \eqref{SQ4_proof_eq1}, 
since $\U_{p_0}(\M)\ast\Oc_{\rho_0}\tto\Oc_{\rho_0}$ is a transitive subgroupoid of $\U(\M)\ast\M_*^+\tto\M_*^+$.
Furthermore, for arbitrary $u_0,v_0\in P_0$, 
the leaf of the degeneracy-foliation of  $\widetilde{\bomega}$ that passes through 
the point $\Psi_{\gamma_0}(u_0,v_0)\in\widetilde{\H}_0$ is  $\Psi_{\gamma_0}(u_0U_{\rho_0}\times v_0U_{\rho_0})$. 
This follows by \eqref{SQ5_eq2} in Proposition~\ref{SQ5}. 

Using these remarks, the equality \eqref{SQ6_eq1} follows by 
\eqref{SQ5_eq1} in Proposition~\ref{SQ5}.  
To this end we also use the principal $U_{\rho_0}$-bundle $P_0\to P/U_{\rho_0}$  
and, on the other hand,  
the fact that, by Proposition~\ref{gauge_isom}, the mapping 
$$\frac{P_0\times P_0}{U_{\rho_0}}\to\widetilde{\H}_{\gamma_0},\quad   [(u,v)]\mapsto\Psi_{\gamma_0}(u,v)=uj(v)\gamma_0$$ 
is an isomorphism of Lie groupoids. 

To prove the uniqueness assertion in \eqref{SQ6_item2}, we note that 
 the mapping 
 $$(\widetilde{\bt},\widetilde{\bs})\colon \widetilde{\H}_{\gamma_0}\to\Oc_{\gamma_0}\times\Oc_{\gamma_0}$$ 
 is a surjective submersion, 
 hence there exists at most one differential 2-form on $\Oc_{\gamma_0}\times\Oc_{\gamma_0}$ whose pull-back via $(\widetilde{\bt},\widetilde{\bs})$ is equal to $\widetilde{\bomega}\vert_{\widetilde{\H}_{\gamma_0}}$. 
\end{proof}

For the next proposition of this subsection recall the following:
\begin{enumerate} [(i)]
\item the positive cone $\P$ inherits a smooth manifold structure from $\M_*^+$ 
via the homeomorphism $E|_{\P}:\P\to \M_*^+$.
\item the tangent spaces $T_\gamma\tilde \bs^{-1}(\tilde \bs(\gamma_0))=\Ker T_\gamma\tilde \bs$  
is equal $T_u\iota_{\gamma_0}(T_uP_0)$, where $\gamma=u\gamma_0$.
\end{enumerate}
The above implies that $\iota_{\gamma_0}:P_0\to \H$ is a quasi-immersed submanifold of $\H$, i.e. for every $u\in P_0$ the tangent map $T_u\iota_{\gamma_0}:T_uP_0\to T_{\iota_{\gamma_0}(u)} \H$ is injective with closed range. 

In the next propositions we mentioned other properties of the standard groupoid $\H\tto \M_*^+$ that are shared by symplectic groupoids in the finite dimensional case. 
Here we denote by $\H^\mathbb{R}$ the space $\H$ regarded as a real Hilbert space.

\begin{prop} \label{Prop}
\begin{enumerate} [{\rm(i)}]
\item\label{Prop_item1} 
The anti-unitary involution (the inversion map) $J:\H\to \H$ is an anti-symplectomorphism, i.e.
\begin{equation} 
\label{Jomega}
J^*\omega=-\omega.
\end{equation}
\item\label{Prop_item2} 
 One has the splitting 
\begin{equation*}
 \H^\mathbb{R}=\H_+\oplus \H_-
\end{equation*} 
of $\H^\mathbb{R}$ into the real Hilbert (Lagrangian) subspaces $\H_{\pm}=P_{\pm}\H$, where 
$$P_{\pm}:=\frac{1}{2}(\id+J)$$ 
satisfy $P_{\pm}^2=P_{\pm}$ and $P_+P_-=P_-P_+=0$.
\item\label{Prop_item3} 
 For the object inclusion maps  one has 
$\widetilde{\bepsilon}\colon\P \hookrightarrow\H_+$  and
$\bepsilon\colon\M_*^+\to \H_+$.

\item\label{Prop_item7}
One has 
$T_\gamma \tilde \bs^{-1}(\tilde \bs(\gamma))\subseteq(T_\gamma \tilde \bt^{-1}(\tilde \bt(\gamma)))^{\perp_\omega}$ 
and 
$T_\gamma \tilde \bt^{-1}(\tilde \bt(\gamma))\subseteq(T_\gamma \tilde \bs^{-1}(\tilde \bs(\gamma)))^{\perp_\omega}$ 
for every $\gamma\in\H$. 
\end{enumerate}
\end{prop}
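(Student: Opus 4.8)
The plan is to verify each of the four assertions using the machinery already developed, most of which reduces to direct computations with the modular objects $J$, $\Delta$ and the expectation maps $E,E'$.

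\textbf{Part \eqref{Prop_item1}.} First I would compute $J^*\omega$ directly. Since $\omega=\de\Gamma$ with $\Gamma=i\langle\gamma\mid\de\gamma\rangle$, and $J$ is an antilinear isometric involution, for $\dot\gamma_1,\dot\gamma_2\in T_\gamma\H\cong\H$ one has $\omega(\dot\gamma_1,\dot\gamma_2)=\Im\langle\dot\gamma_1\mid\dot\gamma_2\rangle$ (up to the normalization fixed in~\eqref{braomega}). Then $(J^*\omega)(\dot\gamma_1,\dot\gamma_2)=\omega(J\dot\gamma_1,J\dot\gamma_2)=\Im\langle J\dot\gamma_1\mid J\dot\gamma_2\rangle=\Im\overline{\langle\dot\gamma_1\mid\dot\gamma_2\rangle}=-\Im\langle\dot\gamma_1\mid\dot\gamma_2\rangle=-\omega(\dot\gamma_1,\dot\gamma_2)$, using antiunitarity of $J$ in the form $\langle J\xi\mid J\eta\rangle=\overline{\langle\xi\mid\eta\rangle}=\langle\eta\mid\xi\rangle$. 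This is the key step and it is short; the only care needed is to be consistent with the convention that makes $(\H,\omega)$ a real symplectic manifold, i.e. with~\eqref{Gamma}.

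\textbf{Part \eqref{Prop_item2}.} Since $J=J^*$ and $J^2=\1$, the operators $P_\pm:=\tfrac12(\id\pm J)$ are (real-linear, because $J$ is antilinear) idempotents with $P_++P_-=\id$ and $P_+P_-=P_-P_+=0$, so $\H^{\R}=\H_+\oplus\H_-$ with $\H_\pm=P_\pm\H$. That each $\H_\pm$ is a closed real subspace is clear since $P_\pm$ are bounded. To see they are Lagrangian, I would use Part~\eqref{Prop_item1}: for $\xi,\eta\in\H_+$ one has $\omega(\xi,\eta)=(J^*\omega)(\xi,\eta)=-\omega(J\xi,J\eta)=-\omega(\xi,\eta)$, hence $\omega(\xi,\eta)=0$; similarly for $\H_-$; and $\H_\pm$ are maximal isotropic because $\H^{\R}=\H_+\oplus\H_-$ and $\omega$ is (strongly) nondegenerate, so any vector $\omega$-orthogonal to all of $\H_+$ must lie in $\H_+$.

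\textbf{Parts \eqref{Prop_item3} and \eqref{Prop_item7}.} For \eqref{Prop_item3}, recall from Theorem~\ref{thm:53} and the properties of the natural cone that $J\rho=\rho$ for all $\rho\in\P$; hence $\widetilde\bepsilon(\P)\subseteq\{\gamma: J\gamma=\gamma\}=\H_+$, and $\bepsilon=\widetilde\bepsilon\circ(E|_\P)^{-1}$ lands in $\H_+$ as well. For \eqref{Prop_item7}, recall that $\widetilde\bs^{-1}(\widetilde\bs(\gamma))$ and $\widetilde\bt^{-1}(\widetilde\bt(\gamma))$ are, by Corollary~\ref{cor:1}\eqref{cor:1_item2} and \eqref{ts}, the orbits of the groupoid actions of $\U(\M)\tto\Ll(\M)$ and $\U(\M')\tto\Ll(\M')$ respectively through $\gamma$ (the fibers of $E$ resp. $E'$, after composing with the homeomorphisms of Proposition~\ref{basic_homeo}). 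So the asserted $\omega$-orthogonality is precisely the symplectic-orthogonality of $\Ker T_\gamma E$ and $\Ker T_\gamma E'$, which is exactly the statement of Proposition~\ref{prop:4.4} that \eqref{diagdual3} is a symplectic dual pair. Thus \eqref{Prop_item7} is immediate from Proposition~\ref{prop:4.4}; I would just rephrase the fibers of $\widetilde\bs,\widetilde\bt$ in terms of $E',E$ via~\eqref{gst}, \eqref{g} and Proposition~\ref{basic_homeo}.

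The main obstacle, such as it is, is bookkeeping rather than mathematics: one must be scrupulous about the sign/normalization convention relating $\omega$ to $\Im\langle\cdot\mid\cdot\rangle$ (from \eqref{Gamma}--\eqref{braomega}) so that Part~\eqref{Prop_item1} comes out with the correct sign, and one must correctly identify $\widetilde\bs^{-1}(\widetilde\bs(\gamma))$ with $E'^{-1}(E'(\gamma))$ (note $\widetilde\bs=\bepsilon'\circ E'$, so its fibers coincide with those of $E'$) before invoking Proposition~\ref{prop:4.4}. Everything else is a formal consequence of $J=J^*$, $J^2=\1$ together with results already in the paper.
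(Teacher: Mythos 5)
Your proposal is correct and follows essentially the same route as the paper: the direct antiunitarity computation for $J^*\omega=-\omega$, the algebra of $J=J^*$, $J^2=\1$ for the splitting, the fact that $\P\subseteq\{\gamma:J\gamma=\gamma\}$ (via \eqref{PHP}) for the inclusion maps, and the identification of the $\widetilde\bs$- and $\widetilde\bt$-fibers with the fibers of $E'$ and $E$ so that \eqref{Prop_item7} reduces to the dual-pair orthogonality of Proposition~\ref{prop:4.4}. You merely spell out more details (e.g.\ the isotropy of $\H_\pm$ and the fiber identification) than the paper's terse proof does.
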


\begin{proof}
\eqref{Prop_item1}
For $\dot\gamma_1, \dot\gamma_2\in T_\gamma \H$ one has 
\begin{align*} 
(J^*\omega)_\gamma(\dot\gamma_1, \dot\gamma_2)
=& \omega_{J\gamma}(J\dot\gamma_1, J\dot\gamma_2) 
=\Im (\langle J\dot\gamma_1\mid J\dot\gamma_2\rangle 
=\Im \ol{\langle \dot\gamma_1\mid \dot\gamma_2\rangle} 
= -\Im \langle \dot\gamma_1\mid \dot\gamma_2\rangle \\
=&-\omega_\gamma(\dot\gamma_1, \dot\gamma_2).
\end{align*}
The above proves \eqref{Jomega}.

\eqref{Prop_item2}
This follows from $J^2=\id$.

\eqref{Prop_item3}
This follows from~\eqref{PHP}.

\eqref{Prop_item7} 
See Proposition~\ref{prop:4.4}. 
\end{proof}

 \section{Modular flows and Hamiltonian flows}
\label{Sect7}

This section includes a brief discussion of the way the modular flows of a von Neumann algebra~$\M$ 
define Poisson flows of its corresponding standard groupoid $\H\tto\M_*^+$ 
associated to any standard form representation $(\M,\H,J,\P)$. 
To this end, we make  some remarks on Poisson flows of general \Banach Lie-Poisson spaces. 
Then we investigate the relation between the modular Poisson flows of the \Banach Lie-Poisson space $\M_*^h$ 
and of the presymplectic structure of the standard Lie groupoid $\widetilde{\H}\tto\M_*^+$, respectively.

 \subsection{Modular flows on standard groupoids}
\label{Subsect7.1}

We recall from \cite[Th. 3.2, Def. 3.3]{Ha73} that 
if $(\M,\H,J,\P)$ is a standard form, then 
the \emph{canonical implementation of $\Aut(\M)$} 
is the group homomorphism 
$\Aut(\M)\to U(\H)$, $g\mapsto u_g$, which satisfies 
\begin{equation}\label{impl}
g(x)=u_gxu_g^{-1}\text{ for all }x\in\M\text{ and }g\in\Aut(\M)
\end{equation}
 and 
is uniquely determined by the conditions 
$$Ju_g=u_gJ\text{ and }u_g(\P)=\P\text{ for all }g\in\Aut(\M).$$
Let $\Aut(\M_*)$ be the group of all isometric invertible operators on $\M_*$, regarded as a topological group with respect to its topology of pointwise convergence. 
For every $g\in\Aut(\M)$ we denote by $g_*\in\Aut(\M_*)$ its predual map, and we endow $\Aut(\M)$ with its topology induced via the injective group homomorphism $\Aut(\M)\to\Aut(\M_*)$, $g\mapsto g_*^{-1}$, 
using \cite[Lemma 3.5]{Ha73}. 
Then the canonical implementation is an isomorphism of topological groups from $\Aut(\M)$ onto a certain closed subgroup of $U(\H)$ with respect to the strong operator topology. 
(See \cite[Prop. 3.6]{Ha73}.)

\begin{prop}\label{prel_P1}
	Let $(\M,\H,J,\P)$ be a standard form. 
	For every $\g\in\Aut(\M)$ 
	and any orbits $\Oc_1,\Oc_2\subseteq\M_*^+$ of the standard groupoid,  if  $\g_*(\Oc_1)=\Oc_2$, 
	then the restricted map $\g_*\vert_{\Oc_1}\colon\Oc_1\to\Oc_2$ is a symplectomorphism. 
\end{prop}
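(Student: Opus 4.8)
The plan is to reduce the statement to the symplectic-reduction description of the orbits given in Theorem~\ref{thm:orbit} together with the equivariance of the expectation map~$E$ from Corollary~\ref{cor:1}\eqref{cor:1_item3}, using the canonical implementation $g\mapsto u_g$ to transport everything into the Hilbert space~$\H$ where $\omega$ is honest. First I would recall that, by Proposition~\ref{isomorphisms} and Theorem~\ref{grpd_act}, the orbits $\Oc\subseteq\M_*^+$ of the standard groupoid are exactly the coadjoint orbits $\Oc_{\rho_0}$ of $\U(\M)\tto\Ll(\M)$, carrying the weakly symplectic form $\widetilde{\omega}_{\rho_0}$ of Theorem~\ref{thm:orbit}, and that the bijection $\bepsilon\vert_{\Oc_{\rho_0}}\colon\Oc_{\rho_0}\to\Oc_{\gamma_0}\subseteq\P$ is a symplectomorphism onto $(\Oc_{\gamma_0},\widetilde{\omega}_{\Oc_{\gamma_0}})$ by Proposition~\ref{SQ6}\eqref{SQ6_item2}, where $\widetilde{\omega}_{\Oc_{\gamma_0}}$ is obtained by restricting and reducing the ambient symplectic form~$\omega$ along the weakly immersed submanifold $\Oc_{\gamma_0}\hookrightarrow\H$. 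So it suffices to show that $\g_*$ carries $(\Oc_{\gamma_1},\widetilde{\omega}_{\Oc_{\gamma_1}})$ to $(\Oc_{\gamma_2},\widetilde{\omega}_{\Oc_{\gamma_2}})$ symplectically, where $\Oc_{\gamma_i}:=\bepsilon(\Oc_i)\subseteq\P$.

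The key point is the compatibility of $\g_*$ with the canonical implementation: since $u_\g(\P)=\P$ and $u_\g$ implements $\g$ by \eqref{impl}, one checks from the definition of $E$ (see \eqref{E}, \eqref{E_cal}, and \eqref{iota*}) that $E(u_\g\gamma)=\g_*(E(\gamma))$ for all $\gamma\in\H$, because $\langle E(u_\g\gamma),x\rangle=\langle u_\g\gamma\mid x u_\g\gamma\rangle=\langle\gamma\mid u_\g^{-1}x u_\g\gamma\rangle=\langle\gamma\mid \g^{-1}(x)\gamma\rangle=\langle\g_*(E(\gamma)),x\rangle$. Consequently $u_\g$ maps the fibre $E^{-1}(\rho)$ onto $E^{-1}(\g_*(\rho))$, hence maps $\Oc_{\gamma_1}=E^{-1}(\Oc_1)\cap\P$ onto $\Oc_{\gamma_2}=E^{-1}(\Oc_2)\cap\P$, and moreover $u_\g\circ\bepsilon=\bepsilon\circ\g_*$ on $\Oc_1$. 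Because $u_\g$ is a linear unitary operator on~$\H$, it is a (strong) symplectomorphism of $(\H,\omega)$: indeed $\omega_{u_\g\gamma}(u_\g\dot\gamma_1,u_\g\dot\gamma_2)=\Im\langle u_\g\dot\gamma_1\mid u_\g\dot\gamma_2\rangle=\Im\langle\dot\gamma_1\mid\dot\gamma_2\rangle=\omega_\gamma(\dot\gamma_1,\dot\gamma_2)$. Then I would argue that $u_\g$ restricts to a diffeomorphism $\Oc_{\gamma_1}\to\Oc_{\gamma_2}$ of the refined manifold structures: this follows since $u_\g$ intertwines the transitive groupoid actions (it conjugates partial isometries $u\mapsto u_\g u u_\g^{-1}\in\U(\M)$ and maps $P_0$-type cross-section spaces accordingly), so in the local charts built from $P_0\times P_0$ in Proposition~\ref{gauge_isom} it is expressed by the smooth maps $(u,v)\mapsto(u_\g u,u_\g v)$. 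Since $u_\g$ preserves $\omega$ and intertwines the weak immersions $\Oc_{\gamma_i}\hookrightarrow\H$, it pulls back $\widetilde{\omega}_{\Oc_{\gamma_2}}$ to $\widetilde{\omega}_{\Oc_{\gamma_1}}$ by uniqueness of the reduced form. Transporting back through the symplectomorphisms $\bepsilon\vert_{\Oc_i}$ of Proposition~\ref{SQ6} and the identification $\Oc_i\cong\Oc_{\rho_i}$ from Theorem~\ref{grpd_act}, we conclude that $\g_*\vert_{\Oc_1}\colon(\Oc_1,\widetilde{\omega}_{\rho_1})\to(\Oc_2,\widetilde{\omega}_{\rho_2})$ is a symplectomorphism.

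The main obstacle I anticipate is not the symplectic identity — that is essentially immediate from unitarity of $u_\g$ — but rather verifying that $\g_*$ is a \emph{diffeomorphism} for the refined (non-Hilbertian) manifold structures on the orbits, i.e.\ that $u_\g$ respects the atlases coming from the principal bundles $P_0\to\Ll_{p_0}(\M)$ and the quotients $P_0\times_{U_0}(U_0/U_{\rho_0})$ of Theorem~\ref{thm:32} and Proposition~\ref{gauge_isom}. This requires checking that conjugation by $u_\g$ sends $\U(\M)$ to $\U(\M)$ compatibly with the support maps $\bl,\br$ and the local sections $u_p,\sigma_p$ — which is true because $\g=\Ad u_\g$ is a $W^*$-algebra automorphism, so it preserves partial isometries, projections, polar decompositions, and the lattice structure — and then reading off smoothness in the charts $\chi_p^w$, $\mu_p^w$. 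Once this bookkeeping is done, the symplectomorphism property and the statement follow; I would also remark that the special case $\g=\sigma_t^{\varphi}$ of a modular flow then gives Proposition~\ref{flow} on one-parameter symmetry groups.
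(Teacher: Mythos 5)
Your overall strategy (transport everything to $\H$ via the canonical implementation $u_\g$, use that $u_\g$ is unitary and preserves $\P$, and handle smoothness through the $P_0$-type principal bundles because the automorphism $\g$ preserves partial isometries, supports and polar decompositions) is essentially the paper's, and your diffeomorphism part — the map $u\mapsto\g(u)$ between the spaces $P_j$ descending to $P_1/U_{\rho_1}\to P_2/U_{\rho_2}$ — matches the paper's argument almost verbatim. The genuine gap is in the symplectic step. You assert that $\widetilde{\omega}_{\Oc_{\gamma_0}}$ "is obtained by restricting and reducing $\omega$ along the weakly immersed submanifold $\Oc_{\gamma_0}\hookrightarrow\H$", and then conclude that since $u_\g$ preserves $\omega$ and intertwines the inclusions $\Oc_{\gamma_i}\hookrightarrow\H$, it must match the reduced forms. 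But the pullback of $\omega$ to $\Oc_{\gamma_0}$ along that inclusion is identically zero: $\Oc_{\gamma_0}\subseteq\P\subseteq\H_+=\{\gamma\in\H: J\gamma=\gamma\}$, and $\H_+$ is a Lagrangian real subspace (Proposition~\ref{Prop}), so $\Im\langle\cdot\mid\cdot\rangle$ vanishes on all its tangent vectors. The orbit symplectic form is not a restriction of $\omega$ to the base orbit; by Theorem~\ref{thm:orbit} it is characterized by $\widetilde{\pi}_0^*\widetilde{\omega}_{\rho_0}=\iota_{\gamma_0}^*\omega$, where $\iota_{\gamma_0}(v)=v\gamma_0$ parametrizes the $E'$-fibre (a fibre of the groupoid source/target), not the orbit sitting inside $\P$. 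Hence "preserves $\omega$ and intertwines the base-orbit inclusions" proves nothing; what must be intertwined is the reduction data.

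The repair is short and is exactly the paper's route: choose $\gamma_1$ representing $\rho_1$, set $\gamma_2:=u_\g\gamma_1$, and note $u_\g(v\gamma_1)=\g(v)u_\g\gamma_1=\g(v)\gamma_2$ for $v\in P_1$, i.e. $u_\g\circ\iota_{\gamma_1}=\iota_{\gamma_2}\circ(\g\vert_{P_1})$; combined with $\pi_j^*\widetilde{\omega}_{\rho_j}=\iota_{\gamma_j}^*\omega$ and unitarity of $u_\g$, this gives the symplectomorphism. (Your detour through Proposition~\ref{SQ6} could also be made to work, but then the intertwining must be carried out on the groupoid total spaces $\widetilde{\H}_{\gamma_i}$ — where $\widetilde{\bomega}$ really is a pullback of $\omega$ — together with an argument that $\widetilde{\bt}^*A=\widetilde{\bs}^*A$ forces $A=0$ on a transitive orbit.) Two smaller slips: with the paper's convention that $\g_*$ is the predual of $\g$, your computation gives $E(u_\g\gamma)=\g_*^{-1}(E(\gamma))$, not $\g_*(E(\gamma))$ (this only exchanges the roles of $\Oc_1$ and $\Oc_2$); and the chart expression should be $(u,v)\mapsto(\g(u),\g(v))=(u_\g u u_\g^*,u_\g v u_\g^*)$ rather than $(u,v)\mapsto(u_\g u,u_\g v)$, since $u_\g u$ need not belong to $\M$.
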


\begin{proof}	
	For $j=1,2$ let $\rho_j\in\Oc_j$ with 
	$\rho_2=\rho_1\circ\alpha$, which we may since $\g_*(\Oc_1)=\Oc_2$. 
	Denote $p_j:=\sigma_*(\rho_j)\in\Ll(\M)$, $P_j:=\br^{-1}(p_j)$, 
	and $U_{\rho_j}:=\{u\in P_j: u\rho_j u^*=\rho_j\}$. 
	Then the map 
	$$\pi_j\colon P_j\to\Oc_j,\quad u\mapsto u\rho_j u^*$$
	induces an $U(\M)$-equivariant diffeomorphism 
	\begin{equation}\label{prel_P1_proof_eq1}
	P_j/ U_{\rho_j}\to\Oc_j, \quad 
	u U_{\rho_j}\mapsto u\rho_j u^*.
	\end{equation}
	On the other hand, it is easily checked that the equality $\rho_1=\rho_2\circ\g$ 
	implies that for every $u\in\M$ we have 
	$$u\in P_1\iff \g(u)\in P_2$$
	and 
	$$u\in U_{\rho_1} \iff \g(u)\in U_{\rho_2}$$
	and this shows that 
	the $U(\M)$-equivariant map 
	$$\widetilde{\g}\colon P_1/ U_{\rho_1}\to P_2/ U_{\rho_2},\quad 
	u U_{\rho_1}\mapsto \g(u)U_{\rho_2}$$
	is bijective. 
	One also has the commutative diagram 
	$$
	\xymatrix{
		P_1 \ar[r]^{\g\vert_{P_1}} \ar[d] & P_2 \ar[d]\\
		P_1/ U_{\rho_1} \ar[r]^{\widetilde{\g}} & P_2/ U_{\rho_2}
	}
	$$ 
	whose vertical arrows are the submersions $P_j\to P_j/ U_{\rho_j}$, $u\mapsto uU_{\rho_j}$. 
	(See Theorem~\ref{thm:32}\eqref{thm:32_item1}.)
	Since the top horizontal arrow in the above diagram is a diffeomorphism  
	(for, $P_j$ is a submanifold of $\M$ and the $*$-automorphism $\alpha\colon \M\to\M$ 
	is in particular a diffeomorphism), it then follows that also $\widetilde{\g}$ 
	is smooth. 
	Replacing $\g$ by $\g^{-1}$ in the above reasoning, 
	we obtain that $\widetilde{\g}$ is actually a diffeomorphism. 
	Now, using the commutative diagram 
	$$
	\xymatrix{
		P_1/ U_{\rho_1}  \ar[r]^{\widetilde{\g}} \ar[d]
		& P_2/ U_{\rho_2}  \ar[d]\\
		\Oc_1 & \ar[l]_{\g_*\vert_{\Oc_2}}  \Oc_2
	}
	$$
	whose vertical arrows are the diffeomorphisms \eqref{prel_P1_proof_eq1}, 
	it follows that $\g_*\vert_{\Oc_1}\colon\Oc_1\to\Oc_2$ is a diffeomorphism. 
	
	Thus, in order to prove that $\g_*\vert_{\Oc_1}\colon\Oc_1\to\Oc_2$ is a symplectomorphism, 
	it remains to check that 
	\begin{equation}\label{prel_P1_proof_eq2}
	(\g_*\vert_{\Oc_2})^*(\tilde{\omega}_1)=\tilde{\omega}_2,
	\end{equation} 
	where $\tilde{\g}_j$ is the canonical symplectic form on $\Oc_j$ for $j=1,2$, cf. Theorem~\ref{thm:orbit}. 
	To this end let 
	$u_\g\colon\H\to\H$ be the unitary implementation of $\g\in\Aut(\M)$, 
	hence $\g(x)=u_\g xu_\g^*$ for all $x\in \M$. 
	Fix any $\gamma_1\in \H$ with $p_1(\gamma_1)=\gamma_1$, and denote $\gamma_2:=u_\g(\gamma_1)$, 
	which implies $p_2(\gamma_2)=\gamma_2$ since $p_2=\g(p_{\gamma_1})=u_\g p_1 u_\g^*$. 
	Defining $\iota_{\gamma_j}\colon P_j\to\H$, $\iota_{\gamma_j}(v):=v\gamma_j$ as in \eqref{pi}, 
	we then obtain 
	the commutative diagram 
	\begin{equation}\label{prel_P1_proof_eq3}
	\xymatrix{
		\H \ar[r]^{u_\g} & \H \\
		P_1 \ar[r]^{\g\vert_{P_1}} \ar[u]^{\iota_{\gamma_1}}  \ar[d]_{\pi_1} & P_2 \ar[d]^{\pi_2} \ar[u]_{\iota_{\gamma_2}}  \\
		\Oc_1 & \ar[l]_{\g_*\vert_{\Oc_2}}  \Oc_2 
	}
	\end{equation} 
	where $u_\g$ preserves the symplectic form $\omega$ of $\H$ since it is a unitary operator. 
	Since the symplectic form $\tilde{\omega}_j$ is obtained from $\omega$ by reduction (see Theorem~\ref{thm:orbit})
	it then follows by \eqref{prel_P1_proof_eq3} that $\g_*\vert_{\Oc_1}\colon\Oc_1\to\Oc_2$ is a symplectomorphism, and we are done. 
\end{proof}

For every orbit $\Oc\subseteq\M_*^+$ of the standard Lie groupoid $\widetilde{\H}\tto\M_*^+$ with its source mapping $\bs=E\colon\H\to\M_*^+$, we denote 
$$\widetilde{\H}_{\Oc}:=\bs^{-1}(\Oc)\subseteq\widetilde{\H}$$
hence $\widetilde{\H}_{\Oc}\tto\Oc$ is a transitive subgroupoid of the standard Lie groupoid corresponding to the orbit~$\Oc$.

\begin{prop}\label{flow}
If $(\M,\H,J,\P)$ is a standard form and $\psi$ is a normal semifinite faithful weight of $\M$ with its corresponding modular flow 
$\R\ni t\mapsto \sigma^t_\psi\in \Aut(\M)$, then the following assertions hold for every orbit $\Oc\subseteq\M_*^+$ of the standard Lie groupoid $\widetilde{\H}\tto\M_*^+$ and for all $t\in\R$.
\begin{enumerate}[{\rm(i)}]
	\item\label{flow_item1} One has $(\sigma^t_\psi)_*(\Oc)=\Oc$ 
	and the mapping  $(\sigma^t_\psi)_*\vert_{\Oc}\colon\Oc\to\Oc$ is a symplectomorphism.  
	\item\label{flow_item2} The canonical implementation $u^t_\psi\in U(\H)$ of $\sigma^t_\psi$ leaves $\H_{\Oc}$ invariant and the mapping 
	$u^t_\psi\vert_{\widetilde{\H}_{\Oc}}\colon \widetilde{\H}_{\Oc}\to \widetilde{\H}_{\Oc}$ is a diffeomorphism that leaves invariant the presymplectic form $\widetilde{\bomega}\vert_{\Oc}$. 
	\item\label{flow_item3} 
	The unitary operator $u^t_\psi$ gives a Lie groupoid automorphism $\widetilde{u^t_\psi}\colon \widetilde{\H}\to \widetilde{\H}$ 
	that leaves invariant the presymplectic form $\widetilde{\bomega}$.
\end{enumerate} 
\end{prop}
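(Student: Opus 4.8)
The plan is to reduce everything to the structural facts about the canonical implementation $u^t_\psi$ of the modular automorphism $\sigma^t_\psi\in\Aut(\M)$ together with the properties of the various standard-groupoid realizations established earlier. The starting point is that the modular flow $t\mapsto\sigma^t_\psi$ is a one-parameter subgroup of $\Aut(\M)$, so by \eqref{impl} its canonical implementation $t\mapsto u^t_\psi$ is a strongly continuous one-parameter subgroup of $U(\H)$ satisfying $Ju^t_\psi=u^t_\psi J$ and $u^t_\psi(\P)=\P$. From $g(x)=u_gxu_g^{-1}$ one gets $g_*(\vert\gamma\rangle\langle\gamma\vert)=\vert u_g\gamma\rangle\langle u_g\gamma\vert$ at the level of rank-one operators, hence $\mathcal{E}\circ u_g=g_*\circ\mathcal{E}$ on $\H$, and composing with $\iota_*$ yields $E\circ u_g=g_*\circ E$, i.e. the expectation map $E\colon\H\to\M_*^+$ intertwines $u_g$ on $\H$ with $g_*$ on $\M_*^+$. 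The analogous identity holds for $E'$, and since $u_g(\P)=\P$ and $E\vert_\P$ is the homeomorphism of Proposition~\ref{basic_homeo}, one gets that $u_g$ commutes with the source/target maps $\widetilde\bs,\widetilde\bt$ in the sense $\widetilde\bs\circ u_g=\bepsilon\circ g_*\circ E$ etc.

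For part \eqref{flow_item1}: an orbit $\Oc\subseteq\M_*^+$ of the standard groupoid is, by Proposition~\ref{comp} and the discussion around \eqref{sub}, a coadjoint groupoid orbit $\Oc_{\rho_0}=\{u\rho_0u^*:u^*u=\sigma_*(\rho_0)\}$. Since $g=\sigma^t_\psi$ is a $*$-automorphism, $g_*(u\rho_0 u^*)=g(u)\,g_*(\rho_0)\,g(u)^*$ and $g(u)$ is again a partial isometry with $g(u)^*g(u)=g(\sigma_*(\rho_0))=\sigma_*(g_*(\rho_0))$; moreover by the Takesaki/Connes cocycle theory the modular automorphism fixes $\psi$, which is used to identify $g_*(\rho_0)$ inside $\Oc_{\rho_0}$ — more directly, the modular flow $\sigma^t_\psi$ satisfies $\sigma^t_\psi{}_* (\rho_0)\in\Oc_{\rho_0}$ because $\sigma^t_\psi$ preserves the Murray--von Neumann equivalence class of $\sigma_*(\rho_0)$ (it is an automorphism), and one checks $\Vert\sigma^t_\psi{}_*(\rho_0)\Vert=\Vert\rho_0\Vert$; but actually the cleanest argument is that $\sigma^t_\psi$ restricted to $\sigma_*(\rho_0)\M\sigma_*(\rho_0)$, together with the cocycle $[D(\rho_0\circ\sigma^t_\psi):D\rho_0]_t$, shows $\sigma^t_\psi{}_*(\rho_0)$ is unitarily equivalent to $\rho_0$ in $\sigma_*(\rho_0)\M\sigma_*(\rho_0)$, hence lies in $\Oc_{\rho_0}$. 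Granting $(\sigma^t_\psi)_*(\Oc)=\Oc$, the symplectomorphism claim is immediate from Proposition~\ref{prel_P1} applied with $\g=\sigma^t_\psi$ and $\Oc_1=\Oc_2=\Oc$.

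For part \eqref{flow_item2}: since $E\circ u^t_\psi=(\sigma^t_\psi)_*\circ E$ and $(\sigma^t_\psi)_*(\Oc)=\Oc$, the unitary $u^t_\psi$ maps $\H_\Oc=E^{-1}(\Oc)$ onto itself; to see this is a diffeomorphism of $\widetilde{\H}_\Oc$ one transports through the isomorphism $\Phi\colon\U(\M)\ast\M_*^+\to\widetilde\H$ of Theorem~\ref{grpd_act} and the gauge-groupoid/Lemma~\ref{SQ3} description, observing that in polar coordinates $u^t_\psi(v\gamma)=u^t_\psi v (u^t_\psi)^*\cdot u^t_\psi\gamma=\sigma^t_\psi(v)\cdot(u^t_\psi\gamma)$ so that on the factor $\U(\M)$ it acts by the $*$-automorphism $\sigma^t_\psi$ (a diffeomorphism of $\U(\M)$) and on $\M_*^+$ by $(\sigma^t_\psi)_*$, whose restriction to the orbit $\Oc$ is the diffeomorphism from part \eqref{flow_item1}; smoothness of the inverse follows by replacing $t$ by $-t$. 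Invariance of $\widetilde\bomega\vert_\Oc$ — more precisely of $\widetilde\bomega\vert_{\widetilde\H_\Oc}$ — is the key computational point: since $\widetilde\bomega=\widetilde\Phi^*\omega$ and $u^t_\psi$ is unitary hence preserves $\omega$ on $\H$, one has $(u^t_\psi)^*\widetilde\bomega=\widetilde\bomega$ as soon as one knows $\widetilde\Phi\circ\widetilde{u^t_\psi}=u^t_\psi\circ\widetilde\Phi$, which holds because $\widetilde\Phi$ is (set-theoretically) the identity of $\H$ and $\widetilde{u^t_\psi}$ is $u^t_\psi$ read in the refined manifold structure; alternatively, appeal to \eqref{SQ6_eq1} together with the symplectomorphism property from part \eqref{flow_item1}.

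For part \eqref{flow_item3}: it remains to check that $u^t_\psi$, read through the refined structure $\widetilde\H$, is a Lie groupoid automorphism. The underlying map is a diffeomorphism of $\widetilde\H$ because it is a diffeomorphism on each transitive subgroupoid $\widetilde\H_\Oc$ by part \eqref{flow_item2} and these are the (open) leaves. That it commutes with the structural maps of $\widetilde\H\tto\M_*^+$ follows from the intertwining relations $E\circ u^t_\psi=(\sigma^t_\psi)_*\circ E$, $E'\circ u^t_\psi=(\sigma^t_\psi)'_*\circ E'$, $J u^t_\psi=u^t_\psi J$, together with the fact that $u^t_\psi$ respects the product $\gamma_1\bullet\gamma_2=v_1v_2\vert\gamma_2\vert$ since in polar form $u^t_\psi$ multiplies the unitary parts by the group homomorphism property $\sigma^t_\psi(v_1v_2)=\sigma^t_\psi(v_1)\sigma^t_\psi(v_2)$ and acts on the positive parts via $(\sigma^t_\psi)_*$; the object inclusion $\bepsilon$ is respected because $u^t_\psi(\P)=\P$. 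Finally $(\widetilde{u^t_\psi})^*\widetilde\bomega=\widetilde\bomega$ globally, since $\widetilde\bomega=\widetilde\Phi^*\omega$, $\widetilde\Phi$ intertwines $\widetilde{u^t_\psi}$ with the $\omega$-preserving unitary $u^t_\psi$.

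The main obstacle I expect is the first claim $(\sigma^t_\psi)_*(\Oc)=\Oc$: one must show the modular automorphism carries each coadjoint groupoid orbit (equivalently, each orbit of $\U(\M)\ast\M_*^+\tto\M_*^+$) to itself. The point is that $\sigma^t_\psi$, being an automorphism, preserves Murray--von Neumann equivalence of projections, hence preserves the set $\Ll_{p_0}(\M)$ of support projections occurring in $\Oc_{\rho_0}$; combined with the fact (from Connes' cocycle Radon--Nikodym theorem / \cite[Ch. VIII]{Ta03a}) that $\rho_0\circ\sigma^t_\psi$ and $\rho_0$ are in the same unitary orbit inside $\sigma_*(\rho_0)\M\sigma_*(\rho_0)$, one concludes $(\sigma^t_\psi)_*(\rho_0)\in\Oc_{\rho_0}$, and then transitivity of the coadjoint action on $\Oc_{\rho_0}$ gives $(\sigma^t_\psi)_*(\Oc_{\rho_0})=\Oc_{\rho_0}$. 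Everything else is then a matter of assembling Proposition~\ref{prel_P1}, Theorem~\ref{grpd_act}, Proposition~\ref{graph}, and the intertwining identities for $E,E',J$ under the canonical implementation.
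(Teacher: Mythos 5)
Your overall architecture coincides with the paper's: part (i) is reduced to the invariance $(\sigma^t_\psi)_*(\Oc)=\Oc$ followed by an application of Proposition~\ref{prel_P1}; parts (ii)--(iii) follow from the intertwining relation $E\circ u_g=g_*^{-1}\circ E$ (you write it with $g_*$ in place of $g_*^{-1}$, a harmless convention slip since $t$ runs over all of $\R$), from the observation that in polar coordinates $u_g$ acts as $(v,\rho)\mapsto(g(v),\rho\circ g^{-1})$, i.e.\ as a Lie groupoid automorphism of $\U(\M)\ast\M_*^+\tto\M_*^+$ transported to $\widetilde{\H}$ through the isomorphism $\Phi$ of Theorem~\ref{grpd_act}, and from unitarity of $u^t_\psi$ together with $\widetilde{\bomega}=\widetilde{\Phi}^*\omega$. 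This is exactly the paper's route for (ii) and (iii).

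The genuine gap is in your justification of the key claim $(\sigma^t_\psi)_*(\Oc)=\Oc$, the one nontrivial input. Two assertions in your sketch fail as stated. First, ``being an automorphism, $\sigma^t_\psi$ preserves Murray--von Neumann equivalence, hence preserves $\Ll_{p_0}(\M)$'': an automorphism preserves the relation $p\sim q$, but it need not map the equivalence class of $p_0$ onto itself (a trace-scaling automorphism of a type~II$_\infty$ factor sends a finite projection to an inequivalent one), so automorphy alone gives nothing here. Second, ``$\rho_0\circ\sigma^t_\psi$ and $\rho_0$ are unitarily equivalent in $\sigma_*(\rho_0)\M\sigma_*(\rho_0)$'' is not even well posed, since the support of $\rho_0\circ\sigma^t_\psi$ is the image of $\sigma_*(\rho_0)$ under the modular flow and in general differs from $\sigma_*(\rho_0)$; the equivalence must be implemented by a unitary, or at least a partial isometry, of $\M$ itself. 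The statement you need is true but is a theorem: the paper obtains it by combining Proposition~\ref{comp} (each groupoid orbit is a union of unitary orbits) with the Haagerup--St\o rmer result that $\varphi\circ\sigma^t_\psi$ is unitarily equivalent to $\varphi$ for every $\varphi\in\M_*^+$, cited as \cite[Prop. 12.6]{HS90a}. If you prefer the cocycle route, the correct repair is to use Connes' cocycle $(D\rho_0:D\psi)_t$ for the possibly non-faithful $\rho_0$, defined through the balanced weight on $M_2(\M)$: it is a partial isometry of $\M$ whose initial and final projections are $\sigma_*(\rho_0)$ and its image under the modular flow, and it conjugates $\rho_0$ to $(\sigma^t_\psi)_*(\rho_0)$; this already places $(\sigma^t_\psi)_*(\rho_0)$ in the groupoid orbit $\Oc_{\rho_0}$, which is all that part (i) requires before invoking Proposition~\ref{prel_P1}. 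Either way, this step must be made explicit rather than rested on the two assertions above.
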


\begin{proof}
\eqref{flow_item1} 
The groupoid orbit $\Oc$ is a union of unitary orbits in $\M_*^+$ by Proposition~\ref{comp}, and every such unitary orbit is preserved by $(\sigma^t_\psi)_*$ by \cite[Prop. 12.6]{HS90a}. 
Then $(\sigma^t_\psi)_*\vert_{\Oc}\colon\Oc\to\Oc$ is a symplectomorphism by Proposition~\ref{prel_P1}

\eqref{flow_item2} 
The assertion will follow by \eqref{flow_item1} as soon as we will have shown 
 that if $g\in\Aut(\M)$ satisfies $g_*(\Oc)=\Oc$, then $u_g(\widetilde{\H}_{\Oc})=\H_{\Oc}$, and moreover the mapping 
 $u_g\vert_{\widetilde{\H}_{\Oc}}\colon \widetilde{\H}_{\Oc}\to \widetilde{\H}_{\Oc}$ is a diffeomorphism that leaves invariant the presymplectic form $\widetilde{\bomega}\vert_{\H_\Oc}$. 
  
To prove this we first note that, for arbitrary 
$g\in\Aut(\M)$ and 
$\gamma\in\H$ one has by \eqref{impl},
$$E(u_g\gamma)=g_*^{-1}(E(\gamma))\in\M_*^+.$$
Then, if $g_*(\Oc)=\Oc$, we directly obtain $u_g(E^{-1}(\Oc))=E^{-1}(\Oc)$, 
that is, $u_g(\widetilde{\H}_{\Oc})=\widetilde{\H}_{\Oc}$. 

On the other hand, every automorphism $g\in\Aut(\M)$ naturally defines the automorphism 
$$A_g\colon \U(\M)\ast\M_*^+\to \U(\M)\ast\M_*^+,\quad (\widetilde{\omega}v,\rho)\mapsto (g(v),\rho\circ g^{-1}) $$
of the Lie groupoid $\U(\M)\ast\M_*^+\tto\M_*^+$, 
which further defines via Theorem~\ref{grpd_act} the automorphism 
$$\Phi\circ A_g\circ \Phi^{-1}\colon \widetilde{\H}\to\widetilde{\H}$$
of the Lie groupoid $\widetilde{\H}\tto\M_*^+$. 
One can now show that $\Phi\circ A_g\circ \Phi^{-1}=u_g$, using the fact that $g(x)=u_gxu_g^{-1}$ for all $x\in\M$. 
This implies that the mapping 
$\widetilde{u_g}\colon\widetilde{\H}\to\widetilde{\H}$, $\widetilde{u_g}(\gamma):=u_g\gamma$, is smooth. 
Moreover, since $u_g\in U(\H)$, one can easily check that $\widetilde{u_g}^*\widetilde{\omega}=\widetilde{\omega}$, 
and this concludes the proof. 
\end{proof}

 \subsection{Infinitesimal aspects}
\label{Subsect7.2}
 
We will now take a brief look at the infinitesimal generators of the flows that we discussed so far in this section. 
Loosely speaking, the main idea here is that if one has 
a standard form representation $(\M,\H,J,\P)$, where $\H$ is a \emph{separable} Hilbert space, 
 then in general a Hamiltonian flow on $\M_*$  generates a Hamiltonian function on $\M_*^h$ 
because such a flow is given by a one-parameter unitary group in $\M$ 
whose infinitesimal generator is a self-adjoint operator that is affiliated to $\M$. 
However, in general, for a Poisson flow on $\M_*$, its corresponding infinitesimal generator may not be affiliated to $\M$, 
and for this reason it is not possible in general to define its corresponding Hamiltonian function. 
In this subsection, by \emph{Poisson flow} on the predual of a von Neumann algebra $\M$ we mean 
the natural action of 
any continuous 1-parameter subgroup of the topological group $\Aut(\M)$ on the predual $\M_*$ or on is self-adjoint part $\M_*^h$, 
where continuity is meant with respect to the topology of this group described at the beginning of Subsection~\ref{Subsect7.1}. 
If a Poisson flow on $\M_*$ consists only of inner automorphisms of $\M$, then we call it a \emph{Hamiltonian flow}. 
The motivation for this terminology is that for any $g\in\Aut(\M)$ its predual mapping $g_*\colon\M_*\to\M_*$ 
is a linear Poisson map with respect to the Lie-Poisson bracket~\eqref{bracketi}, 
while the connection with the Hamiltonian fomalism is discussed below. 

We assume the following setting: 
\begin{itemize}
	\item $\H$ separable Hilbert space
	\item $\M=\M''\subseteq L^\infty(\H)$ standard, 
	with modular operator $\Delta\ge 0$
	\item $A:=\log\Delta$ self-adjoint unbounded operator in~$\H$ 
	\item $\sigma\colon\R\to\Aut(\M)$, $\sigma(t)x:=e^{i tA}xe^{-i tA}$
	\item $\sigma_*\colon\R\to\Iso(\M_*^h)$, $\sigma_*(t)\psi:=\psi\circ\sigma(t)$
\end{itemize}

\begin{rem}\label{modular_rem}
	\normalfont
	Here we have denoted by $\Iso(\Xc)$ the group of all isometric bijective linear operators on~$\Xc$ 
for any real complete normed space~$\Xc$,  
	and we regard $\Iso(\Xc)$ as a topological group with its topology of pointwise convergence on~$\Xc$. 
	We note that the continuity of the composition and inversion mappings of the group $\Iso(\Xc)$ follow by the estimates 
	$$\Vert (T_0^{-1}-T^{-1})x\Vert=\Vert T^{-1}(T-T_0)T_0^{-1}x\Vert =\Vert (T-T_0)T_0^{-1}x\Vert$$
	and 
	$$\Vert(TS-T_0S_0)x\Vert\le \Vert T(S-S_0)x\Vert +\Vert (T-T_0)S_0x\Vert
	=\Vert (S-S_0)x\Vert +\Vert (T-T_0)S_0x\Vert$$
	for arbitrary $T,S,T_0,S_0\in\Iso(\Xc)$ and $x\in\Xc$.

	The above map $\sigma_*\colon(\R,+)\to\Iso(\M_*^h)$ is then a continuous homomorphism 
	of topological groups by the remarks on the topology of $\Aut(\M)$ 
at the beginning of Subsection~\ref{Subsect7.2}. 
\end{rem}

Using that for every $t\in\R$ and $x\in\M$ one has $e^{i tA}x e^{-i tA}\in\M$, 
prove by differentiation with respect to $t$ that if $x=x^*\in\M$ and $[A,x]\in L^\infty(\H)$, 
then actually $[A,x]\in\M$. 

\begin{defn}
	\normalfont 
	We define 
	$$(\M_*^h)^\infty:=\{\psi\in\M_*^h\mid \sigma_*(\cdot)\psi\in\Ci(\R,\M_*^h)\}.$$ 
	This is the space of differentiable vectors with respect to the strongly continuous representation 
	$\sigma_*\colon(\R,+)\to\Iso(\M_*^h)$ 
	(see Remark~\ref{modular_rem})
	hence $(\M_*^h)^\infty$ has the natural structure of a real Fr\'echet space and 
	is a dense 
	linear subspace of $\M_*^h$ that is invariant under the representation~$\sigma_*$. 
\end{defn}

We fix $\xi_0\in (\M_*^h)^\infty$ with its unitary orbit $\Oc:=\U(\M).\xi_0\subseteq \M_*^h$, 
and define 
$$\Oc^\infty:=\Oc\cap (\M_*^h)^\infty.$$
Then $\Oc^\infty$ is a subset of $\Oc$ that is invariant under the action of the $1$-parameter group 
of symplectomorphisms defined by $\sigma_*(\cdot)\vert_{\Oc}$ and $\xi_0\in\Oc^\infty$. 

It would be interesting to establish conditions for $\Oc^\infty$ to be dense in $\Oc$, and to study what additional topological or differential properties the set $\Oc^\infty$ has.

The infinitesimal generator of the \emph{continuous} 1-parameter group of diffeomorphisms 
$\sigma_*\colon\R\to\Diff(\Oc)$ is the vector field $X$ defined at any point $\xi\in\Oc^\infty$, 
for arbitrary $f\in\Ci(\M_*^h,\R)$ satisfying $[A,f'_\xi]\in\M$. 
Here one has the commutator of the unbounded operator $A$ with the bounded operator $f'_\xi$. 
We note that $f'_\xi\colon \M_*^h\to\R$, hence $f'_\xi\in\ug(\M)\subset\M$. 

Using the duality pairing $\langle\cdot,\cdot\rangle\colon\ug(\M)\times\M_*^h\to\R$, 
$(x,\eta)\mapsto-i\eta(x)$, one has 
\begin{equation}\label{mod_eq1}
(X(f))(\xi)
:=\frac{\de}{\de t}\Bigl\vert_{t=0}f(\sigma_*(t)\xi) 
=\langle f'_\xi,\dot\sigma_*(0)\xi\rangle
=\langle \dot\sigma(0)f'_\xi,\xi\rangle
=\xi([A,f'_\xi])
\end{equation}
where we have also used the above formula of $\sigma(t)$. 
To explain the above computation, we also note that $T_\xi\Oc\hookrightarrow\M_*^h$, 
$\dot\sigma_*(0)\colon \M_*^h\to \M_*^h$, and $\dot\sigma_*(0)(T_\xi\Oc)\subseteq T_\xi\Oc$. 

On the other hand, for every $h\in\Ci(\M_*^h,\R)$ and $\xi\in\Oc$ we have 
\begin{equation}\label{mod_eq2}
\{h,f\}(\xi)=\xi([h'_\xi,f'_\xi]).
\end{equation}
If $A\in\M$, then equations \eqref{mod_eq1}--\eqref{mod_eq2} show that 
$X$ is the Hamiltonian vector field of the function $h_A\colon\M_*^h\to\R$, $h_A(\xi):=\langle \xi,A\rangle$. 
It would be interesting to study analogues of the above function $h_A$ if $A=A^*$ is an unbounded operator.

 \section{Standard groupoids in the special case of type I factors} 
\label{Sect8}

In this final section we illustrate the general results of the preceding sections by a brief discussion of standard groupoids of type~I factors 
$\M\simeq L^\infty(\H_0)$, for an arbitrary separable 
complex Hilbert space $\H_0$. 
We also point out the significance of this groupoid for quantum physics. 

Define $\H:=L^2(\H_0)$, the ideal of Hilbert-Schmidt operators on~$\H_0$, 
and for every $a\in L^\infty(\H_0)$ let $\lambda(a)\in L^\infty(\H)$ by $\lambda(a)\gamma:=a\gamma$ for all $\gamma\in\H$. 
Then $\M:=\lambda(L^\infty(\H_0))\subseteq L^\infty(\H)$ is again a type~I factor, and the mapping 
$$\lambda\colon L^\infty(\H_0)\to\M, \quad a\mapsto\lambda(a)$$ 
is a $*$-isomorphism. 
We denote by $\lambda_*\colon \M_*\to(L^\infty(\H_0))_*= L^1(\H_0)$ its corresponding predual mapping, which is an isometric isomorphism of Banach spaces, 
and its restriction and corestriction to the positive cones is a homeomorphism denoted by 
$$\lambda_*^+\colon \M_*^+\to(L^\infty(\H_0))_*^+= L^1(\H_0)^+.$$
The triple $(\M,\H,J, \P)$ is a standard form of the von Neumann algebra $\M$, where 
\begin{itemize}
	\item $J\colon L^2(\H_0)\to L^2(\H_0)$, $J(\gamma):=\gamma^*$; 
	\item $ \P:=L^2(\H_0)^+=\{\rho\in L^2(\H_0): \rho\ge 0\}$.
\end{itemize}

\begin{lem}\label{typeI_lemma}
	The above standard form $(\M,\H,J, \P)$ of the von Neumann algebra $\M=\lambda(L^\infty(\H_0))\subseteq L^\infty(\H)$ has the following properties.  
	\begin{enumerate}[{\rm(i)}]
		\item\label{typeI_lemma_item1} 
		The composition of homeomorphisms 
		$$\xymatrix{L^2(\H_0)^+= \P \ar[r]^{\ \ \ \ E\vert_{\P}} & \M_*^+ \ar[r]^{\lambda_*^+\ \ } &  L^1(\H_0)^+}$$
		is  given by $(\lambda_*^+\circ\theta)(\rho)=\rho^2$ for all $\rho\in L^2(\H_0)^+$. 
		\item\label{typeI_lemma_item2}
		If $\gamma=v_\gamma\vert\gamma\vert$ is the polar decomposition of any $\gamma\in L^2(\H_0)$ 
		with respect to the above standard form, 
		then $\vert\gamma\vert=(\gamma^*\gamma)^{1/2}\in L^2(\H_0)^+$ 
		and $\gamma=v\vert\gamma\vert$ is the usual polar decomposition of the operator $\gamma\in L^\infty(\H_0)$, involving the partial isometry $v:=\lambda^{-1}(v_\gamma)\in L^\infty(\H_0)$. 
	\end{enumerate}
\end{lem}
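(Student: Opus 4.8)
\textbf{Proof proposal for Lemma~\ref{typeI_lemma}.}

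The plan is to unwind the definitions of the standard-form data $(\M,\H,J,\P)$ given just above the statement, and to match the abstract modular-theoretic constructions (the expectation map $E$, the polar decomposition $\gamma=v_\gamma\vert\gamma\vert$ from Section~\ref{Sect6}) against the concrete operator-theoretic objects on $L^\infty(\H_0)$ (trace-class operators, Hilbert--Schmidt operators, the usual polar decomposition of a bounded operator). The identification $\H=L^2(\H_0)$ carries the scalar product $\langle\gamma_1\mid\gamma_2\rangle=\Tr(\gamma_1^*\gamma_2)$, and the von Neumann algebra $\M=\lambda(L^\infty(\H_0))$ acts by left multiplication, so its commutant $\M'$ acts by right multiplication; this is the key structural fact that makes everything computable.

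For \eqref{typeI_lemma_item1}, I would first recall from \eqref{E_cal}, \eqref{E}, and \eqref{iota*} that for $\rho\in\P=L^2(\H_0)^+$ and $a\in L^\infty(\H_0)$ one has $\langle E(\rho),\lambda(a)\rangle=\langle\rho\mid\lambda(a)\rho\rangle=\Tr(\rho\,a\,\rho)=\Tr(\rho^2 a)$, using cyclicity of the trace and $\rho=\rho^*$. Since by definition $\langle\lambda_*^+(E(\rho)),a\rangle=\langle E(\rho),\lambda(a)\rangle$, this says precisely $(\lambda_*^+\circ E\vert_{\P})(\rho)=\rho^2$, where $\rho^2\in L^1(\H_0)^+$ because $\rho$ is Hilbert--Schmidt and positive. (The statement writes $\theta$ for $E\vert_\P$; I will use $E\vert_\P$ throughout, consistently with Proposition~\ref{basic_homeo}.) The only point needing a word is that $\rho\mapsto\rho^2$ is indeed a homeomorphism $L^2(\H_0)^+\to L^1(\H_0)^+$ with inverse the (continuous) square-root map, which is guaranteed anyway by Proposition~\ref{basic_homeo} together with the fact that $\lambda_*^+$ is a homeomorphism.

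For \eqref{typeI_lemma_item2}, recall from \eqref{gst}, \eqref{g} that $\vert\gamma\vert=\widetilde\bs(\gamma)=\bepsilon'(E'(\gamma))\in\P$ and $\gamma=v_\gamma\vert\gamma\vert$ with $v_\gamma\in\U(\M)$ determined by $v_\gamma^*v_\gamma=[\M'\vert\gamma\vert]$. I would compute $E'(\gamma)$ using the commutant $\M'$ (right multiplications): for $b\in L^\infty(\H_0)$, $\langle E'(\gamma),\rho_b'\rangle=\langle\gamma\mid\gamma b\rangle=\Tr(\gamma^*\gamma\,b)$, so that, after transporting through the analogue of \eqref{typeI_lemma_item1} for $\M'$, the unique element of $\P=L^2(\H_0)^+$ representing $E'(\gamma)$ is $(\gamma^*\gamma)^{1/2}$; hence $\vert\gamma\vert=(\gamma^*\gamma)^{1/2}$. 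Then $v_\gamma\vert\gamma\vert=\gamma$ in $L^2(\H_0)$ forces $v_\gamma=\lambda(v)$ where $v\,(\gamma^*\gamma)^{1/2}=\gamma$, i.e. $v$ is the partial isometry of the usual operator polar decomposition of $\gamma$; one checks $v^*v=$ the range projection of $(\gamma^*\gamma)^{1/2}$ matches $\lambda^{-1}([\M'\vert\gamma\vert])$ using that $[\M'\vert\gamma\vert]$ is the projection onto $\overline{\vert\gamma\vert L^\infty(\H_0)}$, which equals the closure of the range of $\vert\gamma\vert=(\gamma^*\gamma)^{1/2}$.

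The main obstacle, such as it is, is purely bookkeeping: one must be careful that $\P$ consists of \emph{Hilbert--Schmidt} positive operators (elements of $\H=L^2(\H_0)$), while the preduals $\M_*^+$ and $L^1(\H_0)^+$ consist of \emph{trace-class} positive operators, so the square and square-root maps are what mediate between the ``$L^2$'' and ``$L^1$'' pictures — this is exactly the point of item~\eqref{typeI_lemma_item1} and it must be invoked correctly when computing $\vert\gamma\vert$ in item~\eqref{typeI_lemma_item2}. There is also a small verification that left multiplications and right multiplications by elements of $L^\infty(\H_0)$ are the full algebra and its commutant on $L^2(\H_0)$, and that $J\gamma=\gamma^*$ together with $\P=L^2(\H_0)^+$ genuinely satisfy the standard-form axioms recalled after \eqref{prod}; this is classical and I would simply cite it (e.g.\ \cite{Ha75,LP2}) rather than reprove it.
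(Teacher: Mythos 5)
Your proposal is correct, and part \eqref{typeI_lemma_item1} is the paper's computation verbatim: pair $E(\rho)$ against $\lambda(a)$, use $\langle\rho\mid a\rho\rangle=\Tr(\rho a\rho)=\Tr(a\rho^2)$, and conclude $\lambda_*^+(E(\rho))=\rho^2$. For part \eqref{typeI_lemma_item2} you take a mildly different route. The paper never touches $E'$: it starts from the usual operator polar decomposition $\gamma=v\rho$ with $\rho=(\gamma^*\gamma)^{1/2}\in\P$, observes that it is a candidate for the standard-form decomposition, and then checks the single condition $\lambda(v)^*\lambda(v)=\sigma_*(E(\rho))$ needed for the uniqueness of that decomposition — where $\sigma_*(E(\rho))$ is identified, via the trace formula already obtained in \eqref{typeI_lemma_item1}, with the projection onto $(\Ker\rho^2)^\perp=(\Ker\rho)^\perp=v^*v$. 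You instead compute $\vert\gamma\vert=\widetilde\bs(\gamma)$ directly from \eqref{ts}--\eqref{gst} by evaluating $E'(\gamma)$ on the commutant and invoking the $\M'$-analogue of \eqref{typeI_lemma_item1}, and only afterwards match $v_\gamma$ with the operator-theoretic partial isometry by comparing $v^*v$ with $[\M'\vert\gamma\vert]$ (equivalently $\sigma_*(E(\vert\gamma\vert))$, cf. \eqref{suppvect}). Both arguments hinge on the same two facts — the trace computation and the identification of the relevant support with the range projection of $(\gamma^*\gamma)^{1/2}$ — but your version additionally needs the commutation theorem identifying $\M'$ with right multiplications (and a word about the antihomomorphism $b\mapsto R_b$, or the use of $j(b)=JbJ$, when transporting \eqref{typeI_lemma_item1} to $\M'$), which the paper's proof avoids entirely; what you gain is that $\vert\gamma\vert$ is produced directly from its definition rather than read off a posteriori from uniqueness. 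One phrasing caveat: the equation $v_\gamma\vert\gamma\vert=\gamma$ alone does not force $v_\gamma=\lambda(v)$ with $v$ the canonical partial isometry — uniqueness enters only through the support condition $v_\gamma^*v_\gamma=[\M'\vert\gamma\vert]$, which you do check, so the argument is complete, but the word ``forces'' belongs after that check, not before.
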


\begin{proof}
	\eqref{typeI_lemma_item1}	
	Let $\rho\in L^2(\H_0)^+$ be arbitrary. 
	One has $E(\rho)\colon\M\to \C$, 
	$\omega_\rho(x)=\langle\rho\mid x\rho\rangle$. 
	Therefore, using the duality pairing $$\langle\cdot,\cdot\rangle\colon\M_0\times(\M_0)_*= L^\infty(\H_0)\times L^1(\H_0)\to \C,\quad  
	\langle a,\delta\rangle=\Tr(a\delta),$$ 
	one obtains for arbitrary $a\in L^\infty(\H_0)$, 
	$$
	\langle a, \lambda_*^+(  E(\rho))\rangle
	= \langle \lambda(a),   E(\rho)\rangle 
	=\langle\rho\mid \lambda(a)\rho\rangle
	=\langle\rho\mid a\rho\rangle
	=\Tr(\rho^*(a\rho)) 
	=\Tr(a\rho^2). 
	$$
	Thus  
	\begin{equation}\label{typeI_lemma_proof_eq0}
	\langle a, \lambda_*^+(  E(\rho))\rangle
	=\langle a,\rho^2\rangle 
	\text{ for all }a\in L^\infty(\H_0).
	\end{equation}
	This implies $\lambda_*^+(  E(\rho))=\rho^2$. 
	
	\eqref{typeI_lemma_item2} 
	Let $\gamma\in L^2(\H_0)$ be arbitrary, with its usual (operator theoretic) polar decomposition 
	$$\gamma=v\rho,$$ 
	where 
	$$\rho:=(\gamma^*\gamma)^{1/2}\in L^2(\H_0)^+$$ 
	and $v\in L^\infty(\H_0)$ is the partial isometry for which $v^*v$ is the orthogonal projection onto $(\Ker\rho)^\perp$. 
	The equality  $\gamma=v\rho$ can be written 
	\begin{equation}\label{typeI_lemma_proof_eq1}
	\gamma=\lambda(v)\rho,
	\end{equation}
	where $\lambda(v)\in L^\infty(\H)$ is a partial isometry and $\rho\in \P$. 
	In order to show that \eqref{typeI_lemma_proof_eq1} 
	is the polar decomposition with respect to the standard form $(\M,\H,J, \P)$ it suffices, by the uniqueness property of that decomposition, 
	to check that $\lambda(v)^*\lambda(v)=p_\rho$. 
	That is, 
	we must prove that 
	\begin{equation}\label{typeI_lemma_proof_eq2}
	\lambda(v)^*\lambda(v)= \sigma_*(E(\rho))\  (\in \Ll(\M)). 
	\end{equation}
	Since $\lambda\colon\M_0\to\M$ is a $*$-isomorphism, 
	it is easily seen that $\lambda^{-1}( \sigma_*(E(\rho)))= \sigma_*(E(\rho)\circ\lambda)$.  
	Therefore, also using $\lambda(v)^*\lambda(v)=\lambda(v^*v)$,  one can see that \eqref{typeI_lemma_proof_eq2} is equivalent to 
	\begin{equation}\label{typeI_lemma_proof_eq3}
	v^*v= \sigma_*(E(\rho)\circ\lambda)
	\  
	(\in \Ll(\M_0)).
	\end{equation}
	On the other hand, using the second equality in \eqref{typeI_lemma_proof_eq0}, it is straightforward to check that 
	$ \sigma_*(E(\rho)\circ\lambda)$ is equal to the orthogonal projection on $(\Ker(\rho^2))^\perp$. 
	Since $\rho=\rho^*$, one has $\Ker(\rho^2)=\Ker\rho$, 
	hence $ \sigma_*(E(\rho)\circ\lambda)$ is equal to the orthogonal projection on $(\Ker\rho)^\perp$, which is equal to $v^*v$ 
	(by the choice of $v$). 
	Thus \eqref{typeI_lemma_proof_eq3} holds true, and this completes the proof. 
\end{proof}

\begin{prop}\label{typeI_prop}
	There exists the groupoid $L^2(\H_0)\tto L^1(\H_0)^+$ having the following structural maps: 
	\begin{itemize} 
		\item the unit map $\bepsilon\colon L^1(\H_0)^+\to L^2(\H_0)$, $\bepsilon(\varphi)=\varphi^{1/2}$; 
		\item the source map $\bs\colon L^2(\H_0)\to L^1(\H_0)^+$, $\bs(\gamma):=\gamma^*\gamma$; 
		\item the target map $\bt\colon L^2(\H_0)\to L^1(\H_0)^+$, $\bt(\gamma)=\gamma\gamma^*$; 
		\item the inversion map $J\colon L^2(\H_0)\to L^2(\H_0)$, $J(\gamma):=\gamma^*$; 
		\item the multiplication map 
		$$\bmu\colon L^2(\H_0)\ast L^2(\H_0)\to L^2(\H_0),\quad  
		\bmu(\gamma_1,\gamma_2):=v_1v_2\vert\gamma_2\vert,$$ 
		where $\gamma_j=v_j\vert\gamma_j\vert$ for $j=1,2$ are the canonical polar decompositions.
	\end{itemize}
\end{prop}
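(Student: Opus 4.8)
The plan is to obtain this result as a direct translation of the general theory developed in the preceding sections, under the $*$-isomorphism $\lambda\colon L^\infty(\H_0)\to\M$ and its predual $\lambda_*$. By Theorem~\ref{thm:53} (together with Remark~\ref{rem:54}) we already know that $(\M,\H,J,\P)$ gives rise to the standard groupoid $\H\tto\M_*^+$ with structural maps $J$, $\bs=(E\vert_\P)\circ\widetilde\bs$, $\bt=(E\vert_\P)\circ\widetilde\bt$, $\bmu$ as in \eqref{prod}, and $\bepsilon=\widetilde\bepsilon\circ(E\vert_\P)^{-1}$. Since $\lambda_*^+\colon\M_*^+\to L^1(\H_0)^+$ is a bijection, transporting the base of this groupoid along $\lambda_*^+$ produces a groupoid $L^2(\H_0)\tto L^1(\H_0)^+$ whose total space is still $\H=L^2(\H_0)$, whose inversion map is unchanged (namely $J(\gamma)=\gamma^*$), whose multiplication is unchanged, and whose source, target, and unit maps are the old ones post-composed (resp. pre-composed) with $\lambda_*^+$ (resp. $(\lambda_*^+)^{-1}$). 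So the only thing to check is that these transported maps coincide with the explicit formulas in the statement.

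First I would identify the source and target maps. We have $\bs=\lambda_*^+\circ(E\vert_\P)\circ\widetilde\bs$ and, by \eqref{gst}, $\widetilde\bs(\gamma)=\vert\gamma\vert$ in the sense of the standard form. By Lemma~\ref{typeI_lemma}\eqref{typeI_lemma_item2} this $\vert\gamma\vert$ equals the operator-theoretic modulus $(\gamma^*\gamma)^{1/2}\in L^2(\H_0)^+$, and by Lemma~\ref{typeI_lemma}\eqref{typeI_lemma_item1} the composite $\lambda_*^+\circ(E\vert_\P)$ sends $\rho\in L^2(\H_0)^+$ to $\rho^2$. Composing, $\bs(\gamma)=\bigl((\gamma^*\gamma)^{1/2}\bigr)^2=\gamma^*\gamma$, as claimed. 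For the target map, recall $\widetilde\bt=\widetilde\bs\circ J$ by \eqref{ts}, so $\bt(\gamma)=\bs(J\gamma)=\bs(\gamma^*)=(\gamma^*)^*\gamma^*=\gamma\gamma^*$. The unit map is $\bepsilon=\widetilde\bepsilon\circ(E\vert_\P)^{-1}\circ(\lambda_*^+)^{-1}$, and by Lemma~\ref{typeI_lemma}\eqref{typeI_lemma_item1} the inverse of $\lambda_*^+\circ(E\vert_\P)$ on $L^1(\H_0)^+$ is $\varphi\mapsto\varphi^{1/2}$; since $\widetilde\bepsilon$ is just the set inclusion $\P\hookrightarrow\H$, we get $\bepsilon(\varphi)=\varphi^{1/2}$.

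Next I would check the inversion and multiplication maps. The inversion map in Theorem~\ref{thm:53} is $J\colon\H\to\H$, which here is $\gamma\mapsto\gamma^*$ by the definition of the standard form for the type~I factor. The multiplication map is \eqref{ast1}: for $(\gamma_1,\gamma_2)\in\H\ast\H$ with polar decompositions $\gamma_j=u_j\vert\gamma_j\vert$ in the standard-form sense, one has $\gamma_1\bullet\gamma_2=u_1u_2\vert\gamma_2\vert$. By Lemma~\ref{typeI_lemma}\eqref{typeI_lemma_item2}, $\vert\gamma_j\vert=(\gamma_j^*\gamma_j)^{1/2}$ and $u_j=\lambda(v_j)$ where $\gamma_j=v_j\vert\gamma_j\vert$ is the usual operator polar decomposition; and the product of partial isometries $u_1u_2$ in $\M$ corresponds under $\lambda^{-1}$ to $v_1v_2$. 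Hence $\bmu(\gamma_1,\gamma_2)=v_1v_2\vert\gamma_2\vert$, matching the statement. The composability condition $\H\ast\H=\{(\gamma_1,\gamma_2):\widetilde\bs(\gamma_1)=\widetilde\bt(\gamma_2)\}$ becomes $\{(\gamma_1,\gamma_2):\gamma_1^*\gamma_1=\gamma_2\gamma_2^*\}$ in the transported picture, which is the natural domain for the stated multiplication.

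Finally, I would note that there is essentially no obstacle here beyond bookkeeping: the groupoid axioms hold because they hold for $\H\tto\M_*^+$ from Theorem~\ref{thm:53}, and a groupoid structure is transported along a bijection of base spaces in the obvious way. The one point requiring a little care is the compatibility between the two notions of polar decomposition (standard-form vs. operator-theoretic), but this is exactly the content of Lemma~\ref{typeI_lemma}\eqref{typeI_lemma_item2}, which we may invoke. Thus the proof reduces to assembling Theorem~\ref{thm:53}, Remark~\ref{rem:54}, and Lemma~\ref{typeI_lemma}, and substituting the explicit formulas.
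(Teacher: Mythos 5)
Your proposal is correct and follows essentially the same route as the paper, whose proof simply invokes Theorem~\ref{thm:53} (with Remark~\ref{rem:54}) together with Lemma~\ref{typeI_lemma}; you have merely spelled out the bookkeeping — transporting the base along $\lambda_*^+$ and matching the standard-form polar decomposition with the operator-theoretic one — which is exactly what those references are meant to supply.
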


\begin{proof}
	Use Theorem~\ref{thm:53} 
along with Lemma~\ref{typeI_lemma}. 
\end{proof}

For the case considered in this section, the coadjoint groupoid orbit $\Oc_{\rho_0}$ corresponding to any $\rho_0\in L^1(\H_0)^+$ can be described using its spectral decomposition 
\begin{equation}
\label{hotel1}
\rho_0=\sum_{n\ge1}\rho_n\ket{n}\bra{n}
\end{equation}
where $\rho_n\ge0$ for every $n\ge1$ and $\{\ket{n}\}_{n\ge1}$ is an orthonormal sequence consisting of eigenvectors of $\rho_0$. 
The support of $\rho_0$ is 
\begin{equation}
\label{hotel2}
p_0=\sum_{\stackrel{\scriptstyle n\ge1}{\rho_n>0}}\ket{n}\bra{n}
\in\Ll(L^\infty(\H_0))
\end{equation}
and 
\begin{equation}
\label{hotel3}
\gamma_0:=\bepsilon(\rho_0)=\sum_{n\ge1}\rho_n^{1/2}\ket{n}\bra{n}
\in L^2(\H_0)^+
\end{equation}

\begin{example}
We now discuss the simplest instance of the transitive groupoid $$\U_{p_0}(L^\infty(\H_0))\ast\Oc_{\rho_0}\tto\Oc_{\rho_0},$$ 
corresponding to the case 
\begin{equation}
\label{hotel4}
\gamma_0=r\ket{\delta_0}\bra{\delta_0}
\end{equation}
where $\delta_0\in\H_0$ satisfies $\langle\delta_0\mid\delta_0\rangle=1$. 
If 
$$\mathbb{S}_r:=\{\gamma\in L^2(\H_0):\Vert\gamma\Vert=r\}$$ 
is the sphere in $L^2(\H_0)$ having its center at $0$ and its radius $r$, one then has $\gamma_0\in\mathbb{S}_r$. 

In this case we have $p_0=\ket{\delta_0}\bra{\delta_0}$ and therefore 
$$P_0=\{\ket{\delta}\bra{\delta_0}\in L^\infty(\H_0) :\delta\in\H_0,\ \langle\delta\mid\delta\rangle=1\}.$$
The stabilizer groups of $\rho_0\in L^1(\H_0)^+$ and $p_0\in\Ll(L^\infty(\H_0))$ are 
$$U_{\rho_0}=U_{p_0}\simeq{\rm U}(1).$$
The mapping $\iota_{\gamma_0}$ defined in \eqref{pi}
is now 
\begin{equation}
\label{hotel5}
\iota_{\gamma_0}\colon P_0\to L^2(\H_0),\quad 
\iota_{\gamma_0}(\ket{\delta}\bra{\delta_0})
=r\ket{\delta}\bra{\delta_0}.
\end{equation}
That is, $\iota_{\gamma_0}(u)=ru\in\mathbb{S}_r$ for every $u\in P_0$, 
and it is clear that this mapping $\iota_{\gamma_0}\colon P_0\to\mathbb{S}_r$ is an injective immersion. 
By Theorem~\ref{thm:32} one has 
\begin{equation}
\label{hotel7}
\Oc_{\rho_0}\simeq P_0/U_{\rho_0}\simeq P_0/{\rm U}(1). 
\end{equation}
Plugging \eqref{hotel5} into \eqref{Gamma0} we obtain 
\begin{equation}
\label{hotel6}
(\iota_{\gamma_0}^*\Gamma)(\ket{\delta}\bra{\delta_0})
=i\Tr(r(\ket{\delta}\bra{\delta_0})^*\ket{\de \delta}\bra{\delta_0})
=ir\langle \delta\mid\de \delta\rangle
\end{equation}
where we recall that $\langle\delta_0\mid\delta_0\rangle=1$ and 
\begin{equation}
\label{hotel8}
\Gamma(\gamma)=i\Tr(\gamma^*\de\gamma).
\end{equation}
The mapping $\iota_{\gamma_0}(P_0)\ni r\ket{\delta}\bra{\delta_0}\mapsto \ket{\delta}\in\H_0$ allows us to identify $\iota_{\gamma_0}(P_0)$ with the unit sphere $\mathbb{S}(\H_0):=\{\delta\in\H_0:\langle\delta_0\mid\delta_0\rangle=1\}$ in $\H_0$. 
Therefore $\Oc_{\rho_0}\simeq P_0/U_{\rho_0}\simeq \mathbb{S}(\H_0)/{\rm U}(1)\simeq\CP(\H_0)$.  
Via this identification, the symplectic form $\widetilde{\omega}_{\rho_0}$ is the Fubini-Study form scaled by $r$, 
\begin{equation}
\label{hotel9}
\omega_{\rm FS}=ir\bar{\partial}\partial\langle \delta\mid\delta\rangle 
\end{equation}
defined on the complex projective space $\CP(\H_0)$ 
(cf. for instance \cite[\S XV.1]{Ne00} and also \cite[Ex. 4.32]{Be06}). 
See \eqref{hotel6}. 

We now recall that 
$$\CP(\H_0)\simeq\Ll_{p_0}(L^\infty(\H_0)),$$ 
since any point of the projective space 
$$[\delta]:=\{z\delta:z\in{\rm U}(1)\}\in\CP(\H_0)$$ 
can be identified with with the rank-one orthogonal projection $$\ket{\delta}\bra{\delta}\in \Ll_{p_0}(L^\infty(\H_0)),$$ 
where $\delta\in\mathbb{S}(\H_0)$. 

Using the maps defined in \eqref{SQ4_proof_eq1}--\eqref{SQ4_proof_eq1.5}, we find  
as in Proposition~\ref{gauge_isom}
$$\U_{p_0}(L^\infty(\H_0))\simeq\frac{P_0\times P_0}{{\rm U}(1)},$$ 
consisting of elements $\ket{\delta}\bra{\tau}$, where $\delta,\tau\in\mathbb{S}(\H_0)$ and, for 
$$u=\ket{\delta}\bra{\delta_0}\text{ and }v=\ket{\tau}\bra{\delta_0},$$ 
we have 
\begin{align*}
\Psi_{\gamma_0}^*\Gamma
&=i\Tr((u\gamma_0v^*)^*\de(u\gamma_0v^*)) \\
&=ir\Tr((\ket{\delta}\langle\delta_0\mid\delta_0\rangle\langle\delta_0\mid\delta_0\rangle\bra{\tau})^*\de(\ket{\delta}\langle\delta_0\mid\delta_0\rangle\langle\delta_0\mid\delta_0\rangle\bra{\tau})) \\
&=ir\Tr((\ket{\delta}\rangle\bra{\tau})^*\de(\ket{\delta}\rangle\bra{\tau})) \\
&=ir\Tr(\ket{\tau}\rangle\bra{\delta}
(\ket{\de\delta}\rangle\bra{\tau}+\ket{\delta}\rangle\bra{\de\tau})) \\
&=ir\langle\delta\mid \de\delta\rangle 
+ir\langle\de\tau\mid\tau\rangle.
\end{align*}
We can now draw the following conclusions from the above discussion. 
\begin{enumerate}[{\rm(i)}]
	\item The coadjoint action groupoid $\U_{p_0}(L^\infty(\H_0))\ast\Oc_{\rho_0}\tto\Oc_{\rho_0}$ can be identified with the action groupoid 
	$\U_{p_0}(L^\infty(\H_0))\ast\Ll_{p_0}(L^\infty(\H_0))\tto \Ll_{p_0}(L^\infty(\H_0))$. 
	\item The presymplectic form $\Psi_{\gamma_0}^*\omega=\de(\Psi_{\gamma_0}^*\Gamma)$ is expressed in terms of the scaled Fubini-Study form as
\begin{equation}
\label{hotel11}
\Psi_{\gamma_0}^*\omega=\bt^*\omega_{\rm FS}-\bs^*\omega_{\rm FS}
\end{equation} 
\item Applying the reduction procedure to the multiplicative presymplectic structure of the \Banach Lie groupoid $$(\U_{p_0}(L^\infty(\H_0))\ast\Oc_{\rho_0},\Psi_{\gamma_0}^*\omega)$$ 
we obtain the symplectic pair groupoid 
$$\CP(\H_0)\times\overline{\CP(\H_0)}\tto\CP(\H_0).$$ 
\end{enumerate}
The last two of these conclusions actually illustrate Proposition~\ref{SQ6}. 
\end{example}

\begin{rem}
The occurrence of the complex projective space  $\CP(\H_0)$ in the above discussion is not incidental. 
It was proved in \cite{BR05} that the 
characteristic distributions of 
 several natural classes of infinite-dimensional real \Banach Lie–Poisson
spaces are integrable, the integral manifolds
being symplectic leaves just as in finite dimensions. 
This holds true in particular for  the real \Banach Lie–Poisson spaces
given by the self-adjoint parts of preduals of arbitrary $W^*$-algebras, as
well as of certain Banach ideals of compact operators on Hilbert spaces, 
including, but not limited to, the Schatten ideals.
In this context, one found that the symplectic leaves sometimes have structures of complex manifolds modeled on Banach spaces, 
and these complex structures are compatible with the symplectic structures of these leaves. 
Thus, one actually obtains K\"ahler structures on some symplectic leaves, and a very special instance of this phenomenon is the above complex projective space~$\CP(\H_0)$. 
\end{rem}

\begin{rem}
We finally point out a physical interpretation of the  groupoid $$\U_{p_0}(L^\infty(\H_0))\ast\Oc_{\rho_0}\tto\Oc_{\rho_0}.$$ 
The partial isometry $\ket{\delta}\bra{\tau}\in\U_{p_0}(L^\infty(\H_0))$ 
realizes the transition 
$$\ket{\tau}\bra{\tau}\mapsto \bigl(\ket{\delta}\bra{\tau}\bigr) \bigl(\ket{\tau}\bra{\tau}\bigr) \bigl(\ket{\delta}\bra{\tau}\bigr)^*=\ket{\delta}\bra{\delta}
$$
between the pure states $\ket{\tau}\bra{\tau},\ket{\delta}\bra{\delta}\in
\Ll_{p_0}(L^\infty(\H_0))\simeq \CP(\H_0)$ of a physical system. 
The scalar product $\langle\delta\mid\tau\rangle$ in $\H_0$ 
and the square of its absolute value $\vert\langle\delta\mid\tau\rangle\vert^2$ describe the amplitude and probability for that transition, respectively. 
One can also consider a sequence of transitions between pure states $\ket{\delta_k}\bra{\delta_k}\mapsto\ket{\delta_{k+1}}\bra{\delta_{k+1}}$ realized by the partial isometries $\ket{\delta_{k+1}}\bra{\delta_k}$, where $k=0,1,\dots,F$. 
Then, according to Feynman's composition rules for the transition amplitudes, the amplitude of the transition from the initial state $\ket{\delta_0}\bra{\delta_0}$ to the final state $\ket{\delta_F}\bra{\delta_F}$ through this sequence of transitions is given by the product $\prod\limits_{k=0}^F\langle\delta_k\mid\delta_{k+1}\rangle$. 
We refer to \cite{Od88} for a more exhaustive discussion of these aspects. 
See also \cite{SW16} for other applications of Lie groupoids to problems of prequantization. 
\end{rem}

 \subsection*{Acknowledgment} 
Thanks are due to C\'edric Arhancet for pointing out a relevant reference on modular theory
and to Aneta Sli\.zewska and Tomasz Goli\'nski for their assistance in preparing this paper. 
The first-named author acknowledges financial
support from a grant of the Ministry of Research, Innovation and Digitization, CNCS--UEFISCDI, project number PN-IV-P1-PCE-2023-0264, within PNCDI IV. 




\end{document}